\title{The Moduli Space of Polynomial Maps and Their Fixed-Point Multipliers}
\author{Toshi Sugiyama}
\thanks{This work was done when the author was a graduate student of Kyoto University}
\thanks{Published in Adv. Math. 322C (2017) pp.132--185}
\date{\today}
\address{Nada High School, Uozaki-kita-machi 8-5-1, Higashinada, Kobe 658-0082, Japan}
\email{sugiyama.toshi@gmail.com}
\subjclass[2010]{Primary 37F10; Secondary 14D20, 14C17
}
\keywords{complex dynamics, moduli space, fixed-point multiplier, Bezout's theorem, intersection multiplicity, %
finite branched covering}
\newcommand{\z}{\zeta}
\newcommand{\la}{\lambda}
\newcommand{\La}{\Lambda}
\newcommand{\mb}[1]{\mathbb{#1}}
\newcommand{\Pd}{\mb{P}^{d-2}}
\newcommand{\key}{Key Lemma}
\newcommand{\mult}{\mathrm{mult}}
\theoremstyle{plain}
\newtheorem{maintheorem}{Main Theorem}
\newtheorem*{keylemma}{Key Lemma}
\newtheorem{vartheorem}{Theorem}
\newtheorem{varproposition}[vartheorem]{Proposition}
\newtheorem{theorem}{Theorem}[section]
\newtheorem{lemma}[theorem]{Lemma}
\newtheorem{proposition}[theorem]{Proposition}
\newtheorem{conjecture}{Conjecture}
\newtheorem*{problem}{Problem}
\theoremstyle{definition}
\newtheorem{remark}[theorem]{Remark}
\newtheorem{definition}[theorem]{Definition}
\newtheorem{example}{Example}
\numberwithin{equation}{section}
\begin{document}

\begin{abstract}
 We consider the family $\mathrm{MP}_d$ of affine conjugacy classes of
 polynomial maps of one complex variable with degree $d \geq 2$, and 
 study the map $\Phi_d:\mathrm{MP}_d\to \widetilde{\Lambda}_d \subset \mathbb{C}^d / \mathfrak{S}_d$
 which maps each $f \in \mathrm{MP}_d$ to the set of fixed-point
 multipliers of $f$.
 We show that the local fiber structure of the map $\Phi_d$
 around $\bar{\lambda} \in \widetilde{\Lambda}_d$
 is completely determined by certain two sets 
 $\mathcal{I}(\lambda)$ and $\mathcal{K}(\lambda)$
 which are subsets of the power set of $\{1,2,\ldots,d \}$.
 Moreover for any $\bar{\lambda} \in \widetilde{\Lambda}_d$,
 we give an algorithm for counting the number of elements
 of each fiber $\Phi_d^{-1}\left(\bar{\lambda}\right)$ 
 only by using $\mathcal{I}(\lambda)$ and $\mathcal{K}(\lambda)$.
 It can be carried out in finitely many steps, and often by hand.
\end{abstract}

\maketitle

\section{Introduction}


Let $\mathrm{MP}_d$ be the family of affine conjugacy classes of polynomial maps of one complex variable with degree $d \geq 2$,
and $\mathbb{C}^d / \mathfrak{S}_d$ the set of unordered collections of $d$ complex numbers.
Then the aim of this paper is to give a {\it complete description} of
the fiber structure of the map
\[
    \Phi_d : \mathrm{MP}_d \to \widetilde{\Lambda}_d \subset \mathbb{C}^d / \mathfrak{S}_d
\]
which maps each $f \in \mathrm{MP}_d$
to the set of fixed-point multipliers of $f$,
except where $f \in \mathrm{MP}_d$ has multiple fixed points.
%
%
%

Since multipliers of fixed points have played a central role in the study of the complex dynamics,
%
it is natural 
to ask to what extent fixed-point multipliers of $f$
determine the original map $f$.
For polynomial maps,
since the set of fixed-point multipliers is invariant under the action of affine transformations,
the question is to count the number of
affine conjugacy classes of polynomial maps 
when the set of its fixed-point multipliers are given.
It is formulated in the following form:
how many elements there are on each fiber of the above map $\Phi_d : \mathrm{MP}_d \to \mathbb{C}^d / \mathfrak{S}_d$.
Here, since the set of fixed-point multipliers always satisfies a certain relation 
by the fixed point theorem (see Proposition~\ref{pr.1.2}),
the image of $\Phi_d$ is contained in a certain hyperplane $\widetilde{\Lambda}_d$ 
in $\mathbb{C}^d / \mathfrak{S}_d$.
Hence the main object of our study is the map
$\Phi_d : \mathrm{MP}_d \to \widetilde{\Lambda}_d.$

For $d=2$, it is easily verified that $\Phi_2$ is bijective.
In the case $d=3$, Milnor~\cite{mi_cub} showed that $\Phi_3$ is also bijective,
which was the starting point of his study of the complex dynamics of cubic polynomials.
For $d \geq 4$, Fujimura and Nishizawa have long studied the map $\Phi_d$ 
in their series of papers such as~\cite{NishizawaFujimura},~\cite{Fujimura2} and~\cite{fu}.
Especially their achievement is summarized in Fujimura's paper~\cite{fu}, which includes the following:
\begin{itemize}
 \item $\Phi_d$ is not surjective for $d\ge 4$. Moreover for $d=4$ or $5$, she found all $\bar{\la} \in \widetilde{\Lambda}_d$
	whose inverse image of $\Phi_d$ is empty.
 \item Generic fiber of $\Phi_d$ consists of $(d-2)!$ points.  Moreover if $\Phi_d^{-1}(\bar{\la})$ is finite, then 
	$\#\left( \Phi_d^{-1}(\bar{\la}) \right) \leq (d-2)!$ always holds.
 \item For $d=4$, she found $\#\left( \Phi_4^{-1}(\bar{\la}) \right)$ for all $\bar{\la} \in \widetilde{\Lambda}_4$.
\end{itemize}
Here, we denote the cardinality of a set $X$ by $\#\left( X \right)$.
Similar results for rational maps are given by Milnor in~\cite[p.152, Problem 12-d]{mi_book} and~\cite{mi_qua}.

Based on the results above,
this paper provides an algorithm for counting the number of elements of each fiber $\Phi_d^{-1}(\bar{\la})$ 
for {\it all} $\bar{\la}=\{\la_1,\dots,\la_d\} \in \widetilde{\Lambda}_d$ and for {\it all} $d\geq 4$
except when $\la_i = 1$ for some $i$.
In practice, for each $\lambda=(\lambda_1,\ldots ,\lambda_d)\in \Lambda_d \subset \mathbb{C}^d$ with $\la_i \ne 1$,
certain two subsets $\mathcal{I}(\lambda), \mathcal{K}(\lambda)$ 
of the power set of $\{1,2,\ldots,d \}$ are defined, 
and the number of elements of a fiber $\Phi_d^{-1}(\bar{\lambda})$
is completely determined by $\mathcal{I}(\lambda)$ and $\mathcal{K}(\lambda)$.
Moreover we give an algorithm for counting the number $\#\left(\Phi_d^{-1}\left(\bar{\lambda}\right)\right)$
only by using $\mathcal{I}(\lambda)$ and $\mathcal{K}(\lambda)$
(see Main Theorems~\ref{mthm.1},~\ref{mthm.3}, Definition~\ref{df.1.6} 
and Section~\ref{sec.1.1.2}).
The algorithm can be carried out in finitely many steps, and only by hand.
%
%
Moreover in Main Theorem~\ref{mthm.2}
we show that the local fiber structure of $\Phi_d$ around $\bar{\lambda}$ 
is also determined by $\mathcal{I}(\lambda)$ and $\mathcal{K}(\lambda)$.

\vspace{12pt}


We shall provide some more concerning results.

Several kinds of compactifications of $\mathrm{MP}_d$ 
have been constructed independently by 
Silverman~\cite{sil}, by DeMarco and McMullen~\cite{demarco} and
by Fujimura and Taniguchi~\cite{tani}.
Silverman's is based on the GIT compactification of the moduli space of
rational maps,
while the compactifications of DeMarco and MuMullen and of Fujimura and Taniguchi 
are both based on the consideration of the multipliers of polynomial
maps.
Especially, Fujimura and Taniguchi's compactification is strongly
related to the definition of the set $\mathcal{I}(\lambda)$ in this paper (see Remarks~\ref{rem.1.3} and~\ref{rm.2.1}).

Regarding the moduli space of rational maps, let us recall
an important result of McMullen~\cite{Mc}.
He 
investigated the map $\overline{\Psi}_d$
which maps each M\"{o}bius conjugacy class
of rational maps of $\widehat{\mathbb{C}}$ of degree $d$
to the set of the multipliers of its periodic points of all periods,
and showed that the map $\overline{\Psi}_d$ is finite-to-one
with few exceptions.
To state the result explicitly, we denote by $\mathrm{MR}_d$ the family of
M\"{o}bius conjugacy classes of rational maps of degree $d$
on the Riemann sphere $\widehat{\mathbb{C}}$,
and define the map 
$\Psi^{(n)}_d:\mathrm{MR}_d \to \mathbb{C}^{d^n+1}/\mathfrak{S}_{d^n+1}$ 
which maps each $f \in \mathrm{MR}_d$ to the set of multipliers of $n$-periodic points of $f$.
Under the above notation,
he considered the map
\[
\overline{\Psi}^N_d := \prod_{n=1}^N\Psi_d^{(n)}:
 \mathrm{MR}_d \to \prod_{n=1}^N 
 \left(\mathbb{C}^{d^n+1}/\mathfrak{S}_{d^n+1}\right).
\]
It is not hard to see that $\overline{\Psi}^1_2$ is an embedding,
and in fact maps $\mathrm{MR}_2$ isomorphically onto a hyperplane in 
$\mathbb{C}^3/\mathfrak{S}_3$ (see~\cite{mi_qua}).
However by looking at (flexible or rigid) Latt\`{e}s examples,
we can no longer expect $\overline{\Psi}^N_d$ to be an embedding for many $d$ even when $N$ is sufficiently large 
(see~\cite{mi_lattes} for Latt\`{e}s examples).
He showed that
 for sufficiently large $N$, the map $\overline{\Psi}^N_d$ is
 finite-to-one except when $d$ is a square,
 in which case it is also finite-to-one away from
 the Latt\`{e}s locus.
Here, the Latt\`{e}s locus consists of one or two points whose inverse images
are one parameter families.
Furthermore, rigid Latt\`{e}s examples imply
that for any positive integer $h$
there exist infinitely many degrees $d$
such that for any $N$, the map $\overline{\Psi}^N_d$ is at least $h$-to-one 
 (see~\cite{Mc} for more detail).
However, it does not appear to be known if $\overline{\Psi}^N_3$ is injective.
Hutz and Tepper~\cite{HutzTepper} showed that $\overline{\Psi}_3^2$ is $12$-to-one map.

There are several other papers such as~\cite{Gorbovickis1} and~\cite{Gorbovickis2},
that discuss the use of multipliers of periodic points 
to parameterize the moduli space of polynomial or rational maps.

In another direction,
Bousch~\cite{bou}, Morton~\cite{mor} and Silverman~\cite{sil} have studied
the algebraic properties of the hypersurfaces
consisting of periodic points of polynomial or rational maps
in the product space of $\widehat{\mathbb{C}}$ and some parameter space.

\vspace{12pt}

We have three main theorems in this paper.
The rest of Introduction is devoted to state Main Theorems~\ref{mthm.1}, \ref{mthm.2} and~\ref{mthm.3}.
%
%
To state them explicitly, we fix our notation first.

For $d \ge 2$, we put
\begin{equation}\label{eq.1.1}
 \mathrm{Poly}_d := \left\{f \in \mb{C}[z] \bigm| \deg f = d  \right\}
 \quad \textrm{and} \quad
 \mathrm{Aut}(\mathbb{C})
 := \left\{\gamma (z) = az+b \bigm| a,b \in \mb{C},\ a \ne 0 \right\}.
\end{equation}
Since $\gamma \in \mathrm{Aut}(\mathbb{C})$
naturally acts on $f \in \mathrm{Poly}_d$
by $\gamma \cdot f := \gamma \circ f \circ \gamma^{-1}$,
we can define its quotient $\mathrm{MP}_d := \mathrm{Poly}_d / \mathrm{Aut}(\mathbb{C})$,
which we usually call the moduli space of polynomial maps
of degree $d$.
We put $\mathrm{Fix}(f) := \{ z \in \mathbb{C} \bigm| f(z)=z\}$ for $f \in \mathrm{Poly}_d$, where $\mathrm{Fix}(f)$ is considered counted with multiplicity.
Hence we always have $\# \left(\mathrm{Fix}(f)\right) = d$.
Since the set of fixed-point multipliers $\left(f'(\z)\right)_{\z \in \mathrm{Fix}(f)}$
is invariant under the action of $\mathrm{Aut}(\mathbb{C})$,
we can naturally define the map
$\Phi_d:\mathrm{MP}_d \to \mathbb{C}^{d}/\mathfrak{S}_{d}$
by $\Phi_d(f):=\left(f'(\z)\right)_{\z \in \mathrm{Fix}(f)}$.
Here, $\mathfrak{S}_{d}$ denotes the $d$-th symmetric group 
which acts on $\mathbb{C}^{d}$ by the permutation of coordinates.

Note that a fixed point $\z \in \mathrm{Fix}(f)$
is multiple if and only if $f'(\z)=1$.

\begin{proposition}[Fixed point theorem]\label{pr.1.2}
 Let $d$ be a natural number with $d \ge 2$ and
 suppose that a polynomial map $f \in \mathrm{Poly}_d$ has no multiple fixed point.
 Then we have $\sum_{\z \in \mathrm{Fix}(f)} \frac{1}{1-f'(\z)} = 0$.
\end{proposition}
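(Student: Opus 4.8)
The plan is to translate the statement into a fact about the polynomial $g(z) := f(z) - z$. Since $\deg f = d \ge 2$ and $f$ has no multiple fixed point, $g$ is a polynomial of degree $d$ whose $d$ roots $\z_1,\ldots,\z_d$ are exactly the points of $\mathrm{Fix}(f)$ and are pairwise distinct; moreover $g'(\z_i) = f'(\z_i) - 1 \ne 0$ for every $i$, precisely because the fixed points are simple (by the Remark preceding the statement). Hence the quantity to be computed is
\[
 \sum_{\z \in \mathrm{Fix}(f)} \frac{1}{1 - f'(\z)} \;=\; -\sum_{i=1}^{d} \frac{1}{g'(\z_i)},
\]
and it suffices to prove that $\sum_{i=1}^{d} 1/g'(\z_i) = 0$ whenever $g \in \mb{C}[z]$ has degree $d \ge 2$ and only simple roots.

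For the latter I would use the partial fraction decomposition. Writing $g(z) = c\prod_{i=1}^{d}(z - \z_i)$ with $c \ne 0$, the rational function $1/g(z)$ has only simple poles, so
\[
 \frac{1}{g(z)} \;=\; \sum_{i=1}^{d} \frac{1}{g'(\z_i)}\cdot\frac{1}{z - \z_i}.
\]
Multiplying both sides by $z$ and letting $|z| \to \infty$: the left-hand side is $O\!\left(|z|^{1-d}\right) \to 0$ because $d \ge 2$, while the right-hand side tends to $\sum_{i=1}^{d} 1/g'(\z_i)$. Therefore this sum vanishes, which is exactly what is needed. Equivalently, one may note that the sum of all residues of $1/g(z)$ over $\mb{P}^1$ is zero while its residue at $\infty$ vanishes since $d \ge 2$; or one may invoke the standard fact that the $(d-1)$-st divided difference of the constant function $1$ at the nodes $\z_1,\ldots,\z_d$ is zero.

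There is essentially no obstacle beyond this; the only point requiring a little care is the bookkeeping with multiplicities — one must observe that ``no multiple fixed point'' for $f$ is equivalent to ``no multiple root'' for $g$, so that the partial fraction expansion really has the simple-pole form above and every denominator $1 - f'(\z_i)$ is nonzero, making the left-hand side of the asserted identity well defined in the first place. Once that is in place, the degree hypothesis $d \ge 2$ enters exactly once, to kill the contribution at infinity.
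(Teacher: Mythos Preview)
Your proof is correct and is essentially the same argument the paper sketches: the paper considers $\frac{1}{2\pi\sqrt{-1}}\oint_{|z|=R}\frac{dz}{z-f(z)}$ for large $R$, which is precisely the residue computation for $-1/g(z)$ that you carry out via partial fractions and the vanishing of the contribution at infinity. Your limit $z/g(z)\to 0$ as $|z|\to\infty$ is exactly the statement that this contour integral tends to zero, so the two presentations differ only cosmetically.
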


Proposition~\ref{pr.1.2} is shown by the integration
$\frac{1}{2\pi\sqrt{-1}}\oint_{|z| = R}\frac{dz}{z - f(z)}$
for sufficiently large $R$.
We put
$\Lambda_d :=\left\{(\lambda_1,\ldots ,\lambda_d)\in\mathbb{C}^d \ \left| \
  \sum_{i=1}^d \prod_{j\ne i} \left( 1- \lambda_j \right) = 0
 \right. \right\}$,
$\widetilde{\Lambda}_d := \Lambda_d / \mathfrak{S}_d$
and $\textit{pr}: \Lambda_d \to \widetilde{\Lambda}_d$.
Then the image of the map $\Phi_d$ is contained in
$\widetilde{\Lambda}_d$ by Proposition~\ref{pr.1.2} and by the fact that
$(\lambda_1,\ldots ,\lambda_d)\in\mathbb{C}^d$
always belongs to $\Lambda_d$
if at least two of $\lambda_i$ are equal to $1$.
In the following, we consider the map
\[
 \Phi_d : \mathrm{MP}_d \to \widetilde{\Lambda}_d
\]
defined by $f \mapsto \left(f'(\zeta) \right)_{\zeta \in \mathrm{Fix}(f)}$.
In the main theorems of this paper,
we restrict our attention to the map $\Phi_d$ on the domain
where polynomial maps have no multiple fixed points, i.e.,
on the domains
$V_d := \left\{ (\lambda_1,\ldots ,\lambda_d) \in \Lambda_d \bigm|
 \la_i \ne 1 \ \textrm{for any} \ 1\le i\le d \right\}$
and $\widetilde{V}_d := V_d/ \mathfrak{S}_d$,
which are Zariski open subsets
of $\La_d$ and $\widetilde{\La}_d$ respectively.
Throughout this paper, we always denote by $\bar{\la}$
the equivalence class of $\la \in \Lambda_d$ in $\widetilde{\Lambda}_d$, i.e., 
$\bar{\la}=\textit{pr}(\la)$, and never denote the complex conjugate of $\la$.

It is not hard to see that
 in the case $d=2$ or $3$, the map $\Phi_d$ is bijective.
However we can no longer expect $\Phi_d$ to be bijective if $d\ge 4$;
yet we can expect $\Phi_d$ to be generically finite by the remark below:
\begin{remark}\label{rem.1.4}
 We have
 $\mathrm{MP}_d\cong\mb{C}^{d-1}/\left(\mb{Z}/(d-1)\mb{Z}\right)$ and
 $\widetilde{\Lambda}_d\cong\mb{C}^{d-1}$.
 Especially we have
 $\dim_{\mb{C}}\mathrm{MP}_d=\dim_{\mb{C}}\widetilde{\Lambda}_d=d-1$.
\end{remark}


We now state the first main theorem in this paper.

\begin{maintheorem}\label{mthm.1}
 Let $d$ be a natural number with $d \ge 4$ and suppose
 $\la = (\la_1, \ldots, \la_d) \in V_d$.
 Then the following statements hold:
 \begin{enumerate}
  \item We always have
	$0 \le \#\left( \Phi_d^{-1}\left(\bar{\la}\right) \right) \le (d-2)!$.
	\label{en.mt1}

  \item The cardinality $\#\left( \Phi_d^{-1}\left(\bar{\la}\right) \right)$ is a function of
	 the two sets
	\begin{align*}
	 \mathcal{I}(\lambda) :=& \left\{ I \subsetneq \{1,2,\ldots,d\} \
	   \left| \ I \ne \emptyset, \quad
	     \sum_{i \in I}\frac{1}{1-\lambda_i}=0
	   \right. \right\} \ {\text and}  \\
	 \mathcal{K}(\lambda) :=& \left\{ K \subseteq \{1,2,\ldots,d\}
	   \bigm| K\ne\emptyset.\
	    \text{ If $i,j \in K$, then $\lambda_i = \lambda_j$} \right\}.
	\end{align*}
	Moreover $\#\left( \Phi_d^{-1}\left(\bar{\la}\right) \right)$ is computed 
	in finitely many steps only by using $\mathcal{I}(\lambda)$ and $\mathcal{K}(\lambda)$. 
	\label{en.mt2}
  \item If
	$\mathcal{I}(\la) \subseteq \mathcal{I}(\la')$ and
	$\mathcal{K}(\la) \subseteq \mathcal{K}(\la')$ hold
	for $\la, \la' \in V_d$, then
	$\#\left( \Phi_d^{-1}\left(\bar{\la}\right) \right) \ge 
	 \#\left( \Phi_d^{-1}\left(\bar{\la'}\right) \right)$ holds.
	\label{en.mt3}

  \item The equality
	$\#\left( \Phi_d^{-1}\left(\bar{\la}\right) \right) = (d-2)!$
	holds if and only if the set $\mathcal{I}(\lambda)$ is empty and
	the complex numbers $\lambda_1,\ldots,\lambda_d$ are mutually distinct.
	\label{en.mt4}

  \item If there exist non-zero integers $c_1,\ldots,c_d$
	which satisfy the conditions
	$c_1(1-\lambda_1) = \cdots = c_d(1-\lambda_d)$
	and $\sum_{i=1}^{d}|c_i| \le 2(d-2)$,
	then the set $\Phi_d^{-1}\left(\bar{\la}\right)$ is empty.
	\label{en.mt5}

  \item In the case $d \le 7$,
	the converse of the assertion~(\ref{en.mt5}) holds.
	\label{en.mt6}

  \item In every degree $d$, the Chebyshev polynomial provides an example of an element of $\Phi_d^{-1}\left(\bar{\la}\right)$
	if $\la \in V_d$ satisfies the condition $c_1(1-\lambda_1) = \cdots = c_d(1-\lambda_d)$ for some non-zero integers $c_i$
	with $\sum_{i=1}^{d}c_i = 0$,  $\sum_{i=1}^{d}|c_i| = 2(d-1)$ and $|c_i| \le 2$ for $1 \le i \le d$.
	\label{en.mt7}
 \end{enumerate}
\end{maintheorem}

The algorithm of the computation in Main Theorem~\ref{mthm.1}(\ref{en.mt2}) is given later
in Definition~\ref{df.1.6} and Main Theorem~\ref{mthm.3}.

\begin{remark}\label{rem.1.3}
 There is some overlap between Main Theorem~\ref{mthm.1} above and the results by Fujimura.
	\begin{itemize}
	 \item She showed in~\cite{fu} that if $\Phi_d^{-1}\left(\bar{\la}\right)$ is finite for $\bar{\la} \in \widetilde{\Lambda}_d$,
		then $0 \le \#\left( \Phi_d^{-1}\left(\bar{\la}\right) \right) \le (d-2)!$ holds.
		We removed the assumption that $\Phi_d^{-1}\left(\bar{\la}\right)$ is finite in the case $\bar{\la} \in \widetilde{V}_d$.
	 \item She showed in~\cite[Theorem 6]{fu} that if $\mathcal{I}(\lambda)$ is empty, 
		then $\#\left( \Phi_d^{-1}\left(\bar{\la}\right) \right) = (d-2)!$ holds counted with multiplicity.
		Main Theorem~\ref{mthm.1}(\ref{en.mt4}) is a strengthening of this result.
	 \item She also gave a sufficient condition for $\Phi_d^{-1}(\bar{\la})$ to be empty in~\cite[Theorem 12]{fu}.
		For $d \leq 5$, her condition is equivalent to that in Main Theorem~\ref{mthm.1}(\ref{en.mt5}).
		However for $d \geq 6$, her condition is stricter than ours.
		In the case $d = 6$, 
		Example~\ref{ex.0.1} in Section~\ref{sec.1.1.2} in this paper is the unique example 
		which satisfies our condition~(\ref{en.mt5}) but not Fujimura's condition in her Theorem 12.
	 \item In the case $d \leq 5$, she also showed Main Theorem~\ref{mthm.1}(\ref{en.mt6}) in~\cite[Theorem 5]{fu}.
	 \item Fujimura and Taniguchi's compactification~\cite{tani} gives us a geometric insight of the fiber structure of $\Phi_d$.
		Especially it provides an intuitional explanation of the reasons
		why $\mathcal{I}(\la)$ naturally arises in the computation of $\#\left( \Phi_d^{-1}\left(\bar{\la}\right) \right)$.
		See also Remark~\ref{rm.2.1}.
	\end{itemize}
\end{remark}

\begin{remark}
 The importance of this paper is that
 we can {\it completely} count the number of elements of each fiber $\Phi_d^{-1}(\bar{\lambda})$ 
 for {\it all} $\lambda \in V_d$ {\it without exception}
 as we will see in Main Theorem~\ref{mthm.3} and Section~\ref{sec.1.1.2}.
 The main technical tools that we use for the proof of main theorems
 are a certain extension of Bezout's theorem on projective space $\mathbb{P}^n$
 (see Proposition~\ref{pr.4.1}) and the relation between intersection multiplicity and 
 the degree of finite branched covering 
 (see Propositions~\ref{pr.5.9}, \ref{pr.5.11}, \ref{pr.6.4}, \ref{pr.6.5} and \ref{pr.6.6}),
 which are common in the area of complex algebraic geometry.
\end{remark}

\begin{remark}
 The assertion~(\ref{en.mt7}) shows that the estimate $\sum_{i=1}^{d}|c_i| \le 2(d-2)$ in the assertion~(\ref{en.mt5}) is sharp,
 because $\sum_{i=1}^{d}|c_i|$ must be even.
 However this does not assure the converse of~(\ref{en.mt5}).
\end{remark}

\begin{conjecture}\label{cj.1}\ 
 \begin{enumerate}
  \item The converse of the assertion~(\ref{en.mt5}) also holds
	in the case $d\ge 8$.
	\label{en.cj1.1}
  \item If
	$\mathcal{I}(\la) \subsetneq \mathcal{I}(\la')$ and
	$\mathcal{K}(\la) \subseteq \mathcal{K}(\la')$ hold
	for $\la, \la' \in V_d$, then
	$\#\left( \Phi_d^{-1}\left(\bar{\la}\right) \right) >
	 \#\left( \Phi_d^{-1}\left(\bar{\la'}\right) \right)$ holds.
	\label{en.cj1.2}
 \end{enumerate}
\end{conjecture}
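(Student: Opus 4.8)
The plan is to reduce both parts of Conjecture~\ref{cj.1} to the concrete existence problem underlying $\Phi_d$. For $\la\in V_d$ put $c_i:=\tfrac{1}{1-\la_i}$, so $\sum_i c_i=0$; writing $f\in P_d$ in the normal form $f(z)=z+a\prod_{i=1}^d(z-\zeta_i)$, one checks that $f\in\Phi_d^{-1}(\bar\la)$ corresponds, up to the action of $\Gamma$, to a choice of \emph{distinct} $\zeta_1,\ldots,\zeta_d\in\mb{C}$ satisfying the polynomial identity $\sum_{i=1}^d c_i\prod_{j\ne i}(z-\zeta_j)\equiv\text{const}$ (the coefficient of $z^{d-1}$ vanishes automatically because $\sum_i c_i=0$, so this is $d-2$ equations in the $\zeta_i$ modulo the two-dimensional group $\Gamma$, whose generic solution count is $(d-2)!$). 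When the $c_i$ are in addition commensurable, so that after clearing denominators $c_1,\ldots,c_d$ become coprime integers, the identity says precisely that the rational function $h(z):=\prod_i(z-\zeta_i)^{c_i}$ of degree $m:=\tfrac12\sum_i|c_i|$ satisfies $h'(z)=\text{const}\cdot\prod_i(z-\zeta_i)^{c_i-1}$, i.e. $h$ is a connected genus-zero branched cover of $\mb{P}^1$ branched only over $\{0,1,\infty\}$ with ramification profile $\{c_i:c_i>0\}$ over $0$, $\{-c_i:c_i<0\}$ over $\infty$, and $(d-1,1^{m-d+1})$ over $1$. The Riemann--Hurwitz count is automatically balanced, and the partition over $1$ is admissible exactly when $m\ge d-1$, equivalently $\sum_i|c_i|>2(d-2)$; thus assertion~(\ref{en.mt5}) is one (easy) direction of a Hurwitz realizability statement, and Conjecture~\ref{cj.1}~(\ref{en.cj1.1}) asserts the converse.

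For part~(\ref{en.cj1.1}) I would proceed in two stages. First reduce to the commensurable case: if the $c_i$ are not all rational multiples of each other, show $\Phi_d^{-1}(\bar\la)\ne\emptyset$ by a deformation/genericity argument for the system ``$\sum_i c_i\prod_{j\ne i}(z-\zeta_j)\equiv\text{const}$'', restricted to the locus in $\La_d$ where exactly the relations of $\mathcal{I}(\la)$ hold; this is the mechanism already behind assertions~(\ref{en.mt3}),~(\ref{en.mt4}), now run in a coordinate subspace. Second, in the commensurable case the problem becomes the pure statement: for $d\ge 8$, every branch datum of the shape above with $m\ge d-1$ is realized by a connected genus-zero cover. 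Since one of the three partitions is ``almost trivial'' — a single nontrivial part $(d-1,1^{m-d+1})$ — I would attempt a direct inductive construction (on $d$, or on $m$) adding one sheet or one transposition at a time, rather than invoking the general, and only partially resolved, genus-zero Hurwitz existence problem.

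For part~(\ref{en.cj1.2}) the plan is to sharpen the proof of assertion~(\ref{en.mt3}), which presumably realizes $\Phi_d^{-1}(\bar\la)$ as a disjoint union of non-negatively counted strata indexed by combinatorial refinements of $\mathcal{I}(\la)$ and $\mathcal{K}(\la)$, together with a surjection onto the strata for $\bar{\la'}$ whenever $\mathcal{I}(\la)\subseteq\mathcal{I}(\la')$ and $\mathcal{K}(\la)\subseteq\mathcal{K}(\la')$. Given $\mathcal{I}(\la)\subsetneq\mathcal{I}(\la')$, fix $I_0\in\mathcal{I}(\la')\setminus\mathcal{I}(\la)$ and isolate the stratum of those $f\in\Phi_d^{-1}(\bar\la)$ whose fixed points satisfy $\sum_{i\in I}\tfrac{1}{1-f'(\zeta_i)}=0$ for exactly the $I\in\mathcal{I}(\la)$: in the limit $\la\to\la'$ the points $\zeta_i$ with $i\in I_0$ form a cluster escaping to infinity, so this stratum maps to nothing among the strata for $\bar{\la'}$. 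It then suffices to show this one stratum is non-empty, which I again expect to follow from the degree count behind assertion~(\ref{en.mt4}) performed on the subspace where precisely the relations of $\mathcal{I}(\la)$ hold, together with an induction on $d$ handling the clusters cut off by those relations.

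The common obstacle is that every soft ingredient — monotonicity, the upper bound $(d-2)!$, and assertion~(\ref{en.mt5}) — rests on non-negativity and on \emph{deleting} strata, whereas the conjectures require \emph{producing} a map, equivalently a branched cover, in a prescribed stratum. For part~(\ref{en.cj1.1}) this is a genuine Hurwitz existence question, for which numerical admissibility is notoriously not always sufficient, so the finite enumeration that presumably settles $d\le 7$ in assertion~(\ref{en.mt6}) does not extend; a proof for all $d\ge 8$ needs a construction uniform in $d$ and sensitive to the \emph{exact} threshold $\sum_i|c_i|=2(d-2)$, most plausibly an induction on $d$ that removes one coordinate while tracking the quantity $\sum_i|c_i|-2(d-2)$. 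Closing that induction, and likewise proving the isolated stratum in part~(\ref{en.cj1.2}) non-empty, is where the real work lies — which is why these statements are posed as conjectures rather than theorems.
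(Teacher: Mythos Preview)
The statement you are addressing is a \emph{conjecture}; the paper does not prove it and contains no argument to compare your proposal against. The paper's only remark (immediately after Conjecture~\ref{cj.1}) is that Main Theorem~\ref{mthm.3} reduces both parts to purely combinatorial statements about the integers $e_{\mb{I}}(\la)$, $s_d(\la)$ and $c_t(\la)$: part~(\ref{en.cj1.1}) becomes the claim that $\sum_t c_t(\la)=0$ forces the existence of small integer weights as in assertion~(\ref{en.mt5}), and part~(\ref{en.cj1.2}) becomes a strict monotonicity of these formulas under enlarging $\mathcal{I}$. Your proposal is therefore not being compared against a proof but against a different \emph{reduction}.

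Your Hurwitz reformulation of the commensurable case of part~(\ref{en.cj1.1}) is correct and is a genuinely different angle from the paper's combinatorial reduction: with $h(z)=\prod_i(z-\z_i)^{c_i}$, the Key Lemma equations are exactly the condition that $h$ is a genus-zero three-point cover with ramification profile $(d-1,1^{m-d+1})$ over the value $1$, and the threshold $m\ge d-1$ matches $\sum_i|c_i|>2(d-2)$. This buys access to the (partially resolved) genus-zero Hurwitz existence literature, whereas the paper's reduction buys a finite recursive formula amenable in principle to induction on the partial order of $\mathfrak{I}(\la)$. Neither route is carried to completion, in your proposal or in the paper.

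One point to tighten: your ``reduce to the commensurable case'' step tacitly uses that $\#\Phi_d^{-1}(\bar\la)$ depends only on $(\mathcal{I}(\la),\mathcal{K}(\la))$, which is precisely Main Theorem~\ref{mthm.3}; the reduction is then not a deformation but the observation that if the pair $(\mathcal{I},\mathcal{K})$ admits \emph{some} integer realization with $\sum_i|c_i|\le 2(d-2)$ the fiber is already empty by assertion~(\ref{en.mt5}), and otherwise every integer realization of that pair has $m\ge d-1$, so the Hurwitz datum is numerically admissible. For part~(\ref{en.cj1.2}), your stratification picture matches the paper's decomposition of $B_d(\la)$ by $\mathfrak{I}(\la)$ (Lemma~\ref{lm.4.6} and Theorem~\ref{thm.B}), but the ``escaping cluster'' heuristic would have to be made precise inside that framework, and as you yourself note, showing the isolated stratum non-empty is the same existence obstacle that makes part~(\ref{en.cj1.1}) open.
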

The above conjecture is completely reduced to the problems on combinatorics
by Main Theorem~\ref{mthm.3}.

The local fiber structure of the map $\Phi_d$ is also determined 
by $\mathcal{I}(\la)$ and $\mathcal{K}(\la)$ as in the following:

\begin{maintheorem}\label{mthm.2}\
 \begin{enumerate}
  \item For any $\la,\la'\in V_d$ with
	 $\mathcal{I}(\la)=\mathcal{I}(\la')$ and
	 $\mathcal{K}(\la)=\mathcal{K}(\la')$,
	 there exist open neighborhoods $\widetilde{U}\ni \bar{\la}$,
	 $\widetilde{U}'\ni \bar{\la}'$
	 in $\widetilde{V}_d$ and biholomorphic maps
	 $\mathfrak{L}:\Phi_d^{-1}\bigl(\widetilde{U}\bigr)\to
	  \Phi_d^{-1}\bigl(\widetilde{U}'\bigr)$,
	 $\widetilde{L}:\widetilde{U}\to \widetilde{U}'$
	 and $L:U\to U'$ with $L(\la)=\la'$
	 such that the following conditions~(\ref{en.mthm2.1.1})
	 and~(\ref{en.mthm2.1.2}) are satisfied,
	 where $U,U'$ are the connected components of
	 $\textit{pr}^{-1}\bigl(\widetilde{U}\bigr)$, 
	$\textit{pr}^{-1}\bigl(\widetilde{U}'\bigr)$ in $V_d$
	containing $\la,\la'$ respectively. \label{en.mthm2.1}
	 \begin{enumerate}
	  \item The equalities
		$\Phi_d\circ\mathfrak{L}=\widetilde{L}\circ\Phi_d
		 |_{\Phi_d^{-1}(\widetilde{U})}$ and
		$\textit{pr}\circ L=\widetilde{L}\circ\textit{pr}|_U$ hold.
		\label{en.mthm2.1.1}
	  \item For any $\la''\in U$,
		the equalities $\mathcal{I}(\la'')=\mathcal{I}(L(\la''))$ and
		$\mathcal{K}(\la'')=\mathcal{K}(L(\la''))$ hold.
		\label{en.mthm2.1.2}
	 \end{enumerate}
  \item  For any
	 $(\mathcal{I},\mathcal{K})\in
	 \left\{(\mathcal{I}(\la),\mathcal{K}(\la))\bigm| \la\in V_d \right\}$,
	 the following properties~(\ref{en.mthm2.2.1}), (\ref{en.mthm2.2.2}) 
	 and~(\ref{en.mthm2.2.3}) 
	 hold for the sets
	 \label{en.mthm2.2}
	 \begin{align*}
	  \widetilde{V}\left(\mathcal{I},\mathcal{K}\right)
	   &:= \left\{\bar{\la}\in \widetilde{V}_d \bigm| \la\in V_d,\ 
	  \mathcal{I}(\la)=\mathcal{I} \text{ and } 
	  \mathcal{K}(\la)=\mathcal{K}\right\}, \\
	  \widetilde{V}\left(\mathcal{I},*\right)
	   &:= \left\{\bar{\la}\in \widetilde{V}_d \bigm| \la\in V_d,\ 
	  \mathcal{I}(\la)=\mathcal{I}\right\}, \\
	  \widetilde{V}\left(*,\mathcal{K}\right)
	   &:= \left\{\bar{\la}\in \widetilde{V}_d \bigm| \la\in V_d,\ 
	  \mathcal{K}(\la)=\mathcal{K}\right\}:
	 \end{align*}
	 \begin{enumerate}
	  \item the map
		$
	  \Phi_d|_{\Phi_d^{-1}(\widetilde{V}\left(\mathcal{I},*\right))} :
	  \Phi_d^{-1}\bigl(\widetilde{V}\left(\mathcal{I},*\right)\bigr) \to
	   \widetilde{V}\left(\mathcal{I},*\right)
		$
		is proper. \label{en.mthm2.2.1}
	  \item The map
		$
	  \Phi_d|_{\Phi_d^{-1}(\widetilde{V}\left(*,\mathcal{K}\right))} :
	  \Phi_d^{-1}\bigl(\widetilde{V}\left(*,\mathcal{K}\right)\bigr) \to
	   \widetilde{V}\left(*,\mathcal{K}\right)
		$
		is locally homeomorphic. \label{en.mthm2.2.2}
	  \item For each connected component $X$ of $\Phi_d^{-1}\bigl(
		\widetilde{V}\left(\mathcal{I},\mathcal{K}\right)\bigr)$,
		the map
		$
		 \Phi_d|_X
		 : X \to \widetilde{V}\left(\mathcal{I},\mathcal{K}\right)
		$
		is an unbranched covering. \label{en.mthm2.2.3}
	 \end{enumerate}
 \end{enumerate}
\end{maintheorem}

\begin{remark}\label{rm.2.1}
 The above assertion~(\ref{en.mthm2.2.1}) implies that $\mathcal{I}(\la)$ dominates
 the information on the number of `holes' on each fiber of the map $\Phi_d$.
 Fujimura and Taniguchi~\cite{tani} showed that
 the map $\Phi_d : \mathrm{MP}_d \to \widetilde{\Lambda}_d$
 is extended to
 the map $\widehat{\Psi}_d : \widehat{M}_d \to \mathbb{P}^{d-1}$,
 where $\widehat{M}_d$ is their compactification of $\mathrm{MP}_d$.
 In our context, 
 the condition $\mathcal{I}(\la) \ne \emptyset$ holds for $\la \in V_d$ if and only if
 $\widehat{\Psi}_d^{-1}(\bar{\la}) \cap 
 \left(\widehat{M}_d \setminus \mathrm{MP}_d \right) \ne \emptyset$.

 On the other hand, the above assertion~(\ref{en.mthm2.2.2}) implies that 
 the condition $\mathcal{K}(\la) \supsetneq \{ \{1\}, \ldots, \{d\} \}$ holds for $\la \in V_d$
 if $\bar{\la}$ lies on the branch locus of the map $\Phi_d$.
\end{remark}

To state Main Theorem~\ref{mthm.3} explicitly, we need some more notations,
which are defined in Definition~\ref{df.1.6} and are often used later
in the proof of the main theorems.
After reading Sections~\ref{sec.2},~\ref{sec.3},~\ref{sec.4} and~\ref{sec.7},
the readers will find that the process in Main Theorem~\ref{mthm.3} is natural.

\begin{definition}\label{df.1.6}
 Let $\la=(\la_1,\ldots,\la_d)$ be an element of $V_d$.  Then
 \begin{itemize}
  \item we put 
	\[
	 \mathfrak{I}(\la) := \left\{ \left\{I_1,\ldots,I_l\right\}\ \left| \ 
	 \begin{matrix}
	  I_1 \amalg \cdots \amalg I_l = \{1,\ldots,d \},\ \ l \ge 2, \\
	  I_u\in\mathcal{I}(\la)
	    \textrm{ for each } 1\le u\le l
	 \end{matrix}
	 \right.\right\},
	\]
	where $I_1 \amalg \cdots \amalg I_l$ denotes the disjoint union 
	of $I_1,\ldots,I_l$.
	The partial order $\prec$ in $\mathfrak{I}(\la)$ is defined
	by the refinement of partitions, namely, 
	for $\mb{I}, \mb{I}'\in\mathfrak{I}(\la)$,
	the relation $\mb{I} \prec \mb{I}'$ holds if and only if
	$\mb{I}'$ is a refinement of $\mb{I}$ as partitions of $\{1,\ldots,d \}$.
	Note that $\mathfrak{I}(\la)$ gives the equivalent information
	as $\mathcal{I}(\la)$.
	(For more detail, see Remark~\ref{rm.4.4} and Section~\ref{sec.1.1.2}.)
  \item We denote by $K_1,\ldots,K_q$ the collection of maximal elements 
	of $\mathcal{K}(\la)$ with respect to the inclusion relations, i.e., 
	\[
	 \left\{K_1,\ldots,K_q\right\}= \left\{K\in\mathcal{K}(\la) \bigm|
	   i\in K,\ j\in\{1,\ldots,d\}\setminus K\Longrightarrow\la_i\ne\la_j
	 \right\}.
	\]
	Note that the equality $K_1 \amalg \cdots \amalg K_q = \{1,\ldots,d \}$
	always holds by definition.
	We put $\kappa_w:=\#(K_w)$ for $1\le w\le q$
	and denote by $g_w$ the greatest common divisor of 
	$\kappa_1,\ldots,\kappa_{(w-1)},(\kappa_w)-1,\kappa_{(w+1)},\ldots,\kappa_q$
	for each $1\le w\le q$.
  \item We define the function $m$ by $m(z):=\frac{1}{1-z}$
	for $z\in \mathbb{C}\setminus \{1\}$.
  \item We may assume $\la \in V_d$ to be in the form
	\[
	 \la=(\underbrace{\la_1,\ldots,\la_1}_{\kappa_1},\ldots,
  	  \underbrace{\la_q,\ldots,\la_q}_{\kappa_q}),
	\]
	where $\la_1,\ldots,\la_q$ are mutually distinct.
	For each $1\le w\le q$ and for each divisor $t$ of $g_w$ with $t\ge 2$,
	we put $d[t]:= \frac{d-1}{t}+1$ and
	denote by $\la[t]$ the element of $V_{d[t]}$ such that
	\begin{multline*}
	 \la[t]:=(
	  \underbrace{m^{-1}(tm(\la_1)),\ldots,
	   m^{-1}(tm(\la_1))}_{\frac{\kappa_1}{t}},
	  \ldots, \\
	  \underbrace{m^{-1}(tm(\la_w)),\ldots,
	   m^{-1}(tm(\la_w))}_{\frac{(\kappa_w)-1}{t}},
	  \ldots,
	  \underbrace{m^{-1}(tm(\la_q)),\ldots,
	   m^{-1}(tm(\la_q))}_{\frac{\kappa_q}{t}},
	 \la_w).
	\end{multline*}
	Note that $w$ is determined by $t$ and 
	that $\mathcal{I}(\la[t])$ is determined 
	by $\mathcal{I}(\la),\mathcal{K}(\la)$ and $t$.
 \end{itemize}
\end{definition}
\begin{maintheorem}\label{mthm.3}
 For $\la=(\la_1,\ldots,\la_d) \in V_d$,
 the cardinality $\#\left( \Phi_d^{-1}(\bar{\lambda}) \right)$
 is computed in the following steps.
 \begin{itemize}
  \item For each $\mathbb{I}=\{I_1,\ldots,I_l\}\in \mathfrak{I}(\la)$,
	we define the number $e_{\mathbb{I}}(\la)$ inductively by the equality
	\begin{equation}\label{eq.7.D1}
	 e_{\mathbb{I}}(\la) :=
	 \left(\prod_{u=1}^l \bigl( \#\left(I_u\right) -1 \bigr)! \right)
	  - \sum_\textrm{\scriptsize $\begin{matrix}
				       \mathbb{I}' \in \mathfrak{I}(\la) \\
				       \mathbb{I}' \succ \mathbb{I}, \;
				       \mathbb{I}' \ne \mathbb{I}
				     \end{matrix}$}
	 \left(
	  e_{\mathbb{I}'}(\la) \cdot \prod_{u=1}^l 
	  \left( \prod_{k=\#(I_u)-\chi_u(\mathbb{I}')+1 }^{\#(I_u)-1}k \right)
	 \right),
	\end{equation}
	where we put
	$\chi_u(\mathbb{I}'):=\#\left(\left\{ I'\in\mathbb{I}' \bigm|
	I' \subseteq I_u \right\}\right)$
	for $\mathbb{I}' \succ \mathbb{I}$.
	Note that in the case $\chi_u(\mathbb{I}') = 1$, we assume that
	$\prod_{k=\#(I_u)-\chi_u(\mathbb{I}')+1 }^{\#(I_u)-1}k = \prod_{k=\#(I_u)}^{\#(I_u)-1}k = 1$.
  \item We put
	\begin{equation}\label{eq.7.C1}
	 s_d(\la) := (d-2)!
	  - \sum_{\mathbb{I} \in \mathfrak{I}(\la)} \left(
          e_{\mathbb{I}}(\la)\cdot
          \prod_{k=d-\#(\mathbb{I})+1}^{d-2}k \right).
	\end{equation}
	Note that in the case $\#(\mathbb{I}) = 2$, we assume that 
	$\prod_{k=d-\#(\mathbb{I})+1}^{d-2}k = \prod_{k=d-1}^{d-2}k = 1$.
  \item Moreover we define the numbers $c_t(\la)$ for
	$t\in \bigcup_{1\le w \le q} \left\{t \bigm| t|g_w \right\}$
	by the equalities
	\begin{equation}\label{eq.7.E1}
	 \sum_{t|b,\ b|g_w}
	  \frac{t}{b}\,c_b(\la) = \frac{s_{d[t]}(\la[t])}{
	  \left(\frac{\kappa_1}{t}\right)!\cdots
	  \left(\frac{\kappa_{(w-1)}}{t}\right)!
	  \left(\frac{(\kappa_w)-1}{t}\right)!
	  \left(\frac{\kappa_{(w+1)}}{t}\right)!
	  \cdots\left(\frac{\kappa_q}{t}\right)! }
	\end{equation}
	for $(w,t) \in \left\{(w,t) \bigm|
	1 \le w \le q,\ t|g_w,\ t \ge 2 \right\}$,
	and
	\begin{equation}\label{eq.7.E2}
	 c_1(\la)
	 + \sum_{w=1}^q\left(\sum_{t|g_w,\ t\ge 2} \frac{1}{t}\,c_t(\la)\right)
	 = \frac{s_d(\la)}{\kappa_1!\cdots\kappa_q!},
	\end{equation}
	where $t|b$ denotes that $t$ divides $b$
	for positive integers $t$ and $b$.
  \item Then the numbers $e_{\mathbb{I}}(\la)$, $s_d(\la)$ and $c_t(\la)$ are
	non-negative integers.
	Moreover we have
	\begin{equation}\label{eq.7.E3}
	 \#\left(\Phi_d^{-1}\left(\bar{\la}\right)\right) = \sum_t c_t(\la)
	  = c_1(\la)+\sum_{w=1}^q\left(\sum_{t|g_w,\ t\ge 2} c_t(\la)\right).
	\end{equation}
 \end{itemize}
\end{maintheorem}

\begin{remark}
 Note that all the numbers defined in Main Theorem~\ref{mthm.3}
 are determined
 by $\mathcal{I}(\la)$ and $\mathcal{K}(\la)$.
 Especially the number $s_d(\la)$ is determined only by $\mathcal{I}(\la)$.
 If we count the number $\#\left(\Phi_d^{-1}\left(\bar{\la}\right)\right)$ with multiplicity,
 then we always have $\#\left(\Phi_d^{-1}\left(\bar{\la}\right)\right) = s_d(\la)$.
 However in our context, we do not consider $\#\left(\Phi_d^{-1}\left(\bar{\la}\right)\right)$
 counted with multiplicity, and therefore need some more computation.
 The number $s_d(\la)$ is the cardinality of the set $S_d(\la)$
 which will be defined in Definition~\ref{df.2.2}.
\end{remark}

\begin{remark}
 Under the isomorphism $\mathrm{MP}_d\cong\mb{C}^{d-1}/\left(\mb{Z}/(d-1)\mb{Z}\right)$
 in Remark~\ref{rem.1.4}, the action of $\mb{Z}/(d-1)\mb{Z}$ on $\mb{C}^{d-1}$ is not free,
 and $\mathrm{MP}_d$ has the set of singular points ${\rm Sing}(\mathrm{MP}_d)$ 
 for $d \geq 4$.
 If $\bar{\la} \in \widetilde{V}_d$ lies away from the locus $\Phi_d({\rm Sing}(\mathrm{MP}_d))$, 
 then the set $\left\{(w,t) \bigm| 1 \le w \le q,\ t|g_w,\ t \ge 2 \right\}$
 in the third step in Main Theorem~\ref{mthm.3} 
 is empty,
 and therefore we have
 $\#\left(\Phi_d^{-1}\left(\bar{\la}\right)\right) = c_1(\la)
  = s_d(\la)/(\kappa_1!\cdots\kappa_q!)$.
\end{remark}

\begin{problem}
 Give a combinatorial proof of the fact that for any $\la\in V_d$ and for any $t$,
 the number $c_t(\la)$ defined above is a non-negative integer.
 Note that the proof given in this paper is not combinatorial.
\end{problem}

For parameters $\la \in \La_d \setminus V_d$, we have the following:

\begin{remark}
 For $\la=(\la_1,\ldots,\la_d)\in \La_d\setminus V_d$
 with $\#\left\{i\bigm| \la_i=1\right\}\ge4$,
 some connected components of 
 the inverse image $\Phi_d^{-1}\left(\bar{\la}\right)$ may have
 dimension greater than or equal to $1$. However, if we put
 \[
  \mathrm{MP}''_d:= \bigl\{f\in \mathrm{MP}_d\bigm|
   \text{$f$ has at most one multiple fixed point}\bigr\},
 \]
 then the map
 $\Phi_d|_{\mathrm{MP}''_d} : \mathrm{MP}''_d \to \widetilde{\La}_d$
 is finite. Moreover similar results to the main theorems hold for
 $\Phi_d|_{\mathrm{MP}''_d}$ and for any $\la \in \La_d\setminus V_d$,
 whose proofs are also similar to those of the main theorems.
\end{remark}

We shall also comment about $f\in \mathrm{MP}_d$ having more than two multiple fixed points.
 For any $\z\in \mathrm{Fix}(f)$,
 the holomorphic index of $f$ at $\z$ is defined to be the complex number
 $\iota(f,\z):=
  \frac{1}{2\pi\sqrt{-1}}\oint_{|z-\z|=\epsilon}\frac{dz}{z - f(z)}$,
 where $\epsilon$ is a sufficiently small positive real number.
 The index $\iota(f,\z)$ is invariant under biholomorphic transformations,
 and is equal to $\frac{1}{1-f'(\z)}$ if $\z$ is not multiple.
 We denote by $m(f,\z)$ the fixed-point multiplicity of $f$
 at $\z\in\mathrm{Fix}(f)$.
 Then we always have $\sum_{\z\in\mathrm{Fix}(f)}m(f,\z)=\deg f$ and
 $\sum_{\z\in\mathrm{Fix}(f)}\iota(f,\z)=0$.
 Moreover we have $\iota(f,\z)\ne 0$ whenever $m(f,\z)=1$.
 Note that $\mathrm{Fix}(f)$ is not considered counted with multiplicity
 only here and in the following conjecture.

\begin{conjecture}
 We consider the map
 $\widetilde{\Phi}_d$, instead of $\Phi_d$,
 which assigns\\
 $\widetilde{\Phi}_d(f) =\bigl(\left[\iota(f,\z),m(f,\z)\right]\bigr)_{\z\in\mathrm{Fix}(f)}$
 to each $f \in \mathrm{MP}_d$,
 so that the target space of $\widetilde{\Phi}_d$ is defined to be the family of unordered collections of
 pairs $[m_i,d_i]$ with $m_i \in \mathbb{C},\ d_i \in \mathbb{Z},\ d_i \geq 1,\ \sum_i d_i = d$
 and $\sum_i m_i = 0$.
 Then it is conjectured that the map $\widetilde{\Phi}_d$ is finite 
 and that similar results to the main theorems hold
 for $\widetilde{\Phi}_d$ and for any parameter value without exception.
\end{conjecture}

We have ten sections in this paper.
In Section~\ref{sec.1.1.2}, we give some examples which illustrate 
the calculation 
of $\#\left(\Phi_d^{-1}\left(\bar{\la}\right)\right)$ in Main Theorem~\ref{mthm.3}.   
In Section~\ref{sec.1.2}, we give
 the detailed program of the 
remaining 
sections.
Sections from~\ref{sec.2} to~\ref{sec.8}
are devoted to the proofs of
Main Theorems~\ref{mthm.1},~\ref{mthm.2} and ~\ref{mthm.3}.

\proof[\bf Acknowledgements]
The results of this paper were obtained when the author was a graduate student of 
Department of Mathematics, Kyoto University.
He would like to express his thanks to his supervisor Professor Mitsuhiro Shishikura
for various kinds of advices and informations.
He also thanks 
to Professor Hiroshi Kokubu and Professor Hiroki Sumi 
for valuable advices on this paper, 
and to Professor Masashi Kisaka for useful informations
concerning the results.
Finally he expresses his thanks to the referee for valuable comments and helpful suggestions.

\section{Some Examples}\label{sec.1.1.2}

In this section, we give three examples which illustrate the calculation 
of $\#\left(\Phi_d^{-1}\left(\bar{\la}\right)\right)$ in Main Theorem~\ref{mthm.3}. 

\begin{example}\label{ex.0.1}
 We consider an element $\la = (\la_1,\dots,\la_6) \in V_6$ satisfying the equality
 \[
    \frac{1}{1-\lambda_1} : \cdots : \frac{1}{1-\lambda_6} = 1:1:2:-1:-1:-2.
 \]
 In this case we have $\#\left(\Phi_6^{-1}\left(\bar{\la}\right)\right) = 0$
 by the assertion~(\ref{en.mt5}) in Main Theorem~\ref{mthm.1}; 
 however in this example we shall find it again 
 by following the steps in Main Theorem~\ref{mthm.3}.

 By definition, we have
 $\mathfrak{I}(\la)= \left\{\mb{I}_{\omega}\bigm| 1\le \omega \le 8\right\}$,
 where
 \begin{align*}
  \mb{I}_1 &= \bigl\{ \{1,4\},\{2,5\},\{3,6\} \bigr\}, \
  \mb{I}_2  = \bigl\{ \{1,5\},\{2,4\},\{3,6\} \bigr\}, \\
  \mb{I}_3  &= \bigl\{ \{1,2,4,5\},\{3,6\} \bigr\},\
  \mb{I}_4  = \bigl\{ \{1,4\},\{2,3,5,6\} \bigr\},\
  \mb{I}_5 = \bigl\{ \{2,5\},\{1,3,4,6\} \bigr\},\\
  \mb{I}_6  &= \bigl\{ \{1,5\},\{2,3,4,6\} \bigr\},\
  \mb{I}_7 = \bigl\{ \{2,4\},\{1,3,5,6\} \bigr\} \text{ and }
  \mb{I}_8  = \bigl\{ \{1,2,6\},\{3,4,5\} \bigr\}.
 \end{align*}
 We have $\mb{I}_3\prec\mb{I}_1,\mb{I}_4\prec\mb{I}_1,\mb{I}_5\prec\mb{I}_1,
  \mb{I}_3\prec\mb{I}_2,\mb{I}_6\prec\mb{I}_2$ and $\mb{I}_7\prec\mb{I}_2$;
 hence the maximal elements of $\mathfrak{I}(\la)$ are
 $\mb{I}_1$, $\mb{I}_2$ and $\mb{I}_8$.

 By the equality~(\ref{eq.7.D1}), we have $e_{\mathbb{I}_1}(\la) = e_{\mathbb{I}_2}(\la) = (2-1)! \cdot (2-1)!\cdot (2-1)! = 1$
 and $e_{\mathbb{I}_8}(\la) = (3-1)! \cdot (3-1)! = 4$. Moreover we have 
 $e_{\mathbb{I}_3}(\la) = (4-1)! \cdot (2-1)! - \left( e_{\mathbb{I}_1}(\la) \cdot 3 + e_{\mathbb{I}_2}(\la) \cdot 3 \right)
 = 6-(3+3)=0$,
 $e_{\mathbb{I}_4}(\la) = e_{\mathbb{I}_5}(\la) = (2-1)! \cdot (4-1)! - e_{\mathbb{I}_1}(\la) \cdot 3 = 6-3=3$ and
 $e_{\mathbb{I}_6}(\la) = e_{\mathbb{I}_7}(\la) = (2-1)! \cdot (4-1)! - e_{\mathbb{I}_2}(\la) \cdot 3 = 6-3=3$.
 Hence by the equality~(\ref{eq.7.C1}), we have $s_6(\la) = (6-2)! - \left(\sum_{\omega=1}^2 e_{\mathbb{I}_{\omega}}(\la) \cdot 4 
 + \sum_{\omega=3}^8 e_{\mathbb{I}_{\omega}}(\la) \right) = 24 - (4+4+0+3+3+3+3+4)=0$,
 which implies $\#\left(\Phi_6^{-1}\left(\bar{\la}\right)\right) = c_1(\la) =0.$
\end{example}

\begin{example}\label{ex.0.2}
 In this example we consider $\la = (\la_1,\dots,\la_{31})\in V_{31}$ with
 \[
    \frac{1}{1-\lambda_1} : \cdots : \frac{1}{1-\lambda_{31}} = \underbrace{6 : \dots : 6}_{25} : \underbrace{-25 : \dots : -25}_{6}.
 \]
 In this case we have $\mathfrak{I}(\la)=\emptyset$ and $s_{31}(\la) = 29!$ by the equality~(\ref{eq.7.C1}).

 On the other hand, by Definition~\ref{df.1.6}, we have $q=2$, $K_1 = \{1, \dots, 25\}$, $K_2 = \{26, \dots, 31\}$, $\kappa_1=25$,
 $\kappa_2=6$, $g_1=\gcd(\kappa_1-1, \kappa_2)=6$, $g_2 = 5$,
 $\bigcup_{1\le w \le 2} \left\{t \bigm| t|g_w \right\} = \left\{ 1, 2, 3, 6, 5 \right\}$,
 $d[2] = \frac{31-1}{2}+1 = 16$, $d[3] = 11$, $d[6] = 6$ and $d[5] = 7$.
 Moreover we have\\
 $\la[2] = \left( \la[2]_1, \dots, \la[2]_{16} \right) \in V_{16}$ with
 $\frac{1}{1-\la[2]_1} : \dots : \frac{1}{1-\la[2]_{16}}
  = \underbrace{12 : \dots : 12}_{\frac{\kappa_1-1}{2}=12} : \underbrace{-50 : -50 : -50}_{\frac{\kappa_2}{2}=3} : 6$.
 Similarly we have\\
 $\la[3] = \left( \la[3]_1, \dots, \la[3]_{11} \right) \in V_{11}$ with
 $\frac{1}{1-\la[3]_1} : \dots : \frac{1}{1-\la[3]_{11}} = \underbrace{18 : \dots : 18}_{8} : -75 : -75 : 6$,\\
 $\la[6] = \left( \la[6]_1, \dots, \la[6]_{6} \right) \in V_{6}$
 with $\frac{1}{1-\la[6]_1} : \dots : \frac{1}{1-\la[6]_{6}} = \underbrace{36 : \dots : 36}_{4} : -150 : 6$ and\\
 $\la[5] = \left( \la[5]_1, \dots, \la[5]_{7} \right) \in V_{7}$ with
 $\frac{1}{1-\la[5]_1} : \dots : \frac{1}{1-\la[5]_{7}}
 = \underbrace{30 : \dots : 30}_{\frac{\kappa_1}{5}=5} : \underbrace{-125}_{\frac{\kappa_2-1}{5}=1} : -25$.

 Since $\mathfrak{I}(\la)=\emptyset$, we have $\mathfrak{I}(\la[t])=\emptyset$ for $t=2, 3, 6, 5$,
 which implies $s_{16}(\la[2]) = 14!$, $s_{11}(\la[3]) = 9!$, $s_{6}(\la[6]) = 4!$ and $s_{7}(\la[5]) = 5!$ by the equality~(\ref{eq.7.C1}).
 By the equality~(\ref{eq.7.E1}) for $(w,t) = (1,6)$, $(1,3)$, $(1,2)$ and $(2,5)$, we have 
 \begin{align*}
   \frac{6}{6}c_6(\la) = \frac{s_{6}(\la[6])}{\left( \frac{\kappa_1 - 1}{6} \right)!\cdot\left( \frac{\kappa_2}{6} \right)!}
   = \frac{4!}{4!\cdot1!} &= 1, \quad
   \frac{3}{3}c_3(\la) + \frac{3}{6}c_6(\la)
   = \frac{s_{11}(\la[3])}{\left( \frac{\kappa_1 - 1}{3} \right)!\cdot\left( \frac{\kappa_2}{3} \right)!}
   = \frac{9!}{8!\cdot 2!} = \frac{9}{2},\\
   \frac{2}{2}c_2(\la) + \frac{2}{6}c_6(\la)
   = \frac{s_{16}(\la[2])}{\left( \frac{\kappa_1 - 1}{2} \right)!\cdot\left( \frac{\kappa_2}{2} \right)!}
   &= \frac{14!}{12!\cdot 3!} = \frac{91}{3}, \quad
   \frac{5}{5}c_5(\la) = \frac{s_{7}(\la[5])}{\left( \frac{\kappa_1}{5} \right)!\cdot\left( \frac{\kappa_2 - 1}{5} \right)!}
   = \frac{5!}{5!\cdot1!} = 1
 \end{align*}
 respectively, which implies $c_6(\la) = 1$, $c_3(\la) = 4$, $c_2(\la) = 30$ and $c_5(\la) = 1$.
 Moreover by the equality~(\ref{eq.7.E2}), we have 
 \[
   c_1(\la) + \frac{1}{2}c_2(\la) + \frac{1}{3}c_3(\la) + \frac{1}{6}c_6(\la) + \frac{1}{5}c_5(\la)
   = \frac{s_{31}(\la)}{\kappa_1! \cdot \kappa_2!} = \frac{29!}{25!\cdot 6!} 
   = \frac{7917}{10},
 \]
 which implies $c_1(\la) = 775$.
 Hence by~(\ref{eq.7.E3}), we have 
 \[
   \#\left(\Phi_{31}^{-1}\left(\bar{\la}\right)\right) = c_1(\la) + c_2(\la) + c_3(\la) + c_6(\la) + c_5(\la) = 775 + 30 + 4 + 1 + 1 = 811.
 \]
\end{example}

\begin{example}\label{ex.0.3}
 Here we consider a little complicated example, which is $\la = (\la_1,\dots,\la_9) \in V_9$ with
 $\frac{1}{1-\lambda_1} : \cdots : \frac{1}{1-\lambda_{9}} = 2:2:2:2:-1:-1:-2:-2:-2$.
 In this case, by Definition~\ref{df.1.6}, we have $q=3$, $\kappa_1=4$, $\kappa_2=2$, $\kappa_3=3$, $g_1=g_2=1$ and $g_3=2$.
 Hence we must find $s_9(\la)$ and $s_5(\la[2])$, and after that
 by the equalities~(\ref{eq.7.E1}) and~(\ref{eq.7.E2}) we have
 \begin{equation}\label{eq.ex3}
   \frac{2}{2}c_2(\la) = \frac{s_5(\la[2])}{\left(4/2\right)! \cdot \left(2/2\right)! \cdot \left((3-1)/2\right)!} = \frac{s_5(\la[2])}{2}
   \quad \text{and} \quad
   c_1(\la) + \frac{1}{2}c_2(\la) = \frac{s_9(\la)}{4!\cdot 2! \cdot 3!}.
 \end{equation}

 We shall find $s_5(\la[2])$ first.
 Since $\la[2] = (\la[2]_1,\dots,\la[2]_5) \in V_5$ with
 $\frac{1}{1-\la[2]_1} : \cdots : \frac{1}{1-\la[2]_{5}} = 4:4:-2:-4:-2$,
 we have $\mathfrak{I}(\la[2]) = \left\{\mb{I}'_1, \mb{I}'_2 \right\}$, where
 $\mb{I}'_1 = \bigl\{ \{1,4\},\{2,3,5\} \bigr\}$ and $\mb{I}'_2 = \bigl\{ \{2,4\},\{1,3,5\} \bigr\}$.
 Hence we have $e_{\mb{I}'_1}(\la[2]) = e_{\mb{I}'_2}(\la[2]) = (2-1)!\cdot (3-1)!=2$ and
 $s_5(\la[2]) = (5-2)! - \left( e_{\mb{I}'_1}(\la[2]) + e_{\mb{I}'_2}(\la[2]) \right) = 6-(2+2)=2$, which implies $c_2(\la) = \frac{2}{2}=1$
 by the equality~(\ref{eq.ex3}).

 On the other hand, the computation of $s_9(\la)$ is much more complicated than that of $s_5(\la[2])$.
 First of all, $\mathfrak{I}(\la)$ consists of $130$ elements, and we shall express them by 
 	\begin{multline*}
	 \mathfrak{I}(\la) =  \left\{\mb{I}_{(1,\omega)}\bigm| 1\le \omega \le 24\right\}
	 \cup \left\{\mb{I}_{(2,\omega)}\bigm| 1\le \omega \le 36\right\}
	 \cup \left\{\mb{I}_{(3,\omega)}\bigm| 1\le \omega \le 36\right\}\\
	 \cup \left\{\mb{I}_{(4,\omega)}\bigm| 1\le \omega \le 12\right\}
	 \cup \left\{\mb{I}_{(5,\omega)}\bigm| 1\le \omega \le 18\right\}
	 \cup \left\{\mb{I}_{(6,\omega)}\bigm| 1\le \omega \le 4\right\}.
	\end{multline*}
 Here $\mb{I}_{(1,\omega)}$ for $1 \le \omega \le 24$ are of the form
 $\bigl\{ \{\sigma(1),5,6\},\{\sigma(2),7\},\{\sigma(3),8\},\{\sigma(4),9\} \bigr\}$
 for $\sigma \in \mathfrak{S}_4 = \mathrm{Aut}(\{1,2,3,4\})$.
 Similarly $\mb{I}_{(2,\omega)}$, $\mb{I}_{(3,\omega)}$, $\mb{I}_{(4,\omega)}$, $\mb{I}_{(5,\omega)}$ and $\mb{I}_{(6,\omega)}$ are
 of the form
	\begin{gather*}
	 \bigl\{ \{\sigma(1),\sigma(2),5,6,\tau(7)\},\{\sigma(3),\tau(8)\},\{\sigma(4),\tau(9)\} \bigr\},\\
	 \bigl\{ \{\sigma(1),5,6\},\{\sigma(2),\tau(7)\},\{\sigma(3),\sigma(4),\tau(8),\tau(9)\} \bigr\},\\
	 \bigl\{ \{\sigma(1),\sigma(2),\sigma(3),5,6,\tau(7),\tau(8)\},\{\sigma(4),\tau(9)\} \bigr\},\\
	 \bigl\{ \{\sigma(1),\sigma(2),5,6,\tau(7)\},\{\sigma(3),\sigma(4),\tau(8),\tau(9)\} \bigr\}\\
	 \text{and} \ \bigl\{ \{\sigma(1),5,6\},\{\sigma(2),\sigma(3),\sigma(4),7,8,9\} \bigr\}
	\end{gather*}
 respectively for $\sigma \in \mathfrak{S}_4 = \mathrm{Aut}(\{1,2,3,4\})$ and $\tau \in \mathfrak{S}_3  = \mathrm{Aut}(\{7,8,9\})$.
 By~(\ref{eq.7.D1}) we have $e_{\mb{I}_{(1,\omega)}}(\la) = 2! \cdot 1! \cdot 1! \cdot 1!=2$.
 For each $1 \le \omega \le 36$,
 we have $\#\left( \left\{ \omega' \mid \mb{I}_{(2,\omega)} \prec \mb{I}_{(1,\omega')} \right\} \right) = 2$;
 hence by~(\ref{eq.7.D1}) we have $e_{\mb{I}_{(2,\omega)}}(\la) = 4! \cdot 1! \cdot 1! - 2 \cdot 4 \times 2 = 8$.
 Similarly for each $1 \le \omega \le 36$, we have
 $\#\left( \left\{ \omega' \mid \mb{I}_{(3,\omega)} \prec \mb{I}_{(1,\omega')} \right\} \right) = 2$,
 which implies
 $e_{\mb{I}_{(3,\omega)}}(\la) = 2! \cdot 1! \cdot 3! - 2 \cdot 3 \times 2 = 0$.
 Since
 $\#\left( \left\{ \omega' \mid \mb{I}_{(4,\omega)} \prec \mb{I}_{(1,\omega')} \right\} \right) = 6$,
 $\#\left( \left\{ \omega' \mid \mb{I}_{(4,\omega)} \prec \mb{I}_{(2,\omega')} \right\} \right) = 6$ and
 $\#\left( \left\{ \omega' \mid \mb{I}_{(4,\omega)} \prec \mb{I}_{(3,\omega')} \right\} \right) = 3$,
 we have 
 $e_{\mb{I}_{(4,\omega)}}(\la) = 6! \cdot 1! - (2 \cdot (5 \cdot 6) \times 6 + 8 \cdot 6 \times 6 + 0 \cdot 6 \times 3) = 72$.
 Similarly we have 
 $e_{\mb{I}_{(5,\omega)}}(\la) = 4! \cdot 3! - (2 \cdot 4 \cdot 3 \times 4 + 8 \cdot 3 \times 2 + 0 \cdot 4 \times 2)=0$ and
 $e_{\mb{I}_{(6,\omega)}}(\la) = 2! \cdot 5! - (2 \cdot (4 \cdot 5) \times 6 + 0 \cdot 5 \times 9)=0$.
 Therefore by~(\ref{eq.7.C1}) we have
 $s_9(\la) = 7! - \left( 2 \cdot (6\cdot 7) \times 24 + 8 \cdot 7 \times 36 + 72 \times 12 \right) = 144$.

 To summarize, we have $c_2(\la)=1$ and $c_1(\la)+\frac{1}{2}c_2(\la) = \frac{144}{4!\cdot2!\cdot3!}$ by~(\ref{eq.ex3}),
 which implies $c_1(\la)=0$ and $\#\left(\Phi_{9}^{-1}\left(\bar{\la}\right)\right) = c_1(\la) + c_2(\la) = 0+1=1$.
 Here, the unique element of $\Phi_{9}^{-1}\left(\bar{\la}\right)$ is represented by $f_9(x)$ which is the one defined in the proof of 
 Proposition~\ref{pr.mt7}.
\end{example}

\section{Detailed program of the proof}\label{sec.1.2}

In this section, we describe the detailed program of the proof of the main theorems.

Sections from~\ref{sec.2} to~\ref{sec.8} are devoted to the proofs
of Main Theorems~\ref{mthm.1},~\ref{mthm.2} and~\ref{mthm.3}.
The proofs are self-contained except for the basic knowledge
of the intersection theory on the projective space $\mb{P}^n$ 
(see Section 4 of Chapter 0 and Section 3 of Chapter 1 in~\cite{GriffithsHarris}) 
and the theory on finite branched coverings.
The most important tool for the proof,
which is stated in Proposition~\ref{pr.4.1},
is an extension of Bezout's theorem on $\mb{P}^n$
especially in the case that
some components of the common zeros of $n$ homogeneous polynomials
are not points or are components
which are proper subsets of other components. 
The most difficult and most crucial part in the proof of the main theorems is the proof of Theorem~\ref{thm.B}. 
Theorem~\ref{thm.B} is stated in Section~\ref{sec.4}, and its proof is described in Section~\ref{sec.6}.
Main Theorem~\ref{mthm.2} is naturally proved in the process of 
proving Main Theorems~\ref{mthm.1} and~\ref{mthm.3}.
The assertions~(\ref{en.mt5}) and~(\ref{en.mt7}) in Main Theorem~\ref{mthm.1}
are proved in Section~\ref{sec.2}, and
the assertions~(\ref{en.mt1}) and~(\ref{en.mt4})
in Main Theorem~\ref{mthm.1} are proved in Section~\ref{sec.4}.
On the other hand, the proofs of the rest are completed in Section~\ref{sec.8}.

In Section~\ref{sec.2}
we rewrite the set $\Phi_d^{-1}\left(\bar{\la}\right)$
as follows:
for each $\la\in V_d$,
we define the subsets $T_d(\la)$, $S_d(\la)$ and $B_d(\la)$ of $\Pd$,
where $T_d(\la)$ is the set of the common zeros
of some $(d-2)$ homogeneous polynomials $\varphi_1,\ldots,\varphi_{d-2}$
on $\Pd$, and 
$T_d(\la)=S_d(\la)\amalg B_d(\la)$.
We define
the subgroup $\mathfrak{S}\left(\mathcal{K}(\la)\right)$
of $\mathfrak{S}_d$ acting on $S_d(\la)$,
and show the existence of the bijection
$\overline{\pi(\la)}:S_d(\la)/\mathfrak{S}\left(\mathcal{K}(\la)\right)
 \cong \Phi_d^{-1}\left(\bar{\la}\right)$ in Proposition~\ref{pr.2.3}.
By Proposition~\ref{pr.2.3},
we can divide the proof of Main Theorems~\ref{mthm.1} and~\ref{mthm.3}
into two steps:
the first one is to determine the cardinality $\#\left(S_d(\la)\right)$; 
the second one is to analyze the action
of $\mathfrak{S}\left(\mathcal{K}(\la)\right)$ on $S_d(\la)$.

In Section~\ref{sec.3} 
we review the intersection theory on $\mb{P}^n$ and
give an extension of Bezout's theorem on $\mb{P}^n$
in Proposition~\ref{pr.4.1},
which will be utilized crucially
for determining the cardinality $\#\left(S_d(\la)\right)$ afterward.
In Definitions~\ref{df.4.0} and~\ref{df.4.1},
we define the family $\mathcal{C}(\varphi_1,\ldots,\varphi_m)$
of irreducible varieties
for homogeneous polynomials $\varphi_1,\ldots,\varphi_m$ on $\mb{P}^n$
and the number\\ 
$\mult_C(\varphi_1,\ldots,\varphi_m)$
for each $C\in\mathcal{C}(\varphi_1,\ldots,\varphi_m)$
with $\mathrm{codim}\, C = m$.
Here, $\mathcal{C}(\varphi_1,\ldots,\varphi_m)$ stands for the family
of the ``components'' of the common zeros of $\varphi_1,\ldots,\varphi_m$
in $\mb{P}^n$.
In practice, it contains all the irreducible components
of the common zeros of $\varphi_1,\ldots,\varphi_m$,
and may also contain
some irreducible varieties which are 
proper subsets of
some irreducible components
of the common zeros of $\varphi_1,\ldots,\varphi_m$.
On the other hand, the number $\mult_C(\varphi_1,\ldots,\varphi_m)$ stands for 
the ``intersection multiplicity'' of $\varphi_1,\ldots,\varphi_m$ along $C$;
if $C$ is an irreducible component,
then it is 
the usual intersection multiplicity of $\varphi_1,\ldots,\varphi_m$ along $C$.
Proposition~\ref{pr.4.1} 
gives 
the relation
among these numbers, which is also reduced to the usual Bezout's theorem
if $\mathcal{C}(\varphi_1,\ldots,\varphi_n)$ consists only of points.

In Sections~\ref{sec.4},~\ref{sec.5} and~\ref{sec.6} 
we determine the cardinality $\#\left(S_d(\la)\right)$, 
based on Section~\ref{sec.3}.
More precisely, in Section~\ref{sec.4}, we
give the explicit expression of the set $B_d(\la)$ in Lemma~\ref{lm.4.6},
and determine the number $\mult_C(\varphi_1,\ldots,\varphi_m)$
for each $C\in\mathcal{C}(\varphi_1,\ldots,\varphi_{d-2})$
with $\mathrm{codim}\, C = m$ and $C \subseteq B_d(\la)$
in Theorems~\ref{thm.A} and~\ref{thm.B}.
Some of the elements of $\mathcal{C}(\varphi_1,\ldots,\varphi_{d-2})$ 
may be proper subsets
of other elements,
which makes their computation much complicated.
Proposition~\ref{pr.4.1}, Theorems~\ref{thm.A} and~\ref{thm.B}
give the exact expression of the cardinality $\#\left(S_d(\la)\right)$.
Sections~\ref{sec.5} and~\ref{sec.6} are devoted
to the proofs of Theorems~\ref{thm.A} and~\ref{thm.B} respectively.

In most cases, the action of $\mathfrak{S}\left(\mathcal{K}(\la)\right)$
on $S_d(\la)$ is free.
However in some cases, it is rather complicated.
In Section~\ref{sec.7}
we analyze the action of $\mathfrak{S}\left(\mathcal{K}(\la)\right)$
on $S_d(\la)$ in detail, and give the exact relation
between the cardinalities of
$S_d(\la)$ and $\Phi_d^{-1}\left(\bar{\la}\right)$ in Theorem~\ref{thm.E}.
To summarize, in Section~\ref{sec.8}
we complete the proof of the main theorems.

\section{Another expression of the set~$\Phi_d^{-1}\left(\bar{\la}\right)$}
\label{sec.2}

In this section we start proving the main theorems.
In the rest of this paper, we always assume that
$d$ is a natural number with $d \ge 4$.

An arbitrary polynomial map $f(z) \in \mb{C}[z]$ of degree $d$
can be expressed in the form
 $f(z) = z + \rho(z - \z_1)(z - \z_2) \cdots (z - \z_d)$,
where $\z_1, \z_2, \ldots , \z_d$ and $\rho$ are complex numbers 
with $\rho \ne 0$.
In this expression we have
$\mathrm{Fix}(f) = \left\{ \z_1,\z_2,\ldots,\z_d \right\}$ and
$f'(\z_i) = 1 + \rho \prod_{j \ne i}(\z_i - \z_j)$ for $1 \le i \le d$.
Hence to show Main Theorems~\ref{mthm.1} and~\ref{mthm.3},
we only need to count the number of the solutions of the equations
 $1 + \rho \prod_{j \ne i}(\z_i - \z_j) = \la_i$
for $1\le i\le d$ modulo affine conjugacy.
However we do not take this method.
The following is the key for the proof
of the main theorems.

\begin{keylemma}
 Let $f$ be a polynomial map of degree $d$ expressed in the form
 \begin{equation*}
  f(z) = z + \rho(z - \z_1)(z - \z_2) \cdots (z - \z_d),
 \end{equation*}
 where $\z_1,\ldots,\z_d$ and $\rho$ are complex numbers with $\rho \ne 0$.
 Then for $\la = (\la_1, \ldots, \la_d) \in V_d$,
 the equalities $f'(\z_i) = \la_i$ hold for $1 \le i \le d$
 if and only if the equalities
 \begin{equation}\label{eq.2.2}
  \sum_{i=1}^d \frac{1}{1 - \la_i} \, \z_i^k
   = \begin{cases}
      0            & (1 \le k \le d-2) \\
      -\frac{1}{\rho} & (k = d-1)
     \end{cases}
 \end{equation}
 hold and $\z_1, \ldots, \z_d$ are mutually distinct.
\end{keylemma}

\begin{remark}
 Similar result to \key{} is already given by Fujimura in~\cite[Lemma 9]{fu},
 while her proof is different from the following.
\end{remark}

\begin{proof}
 The integration
 $\frac{1}{2\pi\sqrt{-1}}\oint_{|z| = R}\frac{z^k}{z - f(z)}\,dz$
 for large real number $R$
 implies the equalities
 \begin{equation}\label{eq.2.4}
  \sum_{i=1}^d \frac{1}{1 - f'(\z_i)} \, \z_i^k
   = \begin{cases}
      0            & (0 \le k \le d-2) \\
      -\frac{1}{\rho} & (k = d-1)
     \end{cases}
 \end{equation}
 if $\z_1, \ldots, \z_d$ are mutually distinct.
 Since $\la_i\ne 1$ for $1\le i\le d$,
 the equalities $f'(\z_i) = \la_i$ for $1 \le i \le d$
 imply the mutual distinctness of $\z_1, \ldots, \z_d$
 and the equalities~(\ref{eq.2.4}),
 which verifies the necessary condition of the lemma.

 Suppose oppositely
 the equalities~(\ref{eq.2.2}) and the mutual distinctness of 
 $\z_1,\ldots,\z_d$.
 Note that the equalities~(\ref{eq.2.2}) are equivalent to
 \begin{equation}\label{eq.2.6}
  \begin{pmatrix}
   1          & 1          & \cdots & 1         \\
   \z_1       & \z_2       & \cdots & \z_d      \\
   \z_1^2     & \z_2^2     & \cdots & \z_d^2    \\
   \vdots     & \vdots     & \ddots & \vdots    \\
   \z_1^{d-1} & \z_2^{d-1} & \cdots & \z_d^{d-1}
  \end{pmatrix}
  \begin{pmatrix}
   \frac{1}{1 - \la_1} \\
   \frac{1}{1 - \la_2} \\
   \vdots                 \\
   \frac{1}{1 - \la_d}
  \end{pmatrix}
  = \begin{pmatrix}
     0 \\ \vdots \\ 0 \\ -\frac{1}{\rho}
    \end{pmatrix}.
 \end{equation}
 The mutual distinctness of $\z_1,\ldots,\z_d$
 implies~(\ref{eq.2.4}),
 which are equivalent to the equality
 obtained from~(\ref{eq.2.6})
 by replacing $\la_i$ by $f'(\z_i)$ for $1\le i \le d$.
 Therefore the invertibility of the square matrix
 in the left hand side of the equality~(\ref{eq.2.6})
 implies $\frac{1}{1-f'(\z_i)} = \frac{1}{1-\la_i}$ for $1 \le i \le d$,
 which completes the proof of \key.
\end{proof}

\setcounter{theorem}{1}

By \key, we associate the set
$\Phi_d^{-1}\left(\bar{\la}\right)$ 
with some other one whose cardinality is expected to be easier to count.
Recall that $\mb{P}^{d-2}$ denotes the complex 
projective space of dimension $d-2$.

\begin{definition}\label{df.2.2}
 For any $\la = (\la_1,\ldots, \la_d)\in V_d$, we put
 \begin{align*}
  T_d(\la) &:= \left\{(\z_1: \cdots : \z_{d-1}) \in \Pd \ \left| \
		\sum_{i=1}^{d-1}\frac{1}{1-\la_i}\,\z_i^k = 0
               \ \text{ for } 1\le k\le d-2 \right. \right\}, \\
  S_d(\la) &:= \left\{(\z_1: \cdots : \z_{d-1}) \in T_d(\la) \bigm|
               \textrm{$\z_1, \ldots, \z_{d-1}$ and $0$ are 
               mutually distinct} \right\}, \\
  B_d(\la) &:= T_d(\la) \setminus S_d(\la) \quad \text{and} \\
  \mathfrak{S}\left(\mathcal{K}(\la)\right) &:=
   \left\{ \sigma \in \mathfrak{S}_d \bigm|
    \la_{\sigma(i)}=\la_i \ \text{holds for any $i$.} \right\}.
 \end{align*}
\end{definition}

Note that $\mathfrak{S}\left(\mathcal{K}(\la)\right)$ is a subgroup of 
$\mathfrak{S}_d$
determined by $\mathcal{K}(\la)$ and is isomorphic to the group
$\mathfrak{S}_{\kappa_1} \times \cdots \times\mathfrak{S}_{\kappa_q}$,
where $\kappa_1,\ldots,\kappa_q$ and $K_1, \ldots, K_q$ are those
defined in Definition~\ref{df.1.6}.

\begin{proposition}\label{pr.2.3}
 For $\la = (\la_1,\ldots, \la_d) \in V_d$,
 we can define the surjection
  $\pi(\la) : S_d(\la) \to \Phi_d^{-1}\left(\bar{\la}\right)$
 by
 \begin{equation*}
  (\z_1: \cdots : \z_{d-1}) \mapsto f(z) = z+ \rho z(z-\z_1)\cdots(z-\z_{d-1}),
 \end{equation*}
 where 
  $-\frac{1}{\rho} = \sum_{i=1}^{d-1}\frac{1}{1-\la_i}\,\z_i^{d-1}$.
 The group $\mathfrak{S}\left(\mathcal{K}(\la)\right)$
 acts on $S_d(\la)$ by the permutation of the coordinates 
 $\z_1, \ldots,\z_{d-1}$ and $0$, 
 namely, it is defined by
 \[
  \sigma \cdot (\z_1 : \cdots : \z_{d-1}) :=
  (\z_{\sigma^{-1}(1)}-\z_{\sigma^{-1}(d)} : \cdots :
   \z_{\sigma^{-1}(d-1)}-\z_{\sigma^{-1}(d)}),
 \]
 where we are assuming $\z_d = 0$.
 Finally
 the map $\pi(\la) : S_d(\la) \to \Phi_d^{-1}\left(\bar{\la}\right)$
 induces the bijection
 \begin{equation*}
  \overline{\pi(\la)} : S_d(\la) / \mathfrak{S}\left(\mathcal{K}(\la)\right)
   \stackrel{\cong}{\to} \Phi_d^{-1}\left(\bar{\la}\right).
 \end{equation*}
\end{proposition}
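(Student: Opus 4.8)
The plan is to lift everything to a more symmetric configuration space and let a quotient do the bookkeeping. Put
\[
 W_d(\la):=\left\{(\z_1,\ldots,\z_d)\in\mb{C}^d \ \left| \
  \z_1,\ldots,\z_d\ \text{are distinct and}\ \sum_{i=1}^d\tfrac{1}{1-\la_i}\z_i^k=0\ \text{for}\ 1\le k\le d-2 \right. \right\},
\]
where the relation also holds for $k=0$ because $\la\in V_d\subseteq\La_d$. First I would check that $\Gamma$ acts on $W_d(\la)$ by $\gamma\cdot(\z_1,\ldots,\z_d):=(\gamma(\z_1),\ldots,\gamma(\z_d))$ --- scalings are immediate, and translations follow from one binomial expansion using that the defining sums vanish for every exponent $0\le k\le d-2$ --- and that $\mathfrak{S}\left(\mathcal{K}(\la)\right)$ acts on $W_d(\la)$ by permuting coordinates, since $\la_{\sigma(j)}=\la_j$ lets one reindex those sums; moreover these two actions commute. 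Next I would show that $(\z_1,\ldots,\z_d)\mapsto(\z_1-\z_d:\cdots:\z_{d-1}-\z_d)$ defines a surjection $W_d(\la)\to S_d(\la)$ whose fibres are exactly the $\Gamma$-orbits (surjectivity by appending a last coordinate $0$ to a representative of a point of $S_d(\la)$; the fibre description because two tuples with the same image differ by a unique affine map). This yields a bijection $W_d(\la)/\Gamma\xrightarrow{\ \sim\ }S_d(\la)$ transporting the coordinate action of $\mathfrak{S}\left(\mathcal{K}(\la)\right)$ to precisely the action written in the statement, which in particular shows that that formula is a well-defined group action.

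Next I would define $F:W_d(\la)\to P_d$ by $F(\z_1,\ldots,\z_d):=z+\rho\,(z-\z_1)\cdots(z-\z_d)$, where $\rho$ is fixed by $-\tfrac{1}{\rho}=\sum_{i=1}^d\tfrac{1}{1-\la_i}\z_i^{d-1}$; this right-hand side is nonzero, since otherwise $\bigl(\tfrac{1}{1-\la_i}\bigr)_i$ would lie in the kernel of the invertible Vandermonde matrix of the distinct nodes $\z_1,\ldots,\z_d$. By the \key, $F(\z)$ has fixed points $\z_1,\ldots,\z_d$ with $F(\z)'(\z_i)=\la_i$, so $\overline{F(\z)}\in\Phi_d^{-1}\left(\bar\la\right)$. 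The map $F$ is $\Gamma$-equivariant, $\gamma\cdot F(\z)=F(\gamma\cdot\z)$ (a direct computation, the value of $\rho$ on the right being pinned down again by the \key\ applied to $\gamma\cdot F(\z)$), and $\mathfrak{S}\left(\mathcal{K}(\la)\right)$-invariant, $F(\sigma\cdot\z)=F(\z)$ (the product $\prod_i(z-\z_i)$ is symmetric, and $\rho$ is unchanged after reindexing its defining sum with $\la_{\sigma(j)}=\la_j$). Hence $F$ descends through $W_d(\la)/\Gamma\cong S_d(\la)$ and then through $\mathfrak{S}\left(\mathcal{K}(\la)\right)$; the resulting map $S_d(\la)\to\Phi_d^{-1}\left(\bar\la\right)$ is exactly $\pi(\la)$ (the explicit factor $z$ coming from the normalization $\z_d=0$), so $\pi(\la)$ is well defined, $\mathfrak{S}\left(\mathcal{K}(\la)\right)$-invariant, and lands in $\Phi_d^{-1}\left(\bar\la\right)$, and $\overline{\pi(\la)}$ is well defined.

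It then remains to prove that $\overline{\pi(\la)}$ is bijective. For surjectivity, given $\bar g\in\Phi_d^{-1}\left(\bar\la\right)$, every $\la_i\ne1$ forces the representative $g$ to have $d$ distinct fixed points, which can be labelled $w_1,\ldots,w_d$ with $g'(w_i)=\la_i$; then the \key\ gives $(w_1,\ldots,w_d)\in W_d(\la)$ and $F(w_1,\ldots,w_d)=g$. For injectivity, if $F(\w)=\gamma\cdot F(\z)$ for some $\gamma\in\Gamma$, then $\{\w_1,\ldots,\w_d\}=\{\gamma(\z_1),\ldots,\gamma(\z_d)\}$ as sets of distinct points, with $\w_i$ and $\gamma(\z_i)$ both carrying eigenvalue $\la_i$; hence there is a permutation $\sigma$ with $\w_i=\gamma(\z_{\sigma(i)})$ and $\la_{\sigma(i)}=\la_i$, i.e.\ $\sigma\in\mathfrak{S}\left(\mathcal{K}(\la)\right)$, so $\w=\gamma\cdot(\sigma^{-1}\cdot\z)$ and $[\z],[\w]$ represent the same point of $\bigl(W_d(\la)/\Gamma\bigr)/\mathfrak{S}\left(\mathcal{K}(\la)\right)$.

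The two equivariance/invariance verifications and the Vandermonde non-vanishing are routine. The one step carrying real content is the injectivity argument: the hypothesis $\la\in V_d$ --- all $\la_i\ne1$, hence no coincidence among the fixed points --- is precisely what forces the conjugating affine map to permute the labelled fixed points by an element of $\mathfrak{S}\left(\mathcal{K}(\la)\right)$ and nothing larger; were some $\la_i$ equal to $1$, the corresponding fixed point could be multiple and this matching would break down.
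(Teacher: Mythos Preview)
Your proof is correct and follows essentially the same route as the paper: your auxiliary space $W_d(\la)$ is exactly the paper's $Q_d(\la)$, your map $F$ is the paper's $\varpi(\la)$, and the Vandermonde argument for $\rho\ne 0$, the $\Gamma$-equivariance, the $\mathfrak{S}(\mathcal{K}(\la))$-invariance, and the injectivity step (matching fixed points and their eigenvalues to force $\sigma\in\mathfrak{S}(\mathcal{K}(\la))$) all appear in the paper's Lemma~2.5 and Proposition~2.8 in the same form. The only difference is packaging: the paper separates these checks into a lemma and an intermediate proposition, whereas you run them inline.
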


To prove Proposition~\ref{pr.2.3},
we consider the auxiliary definitions, lemma and proposition.

\begin{definition}
 We put
 \[
  Q_d(\la) 
   := \left\{ (\z_1,\ldots,\z_d) \in \mb{C}^d \
        \left| \ \begin{matrix}
		  \sum_{i=1}^d \frac{1}{1-\la_i}\,\z_i^k = 0 \ \
		   \textrm{for} \ 1\le k\le d-2 \\
		  \z_1,\ldots,\z_d \textrm{ are mutually distinct}
		 \end{matrix}
        \right. \right\},
 \]
 and denote by $G$
 the projection map 
 $G : \mathrm{Poly}_d \to \mathrm{MP}_d = \mathrm{Poly}_d/\mathrm{Aut}(\mathbb{C})$,
 where $\mathrm{Aut}(\mathbb{C})$ and its action on $\mathrm{Poly}_d$ are those 
 defined in~(\ref{eq.1.1}).
\end{definition}

The groups $\mathrm{Aut}(\mathbb{C}), \mathfrak{S}_d$
and its subgroup $\mathfrak{S}\left(\mathcal{K}(\la)\right)$
naturally act on $\mb{C}^d$, and
the actions of $\mathrm{Aut}(\mathbb{C})$ and $\mathfrak{S}_d$ on $\mb{C}^d$ commute.

\begin{lemma}\label{lm.2.5}
 Let $\la = (\la_1,\ldots, \la_d)$ be an element of $V_d$.  Then
 \begin{enumerate}
  \item  we can define the map
	 $\varpi(\la):Q_d(\la)\to G^{-1}\circ\Phi_d^{-1}\left(\bar{\la}\right)$
	 by
	 \[
	 (\z_1,\ldots,\z_d) \mapsto f(z):= z + \rho (z-\z_1)\cdots(z-\z_d),
	 \]
	 where 
	  $-\frac{1}{\rho} = \sum_{i=1}^{d}\frac{1}{1-\la_i}\,\z_i^{d-1}$.
	 \label{en.2.5.1}
  \item The map $\varpi(\la)$ is surjective.\label{en.2.5.2}
  \item The set $Q_d(\la)$ is invariant
	 under the action of $\mathrm{Aut}(\mathbb{C})$ on $\mb{C}^d$.
	 \label{en.2.5.3}
  \item The actions of $\mathrm{Aut}(\mathbb{C})$ on $Q_d(\la)$
	 and on $G^{-1}\circ\Phi_d^{-1}\left(\bar{\la}\right)$
	 commute with the map $\varpi(\la)$, i.e., the equality
	$\varpi(\la)(\gamma\cdot\z)=\gamma\circ\varpi(\la)(\z)\circ\gamma^{-1}$
	 holds for any $\z \in Q_d(\la)$ and $\gamma \in \mathrm{Aut}(\mathbb{C})$.
	 \label{en.2.5.4}
  \item The set $Q_d(\la)$ is invariant under the action of 
	 $\mathfrak{S}\left(\mathcal{K}(\la)\right)$ on $\mb{C}^d$.
	 \label{en.2.5.5}
  \item For $\z, \z' \in Q_d(\la)$,
	 the equality $\varpi(\la)(\z) = \varpi(\la)(\z')$ holds
	 if and only if the equality $\z' = \sigma \cdot \z$ holds
	 for some $\sigma \in \mathfrak{S}\left(\mathcal{K}(\la)\right)$.
	 \label{en.2.5.6}
 \end{enumerate}
\end{lemma}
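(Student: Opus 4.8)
The plan is to reduce every assertion to the \key{} together with the invertibility of the Vandermonde matrix, and then to verify the equivariance and multiplicity-one statements by direct computation.

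For assertion~(\ref{en.2.5.1}) I would first check that $\rho$ is well defined and nonzero. If $\sum_{i=1}^d\frac{1}{1-\la_i}\z_i^{\,d-1}$ vanished, then, since $\la\in\La_d$ forces $\sum_{i=1}^d\frac{1}{1-\la_i}=0$ and the defining relations of $Q_d(\la)$ give $\sum_{i=1}^d\frac{1}{1-\la_i}\z_i^{\,k}=0$ for $1\le k\le d-2$, the vector $\left(\frac{1}{1-\la_i}\right)_{1\le i\le d}$ would lie in the kernel of the Vandermonde matrix $\left(\z_i^{\,k}\right)_{0\le k\le d-1,\,1\le i\le d}$, which is invertible because $\z_1,\ldots,\z_d$ are mutually distinct; this contradicts $\la_i\ne1$. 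With $\rho\ne0$ the map $f(z)=z+\rho(z-\z_1)\cdots(z-\z_d)$ has degree $d$ with $\mathrm{Fix}(f)=\{\z_1,\ldots,\z_d\}$, and the relations defining $Q_d(\la)$ together with the choice of $\rho$ are exactly the equalities~(\ref{eq.2.2}) of the \key{}, so $f'(\z_i)=\la_i$ for all $i$ and therefore $\Phi_d(G(f))=\bar\la$. For surjectivity~(\ref{en.2.5.2}), given $f\in G^{-1}\circ\Phi_d^{-1}(\bar\la)$ I would write $f(z)=z+\rho(z-\z_1)\cdots(z-\z_d)$; since $\la_i\ne1$ no fixed point is multiple, so the $\z_i$ are distinct, and after relabelling them so that $f'(\z_i)=\la_i$ (possible because the two multisets of eigenvalues coincide) the \key{} yields $(\z_1,\ldots,\z_d)\in Q_d(\la)$ with $\varpi(\la)(\z_1,\ldots,\z_d)=f$, the coefficient $\rho$ being forced by the case $k=d-1$ of~(\ref{eq.2.2}).

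The invariance and equivariance statements I would settle by binomial expansion. Writing $\gamma(z)=az+b$, each sum $\sum_i\frac{1}{1-\la_i}(a\z_i+b)^{k}$ with $1\le k\le d-2$ is a linear combination of $\sum_i\frac{1}{1-\la_i}\z_i^{\,j}$ for $0\le j\le k$, all of which vanish --- those with $j\ge1$ by the definition of $Q_d(\la)$ and the one with $j=0$ because $\la\in\La_d$; this is~(\ref{en.2.5.3}). For~(\ref{en.2.5.4}) a substitution gives $\gamma\circ f\circ\gamma^{-1}(z)=z+\rho a^{1-d}\prod_i\!\left(z-(a\z_i+b)\right)$, and the same expansion shows $\sum_i\frac{1}{1-\la_i}(a\z_i+b)^{d-1}=a^{d-1}\sum_i\frac{1}{1-\la_i}\z_i^{\,d-1}$, so the leading coefficient $\rho a^{1-d}$ is precisely the one prescribed for $\varpi(\la)$ at $\gamma\cdot\z$; hence $\varpi(\la)(\gamma\cdot\z)=\gamma\circ\varpi(\la)(\z)\circ\gamma^{-1}$. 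Assertion~(\ref{en.2.5.5}) is immediate, since $\sigma\in\mathfrak{S}(\mathcal{K}(\la))$ satisfies $\frac{1}{1-\la_{\sigma(i)}}=\frac{1}{1-\la_i}$, so reindexing each sum by $\sigma$ leaves it unchanged while distinctness is obviously preserved.

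Finally, for~(\ref{en.2.5.6}): the ``if'' direction follows from the same reindexing, which shows that both the monic product $\prod_i(z-\z_i)$ and the coefficient $\rho$ are unchanged under $\sigma\in\mathfrak{S}(\mathcal{K}(\la))$, so $\varpi(\la)(\sigma\cdot\z)=\varpi(\la)(\z)$. For the ``only if'' direction, suppose $\varpi(\la)(\z)=\varpi(\la)(\z')=f$; comparing $f(z)=z+\rho\prod_i(z-\z_i)=z+\rho'\prod_i(z-\z_i')$ forces $\rho=\rho'$ and $\{\z_1,\ldots,\z_d\}=\{\z_1',\ldots,\z_d'\}=\mathrm{Fix}(f)$, and since these $d$ points are distinct there is a unique $\tau\in\mathfrak{S}_d$ with $\z'=\tau\cdot\z$. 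Applying the \key{} to $\z$ and to $\z'$ gives $f'(\z_i)=\la_i$ and $f'(\z_i')=\la_i$, i.e.\ $f'(\z_{\tau^{-1}(i)})=\la_i$; comparing with $f'(\z_i)=\la_i$ and using distinctness of the $\z_i$ yields $\la_{\tau(i)}=\la_i$ for all $i$, so $\tau\in\mathfrak{S}(\mathcal{K}(\la))$. I expect the only place demanding care to be this last bookkeeping --- tracking how the relabelling permutation sits inside $\mathfrak{S}(\mathcal{K}(\la))$ --- while everything else is a routine consequence of the \key{} and the Vandermonde determinant.
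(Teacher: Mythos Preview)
Your proof is correct and follows essentially the same approach as the paper's own proof. The paper is simply much terser: it explicitly verifies only the existence of $\rho$ (via the same Vandermonde argument you give) and the ``only if'' direction of~(\ref{en.2.5.6}) (via the same permutation bookkeeping), dismissing all remaining assertions as ``obvious by \key{}'' rather than spelling out the binomial expansions and equivariance checks as you do.
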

\begin{proof}
 Most of the assertions are obvious by Key Lemma.
 We only check the existence of the complex number $\rho$ 
 in the assertion~(\ref{en.2.5.1})
 and the necessary condition of the assertion~(\ref{en.2.5.6}).

 If we cannot determine $\rho \in \mb{C}^*$,
 then we have $\sum_{i=1}^{d}\frac{1}{1-\la_i}\,\z_i^{d-1}=0$,
 which implies $\frac{1}{1-\la_i} = 0$ for $1\le i\le d$
 by the equality~(\ref{eq.2.6}).
 Hence the contradiction assures the existence of $\rho$.

 Let $\z=(\z_1,\ldots,\z_d), \z'=(\z'_1,\ldots,\z'_d)$
 be elements of $Q_d(\la)$ with $\varpi(\la)(\z) =\varpi(\la)(\z')=:f$.
 Then by the definition of $\varpi(\la)$,
 there exists a permutation $\sigma \in \mathfrak{S}_d$
 with $\z'=\sigma\cdot\z$.
 On the other hand, by \key,
 we have $f'(\z_i)=f'(\z'_i)=\la_i$ for $1\le i\le d$.
 Since $\z'_i=\z_{\sigma^{-1}(i)}$ for $1\le i\le d$,
 we have $\la_i=\la_{\sigma(i)}$ for $1\le i\le d$,
 which implies $\sigma\in \mathfrak{S}\left(\mathcal{K}(\la)\right)$.
 Thus the necessary condition of~(\ref{en.2.5.6}) is verified.
\end{proof}

\begin{definition}
 We put $\widetilde{Q}_d(\la) := Q_d(\la) / \mathrm{Aut}(\mathbb{C})$.
\end{definition}

\begin{proposition}\label{pr.2.8}
 For $\la = (\la_1,\ldots, \la_d) \in V_d$,
 the map $\varpi(\la)$ in Lemma~\ref{lm.2.5} induces the surjection
  $\widetilde{\varpi}(\la) : \widetilde{Q}_d(\la) \to
   \Phi_d^{-1}\left(\bar{\la}\right)$.
 The group $\mathfrak{S}\left(\mathcal{K}(\la)\right)$ acts
 on $\widetilde{Q}_d(\la)$,
 which induces the bijection
 \[
  \overline{\varpi(\la)} : \widetilde{Q}_d(\la) /
   \mathfrak{S}\left(\mathcal{K}(\la)\right)
   \to \Phi_d^{-1}\left(\bar{\la}\right).
 \]
 Moreover $\widetilde{Q}_d(\la)$ is canonically identified with $S_d(\la)$
 by the bijection $\iota(\la):S_d(\la) \to \widetilde{Q}_d(\la)$
 which maps $(\z_1:\cdots:\z_{d-1}) \in S_d(\la)$ to 
 the equivalence class of $(\z_1,\ldots,\z_{d-1},0)$ in $\widetilde{Q}_d(\la)$.
 Under this identification,
 $\widetilde{\varpi}(\la)\circ\iota(\la) = \pi(\la)$ holds, and
 the actions of $\mathfrak{S}\left(\mathcal{K}(\la)\right)$
 on $S_d(\la)$ and on $\widetilde{Q}_d(\la)$ commute with the map $\iota(\la)$.
\end{proposition}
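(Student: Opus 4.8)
The plan is to deduce everything from Lemma~\ref{lm.2.5} by successively forming quotients, carefully tracking how the two commuting actions on $\mb{C}^d$ — that of $\Gamma$ and that of the subgroup $\mathfrak{S}\left(\mathcal{K}(\la)\right)\subseteq\mathfrak{S}_d$ — interact with $\varpi(\la)$. First I would construct $\widetilde{\varpi}(\la)$. By Lemma~\ref{lm.2.5}(\ref{en.2.5.3}), $Q_d(\la)$ is $\Gamma$-invariant, so $\widetilde{Q}_d(\la)$ is defined; and by Lemma~\ref{lm.2.5}(\ref{en.2.5.4}), $\varpi(\la)(\gamma\cdot\z)=\gamma\circ\varpi(\la)(\z)\circ\gamma^{-1}$ is affine conjugate to $\varpi(\la)(\z)$, so $G\circ\varpi(\la)$ is constant on $\Gamma$-orbits and factors as $\widetilde{\varpi}(\la):\widetilde{Q}_d(\la)\to\Phi_d^{-1}\left(\bar{\la}\right)$. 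Surjectivity is immediate from Lemma~\ref{lm.2.5}(\ref{en.2.5.2}): any $\bar f\in\Phi_d^{-1}\left(\bar{\la}\right)$ has a representative $f\in G^{-1}\circ\Phi_d^{-1}\left(\bar{\la}\right)$, and $f=\varpi(\la)(\z)$ for some $\z\in Q_d(\la)$.

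Next I would pass to the $\mathfrak{S}\left(\mathcal{K}(\la)\right)$-quotient. Because the $\Gamma$- and $\mathfrak{S}_d$-actions on $\mb{C}^d$ commute and $Q_d(\la)$ is $\mathfrak{S}\left(\mathcal{K}(\la)\right)$-invariant (Lemma~\ref{lm.2.5}(\ref{en.2.5.5})), the $\mathfrak{S}\left(\mathcal{K}(\la)\right)$-action descends to $\widetilde{Q}_d(\la)$. Taking $\gamma=\mathrm{id}$ in Lemma~\ref{lm.2.5}(\ref{en.2.5.6}) shows $\varpi(\la)$, hence $\widetilde{\varpi}(\la)$, is invariant under $\mathfrak{S}\left(\mathcal{K}(\la)\right)$, so it factors as $\overline{\varpi(\la)}:\widetilde{Q}_d(\la)/\mathfrak{S}\left(\mathcal{K}(\la)\right)\to\Phi_d^{-1}\left(\bar{\la}\right)$, still surjective. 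For injectivity, if $\overline{\varpi(\la)}$ sends the classes of $\z,\z'\in Q_d(\la)$ to the same point, then $\varpi(\la)(\z')=\gamma\circ\varpi(\la)(\z)\circ\gamma^{-1}=\varpi(\la)(\gamma\cdot\z)$ for some $\gamma\in\Gamma$ by Lemma~\ref{lm.2.5}(\ref{en.2.5.4}); Lemma~\ref{lm.2.5}(\ref{en.2.5.6}) then gives $\z'=\sigma\cdot(\gamma\cdot\z)$ with $\sigma\in\mathfrak{S}\left(\mathcal{K}(\la)\right)$, and commutativity of the two actions shows $\z$ and $\z'$ give the same class in $\widetilde{Q}_d(\la)/\mathfrak{S}\left(\mathcal{K}(\la)\right)$.

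Finally I would set up $\iota(\la)$ and check the remaining compatibilities. Given $(\z_1:\cdots:\z_{d-1})\in S_d(\la)$, the vector $(\z_1,\ldots,\z_{d-1},0)$ lies in $Q_d(\la)$ — its power sums coincide with those defining $T_d(\la)$ since the last entry is $0$, and the entries together with $0$ are distinct by the definition of $S_d(\la)$ — and rescaling $(\z_i)$ by $c\in\mb{C}^*$ amounts to acting by $\gamma(z)=cz$, so $\iota(\la)$ is well defined. It is bijective because each $\Gamma$-orbit in $Q_d(\la)$ has a unique representative with last coordinate $0$ (translate by $\gamma(z)=z-\z_d$, using $\Gamma$-invariance of $Q_d(\la)$; the other coordinates are then nonzero and mutually distinct), while two such normalized representatives in one orbit differ by a $\gamma$ fixing $0$, i.e.\ a scaling. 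Unwinding definitions, $\widetilde{\varpi}(\la)\circ\iota(\la)$ sends $(\z_1:\cdots:\z_{d-1})$ to the affine class of $z+\rho\,z(z-\z_1)\cdots(z-\z_{d-1})$ with $-\tfrac{1}{\rho}=\sum_{i=1}^{d-1}\tfrac{1}{1-\la_i}\z_i^{d-1}$, which is exactly $\pi(\la)$; and transporting the permutation action on $\widetilde{Q}_d(\la)$ through $\iota(\la)$ and renormalizing the last coordinate to $0$ via $z\mapsto z-\z_{\sigma^{-1}(d)}$ recovers the formula for the $\mathfrak{S}\left(\mathcal{K}(\la)\right)$-action on $S_d(\la)$ in Proposition~\ref{pr.2.3}, so $\iota(\la)$ is equivariant. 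I expect the only point requiring real care to be this last matching of the two descriptions of the $\mathfrak{S}\left(\mathcal{K}(\la)\right)$-action across $\iota(\la)$ — the action on $S_d(\la)$ is written in coordinates already normalized by $\z_d=0$, whereas on $\widetilde{Q}_d(\la)$ it is the plain permutation, so one must absorb a translation — everything else being a formal diagram chase on top of Lemma~\ref{lm.2.5}.
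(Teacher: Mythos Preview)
Your proposal is correct and follows exactly the approach the paper takes: the paper's own proof simply states that Proposition~\ref{pr.2.8} is a direct consequence of Lemma~\ref{lm.2.5}, and your write-up is a careful unpacking of precisely that deduction. In particular, your identification of the one nontrivial verification --- matching the permutation action on $\widetilde{Q}_d(\la)$ with the translated-and-renormalized action on $S_d(\la)$ --- is accurate, and the rest is, as you say, a formal diagram chase.
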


\begin{proof}[Proof of Propositions~\ref{pr.2.8} and~\ref{pr.2.3}]
 Proposition~\ref{pr.2.8} is a direct consequence of Lemma~\ref{lm.2.5},
 whereas
 Proposition~\ref{pr.2.3} is just a corollary of Proposition~\ref{pr.2.8}. 
\end{proof}

We make use of
the bijection $\iota(\la) : S_d(\la) \cong \widetilde{Q}_d(\la)$
in the proof;
in the process of determining the cardinality $\#\left(S_d(\la)\right)$
we consider only $S_d(\la)$,
while
we utilize $\widetilde{Q}_d(\la)$
in the process of analyzing the action
of $\mathfrak{S}\left(\mathcal{K}(\la)\right)$ on $S_d(\la)$.

\begin{proposition}\label{pr.mt5}
  The assertion~(\ref{en.mt5}) in Main Theorem~\ref{mthm.1} holds.
\end{proposition}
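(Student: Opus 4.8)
The plan is to argue by contradiction, and to replace $\Phi_d^{-1}\left(\bar{\la}\right)$ by $Q_d(\la)$: by Lemma~\ref{lm.2.5}\,(\ref{en.2.5.2}) the map $\varpi(\la)$ is surjective onto $G^{-1}\circ\Phi_d^{-1}\left(\bar{\la}\right)$, so it suffices to prove $Q_d(\la)=\emptyset$. Suppose $(\z_1,\dots,\z_d)\in Q_d(\la)$ and put $m_i:=\frac{1}{1-\la_i}$. The hypothesis furnishes a single $\mu\in\mb{C}^*$ with $m_i=\mu c_i$ for all $i$, and since $\la\in V_d$ we have $\sum_{i=1}^d m_i=0$, hence $\sum_{i=1}^d c_i=0$. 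Dividing the defining relations of $Q_d(\la)$ by $\mu$ and adjoining this $k=0$ relation, I get $\sum_{i=1}^d c_i\z_i^k=0$ for $0\le k\le d-2$, while $\z_1,\dots,\z_d$ are mutually distinct.

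First I would repackage these moment relations through the rational function $R(z):=\sum_{i=1}^d\frac{c_i}{z-\z_i}$. Its Laurent expansion at infinity is $\sum_{k\ge 0}z^{-k-1}\bigl(\sum_{i}c_i\z_i^k\bigr)$, so the moment relations say precisely $R(z)=O(z^{-d})$. Writing $R=N/\prod_{i=1}^d(z-\z_i)$ with $\deg N\le d-1$ and $N(\z_i)=c_i\prod_{j\ne i}(\z_i-\z_j)\ne 0$, this forces $N$ to be a nonzero constant $c$, i.e. $R(z)=c/\prod_{i=1}^d(z-\z_i)$.

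Next, split $\{1,\dots,d\}=I_+\amalg I_-$ according to the sign of $c_i$ and set $A(z):=\prod_{i\in I_+}(z-\z_i)^{c_i}$, $B(z):=\prod_{i\in I_-}(z-\z_i)^{-c_i}$, two coprime monic polynomials; because $\sum_i c_i=0$ they share a common degree $n:=\tfrac12\sum_{i=1}^d|c_i|$, and the hypothesis $\sum_i|c_i|\le 2(d-2)$ reads $n\le d-2$. From $\frac{(A/B)'}{A/B}=\frac{A'}{A}-\frac{B'}{B}=R$ I obtain $A'B-AB'=c\,D$, where $D:=\prod_{i\in I_+}(z-\z_i)^{c_i-1}\prod_{i\in I_-}(z-\z_i)^{-c_i-1}$ is an honest polynomial, monic of degree exactly $2n-d$. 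The contradiction then comes from computing the order of vanishing at $z=\infty$ of $\phi':=(A/B)'$ in two ways: on one hand $\phi'=(A'B-AB')/B^2\sim c\,z^{-d}$, so this order equals $d$; on the other hand $\phi=1+(A-B)/B$ with $\deg(A-B)\le n-1$ and $A\ne B$ (since $c\ne 0$ makes $\phi$ nonconstant), so $\phi'=\bigl((A-B)/B\bigr)'$ vanishes at infinity to order $n-\deg(A-B)+1\le n+1\le d-1$. The relation $d\le d-1$ is absurd, hence $Q_d(\la)=\emptyset$ and therefore $\Phi_d^{-1}\left(\bar{\la}\right)=\emptyset$.

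I expect no genuine obstacle here: the argument stays inside the Key Lemma / $Q_d(\la)$ formalism already available in this section and is otherwise elementary. The one point to handle carefully is the exact degree bookkeeping at infinity — that $D$ really is a polynomial of degree precisely $2n-d$ with leading coefficient $1$ (so that $\phi'\sim c\,z^{-d}$ with nonzero leading term), and dually that $A-B$, being a difference of two monic polynomials of equal degree $n$, has degree at most $n-1$. Both facts are immediate from the explicit product formulas once $\sum_i c_i=0$ is used, and that identity is exactly where the membership $\la\in V_d$ enters.
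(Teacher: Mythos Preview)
Your argument is correct, and the reduction to $Q_d(\la)=\emptyset$ via Lemma~\ref{lm.2.5} is the same first step the paper takes. After that, however, the two proofs diverge.

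The paper's proof is shorter and purely combinatorial: writing the relations $\sum_i c_i\z_i^k=0$ for $1\le k\le d-2$ as equalities between the $k$-th power sums of the two multisets
\[
\underbrace{\z_1,\ldots,\z_1}_{c_1},\ldots,\underbrace{\z_j,\ldots,\z_j}_{c_j}
\quad\text{and}\quad
\underbrace{\z_{j+1},\ldots,\z_{j+1}}_{-c_{j+1}},\ldots,\underbrace{\z_d,\ldots,\z_d}_{-c_d}
\]
(with $c_1,\ldots,c_j>0$ and $c_{j+1},\ldots,c_d<0$), it observes that both multisets have size $\tfrac12\sum_i|c_i|\le d-2$, so agreement of power sums through degree $d-2$ forces agreement of all elementary symmetric functions and hence equality of the multisets, contradicting the distinctness of the $\z_i$. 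Your route encodes the same moment data in the rational function $R(z)=\sum_i c_i/(z-\z_i)$, recognizes it as the logarithmic derivative of $A/B$, and extracts the contradiction from the order of vanishing of $(A/B)'$ at infinity. In effect, the equality of multisets in the paper's proof corresponds exactly to $A=B$ in yours; you reach it by a degree count rather than by Newton's identities. The paper's version is more economical, while yours makes the connection to $z+\rho\prod(z-\z_i)$ and its residues somewhat more transparent and would generalize more readily to settings where one already has the partial-fraction form in hand.
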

\begin{remark}
 As already mentioned in Remark~\ref{rem.1.3}, Fujimura~\cite[Theorem 12]{fu} proved a weaker 
 statement 
 of 
 Proposition~\ref{pr.mt5}, while 
 her proof is very similar to the following.
\end{remark}
\begin{proof}
 Since the
 map $G\circ\varpi(\la):Q_d(\la)\to\Phi_d^{-1}\left(\bar{\la}\right)$
 is surjective,
 it suffices to show that the set $Q_d(\la)$ is empty.
 Note first that the conditions $\sum_{i=1}^d(1-\la_i)^{-1}=0$ and $c_1(1-\lambda_1) = \cdots = c_d(1-\lambda_d)$
 imply $\sum_{i=1}^d c_i=0$.
 We may assume that the integers $c_1,\ldots,c_j$ are positive
 and that the 
 rests
 are negative.
 Then the condition $\sum_{i=1}^d \left|c_i \right| \le 2(d-2)$ is  equivalent 
 to $\sum_{i=1}^j c_i = \sum_{i=j+1}^d -c_i \le d-2$, and 
 the defining equations
 $\sum_{i=1}^d \frac{1}{1-\la_i} \z_i^k = 0$ for $1\le k \le d-2$
 are equivalent to the equations
 \[
  \underbrace{\z_1^k + \cdots + \z_1^k}_{c_1} + \cdots +
  \underbrace{\z_j^k + \cdots + \z_j^k}_{c_j} = 
  \underbrace{\z_{j+1}^k + \cdots + \z_{j+1}^k}_{-c_{j+1}} + \cdots +
  \underbrace{\z_d^k + \cdots + \z_d^k}_{-c_d}
 \]
 for $1\le k \le d-2$.
 Hence the $k$-th fundamental symmetric expressions of
 \begin{equation}\label{eq.mt5.1}
  \underbrace{\z_1, \ldots, \z_1}_{c_1}, \ldots,
  \underbrace{\z_j, \ldots, \z_j}_{c_j}
  \quad \text{and} \quad
  \underbrace{\z_{j+1}, \ldots, \z_{j+1}}_{-c_{j+1}}, \ldots,
  \underbrace{\z_d, \ldots, \z_d}_{-c_d}
 \end{equation}
 coincide for $1\le k \le d-2$.
 Therefore
 the condition $\sum_{i=1}^j c_i = \sum_{i=j+1}^d -c_i \le d-2$
 assures that the left half of (\ref{eq.mt5.1})
 is some permutation of the right half of (\ref{eq.mt5.1}),
 which contradicts the mutual distinctness of $\z_1,\ldots,\z_d$.
 Thus the set $Q_d(\la)$ is empty. 
\end{proof}

\begin{proposition}\label{pr.mt7}
  The assertion~(\ref{en.mt7}) in Main Theorem~\ref{mthm.1} holds.
\end{proposition}
\begin{proof}
 To prove the proposition, we may assume that
 $c_1,\dots, c_d$ is a permutation of\\
 $1, -1, \underbrace{2,\dots,2}_{\frac{d}{2}-1}, \underbrace{-2,\dots,-2}_{\frac{d}{2}-1}$
 or $1, 1, \underbrace{2,\dots,2}_{\frac{d-3}{2}}, \underbrace{-2,\dots,-2}_{\frac{d-1}{2}}$ according as $d$ is even or odd.
 
 Let $U_{d-2}(z)$ be Chebyshev polynomial of the second kind of degree $d-2$.
 By definition, 
 $U_{d-2}(z)$ is a polynomial of degree $d-2$ satisfying the equality $U_{d-2}(\cos\theta) = \sin (d-1)\theta/\sin\theta$.
 Put $f_d(z) = z + \rho (z^2-1)U_{d-2}(z)$ for $\rho \in \mathbb{C} \setminus \{0\}$.
 Then we have $\mathrm{Fix}(f_d) = \left\{ \cos(k\pi/(d-1)) \mid k=0, 1,\dots, d-1 \right\}$.
 Moreover by a direct calculation we have $f'_d(1) = 1+\rho\cdot 2(d-1),\ f'_d(-1) = 1+\rho\cdot 2(-1)^{d-1}(d-1)$ and
 $f'_d\left(\cos\frac{k\pi}{d-1}\right) = 1 + \rho\cdot (-1)^k(d-1)$ for $1 \le k \le d-2$.
 Hence for any $\lambda \in V_d$ with $c_1(1-\lambda_1) = \dots = c_d(1-\lambda_d)$,
 we have $\Phi_d(f_d) = \bar{\lambda}$ for suitable $\rho$.
\end{proof}

\begin{remark}
 In practice, for any $d$, a similar computation to Example~\ref{ex.0.3} in Section~\ref{sec.1.1.2} assures
 the equality $\#\left( \Phi_d^{-1}(\bar{\lambda}) \right) = 1$
 for $\la \in V_d$ given in the proof of Proposition~\ref{pr.mt7}. Hence the unique element of $\Phi_d^{-1}(\bar{\lambda})$
 is represented by the above $f_d(z)$ for any $d$.
\end{remark}

\section{Review of the intersection theory on $\mb{P}^n$}\label{sec.3}

This section
summarizes the facts about the intersection theory on $\mb{P}^n$,
and states extended Bezout's theorem in 
Proposition~\ref{pr.4.1}.
For detailed explanation of the basic knowledge of this section, 
see Section 4 of Chapter 0 and Section 3 of Chapter 1 in~\cite{GriffithsHarris}.

Let $C$ be an algebraic variety of dimension $k$ in $\mathbb{P}^n$.
Then 
generic $(n-k)$-plane $\mb{P}^{n-k} \subset \mb{P}^n$ intersects $C$ 
transversely; we may thus define the degree of $C$ to be the number of
intersection points of $C$ with a generic linear subspace $\mb{P}^{n-k}$,
which does not depend on the choice of $\mb{P}^{n-k}$.
For example,
for any homogeneous polynomial $\varphi(\z)$ of degree $d$ on $\mb{P}^n$,
the degree of the zeros of $\varphi$ is always $d$.

Secondly we remember the definition of the intersection multiplicity
$\mult_{C_{\mu}}(C,C')$ of varieties $C$ and $C'$ in $\mb{P}^n$
along an irreducible component $C_{\mu}$ of $C\cap C'$
with $\dim C_{\mu} = \dim C + \dim C' -n$.
If $C_{\mu}$ is a point,
then the intersection multiplicity is defined as follows:
in a local coordinate having the origin as $C_{\mu}$,
$C$ meets $C'+\epsilon$ transversely around the origin
for generic small $\epsilon\in\mb{C}^n$,
where $C'+\epsilon$ denotes the translation of $C'$ by $\epsilon$
with respect to the given local coordinate;
we may thus define the intersection multiplicity $\mult_{C_{\mu}}(C,C')$
to be the number of intersection points of $C$ and $C'+\epsilon$
around the origin for sufficiently small generic $\epsilon$,
which does not depend on the choice of $\epsilon$ nor a local coordinate.
In the general case with $\dim C_{\mu} = \dim C + \dim C' -n$,
the intersection multiplicity $\mult_{C_{\mu}}(C,C')$ is defined
to be the number $\mult_p(C\cap H, C'\cap H)$
on $H$, where $p$ is a generic smooth point of $C_{\mu}$ and
$H$ is a submanifold in a neighborhood of $p$ intersecting $C_{\mu}$
transversely at $p$ and with complementary dimension of $C_{\mu}$.

Next we state the relation among the intersection multiplicities defined above.
Let $C,C'$ be algebraic varieties in $\mb{P}^n$ with $\dim C =k$ and
$\dim C' = k'$, and $C_1,\ldots, C_r$ the irreducible components of $C\cap C'$.
Suppose that the equality
$\dim C_{\mu} = \dim C + \dim C' -n$ holds for any $\mu$.
Then the topological intersection of $C$ and $C'$ is given by
 $\left(C\cdot C'\right) = \sum_{\mu=1}^r \mult_{C_{\mu}}(C,C')\cdot C_{\mu}$,
which implies the equality
\begin{equation}\label{eq.4.0}
 \deg C\cdot\deg C' = \sum_{\mu=1}^r \mult_{C_{\mu}}(C,C') \cdot \deg C_{\mu}.
\end{equation}

On the basis of those mentioned above,
we state Definitions~\ref{df.4.0},~\ref{df.4.1}
and Proposition~\ref{pr.4.1}.

\begin{definition}~\label{df.4.0}
 We define the family $\mathcal{C}(\varphi_1, \ldots, \varphi_m)$
 for homogeneous polynomials $\varphi_1, \ldots, \varphi_m$ on $\mb{P}^n$
 inductively as follows:
 if $m=1$, then $\mathcal{C}(\varphi_1)$ is the family
 of the irreducible components of the zeros of $\varphi_1$ in $\mb{P}^n$;
 in the case $m\ge 2$, putting
 \[
 \mathcal{C}'
  := \left\{C'\in\mathcal{C}(\varphi_1, \ldots, \varphi_{m-1}) \bigm|
        C' \subseteq \{\varphi_m=0\} \right\} \quad \text{and} \quad
 \mathcal{C}'':=\mathcal{C}(\varphi_1,\ldots,\varphi_{m-1})
  \setminus\mathcal{C}',
 \]
 we define the family $\mathcal{C}(\varphi_1, \ldots, \varphi_m)$
 by
 \[
  \mathcal{C}(\varphi_1, \ldots, \varphi_m) := \mathcal{C}' \cup 
   \bigcup_{C''\in\mathcal{C}''}
    \left\{C \bigm| C \text{ is an irreducible component of }
     C''\cap\{\varphi_m=0\}\right\}.
 \]
\end{definition}

By definition, a variety $C$ in $\mb{P}^n$ is
an irreducible component of the common zeros of $\varphi_1,\ldots,\varphi_m$
if and only if 
$C$ is a maximal element of $\mathcal{C}(\varphi_1,\ldots,\varphi_m)$
with respect to the inclusion relations.
Making use of the family $\mathcal{C}(\varphi_1,\ldots,\varphi_m)$,
we are able to consider ``components'' of the common zeros which are
proper subsets of some irreducible components of the common zeros.

\begin{definition}\label{df.4.1} 
 We shall define the number $\mult_C(\varphi_1, \ldots,\varphi_m)$
 for homogeneous polynomials $\varphi_1, \ldots, \varphi_m$ on $\mb{P}^n$
 and an irreducible variety $C$ in $\mb{P}^n$ with $\mathrm{codim}\, C=m$.
 If $C\notin \mathcal{C}(\varphi_1, \ldots,\varphi_m)$,
 then we put $\mult_C(\varphi_1,\ldots,\varphi_m)=0$;
 if $C \in \mathcal{C}(\varphi_1, \ldots,\varphi_m)$,
 we define $\mult_C(\varphi_1, \ldots,\varphi_m)$
 by induction of $m$ in the following manner:
 if $m=1$, then the number $\mult_C(\varphi_1)$ is
 the usual order of zeros of $\varphi_1$ along $C$;
 in the case $m\ge 2$, the number $\mult_C(\varphi_1, \ldots,\varphi_m)$ is
 defined 
by the equality
 \begin{equation}\label{eq.4.1}
   \mult_C(\varphi_1,\ldots,\varphi_m) =
    \sum_{C' \in \mathcal{C}_C} \mult_{C'}(\varphi_1,\ldots,\varphi_{m-1})
     \cdot \mult_C(C',\varphi_m),
 \end{equation}
 where $\mathcal{C}_C
  = \left\{C'\in\mathcal{C}(\varphi_1,\ldots,\varphi_{m-1})\bigm|
            \mathrm{codim}\,C'=m-1,\ 
            C\subseteq C',\ C'\nsubseteq \{\varphi_m=0\}\right\}$.
 Here, for a homogeneous polynomial $\varphi$, an irreducible variety $C'$ with $C' \nsubseteq \{\varphi=0\}$ 
 and an irreducible component $C$ of $C' \cap \{\varphi=0\}$,
 the number $\mult_C(C',\varphi)$ is defined by
 \[
    \mult_C(C',\varphi) := \sum_{C''\in \mathcal{C}(\varphi),\ C \subseteq C''} \mult_C(C',C'') \cdot \mult_{C''}(\varphi).
 \]
 Note that the notation $\mult_C(C',\varphi)$ is also used in the following sections.
\end{definition}

At any rate, Definition~\ref{df.4.1} assigns a positive integer
$\mult_C(\varphi_1, \ldots,\varphi_m)$
to each $C\in \mathcal{C}(\varphi_1, \ldots,\varphi_m)$
with $\mathrm{codim}\, C = m$.
By definition, if $C$ is an irreducible component of the common zeros 
of $\varphi_1, \ldots, \varphi_m$ with $\mathrm{codim}\, C = m$,
then the number $\mult_C(\varphi_1, \ldots,\varphi_m)$ defined above
is the usual intersection multiplicity
of $\varphi_1, \ldots, \varphi_m$ along $C$.
We state the relation among the numbers defined above
in Proposition~\ref{pr.4.1}.

\begin{proposition}\label{pr.4.1}
 Let $\varphi_1,\ldots,\varphi_n$ be homogeneous polynomials on $\mb{P}^n$,
 put $\mathrm{codim}\, C =: l_C$
 for each $C\in\mathcal{C}(\varphi_1,\ldots,\varphi_n)$,
 and suppose that the inclusion relation
 \begin{equation}\label{eq.4.3.0}
 \left\{C\in\mathcal{C}(\varphi_1,\ldots,\varphi_k)\bigm|
  \mathrm{codim}\, C <k\right\}
  \subseteq \mathcal{C}(\varphi_1,\ldots,\varphi_n)
 \end{equation}
 holds for every $1 \le k \le n$.
 Then we have 
 $\left\{C\in\mathcal{C}(\varphi_1,\ldots,\varphi_n)\bigm| l_C=k\right\}
  \subseteq \mathcal{C}(\varphi_1,\ldots,\varphi_k)$ for every $1 \le k \le n$.
 Moreover we have the equality
 \begin{equation}\label{eq.4.2}
   \prod_{k=1}^n \deg \varphi_k =
    \sum_{C\in\mathcal{C}(\varphi_1,\ldots,\varphi_n)}
    \left( \deg C \cdot
      \mult_C(\varphi_1,\ldots,\varphi_{l_C}) \cdot
      \prod_{k=l_C+1}^n \deg \varphi_k
     \right).
 \end{equation}
 Here, 
 in the case $l_C=n$,
 we assume that $\prod_{k=l_C+1}^n\deg \varphi_k = \prod_{k=n+1}^n\deg \varphi_k=1$.
\end{proposition}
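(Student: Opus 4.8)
The plan is to prove both assertions simultaneously by induction on the number $n$ of polynomials, using the recursive structure of Definitions~\ref{df.4.0} and~\ref{df.4.1} together with two classical inputs: the case $m=1$, namely that the zero divisor of a homogeneous polynomial $\varphi$ satisfies $\deg\varphi=\sum_{C}\mult_C(\varphi)\deg C$; and the intersection-degree identity~(\ref{eq.4.0}), $\deg C'\cdot\deg\varphi=\sum_\mu\mult_{C_\mu}(C',\varphi)\deg C_\mu$, valid whenever $C'\not\subseteq\{\varphi=0\}$, in which case every component of $C'\cap\{\varphi=0\}$ has codimension exactly $\mathrm{codim}\,C'+1$. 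Throughout I would use the evident structural fact that the families only get finer: each element of $\mathcal{C}(\varphi_1,\ldots,\varphi_j)$ is contained in some element of $\mathcal{C}(\varphi_1,\ldots,\varphi_{j-1})$, and an element $C'$ of $\mathcal{C}(\varphi_1,\ldots,\varphi_{j-1})$ with $C'\not\subseteq\{\varphi_j=0\}$ is deleted at stage $j$ and replaced by the components of $C'\cap\{\varphi_j=0\}$; in particular $\mathrm{codim}\,C\le j$ for every $C\in\mathcal{C}(\varphi_1,\ldots,\varphi_j)$.

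First I would prove the containment assertion. The engine is the following \emph{codimension lemma}: for each $1\le j\le n$, every $C'\in\mathcal{C}(\varphi_1,\ldots,\varphi_{j-1})$ that is properly cut at stage $j$ (i.e.\ $C'\not\subseteq\{\varphi_j=0\}$) has $\mathrm{codim}\,C'=j-1$, so that every element of $\mathcal{C}(\varphi_1,\ldots,\varphi_j)$ newly created at stage $j$ has codimension exactly $j$. Indeed, if such a $C'$ had $\mathrm{codim}\,C'<j-1$, then hypothesis~(\ref{eq.4.3.0}) (with index $j-1$) would put $C'$ in $\mathcal{C}(\varphi_1,\ldots,\varphi_n)$, hence $C'$ would survive all the later refinements and in particular could not be deleted at stage $j$, a contradiction. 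Granting the codimension lemma, a component $C\in\mathcal{C}(\varphi_1,\ldots,\varphi_n)$ with $l_C=k$ is never newly created at a stage $j>k$ (those have codimension $j>k$), so it persists downward through stages $n,n-1,\ldots,k$; thus $C\in\mathcal{C}(\varphi_1,\ldots,\varphi_k)$, which is exactly the desired inclusion.

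For the degree formula I would induct on $n$, the case $n=1$ being the classical statement above. For the step, write $\mathcal{C}(\varphi_1,\ldots,\varphi_{n-1})=\mathcal{C}'\amalg\mathcal{C}''$ as in Definition~\ref{df.4.0}, expand $\prod_{k=1}^{n-1}\deg\varphi_k$ by the induction hypothesis applied to $\varphi_1,\ldots,\varphi_{n-1}$, and multiply through by $\deg\varphi_n$. The terms indexed by $C'\in\mathcal{C}'$, and those indexed by $C'\in\mathcal{C}''$ with $\mathrm{codim}\,C'<n-1$, are already in the target shape: such $C'$ survive into $\mathcal{C}(\varphi_1,\ldots,\varphi_n)$ unchanged, with the same codimension and the same value of $\mult_{C'}(\varphi_1,\ldots,\varphi_{l_{C'}})$, and the surviving product $\prod_{k=l_{C'}+1}^{n}\deg\varphi_k$ already carries the factor $\deg\varphi_n$. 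By the codimension lemma the remaining $C'\in\mathcal{C}''$ all have $\mathrm{codim}\,C'=n-1$; for these I would rewrite $\deg C'\cdot\deg\varphi_n$ via~(\ref{eq.4.0}) as $\sum_{C}\mult_C(C',\varphi_n)\deg C$ over the components $C$ of $C'\cap\{\varphi_n=0\}$. Collecting, since by the codimension lemma again every codimension-$n$ member $C$ of $\mathcal{C}(\varphi_1,\ldots,\varphi_n)$ is newly created at stage $n$ (so there is no overlap with the surviving part, hence no double counting), the coefficient of $\deg C$ becomes $\sum_{C'}\mult_{C'}(\varphi_1,\ldots,\varphi_{n-1})\cdot\mult_C(C',\varphi_n)$, summed over the $C'\in\mathcal{C}''$ of codimension $n-1$ containing $C$. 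By the containment assertion these $C'$ are precisely the members of the set $\mathcal{C}_C$ of Definition~\ref{df.4.1}, and the induction hypothesis identifies $\mult_{C'}(\varphi_1,\ldots,\varphi_{n-1})$ with $\mult_{C'}(\varphi_1,\ldots,\varphi_{l_{C'}})$; hence by~(\ref{eq.4.1}) this coefficient equals $\mult_C(\varphi_1,\ldots,\varphi_n)$, completing the induction.

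I expect the main obstacle to be the bookkeeping in this last step: one must check carefully that, after substituting~(\ref{eq.4.0}), the resulting double sum reorganizes exactly into the target sum over $\mathcal{C}(\varphi_1,\ldots,\varphi_n)$, that the index set $\mathcal{C}_C$ of Definition~\ref{df.4.1} matches the set of $C'$ that actually contribute to a given $C$, and that survivors and newly created components stratify $\mathcal{C}(\varphi_1,\ldots,\varphi_n)$ by codimension with no overlap. All of this rests on the codimension lemma, and hence on hypothesis~(\ref{eq.4.3.0}): it is precisely the assumption that prevents a low-codimension component from being cut, thereby forcing the tidy codimension stratification that makes the recursion~(\ref{eq.4.1}) match the expansion of $\prod_{k=1}^n\deg\varphi_k$.
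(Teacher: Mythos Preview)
Your proposal is correct and follows essentially the same approach as the paper: both arguments prove the degree identity by induction on the number of polynomials, using~(\ref{eq.4.0}) to expand $\deg C'\cdot\deg\varphi_{k+1}$ for the cut components and~(\ref{eq.4.1}) to reassemble the resulting sum, with the codimension stratification forced by hypothesis~(\ref{eq.4.3.0}) guaranteeing the bookkeeping works. Your explicit ``codimension lemma'' is exactly the content the paper compresses into its one-line description of the sets $\mathcal{C}_k$; the paper phrases the induction as proving the sequence of identities $(\theequation)_k$ for the intermediate families $\mathcal{C}(\varphi_1,\ldots,\varphi_k)$ within the fixed system rather than invoking the proposition for $\varphi_1,\ldots,\varphi_{n-1}$, which sidesteps having to verify~(\ref{eq.4.3.0}) for the truncated system, but your codimension lemma yields that verification immediately, so the difference is purely cosmetic.
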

\begin{proof}
 We put $\mathcal{C}_k :=
  \left\{C\in\mathcal{C}(\varphi_1,\ldots,\varphi_k)\bigm|
  \mathrm{codim}\, C = k,\ C\subseteq \{\varphi_{k+1}=0\} \right\}$
 for each $1 \le k \le n-1$.
 Then by Definition~\ref{df.4.0} and the assumption~(\ref{eq.4.3.0}),
 we have $\mathcal{C}_1 \amalg \cdots \amalg \mathcal{C}_k \subseteq 
  \mathcal{C}(\varphi_1,\ldots,\varphi_k)$
 and $\left\{C\in\mathcal{C}(\varphi_1,\ldots,\varphi_k)\bigm|
 \mathrm{codim}\, C = k \right\} = \mathcal{C}(\varphi_1,\ldots,\varphi_k)
 \setminus \left(\mathcal{C}_1 \amalg \cdots \amalg \mathcal{C}_{k-1}\right)$
 for every $1 \le k \le n-1$,
 which implies the former assertion of the proposition.

 To prove the latter, it suffices to show the equality
 \stepcounter{equation}
 \begin{equation}\label{eq.4.2pr}\tag*{$(\theequation)_k$}
  \prod_{l=1}^k \deg\varphi_l
   = 
  \sum_{C\in\mathcal{C}(\varphi_1,\ldots,\varphi_k)} \left(
   \deg C \cdot \mult_C(\varphi_1,\ldots,\varphi_{l_C}) \cdot
   \prod_{l=l_C+1}^k \deg\varphi_l
   \right)
 \end{equation}
 by induction of $k$, because (\theequation)$_n$ is the same as~(\ref{eq.4.2}).
 The equality~(\theequation)$_1$ is in the form
 $\deg \varphi_1 = \sum_{C \in \mathcal{C}(\varphi_1)} \deg C \cdot \mult_C(\varphi_1)$,
 which obviously holds.
 Multiplying both sides of the equality~\ref{eq.4.2pr}
 by $\deg\varphi_{k+1}$,
 we have
 \begin{align*}
	   \prod_{l=1}^{k+1} \deg\varphi_l
   	   &= \deg \varphi_{k+1} \cdot
	  \sum_{C\in\mathcal{C}(\varphi_1,\ldots,\varphi_k)} \left(
	   \deg C \cdot \mult_C(\varphi_1,\ldots,\varphi_{l_C}) \cdot
	   \prod_{l=l_C+1}^k \deg\varphi_l
	   \right)\\
	   &= \sum_{C\in\mathcal{C}_1 \amalg \dots \amalg \mathcal{C}_k} \left(
	   \deg C \cdot \mult_C(\varphi_1,\ldots,\varphi_{l_C}) \cdot
	   \prod_{l=l_C+1}^{k+1} \deg\varphi_l
	   \right)\\
	   &{}\hspace{10pt} + 
	  \sum_{C\in\mathcal{C}(\varphi_1,\ldots,\varphi_k) \setminus \left(\mathcal{C}_1 \amalg \dots \amalg \mathcal{C}_k\right)}
	   \Bigl(
	   \deg \varphi_{k+1} \cdot \deg C \cdot \mult_C(\varphi_1,\ldots,\varphi_k)
	   \Bigr).
 \end{align*}
 We put 
 $\mathcal{C}'_k:= \mathcal{C}(\varphi_1,\ldots,\varphi_k) \setminus \left(\mathcal{C}_1 \amalg \dots \amalg \mathcal{C}_k\right)$
 for the brevity of notation.
 Then for every $C\in\mathcal{C}'_k$,
	we have $\deg \varphi_{k+1} \cdot \deg C = \sum_{\mu =1}^r \mult_{C_\mu}(C,\varphi_{k+1})\cdot \deg C_{\mu}$
	by (\ref{eq.4.0}) and by the definition of $\mult_{C_{\mu}}(C,\varphi_{k+1})$, 
	where $C_1,\dots,C_r$ are the irreducible components 
	of $C \cap \{\varphi_{k+1}=0\}$.
	Therefore, putting $\mult_{C'}(C,\varphi_{k+1})=0$ for $C'$ different from $C_1,\dots,C_r$, we have 
	\begin{equation}\label{eq.4.4}
	 \begin{split}
	   &\sum_{C\in\mathcal{C}'_k} \Bigl(
	   \deg \varphi_{k+1} \cdot \deg C \cdot \mult_C(\varphi_1,\ldots,\varphi_k)
	   \Bigr)\\
	  &= \sum_{C\in\mathcal{C}'_k} \left(
		\left(
		\sum_{C'\in\mathcal{C}(\varphi_1,\ldots,\varphi_{k+1}) \setminus 
			\left(\mathcal{C}_1 \amalg \dots \amalg \mathcal{C}_k\right)}
		  \mult_{C'}(C,\varphi_{k+1})\cdot \deg C'
		\right)
	      \cdot \mult_C(\varphi_1,\ldots,\varphi_k)
	   \right)\\
	   &= \sum_{C'\in\mathcal{C}(\varphi_1,\ldots,\varphi_{k+1}) \setminus 
			\left(\mathcal{C}_1 \amalg \dots \amalg \mathcal{C}_k\right)}
		\left( \deg C' \cdot
		 \left(
		  \sum_{C\in\mathcal{C}'_k}
		   \mult_C(\varphi_1,\ldots,\varphi_k) \cdot \mult_{C'}(C,\varphi_{k+1})
		\right) \right)\\
	   &= \sum_{C'\in\mathcal{C}(\varphi_1,\ldots,\varphi_{k+1}) \setminus 
			\left(\mathcal{C}_1 \amalg \dots \amalg \mathcal{C}_k\right)}
		\Bigl(
		   \deg C' \cdot \mult_{C'}(\varphi_1,\ldots,\varphi_{k+1}) 
		\Bigr)
	 \end{split}
	\end{equation}
	by Definition~\ref{df.4.1}.
\addtocounter{equation}{-1}\hspace{-5pt}
	To summarize, we have $(\theequation)_{k+1}$.
\addtocounter{equation}{1}
\end{proof}
Proposition~\ref{pr.4.1} is reduced to the usual Bezout's theorem
if $\mathcal{C}(\varphi_1,\ldots,\varphi_n)$ consists only of points.
Proposition~\ref{pr.4.1} is utilized crucially
for determining the cardinality $\#\left(S_d(\la)\right)$ 
in Section~\ref{sec.4}.
\begin{remark}
 The family $\mathcal{C}(\varphi_1,\ldots,\varphi_m)$ and
 the number $\mult_C(\varphi_1,\ldots,\varphi_m)$ may vary
 when the order of $\varphi_1,\ldots,\varphi_m$ changes.
 Hence Definitions~\ref{df.4.0} and~\ref{df.4.1}
 may appear to be a little strange in some sense;
 however this works very well
 for the computation of the cardinality $\#\left(S_d(\la)\right)$.
 In the following, we give an example
 in which the family $\mathcal{C}(\varphi_1,\varphi_2)$ and
 the number $\mult_{\mathrm{P}_2}(\varphi_1,\varphi_2)$
 differ from $\mathcal{C}(\varphi_2,\varphi_1)$ and
 $\mult_{\mathrm{P}_2}(\varphi_2,\varphi_1)$ respectively.
 Consider $\varphi_1=y(y-x)$ and $\varphi_2=y\left(yz^2+x^3-2x^2z\right)$
 on $\mb{P}^2=\{(x:y:z)\}$.
 We put $\mathrm{P}_1=\{(1:1:1)\}$, $\mathrm{P}_2=\{(0:0:1)\}$,
 $\mathrm{P}_3=\{(2:0:1)\}$,
 $C_0=\{y=0\}$, $C_1=\{x=y\}$ and $C_2=\{yz^2+x^3-2x^2z=0\}$.
 Then we have
 $\mathcal{C}(\varphi_1,\varphi_2)=\{C_0, \mathrm{P}_1, \mathrm{P}_2\}$
 and
 $\mathcal{C}(\varphi_2,\varphi_1)
  =\{C_0, \mathrm{P}_1, \mathrm{P}_2, \mathrm{P}_3\}$.
 Moreover we have
 $\mult_{\mathrm{P}_2}(\varphi_1,\varphi_2)=
 \mult_{C_1}(\varphi_1)\cdot\mult_{\mathrm{P}_2}(C_1,\varphi_2)=1\cdot 2=2$
 and
 $\mult_{\mathrm{P}_2}(\varphi_2,\varphi_1)
 =\mult_{C_2}(\varphi_2)\cdot\mult_{\mathrm{P}_2}(C_2,\varphi_1)=1\cdot 3=3$.
 However Proposition~\ref{pr.4.1} holds as 
 we will see
 \begin{align*}
  \deg C_0 \cdot \mult_{C_0}(\varphi_1)\cdot \deg\varphi_2
   + \deg \mathrm{P}_1 \cdot \mult_{\mathrm{P}_1}(\varphi_1,\varphi_2)
   + \deg \mathrm{P}_2 \cdot \mult_{\mathrm{P}_2}&(\varphi_1,\varphi_2) \\
  = 1\cdot 1 \cdot 4 + 1\cdot 2+ 1\cdot 2 = 8
   &= \deg\varphi_1\cdot\deg\varphi_2, \\
  \deg C_0 \cdot \mult_{C_0}(\varphi_2) \cdot \deg\varphi_1
   + \deg \mathrm{P}_1 \cdot \mult_{\mathrm{P}_1}(\varphi_2,\varphi_1)
   + \deg \mathrm{P}_2 \cdot \mult_{\mathrm{P}_2}&(\varphi_2,\varphi_1) \\
   + \deg \mathrm{P}_3 \cdot \mult_{\mathrm{P}_3}(\varphi_2,\varphi_1)
  = 1\cdot 1 \cdot 2 + 1\cdot 2 + 1\cdot 3 + 1 \cdot 1 = 8
   &= \deg\varphi_2\cdot\deg\varphi_1.
 \end{align*}
\end{remark}

\section{Outline of determining the
 cardinality $\#\left(S_d(\la)\right)$}\label{sec.4}

In this section we give an outline of determining 
the cardinality of the set $S_d(\la)$ defined in Definition~\ref{df.2.2}
for each $\la  \in V_d$.
The assertions~(\ref{en.mt1}) and~(\ref{en.mt4})
in Main Theorem~\ref{mthm.1} are also proved in this section.

For the brevity of notation we put
\[ 
   m_i := \frac{1}{1-\la_i} \quad \textrm{and} \quad
   \varphi_k(\z) := \sum_{i=1}^{d-1} m_i \z_i^k
\]
for each $i$ and $k$,
and we always assume that $\z_d = 0$.
Therefore $T_d(\la)$ is the set of the common zeros of
$\varphi_1,\ldots,\varphi_{d-2}$ in $\Pd$,
and $S_d(\la)$ consists of an
element $\z=(\z_1:\cdots:\z_{d-1})\in T_d(\la)$
with mutually distinct $\z_1, \ldots, \z_{d-1}$ and $\z_d$.
Moreover we may also consider
that
 $\varphi_k(\z) = \sum_{i=1}^d m_i \z_i^k$.

\begin{lemma}\label{lm.4.1}
 Let $\la$ be an element of $V_d$.
 Then $S_d(\la)$ is discrete in $\Pd$.
 Moreover
 we always have
  $\mult_{\z_0}\left(\varphi_1,\ldots,\varphi_{d-2}\right) = 1$
 for any $\z_0 \in S_d(\la)$.
\end{lemma}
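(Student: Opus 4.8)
The plan is to reduce both assertions to the single claim that, for every $\z_0 \in S_d(\la)$, the Jacobian of the $(d-2)$ polynomials $\varphi_1,\ldots,\varphi_{d-2}$ is nonsingular at $\z_0$ in a suitable affine chart of $\Pd$. First I would choose the chart. A point $\z_0 = (\z_1:\cdots:\z_{d-1}) \in S_d(\la)$ has $\z_1,\ldots,\z_{d-1}$ and $\z_d = 0$ mutually distinct, so in particular $\z_1,\ldots,\z_{d-1}$ are all nonzero; hence $\z_0$ lies in the chart $\{\z_{d-1}\ne 0\}$, where I take affine coordinates $w_j := \z_j/\z_{d-1}$ for $1\le j\le d-2$, so that the defining equations of $T_d(\la)$ become $\tilde\varphi_k(w) := \sum_{i=1}^{d-2} m_i w_i^k + m_{d-1} = 0$ for $1\le k\le d-2$.

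Next I would compute the Jacobian. Since $\partial\tilde\varphi_k/\partial w_j = k\, m_j\, w_j^{k-1}$, the Jacobian matrix is $\bigl(k\, m_j\, w_j^{k-1}\bigr)_{1\le k,j\le d-2}$; pulling the factor $k$ out of the $k$-th row and $m_j$ out of the $j$-th column shows that its determinant equals $\bigl(\prod_{k=1}^{d-2}k\bigr)\bigl(\prod_{j=1}^{d-2}m_j\bigr)$ times the Vandermonde determinant $\det\bigl(w_j^{k-1}\bigr)_{1\le k,j\le d-2} = \prod_{1\le i<j\le d-2}(w_j-w_i)$. Because $\la_i\ne 1$ forces $m_i\ne 0$ for each $i$, and because $w_1,\ldots,w_{d-2}$ are mutually distinct (they equal $\z_1/\z_{d-1},\ldots,\z_{d-2}/\z_{d-1}$ with the $\z_i$ pairwise distinct), this determinant is nonzero; hence the Jacobian is nonsingular at $\z_0$.

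Finally I would extract the two conclusions. By the holomorphic implicit function theorem the common zero locus $T_d(\la) = \{\varphi_1 = \cdots = \varphi_{d-2} = 0\}$ coincides, in a neighborhood of $\z_0$, with $\{\z_0\}$; thus $\z_0$ is isolated in $T_d(\la)$, hence isolated in $S_d(\la)\subseteq T_d(\la)$, which gives the discreteness of $S_d(\la)$. Moreover $\z_0$ is then an irreducible component of $T_d(\la)$ of codimension $d-2$, so $\z_0\in\mathcal{C}(\varphi_1,\ldots,\varphi_{d-2})$ and $\mult_{\z_0}(\varphi_1,\ldots,\varphi_{d-2})$ equals the usual local intersection multiplicity of $\varphi_1,\ldots,\varphi_{d-2}$ along $\z_0$; since the nonsingularity of the Jacobian means that $\tilde\varphi_1,\ldots,\tilde\varphi_{d-2}$ generate the maximal ideal of the local ring at $\z_0$ --- equivalently, that the smooth hypersurfaces $\{\tilde\varphi_1 = 0\},\ldots,\{\tilde\varphi_{d-2} = 0\}$ meet transversally at $\z_0$, so that a generic small translate of $\{\tilde\varphi_2 = 0\},\ldots,\{\tilde\varphi_{d-2} = 0\}$ meets $\{\tilde\varphi_1 = 0\}$ in exactly one point near $\z_0$ in the sense recalled in Section~\ref{sec.3} --- this multiplicity is $1$.

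There is essentially no hard step here: the only points requiring attention are choosing the chart so that all coordinates entering the Vandermonde are nonzero (which is exactly what the mutual-distinctness condition defining $S_d(\la)$ guarantees) and restating ``nonsingular Jacobian'' in terms of the number $\mult_C$ of Section~\ref{sec.3}.
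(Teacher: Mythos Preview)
Your proof is correct and follows essentially the same approach as the paper: both show that the Jacobian of the $\varphi_k$ at $\z_0$ has full rank by recognizing its determinant as a product of nonzero $m_i$'s, factorials, and a Vandermonde in the distinct $\z_i$. The only cosmetic difference is that you dehomogenize to a $(d-2)\times(d-2)$ affine Jacobian in the chart $\{\z_{d-1}\ne 0\}$, whereas the paper stays in homogeneous coordinates and adjoins the auxiliary row $\partial\varphi_{d-1}/\partial\z$ to obtain a square $(d-1)\times(d-1)$ matrix whose nonvanishing forces the first $d-2$ gradients to be independent.
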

\begin{proof}
 We consider the row vectors
  $\frac{\partial \varphi_k}{\partial \z} =
   \left(\frac{\partial \varphi_k}{\partial \z_1},\ldots,
    \frac{\partial \varphi_k}{\partial \z_{d-1}} \right) =
   \left(km_1 \z_1^{k-1},\ldots,km_{d-1} \z_{d-1}^{k-1}\right)$
 at $\z = \z_0 \in S_d(\la)$ for $1\le k \le d-1$.
 Since $\z_1,\ldots,\z_{d-1}$ are mutually distinct at $\z = \z_0$ and 
 since $m_i \ne 0$
 for any $i$, we have
 \[
  \det \sideset{^t}{}{\mathop{\left(
   \sideset{^t}{}{\mathop{\left(
     \frac{\partial\varphi_1}{\partial\z}\right)}},
    \ldots,
   \sideset{^t}{}{\mathop{\left(
     \frac{\partial\varphi_{d-1}}{\partial\z}\right)}}
        \right)}}
  = (d-1)! \cdot \prod_{i=1}^{d-1} m_i \cdot
  \det \begin{pmatrix}
	1          & \cdots & 1              \\
	\z_1       & \cdots & \z_{d-1}       \\
	\vdots     & \ddots & \vdots         \\
	\z_1^{d-2} & \cdots & \z_{d-1}^{d-2}
       \end{pmatrix}
  \ne 0.
 \]
 Therefore the row vectors
  $\frac{\partial \varphi_1}{\partial \z}, \ldots,
  \frac{\partial \varphi_{d-2}}{\partial \z}$
 are linearly independent at $\z = \z_0$,
 which proves the lemma.
\end{proof}

\begin{proposition}\label{pr.mt1}
 The assertion~(\ref{en.mt1}) in Main Theorem~\ref{mthm.1} holds
\end{proposition}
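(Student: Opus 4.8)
The lower bound $0 \le \#\left(\Phi_d^{-1}\left(\bar{\la}\right)\right)$ is a triviality, so the whole content of assertion~(\ref{en.mt1}) in Main Theorem~\ref{mthm.1} is the upper bound. The plan is to route it through the set $S_d(\la)$. By Proposition~\ref{pr.2.3} the bijection $\overline{\pi(\la)}$ identifies $\Phi_d^{-1}\left(\bar{\la}\right)$ with the quotient $S_d(\la)/\mathfrak{S}\left(\mathcal{K}(\la)\right)$, and since the cardinality of a quotient set never exceeds that of the set itself, we get $\#\left(\Phi_d^{-1}\left(\bar{\la}\right)\right) \le \#\left(S_d(\la)\right)$. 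It therefore suffices to prove $\#\left(S_d(\la)\right) \le (d-2)!$, and this is where Lemma~\ref{lm.4.1} and Bezout's theorem enter.

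Recall that $T_d(\la) \subseteq \Pd$ is the common zero locus of the $d-2$ homogeneous polynomials $\varphi_1,\ldots,\varphi_{d-2}$, with $\deg\varphi_k = k$, and that $S_d(\la) \subseteq T_d(\la)$. By Lemma~\ref{lm.4.1} and the linear-independence computation carried out in its proof, at every $\z_0 \in S_d(\la)$ the row vectors $\frac{\partial\varphi_1}{\partial\z},\ldots,\frac{\partial\varphi_{d-2}}{\partial\z}$ are linearly independent, so $\z_0$ is an \emph{isolated} point of $T_d(\la)$, hence a $0$-dimensional irreducible component of the projective variety $T_d(\la)$, and $\mult_{\z_0}(\varphi_1,\ldots,\varphi_{d-2}) = 1$. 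In particular $S_d(\la)$ is finite, being contained in the finite collection of $0$-dimensional irreducible components of $T_d(\la)$. I would then apply Bezout's theorem in the refined form recalled in Section~\ref{sec.3} (cf.\ the identity~(\ref{eq.4.0}) and Proposition~\ref{pr.4.1}) to the $d-2$ hypersurfaces $\{\varphi_1=0\},\ldots,\{\varphi_{d-2}=0\}$ in $\Pd$: the contributions of the irreducible components of their common zero locus, each weighted by its degree, its intersection multiplicity, and the degrees of the remaining $\varphi_k$'s, are non-negative integers whose sum equals $\prod_{k=1}^{d-2}\deg\varphi_k = (d-2)!$. Discarding every term except those coming from the points of $S_d(\la)$, each of which contributes $1 \cdot 1 \cdot 1 = 1$, gives $\#\left(S_d(\la)\right) \le (d-2)!$; combined with the previous paragraph, this proves the proposition.

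The argument is short, and the only step requiring care is the assertion, supplied by Lemma~\ref{lm.4.1}, that the points of $S_d(\la)$ are genuine isolated $0$-dimensional components of $T_d(\la)$. A priori a point of $S_d(\la)$ could lie on a positive-dimensional component of $B_d(\la) = T_d(\la) \setminus S_d(\la)$, in which case it would not appear as a separate term in the Bezout count; the full-rank Jacobian criterion in Lemma~\ref{lm.4.1} rules this out. Once that is granted the estimate is immediate, so I do not anticipate any real obstacle here beyond citing Lemma~\ref{lm.4.1} correctly.
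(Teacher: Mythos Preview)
Your proof is correct and follows essentially the same route as the paper's: use Lemma~\ref{lm.4.1} to see that every point of $S_d(\la)$ is an isolated zero of $\varphi_1,\ldots,\varphi_{d-2}$ with multiplicity $1$, bound $\#\bigl(S_d(\la)\bigr)$ by $\prod_{k=1}^{d-2}\deg\varphi_k=(d-2)!$ via Bezout (Proposition~\ref{pr.4.1}), and then pass to $\Phi_d^{-1}\left(\bar{\la}\right)$ through the surjection $\pi(\la)$ of Proposition~\ref{pr.2.3}. The paper's own proof is just a two-line compression of the same argument.
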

\begin{proof}
 Since the map $\pi(\la): S_d(\la) \to \Phi_d^{-1}\left(\bar{\la}\right)$ is surjective,
 it suffices to show the inequality $\#\left(S_d(\la)\right) \le (d-2)!$ for $\la \in V_d$.
 The following argument is similar to the proof of Proposition~\ref{pr.4.1}.

 Note first that if $C, C'$ are irreducible varieties in $\mathbb{P}^{d-2}$ 
 with $\mathrm{codim}\, C' = 1$ and $C \nsubseteq C'$,
 then all the irreducible components of $C \cap C'$ have codimension $\mathrm{codim}\, C + 1$.
 Hence a component $C \in \mathcal{C}(\varphi_1, \dots, \varphi_k)$ with $\mathrm{codim}\, C < k$ does not ``generate" 
 any elements of $S_d(\la)$ 
 since all the components of $S_d(\la)$ have ``$\mathrm{codim} = d-2$" by the discreteness of $S_d(\la)$.
 Therefore putting $\mathcal{C}_1 = \mathcal{C}(\varphi_1)$ and 
 \begin{align*}
	   \mathcal{C}'_k &:= \{C \in \mathcal{C}_{k} \mid C \nsubseteq \{\varphi_{k+1} = 0\} \}, \\
	   \mathcal{C}_{k+1} &:= \bigcup_{C' \in \mathcal{C}'_k} \left\{ C \mid 
	   C \textrm{ is an irreducible component of } C' \cap \{ \varphi_{k+1}=0\}\right\}
 \end{align*}
 inductively, we have $\mathrm{codim}\, C = k$ for every $C \in \mathcal{C}_k$ and also have
 $\left\{ \{\zeta\} \mid \zeta \in S_d(\la)  \right\} \subseteq \mathcal{C}_{d-2}$.
 Here, note that $\mathcal{C}_k$ and $\mathcal{C}'_k$ above are different from those 
 defined in the proof of Proposition~\ref{pr.4.1}.
 Applying the equalities~(\ref{eq.4.0}) and~(\ref{eq.4.1}) repeatedly, we have 
 \begin{align*}
  \deg \varphi_1 &= \sum_{C_1 \in \mathcal{C}_1} \mult_{C_1}(\varphi_1)\cdot \deg C_1\\
  \deg \varphi_2 \cdot \sum_{C_1 \in \mathcal{C}'_1} \mult_{C_1}(\varphi_1)\cdot \deg C_1
		&= \sum_{C_2 \in \mathcal{C}_2} \mult_{C_2}(\varphi_1,\varphi_2)\cdot \deg C_2\\
  \deg \varphi_3 \cdot \sum_{C_2 \in \mathcal{C}'_2} \mult_{C_2}(\varphi_1,\varphi_2)\cdot \deg C_2
		&= \sum_{C_3 \in \mathcal{C}_3} \mult_{C_3}(\varphi_1,\varphi_2,\varphi_3)\cdot \deg C_3\\
		\vdots\\
  \deg\varphi_{d-2}\cdot\sum_{C_{d-3}\in\mathcal{C}'_{d-3}} \mult_{C_{d-3}}(\varphi_1,\dots,&\varphi_{d-3}) \cdot\deg C_{d-3}\\
		= \sum_{C_{d-2} \in \mathcal{C}_{d-2}} &\mult_{C_{d-2}}(\varphi_1,\dots,\varphi_{d-2})\cdot \deg C_{d-2}
 \end{align*}
 by a similar calculation to~(\ref{eq.4.4}).
 Hence we have
 \[
   \prod_{k=1}^{d-2} \deg \varphi_k
    \geq \sum_{C_{d-2} \in \mathcal{C}_{d-2}} \mult_{C_{d-2}}(\varphi_1,\dots,\varphi_{d-2})\cdot \deg C_{d-2}
    \geq \#\left( \mathcal{C}_{d-2} \right)
    \geq \#\left( S_d(\la) \right).
 \]
 Since $\deg\varphi_k = k$,
 we have $\#\left( S_d(\la) \right) \leq \prod_{k=1}^{d-2} \deg\varphi_k = \prod_{k=1}^{d-2} k = (d-2)!$,
 which completes the proof of the proposition
\end{proof}

Proposition~\ref{pr.4.1} and Lemma~\ref{lm.4.1} imply that
in order to determine the cardinality $\#\left(S_d(\la)\right)$, 
we only need to find the degree $\deg C$ and 
the number $\mult_C(\varphi_1,\ldots,\varphi_{d-l})$
for each $2 \le l \le d-1$
and $C\in\mathcal{C}(\varphi_1,\ldots,\varphi_{d-2})$ 
with $\dim C = l-2$ included in $B_d(\la)$.
To state the explicit expression of the set $B_d(\la)$,
we shall make a definition of $E_d(\mb{I})$ for each $\mb{I} \in \mathfrak{I}(\la)$.
Recall the definition of $\mathfrak{I}(\la)$ for $\la \in V_d$
defined in Definition~\ref{df.1.6}.

\begin{definition}\label{df.4.2}
 Let $\la$ be an element of $V_d$.
 For each $\mb{I}=\{I_1,\ldots,I_l\} \in \mathfrak{I}(\la)$,
 we define the subset $E_d(\mb{I})$ of $\Pd$ by
 \[
  E_d(\mb{I}) := \left\{ (\z_1:\cdots:\z_{d-1})\in\Pd \ \left| \
   \begin{matrix}
    \textrm{If $i,j \in \{1,\ldots,d\}$ belong to
       the same $I_u$} \\
    \textrm{for some $u$, then $\z_i=\z_j$ holds}.
   \end{matrix}
   \right.\right\}.
 \]
\end{definition}
In the definition of $E_d(\mb{I})$, we are assuming $\z_d=0$.
By definition, the relation $\mb{I} \prec \mb{I}'$ holds for
$\mb{I},\mb{I}' \in \mathfrak{I}(\la)$
if and only if
$E_d(\mb{I})\subseteq E_d(\mb{I}')$ holds.
Moreover if $\#\left(\mb{I}\right) = l$,
then $E_d\left(\mb{I}\right)$ is an $(l-2)$-dimensional complex plane
in $\Pd$; hence the degree of $E_d\left(\mb{I}\right)$ is always $1$.
To help the reader to understand the definition of $E_d(\mb{I})$,
we give an example.

\begin{example}\label{ex.1}
 Let us consider again $\la \in V_6$ with $m_1:\cdots :m_6 = 1:1:2:-1:-1:-2$ introduced in Example~\ref{ex.0.1}.
 The notation follows that in Example~\ref{ex.0.1}.
 In this case, we have
 \begin{align*}
  E_6(\mb{I}_1) &= \left\{(\z_1:\z_2:0:\z_1:\z_2) \in \mb{P}^4 \ \left| \
   (\z_1:\z_2) \in \mb{P}^1  \right.\right\}, \\
  E_6(\mb{I}_2) &= \left\{(\z_1:\z_2:0:\z_2:\z_1) \in \mb{P}^4 \ \left| \
   (\z_1:\z_2) \in \mb{P}^1  \right.\right\}, \\
  E_6(\mb{I}_3) &= \{(1:1:0:1:1)\}, \
  E_6(\mb{I}_4)  = \{(1:0:0:1:0)\}, \
  E_6(\mb{I}_5)  = \{(0:1:0:0:1)\}, \\
  E_6(\mb{I}_6) &= \{(1:0:0:0:1)\}, \
  E_6(\mb{I}_7)  = \{(0:1:0:1:0)\} \text{ and }
  E_6(\mb{I}_8)  = \{(0:0:1:1:1)\}.
 \end{align*}
 $E_6 (\mb{I}_1)$ and $E_6 (\mb{I}_2)$ are complex lines in $\mb{P}^4$,
 whereas $E_6 (\mb{I}_{\omega})$ are points for $3 \le\omega \le 8$.
 We have $E_6 (\mb{I}_{\omega})\subset E_6 (\mb{I}_1)$
 for $\omega = 3,4$ and $5$,
 and $E_6 (\mb{I}_{\omega})\subset E_6 (\mb{I}_2)$
 for $\omega = 3,6$ and $7$.
\end{example}

\begin{remark}\label{rm.4.4}
 Since we always have the equality $\sum_{i=1}^d m_i =0$,
 we have
 \[ \textstyle
 \mathfrak{I}(\la) = \left\{ \mb{I} \subseteq \mathcal{I}(\la) \bigm|
                     \coprod_{I\in\mb{I}}I = \{1,\ldots,d\} \right\}
 \ \text{ and } \
  \mathcal{I}(\la)
   = \bigcup_{\mb{I} \in \mathfrak{I}(\la)} \mb{I}.  
 \]
 Hence $\mathfrak{I}(\la)$ gives the equivalent
 information as $\mathcal{I}(\la)$.
\end{remark}

Now we are in a position to give the explicit expression of the set $B_d(\la)$.

\begin{lemma}\label{lm.4.6}
 Let $\la$ be an element of $V_d$.
 Then we have the equality
 \begin{equation}\label{eq.4.6}
  B_d(\la) = \bigcup_{\mb{I} \in \mathfrak{I}(\la)} E_d(\mb{I}).
 \end{equation}
\end{lemma}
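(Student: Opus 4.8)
The plan is to establish the two inclusions in~(\ref{eq.4.6}) directly from the definitions of $T_d(\la)$, $S_d(\la)$ and $E_d(\mb{I})$, using only the two elementary facts that $m_i\ne 0$ for every $i$ and that $\sum_{i=1}^d m_i=0$ (see Remark~\ref{rm.4.4}); throughout we keep the convention $\z_d=0$.

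\textbf{The inclusion $\supseteq$.} Fix $\mb{I}=\{I_1,\ldots,I_l\}\in\mathfrak{I}(\la)$ and a point $\z\in E_d(\mb{I})$. On each block $I_u$ all the coordinates $\z_i$ with $i\in I_u$ agree; call their common value $\eta_u$. Since $I_u\in\mathcal{I}(\la)$ means $\sum_{i\in I_u}m_i=0$, we obtain, for every $k\ge 1$,
\[
 \varphi_k(\z)=\sum_{i=1}^d m_i\z_i^k=\sum_{u=1}^l\Bigl(\sum_{i\in I_u}m_i\Bigr)\eta_u^k=0,
\]
so $\z\in T_d(\la)$. Because $m_i\ne 0$, no singleton belongs to $\mathcal{I}(\la)$, hence every block $I_u$ has $\#(I_u)\ge 2$; picking distinct $i,j$ in one block gives $\z_i=\z_j$, which (reading $j=d$ as $\z_i=0=\z_d$) contradicts the mutual distinctness demanded in the definition of $S_d(\la)$. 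Thus $\z\notin S_d(\la)$, i.e.\ $\z\in B_d(\la)$.

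\textbf{The inclusion $\subseteq$.} Let $\z=(\z_1:\cdots:\z_{d-1})\in B_d(\la)$ and set $\z_d=0$. Let $\eta_1,\ldots,\eta_l$ be the pairwise distinct values occurring among $\z_1,\ldots,\z_d$, and put $I_u:=\{i\mid\z_i=\eta_u\}$, so $I_1\amalg\cdots\amalg I_l=\{1,\ldots,d\}$. Since $\z\notin S_d(\la)$, two of the $\z_i$ coincide, so $l\le d-1$; since $\z$ is a genuine point of $\Pd$, some $\z_i$ with $i\le d-1$ is nonzero while $\z_d=0$, so $l\ge 2$. Writing $c_u:=\sum_{i\in I_u}m_i$, the vanishing of $\varphi_k(\z)$ for $1\le k\le d-2$ together with $\sum_{i=1}^d m_i=0$ (the case $k=0$) yields
\[
 \sum_{u=1}^l c_u\,\eta_u^{\,k}=0\qquad(0\le k\le d-2).
\]
In particular these hold for $0\le k\le l-1$ (as $l-1\le d-2$), and the associated $l\times l$ coefficient matrix $(\eta_u^{\,k})_{0\le k\le l-1,\;1\le u\le l}$ is Vandermonde in the distinct $\eta_u$, hence invertible; therefore $c_u=0$ for every $u$. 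So each $I_u$ is a nonempty proper subset of $\{1,\ldots,d\}$ (proper since $l\ge 2$) with $\sum_{i\in I_u}m_i=0$, i.e.\ $I_u\in\mathcal{I}(\la)$, whence $\mb{I}:=\{I_1,\ldots,I_l\}\in\mathfrak{I}(\la)$; and $\z\in E_d(\mb{I})$ by construction. This completes~(\ref{eq.4.6}).

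\textbf{Main point.} There is no deep obstacle here; the crux is the use of $\sum_{i=1}^d m_i=0$, which supplies the extra equation ($k=0$) turning the system for $c_1,\ldots,c_l$ into a square Vandermonde system and forcing all $c_u$ to vanish. This is exactly why the statement concerns $B_d(\la)$ rather than all of $T_d(\la)$: for $\z\in S_d(\la)$ one has $l=d$, and the $d-1$ available equations genuinely fail to determine the $c_u$. The only other care needed is the bookkeeping around the convention $\z_d=0$ and the block containing the index $d$, which is what ties ``coincidences among fixed points'' to membership in $\mathcal{I}(\la)$.
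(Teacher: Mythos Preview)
Your proof is correct and follows essentially the same approach as the paper's: partition the coordinates of a point $\z\in B_d(\la)$ by their common values, use the defining equations $\varphi_k(\z)=0$ together with the automatic relation $\sum_i m_i=0$ to obtain a Vandermonde system forcing $\sum_{i\in I_u} m_i=0$ for each block, and observe that the reverse inclusion is immediate. You are merely more explicit than the paper about the $\supseteq$ direction (which the paper declares ``clear'') and about the role of the $k=0$ equation.
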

More strictly, $B_d(\la)$ is a union of $E_d(\mb{I})$ only for
maximal elements $\mb{I}$ of $\mathfrak{I}(\la)$ as set.
However as we will see later in Example~\ref{ex.3},
it is better to consider components $E_d(\mb{I})$ for $\mb{I}$
which are not necessarily maximal in $\mathfrak{I}(\la)$.
Note that the equality~(\ref{eq.4.6}) is only an equality as set.
\begin{proof}
 For any point $\z_0 = (\z_1: \cdots : \z_{d-1}) \in B_d(\la)$,
 we put
 \[
  \mb{I}(\z_0) := \left\{ I \subsetneq \{1,2,\ldots,d\} \ \left| \
   \begin{matrix}
    I \ne \emptyset. \qquad
    \textrm{If $i,j\in I$, then $\z_i = \z_j$.} \\
    \textrm{If $i \in I$ and $j \in \{1,2,\ldots,d\} \setminus I$,
      then $\z_i \ne \z_j$}.
   \end{matrix}
  \right.\right\},
 \]
 $\#\left(\mb{I}(\z_0)\right) =: l$,
 $\mb{I}(\z_0) =: \left\{I_1,\ldots, I_l \right\}$
 and $\alpha_u := \z_i$
 for $i\in I_u$ for each $1 \le u \le l$.
 Then by definition,
 $\{1,2,\ldots,d\}$ is a disjoint union of $I_1,\ldots, I_l$, and
 $\alpha_1,\ldots,\alpha_l$ are mutually distinct, one of which is 
 zero since $\z_d=0$ and $d \in I_u$ for some $1 \le u \le l$.
 Moreover
 since $\z_0\in B_d(\la)$,
 we have $2 \le l \le d-1$.

 Under the notation above,
 the defining equations
 $\varphi_k(\z_0) = \sum_{u=1}^l \left(\sum_{i\in I_u} m_i\right)\alpha_u^k =0$
 for $1 \le k \le d-2$
 are equivalent to the equality
 \begin{equation*}
  \begin{pmatrix}
   1              & \cdots & 1             \\
   \alpha_1       & \cdots & \alpha_l      \\
   \vdots         & \ddots & \vdots        \\
   \alpha_1^{d-2} & \cdots & \alpha_l^{d-2}
  \end{pmatrix}
  \begin{pmatrix}
   \sum_{i\in I_1} m_i \\
   \vdots              \\
   \sum_{i\in I_l} m_i
  \end{pmatrix}
  = \begin{pmatrix}
     0 \\ \vdots \\ 0
    \end{pmatrix},
 \end{equation*}
 which implies
  $\sum_{i\in I_u} m_i = 0$
 for $1 \le u \le l$ since $l \le d-1$.
 Therefore we have $\mb{I}(\z_0)\in\mathfrak{I}(\la)$
 and
 $\z_0 \in E_d\left(\mb{I}(\z_0)\right)$ for any $\z_0\in B_d(\la)$,
 which assures
  $B_d(\la) \subseteq 
   \bigcup_{\mb{I} \in \mathfrak{I}(\la)} E_d(\mb{I})$.
 The opposite inclusion relation is clear,
 which completes the proof of the lemma.
\end{proof}

\begin{proposition}\label{pr.mt4}
 The assertion~(\ref{en.mt4}) in Main Theorem~\ref{mthm.1} holds.
\end{proposition}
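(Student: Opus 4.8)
The plan is to route the computation through the bijection $\overline{\pi(\la)}:S_d(\la)/\mathfrak{S}\left(\mathcal{K}(\la)\right)\stackrel{\cong}{\to}\Phi_d^{-1}\left(\bar{\la}\right)$ of Proposition~\ref{pr.2.3} and to treat two ingredients separately: the cardinality of $S_d(\la)$, and the action of $\mathfrak{S}\left(\mathcal{K}(\la)\right)$ on $S_d(\la)$. As a preliminary combinatorial reduction, Remark~\ref{rm.4.4} gives $\mathcal{I}(\la)=\bigcup_{\mb{I}\in\mathfrak{I}(\la)}\mb{I}$, so $\mathcal{I}(\la)=\emptyset\iff\mathfrak{I}(\la)=\emptyset$; and Lemma~\ref{lm.4.6} gives $B_d(\la)=\bigcup_{\mb{I}\in\mathfrak{I}(\la)}E_d(\mb{I})$, where each $E_d(\mb{I})$ is a nonempty linear subspace of $\Pd$, so $B_d(\la)=\emptyset\iff\mathfrak{I}(\la)=\emptyset$. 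Thus $\mathcal{I}(\la)=\emptyset\iff B_d(\la)=\emptyset\iff T_d(\la)=S_d(\la)$.

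The first main step is the equivalence $\#\left(S_d(\la)\right)=(d-2)!\iff B_d(\la)=\emptyset$. Apply Proposition~\ref{pr.4.1} to $\varphi_1,\ldots,\varphi_{d-2}$ on $\Pd$: every point of $S_d(\la)$ is an isolated point of $T_d(\la)$, hence an element of $\mathcal{C}(\varphi_1,\ldots,\varphi_{d-2})$ with $\deg=1$ and $\mult_{\z_0}(\varphi_1,\ldots,\varphi_{d-2})=1$ by Lemma~\ref{lm.4.1}, so it contributes exactly $1$ to~(\ref{eq.4.2}); every remaining element $C$ of $\mathcal{C}(\varphi_1,\ldots,\varphi_{d-2})$ contributes the positive integer $\deg C\cdot\mult_C(\varphi_1,\ldots,\varphi_{l_C})\cdot\prod_{k=l_C+1}^{d-2}\deg\varphi_k$ and lies inside $B_d(\la)$ (any element of $\mathcal{C}(\varphi_1,\ldots,\varphi_{d-2})$ is either a point of $T_d(\la)$ or is contained in one of the subspaces $E_d(\mb{I})$; since the latter lie in $B_d(\la)$ and the points of $T_d(\la)$ in $B_d(\la)$ are disjoint from $S_d(\la)$, an element of $\mathcal{C}$ meeting $S_d(\la)$ must be a point of $S_d(\la)$). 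Hence $(d-2)!=\prod_{k=1}^{d-2}\deg\varphi_k=\#\left(S_d(\la)\right)+\bigl(\text{sum of the contributions of the elements of }\mathcal{C}\text{ inside }B_d(\la)\bigr)$, so $\#\left(S_d(\la)\right)=(d-2)!$ precisely when $\mathcal{C}(\varphi_1,\ldots,\varphi_{d-2})$ has no element inside $B_d(\la)$, i.e. when $B_d(\la)=\emptyset$; here one uses that a nonempty $B_d(\la)$ always contains a maximal $E_d(\mb{I}_0)$, which is an irreducible component of $T_d(\la)$ and hence an element of $\mathcal{C}(\varphi_1,\ldots,\varphi_{d-2})$. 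Combined with the previous paragraph, $\#\left(S_d(\la)\right)=(d-2)!\iff\mathcal{I}(\la)=\emptyset$.

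It remains to account for the group action. If the $\la_i$ are mutually distinct then $\mathfrak{S}\left(\mathcal{K}(\la)\right)=\{e\}$, so $\#\left(\Phi_d^{-1}\left(\bar{\la}\right)\right)=\#\left(S_d(\la)\right)$, which by the above equals $(d-2)!$ exactly when $\mathcal{I}(\la)=\emptyset$; this settles the ``if'' direction. For ``only if'', assume $\#\left(\Phi_d^{-1}\left(\bar{\la}\right)\right)=(d-2)!$. From $\#\left(\Phi_d^{-1}\left(\bar{\la}\right)\right)=\#\left(S_d(\la)/\mathfrak{S}\left(\mathcal{K}(\la)\right)\right)\le\#\left(S_d(\la)\right)\le(d-2)!$, where the last inequality is Proposition~\ref{pr.mt1}, both inequalities must be equalities; therefore $\#\left(S_d(\la)\right)=(d-2)!$ (so $\mathcal{I}(\la)=\emptyset$ by the above) and $\mathfrak{S}\left(\mathcal{K}(\la)\right)$ fixes every point of $S_d(\la)$, a set which is nonempty since $(d-2)!\ge2$. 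If now $\la_a=\la_b$ for some $a\ne b$, the transposition $(a\,b)$ lies in $\mathfrak{S}\left(\mathcal{K}(\la)\right)$; applying the explicit action of Proposition~\ref{pr.2.3} to any $\z=(\z_1:\cdots:\z_{d-1})\in S_d(\la)$, where $\z_d=0$ and $\z_1,\ldots,\z_{d-1},0$ are mutually distinct, shows $(a\,b)\cdot\z\ne\z$ --- in the case $d\notin\{a,b\}$ because this merely swaps two distinct nonzero coordinates, and in the case $d\in\{a,b\}$ because an equality $(a\,b)\cdot\z=\z$ would force all $d-2\ge2$ remaining coordinates $\z_k$ to equal $\tfrac12\z_a$ --- contradicting the triviality of the action. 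Hence the $\la_i$ are mutually distinct, which completes the proof.

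The step I expect to demand the most care is the bookkeeping in the Bezout equivalence: one must verify precisely that a maximal $E_d(\mb{I}_0)$ is an irreducible component of $T_d(\la)$ (so that it genuinely appears in $\mathcal{C}(\varphi_1,\ldots,\varphi_{d-2})$, contributes a strictly positive term to~(\ref{eq.4.2}), and is not already one of the points of $S_d(\la)$), that every element of $\mathcal{C}(\varphi_1,\ldots,\varphi_{d-2})$ not equal to a point of $S_d(\la)$ lies inside $B_d(\la)$, and that the hypothesis~(\ref{eq.4.3.0}) of Proposition~\ref{pr.4.1} holds for $\varphi_1,\ldots,\varphi_{d-2}$. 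The case analysis producing $(a\,b)\cdot\z\ne\z$ is elementary, but is the one place where the standing assumption $d\ge4$ is used (it fails for $d=3$, in accordance with $\Phi_3$ being bijective).
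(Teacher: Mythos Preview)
Your proof is correct and follows essentially the same approach as the paper's: reduce via Proposition~\ref{pr.2.3} to the two conditions $B_d(\la)=\emptyset$ (handled via Lemma~\ref{lm.4.1}, Lemma~\ref{lm.4.6}, and B\'ezout) and triviality of the $\mathfrak{S}(\mathcal{K}(\la))$-action (handled via the transposition argument using $d\ge 4$). You have simply filled in more of the details that the paper leaves implicit, including the case split for $(a\,b)\cdot\z\ne\z$ and the honest flag that the strict inequality $\#(S_d(\la))<(d-2)!$ when $B_d(\la)\ne\emptyset$ rests on knowing that a maximal $E_d(\mb{I}_0)$ really contributes positively---which the paper, like you, treats as part of ``intersection theory on $\Pd$'' at this stage.
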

\begin{proof}
 By Proposition~\ref{pr.2.3}, the equality $\#\left(\Phi_d^{-1}\left(\bar{\la}\right)\right) =(d-2)!$
 holds if and only if $\#\left( S_d(\la) \right) = (d-2)!$ holds and that
 the action of $\mathfrak{S}\left(\mathcal{K}(\la)\right)$
 on $S_d(\la)$ is trivial.
 Here, if $\la_i=\la_j$ holds for some $i\ne j$, then
 the action of
 the permutation $(i,j)\in \mathfrak{S}\left(\mathcal{K}(\la)\right)$
 on $S_d(\la)$ is not trivial since $d \ge 4$.
 Hence the action of $\mathfrak{S}\left(\mathcal{K}(\la)\right)$
 on $S_d(\la)$ is trivial if and only if $\la_1,\dots,\la_d$ are mutually distinct.
 Moreover by Lemma~\ref{lm.4.6}, $\mathcal{I}(\la)$ is empty
 if and only if $B_d(\la)$ is empty.
 Hence to complete the proof of the proposition,
 we only need to show that the condition $\#\left( S_d(\la) \right) = (d-2)!$ is equivalent to 
 the condition that $B_d(\la)$ is empty.

 In the following, we use notations defined in the proof of Proposition~\ref{pr.mt1}.
 Looking at the proof of Proposition~\ref{pr.mt1} carefully, 
 we can find that the condition $\#\left( S_d(\la) \right) = (d-2)!$ is equivalent to the conditions
 \begin{equation}\label{eq.4.7}
  \mathcal{C}'_k = \mathcal{C}_k \text{ for } 1 \leq k \leq d-3 \quad \text{and}  \quad 
  \mathcal{C}_{d-2}=S_d(\la),
 \end{equation}
 since $\deg P = 1$ for a point $P$ and $\mult_{\zeta}(\varphi_1,\dots,\varphi_{d-2})=1$ for $\zeta \in S_d(\la)$.
 Here, we identify $\zeta \in S_d(\la)$ with $\{\zeta\}$ by abuse of notation.

 If the conditions $\mathcal{C}'_k = \mathcal{C}_k$ hold for every $1 \leq k \leq d-3$, 
 then the set of common zeros of $\varphi_1,\dots,\varphi_{d-2}$, 
 which we denote by $T_d(\la)$ in this paper, consists of discrete points;
 hence we have $T_d(\la) = B_d(\la) \amalg S_d(\la) = \mathcal{C}_{d-2}$. 
 Therefore in this case, $\mathcal{C}_{d-2}=S_d(\la)$
 holds if and only if $B_d(\la)$ is empty.

 On the other hand, if $\mathcal{C}'_k \subsetneq \mathcal{C}_k$ for some $1 \leq k \leq d-3$, then 
 for $C_k \in \mathcal{C}_k \setminus \mathcal{C}'_k$, all the irreducible components of 
 $C_k \cap \{\varphi_{k+2}=\dots = \varphi_{d-2}=0\}$ have codimension greater than or equal to $1$.
 Hence in this case $T_d(\la) = B_d(\la) \amalg S_d(\la)$ contains components greater than or equal to $1$. 
 Since $S_d(\la)$ consists of discrete points, $B_d(\la)$ is not empty.

 To summarize, we have shown that the condition~(\ref{eq.4.7}) is equivalent to the condition that $B_d(\la)$ is empty,
 which completes the proof of the proposition.
\end{proof}

\vspace{12pt}

In the rest of this section we give an example and some theorems
that exactly give the number $\mult_C(\varphi_1,\ldots,\varphi_{d-l})$
for each $C\in \mathcal{C}(\varphi_1,\ldots,\varphi_{d-2})$ with $\dim C = l-2$.
However their proofs,
which are the most crucial and difficult part 
in the proof of the main theorems,
will be given later in Sections~\ref{sec.5} and~\ref{sec.6}.

\begin{vartheorem}\label{thm.A}
 Let $\la$ be an element of $V_d$, and $\mb{I} = \{I_1,\ldots,I_l\}$
 a maximal element of $\mathfrak{I}(\la)$.
 Then $E_d(\mb{I})$
 is an irreducible component of the common zeros
 of $\varphi_1,\ldots, \varphi_{d-l}$ with
 its intersection multiplicity
 \[
  \mult_{E_d(\mb{I})}(\varphi_1,\ldots,\varphi_{d-l}) = 
   \prod_{u=1}^{l} \left(\#(I_u) -1\right)!.
 \]
\end{vartheorem}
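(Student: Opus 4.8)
The plan is to compute $\mult_{E_d(\mb{I})}(\varphi_1,\ldots,\varphi_{d-l})$ by localizing at a generic point of $E_d(\mb{I})$: recalling from Section~\ref{sec.3} that this multiplicity equals the length of $\mathcal{O}_{H,p}/(\varphi_1|_H,\ldots,\varphi_{d-l}|_H)$ for a generic $p\in E_d(\mb{I})$ and a transverse slice $H$ of complementary dimension, the problem becomes a local colength computation which I expect to factor as a product over the blocks $I_u$. First I would record the easy half: since $\mb{I}\in\mathfrak{I}(\la)$ we have $\sum_{i\in I_u}m_i=0$ for each $u$, so substituting $\z_i=\alpha_u$ ($i\in I_u$) into $\varphi_k=\sum_i m_i\z_i^k$ gives $\sum_u\bigl(\sum_{i\in I_u}m_i\bigr)\alpha_u^k=0$; hence $E_d(\mb{I})\subseteq\{\varphi_1=\cdots=\varphi_{d-l}=0\}$. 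As $E_d(\mb{I})$ is a linear space of codimension $d-l$, to conclude that it is in fact an irreducible component of $\{\varphi_1=\cdots=\varphi_{d-l}=0\}$ it suffices to check that over a generic point the common zero locus is $0$-dimensional, which will come out of the local computation.

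Next I would set up coordinates on the slice. Fix generic pairwise-distinct values $\alpha_u$ (the one for the block $I_{u_0}\ni d$ being forced to $0$ by $\z_d=0$), pick a distinguished index $i_u^*\in I_u$ in each block with $i_{u_0}^*=d$, and take $H$ to be cut out by freezing $\z_{i_u^*}=\alpha_u$; its coordinates are the $\sum_u(\#(I_u)-1)=d-l$ quantities $w_i:=\z_i-\alpha_{u(i)}$ over non-distinguished $i$. Expanding $\varphi_k|_H=\sum_{i}m_i(\alpha_{u(i)}+w_i)^k$ and using $\sum_{i\in I_u}m_i=0$ gives
\[
 \varphi_k|_H=\sum_{u=1}^{l}\ \sum_{j=1}^{k}\binom{k}{j}\alpha_u^{\,k-j}\,\psi_{u,j},
 \qquad \psi_{u,j}:=\sum_{i\in I_u}m_i w_i^{\,j},
\]
where $\psi_{u,j}$ is a homogeneous form of degree $j$ in the block-$I_u$ coordinates alone (and the block-$I_{u_0}$ contribution to $\varphi_k|_H$ is exactly $\psi_{u_0,k}$, since $\alpha_{u_0}=0$). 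The reduced claim is then purely local: at the origin of $H$, which is $p$, one has $\dim_{\mb{C}}\mathcal{O}_{H,0}/(\varphi_1|_H,\ldots,\varphi_{d-l}|_H)=\prod_{u=1}^l(\#(I_u)-1)!$, and the origin is isolated.

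The per-block ingredient is that for $m:=\#(I_u)-1$ the homogeneous forms $\psi_{u,1},\ldots,\psi_{u,m}$ in the $m$ block-$I_u$ variables form a regular sequence with common zero locus $\{0\}$, so $\prod_{j=1}^{m}j=m!$ is their colength; since these forms involve disjoint sets of variables for different $u$, the full decoupled system $\{\psi_{u,j}:1\le u\le l,\ 1\le j\le\#(I_u)-1\}$ is again a regular sequence on $H$ with colength $\prod_u(\#(I_u)-1)!$. This is precisely where the maximality of $\mb{I}$ enters: from the partial-fraction identity $\sum_{i\in I_u}\frac{m_i}{1-w_ix}=\psi_{u,1}x+\psi_{u,2}x^2+\cdots$ (with the convention $w_{i_u^*}=0$) one sees that a point of $V(\psi_{u,1},\ldots,\psi_{u,m})$ corresponds to a partition of $I_u$ by the values of the $w_i$ in which every part has vanishing $m$-sum; if $\mb{I}$ is maximal then no proper non-empty subset of $I_u$ has vanishing $m$-sum, so the only such partition is the trivial one and all $w_i=0$.

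Finally I must pass from the decoupled forms $\psi_{u,j}$ to the genuine equations $\varphi_k|_H$, which mix the blocks and the degrees through the Vandermonde-type matrix $\bigl(\binom{k}{j}\alpha_u^{\,k-j}\bigr)$; I expect this to be the main obstacle. One cannot in general realize the individual $\psi_{u,j}$ as leading forms of suitable $\mb{C}$-linear combinations of the $\varphi_k|_H$ (there are not enough combinations once some block is large), so a sharper argument is needed — for instance organizing the $\varphi_k|_H$ into a filtration compatible with both the block decomposition and the weight grading and comparing associated graded rings, or degenerating $\la$ to a nearby $\la'$ whose combinatorial data retain only $\mb{I}$ and its coarsenings (the latter sitting inside $E_d(\mb{I})$ in strictly lower dimension) and then using upper semicontinuity of the local colength together with the Bézout-type identity of Proposition~\ref{pr.4.1} to force equality. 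Once the local colength is pinned to $\prod_u(\#(I_u)-1)!$ and the origin is seen to be isolated on $H$, both assertions of the theorem follow at once.
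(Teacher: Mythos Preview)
Your setup, transverse-slice coordinates, binomial expansion $\varphi_k|_H=\sum_{u,h}\binom{k}{h}\alpha_u^{k-h}p_{u,h}$ (your $\psi_{u,j}$ is the paper's $p_{u,j}$), and per-block multiplicity computation coincide exactly with the paper's Section~\ref{sec.5}; the paper also uses maximality of $\mb{I}$ precisely as you do to show that the decoupled system $\{p_{u,k}=0:1\le k\le r_u\}$ has the origin as its unique zero with multiplicity $r_u!=(\#(I_u)-1)!$ (Lemma~\ref{lm.5.10}). You have also correctly diagnosed the coupling as the crux and correctly observed that a naive leading-form argument fails, since the cross terms $p_{v,h}$ with $r_v+1\le h<r_u$ can have lower degree than the main term $p_{u,r_u}$.

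Where your outline stops, the paper proceeds in two concrete steps. First (Proposition~\ref{pr.5.4}, Lemma~\ref{lm.5.5}) it computes the determinant of the $(d-l)\times(d-l)$ coefficient matrix $\bigl(\binom{k}{h}\alpha_u^{k-h}\bigr)$, rows indexed by $1\le k\le d-l$ and columns by pairs $(u,h)$ with $1\le h\le r_u$, obtaining $\frac{(d-l)!}{r_1!\cdots r_l!}\prod_{v<u}(\alpha_u-\alpha_v)^{r_vr_u}\ne 0$; hence the system $\psi_1=\cdots=\psi_{d-l}=0$ is equivalent to one of the shape
\[
 p_{u,k}(\xi_u)=\sum_{v=1}^{l}\sum_{h=r_v+1}^{d-l}a_{u,k,v,h}\,p_{v,h}(\xi_v)\qquad(1\le u\le l,\ 1\le k\le r_u).
\]
Second (Proposition~\ref{pr.5.11}) it treats the $a_{u,k,v,h}$ as free parameters and proves a properness estimate for the resulting family of maps, so the local multiplicity at $0$ is constant in $a$ and hence equals its value $\prod_u r_u!$ at $a=0$. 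The estimate uses the block-weighted gauge $\|\xi\|:=\max_u|\xi_u|^{r_u}$: Lemma~\ref{lm.5.10} gives $\max_{k\le r_u}|p_{u,k}(\xi_u)|\ge\delta_u|\xi_u|^{r_u}$, while each perturbing $p_{v,h}$ with $h\ge r_v+1$ is $O(\|\xi\|^{1+\mu})$ with $\mu=1/\max_u r_u$, so the right-hand sides cannot cancel the left near $0$. This is the analytic realization of your first suggestion (a block-weighted filtration), made precise. Your second suggestion --- degenerating $\la$ while retaining $\mb{I}$ --- does not by itself decouple the blocks: for any such $\la'$ the local system on the slice has the same coupled form, and the B\'ezout identity of Proposition~\ref{pr.4.1} then relates the unknown multiplicity to the equally unknown $\#(S_d(\la'))$, so no new information is gained.
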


\begin{example}\label{ex.3}
 We consider again $\la \in V_6$ introduced in Examples~\ref{ex.0.1} and~\ref{ex.1}.
 The notation follows that in Examples~\ref{ex.0.1} and~\ref{ex.1} again.
 In this case, we have $\Phi_6^{-1}\left(\bar{\la}\right)=\emptyset$
 by the assertion~(\ref{en.mt5}) in Main Theorem~\ref{mthm.1},
 which implies $S_6(\la) = \emptyset$.
 Hence in this example,
 we verify
 $S_6(\la) = \emptyset$ by the calculation of intersection multiplicities.

 By Example~\ref{ex.1} and Lemma~\ref{lm.4.6}, we have
  $B_6(\la) = E_6(\mb{I}_1) \cup E_6(\mb{I}_2) \cup E_6(\mb{I}_8)$
 as set. Moreover by Theorem~\ref{thm.A}, we have
  $\mult_{E_6(\mb{I}_{\omega})}(\varphi_1,\varphi_2,\varphi_3)
   = \left((2-1)!\right)^3 = 1$
 for $\omega = 1,2$, and 
 $\mult_{E_6(\mb{I}_8)}(\varphi_1,\varphi_2,\varphi_3,\varphi_4)
  = \left((3-1)!\right)^2 = 4$.
 Hence the common zeros of $\varphi_1,\varphi_2$ and $\varphi_3$
 are composed of 
 $E_6(\mb{I}_1),E_6(\mb{I}_2)$ and some curve $C$ whose degree is
  $\deg C = 3! - (1+1) = 4$.
 Moreover since
  $\deg C \cdot \deg \varphi_4 = 4 \cdot 4 = 16$,
 we have
 \[ 
  \#\left(S_6(\la)\right) = 16 - 
   \sum_{\z \in C \cap \{\varphi_4(\z) =0\} \cap B_6(\la)}
    \mult_{\{\z\}}(C,\varphi_4)
 \]
 by~(\ref{eq.4.0}).
 Here, we have
 $E_6(\mb{I}_8) \subseteq C \cap \{\varphi_4(\z) =0\} \cap B_6(\la)$
 with $\mult_{E_6(\mb{I}_8)}(C,\varphi_4) =4$.

 What occurs in the difference~``$16-4=12$''?
 It appears to be correct that $\#\left(S_6(\la)\right) = 12$;
 however this is not the case.
 In practice, the curve $C$ intersects the lines $E_6(\mb{I}_1)$ and 
 $E_6(\mb{I}_2)$.
 Precisely,
 the intersection points of the two curves $C$ and $E_6(\mb{I}_1)$
 are $E_6(\mb{I}_4)$ and $E_6(\mb{I}_5)$,
 while those of $C$ and $E_6(\mb{I}_2)$
 are $E_6(\mb{I}_6)$ and $E_6(\mb{I}_7)$;
 these four points do belong
 to the intersection $C \cap \{\varphi_4(\z) =0\} \cap B_6(\la)$.
 Moreover as we will see in Theorem~\ref{thm.B},
 we have
 $\mult_{E_6(\mb{I}_{\omega})}(C,\varphi_4)=
  \mult_{E_6(\mb{I}_{\omega})}(\varphi_1,\ldots,\varphi_4) = 3$
 for $4 \le \omega \le 7$.
 We thus have the equality
  $16 - (4 + 3 + 3 + 3 + 3) = 0$,
 which assures that $S_6(\la)$ is empty
 and that the intersection points of $C$ and $\{\varphi_4(\z)=0\}$ are
 $E_6(\mb{I}_{\omega})$ for $4 \le \omega \le 8$,
 which does not cause any contradiction.
 To summarize,
 the family $\mathcal{C}(\varphi_1,\varphi_2,\varphi_3,\varphi_4)$
 consists of $E_6(\mb{I}_{\omega})$ for $\omega=1,2,4,5,6,7$ and $8$,
 and 
 the equality
 \[
  4! - (1\cdot 4 + 1\cdot 4 + 3 + 3 + 3 +3 + 4) = 0
 \]
 implies that $S_6(\la)$ is an empty set.
 
 As a conclusion of Example~\ref{ex.3},
 we comment about the component $E_6(\mb{I}_3)$.
 The point $E_6(\mb{I}_3)$ may also appear as an element of 
 $\mathcal{C}(\varphi_1,\varphi_2,\varphi_3,\varphi_4)$.
 However by Theorem~\ref{thm.B} below, we have
  $\mult_{E_6(\mb{I}_3)}(\varphi_1,\varphi_2,\varphi_3,\varphi_4) = 0$,
 which means that in practice $E_6(\mb{I}_3)$ is not
 an element of $\mathcal{C}(\varphi_1,\varphi_2,\varphi_3,\varphi_4)$.
\end{example}

By Example~\ref{ex.3}, we found that
in order to count the number of the set $S_d(\la)$,
we must also consider the ``intersection multiplicities'' of ``components''
which are proper subsets of $E_d(\mb{I})$
for some maximal $\mb{I} \in \mathfrak{I}(\la)$.

To state Theorem~\ref{thm.B}, we need the following symbol:

\begin{definition}
 For $\la=(\la_1,\ldots,\la_d) \in V_d$ and $I \in \mathcal{I}(\la)$,
 we put $\la_I := (\la_i)_{i \in I}$.
\end{definition}
Note that $\la_I$ always belongs to $V_{\#(I)}$ by definition.

\begin{vartheorem}\label{thm.B}
 Let $\la$ be an element of $V_d$.  Then
 \begin{enumerate}
  \item we have
 $\left\{C\in\mathcal{C}(\varphi_1,\ldots,\varphi_{d-2}) \bigm|
  C \subseteq B_d(\la)\right\}
  \subseteq \left\{E_d(\mb{I})\bigm| \mb{I} \in \mathfrak{I}(\la)\right\}$.
  \item For any $2 \le l \le d-1$
 we have\\ \hspace*{35pt}
 $\left\{C\in\mathcal{C}(\varphi_1,\ldots,\varphi_{d-l})\bigm|
  \dim C > l-2 \right\}
  \subseteq\left\{E_d(\mb{I})\bigm| \mb{I} \in \mathfrak{I}(\la)\right\}$.
  \item For any $\mb{I} = \{I_1,\ldots,I_l\} \in \mathfrak{I}(\la)$,
 we have
 \begin{equation}\label{eq.4.B}
  \mult_{E_d(\mb{I})}(\varphi_1,\ldots,\varphi_{d-l})
   = \prod_{u=1}^l \biggl( 
   \Bigl( \#\left(I_u\right) -1 \Bigr) \cdot
   \#\Bigl(S_{\#(I_u)}\left(\la_{I_u}\right)\Bigr)  \biggr),
 \end{equation}
 where the cardinality $\#\bigl(S_{\#(I_u)}\left(\la_{I_u}\right)\bigr)$
 is defined to be $1$
 if $\#(I_u)$ is equal to or smaller than $3$.
 \end{enumerate}
\end{vartheorem}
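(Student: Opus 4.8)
The plan is to dispatch parts~(1) and~(2) by a dimension count resting on Theorem~\ref{thm.A} and Lemma~\ref{lm.4.6}, and to concentrate the real effort on the multiplicity formula~(\ref{eq.4.B}), which I would prove by an explicit local analysis at a generic point of $E_d(\mb{I})$ in which the sets $S_{\#(I_u)}(\la_{I_u})$ appear recursively. For~(1) and~(2): each $\varphi_k$ vanishes identically on every $E_d(\mb{I})$, and by Theorem~\ref{thm.A} each $E_d(\mb{I})$ with $\mb{I}$ maximal is an irreducible component of $\{\varphi_1=\cdots=\varphi_{d-\#(\mb{I})}=0\}$, so the varieties $E_d(\mb{I})$ persist as members of $\mathcal{C}(\varphi_1,\ldots,\varphi_{d-l})$; conversely, if $C\in\mathcal{C}(\varphi_1,\ldots,\varphi_{d-l})$ has $\dim C>l-2$, then on a dense open subset of $C$ the coincidence pattern of $(\z_1,\ldots,\z_d)$ equals a fixed partition $\mb{I}$ with $\#(\mb{I})-2\ge\dim C>l-2$, so that $C\subseteq E_d(\mb{I})$, and restricting $\varphi_k|_C\equiv 0$ to $E_d(\mb{I})$ together with the linear independence of the monomials $\alpha_v^{\,k}$ ($1\le k\le d-l$, $v$ ranging over the blocks not containing the index~$d$) forces $\sum_{i\in I_v}m_i=0$ for every block, hence $\mb{I}\in\mathfrak{I}(\la)$ and then $C=E_d(\mb{I})$ by irreducibility and dimension; part~(1) is the case $l=2$.

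For~(3) I would fix a generic point $p=(\alpha_1:\cdots:\alpha_l)$ of $E_d(\mb{I})$, with the $\alpha_u$ mutually distinct and one of them equal to $0$, and a transverse slice $H$ of dimension $d-l$. In the natural affine chart the substitution $\z_i=\alpha_{u(i)}+\epsilon_i$ for $i\in I_u$, normalised by $\epsilon_{b_u}=0$ for a chosen base index $b_u\in I_u$ of each block, turns the defining equations into
\[
 \varphi_k=\sum_{u=1}^{l}\sum_{j\ge 1}\binom{k}{j}\,\alpha_u^{\,k-j}\,\psi_{u,j},
 \qquad \psi_{u,j}:=\sum_{i\in I_u}m_i\,\epsilon_i^{\,j},
\]
the $j=0$ contribution having vanished because $\sum_{i\in I_u}m_i=0$. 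Exploiting the Vandermonde structure in the distinct $\alpha_u$, I would perform invertible row operations on $\varphi_1,\ldots,\varphi_{d-l}$ to peel off, block by block and in increasing order of $j$, a subsystem of $\sum_u(\#(I_u)-2)=d-2l$ equations whose leading parts are the forms $\psi_{u,j}$ with $1\le j\le\#(I_u)-2$, leaving $l$ coupling equations with leading parts $\sum_u\binom{k}{\#(I_u)-1}\alpha_u^{\,k-\#(I_u)+1}\psi_{u,\#(I_u)-1}$. The first subsystem, whose blocks involve pairwise disjoint groups of variables $\epsilon_{I_u}$, is to leading order a copy of the system defining $T_{\#(I_u)}(\la_{I_u})$ coned off within each group; its zero locus near $p$ is therefore a union of $\prod_u\#\bigl(S_{\#(I_u)}(\la_{I_u})\bigr)$ linear strata with one coordinate $t_u$ per block, indexed by a choice of $\xi^{(u)}\in S_{\#(I_u)}(\la_{I_u})$ for each $u$, the remaining degenerate part of each block cone corresponding to $B_{\#(I_u)}(\la_{I_u})$ and --- as one must check --- not contributing to the multiplicity along $E_d(\mb{I})$.

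On each such stratum, writing $\epsilon_{I_u}=t_u\,\xi^{(u)}$, the block-$u$ part of $\varphi_k$ becomes $\Psi_u\binom{k}{\#(I_u)-1}\alpha_u^{\,k-\#(I_u)+1}t_u^{\,\#(I_u)-1}$ up to higher order, where $\Psi_u:=\psi_{u,\#(I_u)-1}(\xi^{(u)})\ne 0$ --- this non-vanishing being exactly the $\rho\ne 0$ of Proposition~\ref{pr.2.3} applied to $\la_{I_u}$. Substituting $s_u:=\Psi_u t_u^{\,\#(I_u)-1}$, an $(\#(I_u)-1)$-to-one branched cover which contributes the factor $\prod_u(\#(I_u)-1)$, the coupling system becomes $l$ equations in the $l$ variables $s_u$ with invertible linear part (a nonzero generalised Vandermonde determinant, for generic $p$), hence transverse and contributing the factor~$1$; multiplying over the $\prod_u\#(S_{\#(I_u)})$ strata gives exactly $\prod_u\bigl((\#(I_u)-1)\cdot\#(S_{\#(I_u)}(\la_{I_u}))\bigr)$. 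When $\mb{I}$ is maximal this reduces to $\prod_u(\#(I_u)-1)!$, because then $\mathcal{I}(\la_{I_u})=\emptyset$, so $B_{\#(I_u)}(\la_{I_u})=\emptyset$ and $\#(S_{\#(I_u)}(\la_{I_u}))=(\#(I_u)-2)!$, in agreement with Theorem~\ref{thm.A}.

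The hard part will be making the row reduction and the block decoupling rigorous inside the local ring at $p$: one must show that the ideal generated by $\varphi_1,\ldots,\varphi_{d-l}$ genuinely splits into the block ideals plus the coupling ideal up to terms that do not affect the local intersection number, control the forms $\psi_{u,j}$ with $j\ge\#(I_u)$ (which are no longer algebraically independent of the lower ones), and verify that the conified subsystem attached to a block has the same local intersection number as the honest system cutting out $S_{\#(I_u)}(\la_{I_u})$ inside $\mb{P}^{\#(I_u)-2}$, degenerate strata included. This is where essentially all of Section~\ref{sec.6} should be spent, and where the case $\#(\mb{I})<d/2$ --- in which $E_d(\mb{I})$ is a genuinely non-reduced component and the linear parts of $\varphi_1,\ldots,\varphi_{d-l}$ fail to be linearly independent --- needs particular attention.
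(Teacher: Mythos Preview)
Your plan for part~(3) is essentially the paper's, repackaged. You propose to row-reduce $\varphi_1,\dots,\varphi_{d-l}$ at a generic $\alpha\in E_d(\mb{I})$, split into a block-diagonal ``first subsystem'' $\psi_{u,j}=\text{(higher order)}$ for $j\le r_u-1$ whose leading-order zero locus is $\prod_u(\text{cone over }S_{\#(I_u)}(\la_{I_u}))$, and $l$ coupling equations in the $\psi_{u,r_u}$. The paper keeps all $d-l$ row-reduced equations $p_{u,k}=\sum_{v,\,h>r_v}a_{u,k,v,h}\,p_{v,h}$ together (Proposition~\ref{pr.5.4}), treats $A=(a_{u,k,v,h})$ as a deformation parameter, proves in Proposition~\ref{pr.6.4} that the resulting map $F$ is a proper branched cover over a region $W_\epsilon\times D_R$ bounded away from the degenerate directions $\eta_{u,r_u}=0$, and then reads off the degree on the slice $A=0$, $\eta_{u,k}=0$ for $k<r_u$ (Proposition~\ref{pr.6.5}), which decouples blockwise and gives $\prod_u r_u\cdot\#(S_{\#(I_u)}(\la_{I_u}))$. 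Either organisation works, and both bottom out in exactly the estimate you flag as hard: $|p_{u,h}(\xi_u)|\lesssim\max_{k\le r_u}|p_{u,k}(\xi_u)|$ for $h>r_u$ on $|\xi_u|=1$, i.e.\ Proposition~\ref{pr.6.1}. The deformation viewpoint buys a clean separation of ``properness'' from ``degree''; your direct decomposition is more geometric but must handle the inter-block coupling by hand. The check that the cones over $B_{\#(I_u)}(\la_{I_u})$ do not contribute is, in the paper's language, the role of the $W_\epsilon$ cutoff together with assertion~(\ref{en.6.1.3}) of Proposition~\ref{pr.6.1}.

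There is, however, a real gap in your treatment of~(1) and~(2). You propose to dispatch them as a dimension count: take $C\in\mathcal{C}(\varphi_1,\dots,\varphi_{d-l})$ with $\dim C>l-2$, read off the generic coincidence pattern $\mb{I}$ with $l'=\#(\mb{I})$, and force $\sum_{i\in I_v}m_i=0$ by ``linear independence of the monomials $\alpha_v^{\,k}$''. But at a single generic point this is a Vandermonde system of $d-l$ equations in $l'-1$ unknowns, and you need $l'-1\le d-l$ to conclude; you only know $l'\ge l+1$ and $l'\le d-1$, which does not suffice across the full range $2\le l\le d-1$. And even granting $\mb{I}\in\mathfrak{I}(\la)$, ``irreducibility and dimension'' does not yield $C=E_d(\mb{I})$ without first knowing that no excess component of $\{\varphi_1=\cdots=\varphi_{d-l}=0\}$ other than the $E_d(\mb{I})$'s passes through the generic point of $C$ --- which is precisely assertion~(2). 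The paper does not prove~(1) and~(2) softly: it uses Lemma~\ref{lm.6.2} (a byproduct of the inductive proof of Proposition~\ref{pr.6.1}) to show that near any $\alpha\in B_d(\la)$ with $l'=\#(\mb{I}(\alpha))$, the vanishing of $\varphi_1,\dots,\varphi_{d-l'}$ already forces the vanishing of all $\varphi_k$, so that locally $\{\varphi_1=\cdots=\varphi_{d-l'}=0\}=B_d(\la)$. In other words, (1) and~(2) are not preliminaries to the hard estimate but corollaries of it; your plan should budget them into the same Section~\ref{sec.6} work rather than before it.
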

By Proposition~\ref{pr.4.1} and Theorem~\ref{thm.B},
the variety $E_d(\mb{I})$ for $\mb{I}\in\mathfrak{I}(\la)$
is really an element of
$\mathcal{C}(\varphi_1,\ldots,\varphi_{d-2})$,
if and only if
the right hand side of the equality~(\ref{eq.4.B}) is strictly positive.

\begin{remark}\label{rm.4.10}
 If an element $\mb{I} = \{I_1,\ldots,I_l\} \in \mathfrak{I}(\la)$
 is maximal, then 
 $\mathfrak{I}(\la_{I_u})$ is empty for every $u$,
 which implies
  $\#\Bigl(S_{\#(I_u)}\left(\la_{I_u}\right)\Bigr)
   = \bigl( \#(I_u)-2 \bigr)!$
 by Definition~\ref{df.2.2}, Lemmas~\ref{lm.4.1} and~\ref{lm.4.6}.
 Thus Theorem~\ref{thm.A} is a special case of Theorem~\ref{thm.B}.
\end{remark}

By Proposition~\ref{pr.4.1} and Theorem~\ref{thm.B},
we have the following:

\begin{varproposition}\label{pr.C}
 Let $\la$ be an element of $V_d$.
 Then we have the equality
 \begin{equation}\label{eq.4.C}
  \#\left(S_d(\la)\right) = (d-2)!
   - \sum_{\mb{I} \in \mathfrak{I}(\la)} \left(
       \mult_{E_d(\mb{I})}(\varphi_1,\ldots,\varphi_{d-\#(\mb{I})}) \cdot
       \prod_{k=d-\#(\mb{I})+1}^{d-2}k
						      \right).
 \end{equation}
 Here, 
 for $\mb{I} \in \mathfrak{I}(\la)$ with $\#(\mb{I}) = 2$,
 we assume that $\prod_{k=d-\#(\mb{I})+1}^{d-2}k = \prod_{k=d-1}^{d-2}k = 1$.
\end{varproposition}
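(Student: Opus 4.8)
The plan is to apply Proposition~\ref{pr.4.1} to the $d-2$ homogeneous polynomials $\varphi_1,\ldots,\varphi_{d-2}$ on $\Pd$ and then to total up the contribution of each element of $\mathcal{C}(\varphi_1,\ldots,\varphi_{d-2})$. The observation that makes everything fit is that $E_d(\mb{I})\subseteq T_d(\la)$ for every $\mb{I}\in\mathfrak{I}(\la)$: on $E_d(\mb{I})$ the coordinates in each block $I_u$ of $\mb{I}$ share a common value $\alpha_u$, so $\varphi_k$ restricts to $\sum_u\bigl(\sum_{i\in I_u}m_i\bigr)\alpha_u^k$, which vanishes identically because $I_u\in\mathcal{I}(\la)$ gives $\sum_{i\in I_u}m_i=0$. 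Hence $E_d(\mb{I})\subseteq\{\varphi_m=0\}$ for all $m$, so by the inductive construction of Definition~\ref{df.4.0}, once $E_d(\mb{I})$ appears in some $\mathcal{C}(\varphi_1,\ldots,\varphi_k)$ it stays inside the subfamily $\mathcal{C}'$ at every later step and therefore belongs to $\mathcal{C}(\varphi_1,\ldots,\varphi_{d-2})$; contrapositively, $E_d(\mb{I})\notin\mathcal{C}(\varphi_1,\ldots,\varphi_{d-2})$ forces $E_d(\mb{I})\notin\mathcal{C}(\varphi_1,\ldots,\varphi_m)$ for every $m$, hence $\mult_{E_d(\mb{I})}(\varphi_1,\ldots,\varphi_{d-\#(\mb{I})})=0$ by Definition~\ref{df.4.1}. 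With this I first verify the hypothesis~(\ref{eq.4.3.0}) of Proposition~\ref{pr.4.1}: for $1\le k\le d-2$, Theorem~\ref{thm.B}~(2) applied with $l=d-k$ shows that every $C\in\mathcal{C}(\varphi_1,\ldots,\varphi_k)$ with $\mathrm{codim}\,C<k$ is of the form $E_d(\mb{I})$, which lies in $\mathcal{C}(\varphi_1,\ldots,\varphi_{d-2})$ by the above. Proposition~\ref{pr.4.1}, together with $\deg\varphi_k=k$, then gives (writing $l_C:=\mathrm{codim}\,C$)
\[
 (d-2)!=\sum_{C\in\mathcal{C}(\varphi_1,\ldots,\varphi_{d-2})}\Bigl(\deg C\cdot\mult_C(\varphi_1,\ldots,\varphi_{l_C})\cdot\prod_{k=l_C+1}^{d-2}k\Bigr).
\]

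Next I split this sum using $T_d(\la)=S_d(\la)\amalg B_d(\la)$, noting that every $C\in\mathcal{C}(\varphi_1,\ldots,\varphi_{d-2})$ is contained in $T_d(\la)$. If $C\not\subseteq B_d(\la)$ then, since $B_d(\la)$ is closed and $C$ is irreducible, $C\cap S_d(\la)$ is dense in $C$; as $S_d(\la)$ is discrete in $\Pd$ (Lemma~\ref{lm.4.1}), $C$ must be a single point of $S_d(\la)$, for which $l_C=d-2$, $\deg C=1$, $\mult_C(\varphi_1,\ldots,\varphi_{d-2})=1$ by Lemma~\ref{lm.4.1}, and the trailing product is empty; conversely each point of $S_d(\la)$ is isolated in the common zeros of $\varphi_1,\ldots,\varphi_{d-2}$, hence an element of $\mathcal{C}(\varphi_1,\ldots,\varphi_{d-2})$, so these terms contribute exactly $\#\left(S_d(\la)\right)$. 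If $C\subseteq B_d(\la)$ then Theorem~\ref{thm.B}~(1) gives $C=E_d(\mb{I})$ for a unique $\mb{I}\in\mathfrak{I}(\la)$ ($\mb{I}$ being read off from a generic point of $E_d(\mb{I})$), with $\deg C=1$ and, since $\dim E_d(\mb{I})=\#(\mb{I})-2$, with $l_C=d-\#(\mb{I})$; the corresponding term is $\mult_{E_d(\mb{I})}(\varphi_1,\ldots,\varphi_{d-\#(\mb{I})})\cdot\prod_{k=d-\#(\mb{I})+1}^{d-2}k$. Extending this part of the sum to run over all of $\mathfrak{I}(\la)$ only adjoins terms whose multiplicity factor vanishes, so after substituting into the displayed identity and solving for $\#\left(S_d(\la)\right)$ one obtains~(\ref{eq.4.C}).

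Essentially this is bookkeeping layered on Proposition~\ref{pr.4.1} together with Theorems~\ref{thm.A} and~\ref{thm.B}; the only mild subtleties are the persistence property of the planes $E_d(\mb{I})$ (which both licenses the hypothesis of Proposition~\ref{pr.4.1} and makes the zero padding of the $B_d(\la)$-sum legitimate) and the discreteness argument identifying the components off $B_d(\la)$ with the points of $S_d(\la)$. The genuine difficulty lies upstream, in establishing the structure of $\mathcal{C}(\varphi_1,\ldots,\varphi_{d-l})$ along $B_d(\la)$ and the multiplicity formula~(\ref{eq.4.B}) --- that is, Theorems~\ref{thm.A} and~\ref{thm.B}, whose proofs occupy Sections~\ref{sec.5} and~\ref{sec.6}.
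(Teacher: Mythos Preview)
Your argument is correct and is exactly the route the paper takes: it states Proposition~\ref{pr.C} as an immediate consequence of Proposition~\ref{pr.4.1} and Theorem~\ref{thm.B}, and your write-up simply spells out the bookkeeping (verifying hypothesis~(\ref{eq.4.3.0}) via Theorem~\ref{thm.B}(2) and the persistence of the planes $E_d(\mb{I})$, splitting the Bezout sum by $T_d(\la)=S_d(\la)\amalg B_d(\la)$, and padding with the zero-multiplicity terms). There is nothing to add.
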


As we have seen in Theorem~\ref{thm.B} and Proposition~\ref{pr.C},
the cardinality $\#\left(S_d(\la)\right)$ is completely determined
by the data $\mathfrak{I}(\la)$.
Moreover it is practically computed only by hand,
though the process of its computation
may be rather long or complicated.
To relieve the long computation,
we give one more proposition.

\begin{varproposition}\label{pr.D}
 For $\la \in V_d$ and $\mb{I}=\{I_1,\ldots,I_l\} \in \mathfrak{I}(\la)$,
 the number
 $\mult_{E_d(\mb{I})}(\varphi_1,\ldots,\varphi_{d-l})$
 given in the equality~(\ref{eq.4.B}) is also equal to
 \begin{multline}\label{eq.4.D}
   \left( \prod_{u=1}^l \bigl( \#\left(I_u\right) -1 \bigr)! \right) \\
    - \sum_\textrm{\scriptsize $\begin{matrix}
				 \mb{I}' \in \mathfrak{I}(\la) \\
				 \mb{I}' \succ \mb{I}, \;
				 \mb{I}' \ne \mb{I}
			       \end{matrix}$}
    \left(
     \mult_{E_d(\mb{I}')}(\varphi_1,\ldots,\varphi_{d-\#\left(\mb{I}'\right)})
     \cdot \prod_{u=1}^l 
     \left( \prod_{k=\#(I_u)-\chi_u(\mb{I}')+1}^{\#(I_u)-1}k \right)
    \right),
 \end{multline}
 where $\chi_u\left(\mb{I}'\right)$ is the one
 defined in Main Theorem~\ref{mthm.3}.
 Here, if $\chi_u(\mb{I}')=1$, then 
 we assume that
 $\prod_{k=\#(I_u)-\chi_u(\mb{I}')+1}^{\#(I_u)-1}k = \prod_{k=\#(I_u)}^{\#(I_u)-1}k = 1$.
\end{varproposition}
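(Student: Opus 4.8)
The plan is to deduce the equality of (\ref{eq.4.B}) and (\ref{eq.4.D}) from Proposition~\ref{pr.C} applied, block by block, to the tuples $\la_{I_1},\ldots,\la_{I_l}$, by decomposing a refinement of $\mb{I}$ into independent refinements of its parts. Write $d_u:=\#(I_u)$ throughout. The combinatorial backbone is the bijection
\[
 \bigl\{\mb{I}'\in\mathfrak{I}(\la)\bigm|\mb{I}\prec\mb{I}'\bigr\}
  \;\cong\;\prod_{u=1}^{l}\Bigl(\bigl\{\,\{I_u\}\,\bigr\}\cup\mathfrak{I}(\la_{I_u})\Bigr),
 \qquad
 \mb{I}'\longmapsto\bigl(\mb{J}_1(\mb{I}'),\ldots,\mb{J}_l(\mb{I}')\bigr),
\]
where $\mb{J}_u(\mb{I}'):=\{I'\in\mb{I}'\mid I'\subseteq I_u\}$. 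Since $\mb{I}'$ refines $\mb{I}$, each $\mb{J}_u(\mb{I}')$ is a partition of $I_u$; if it has at least two parts, then every part lies in $\mathcal{I}(\la)$ and is a proper nonempty subset of $I_u\subsetneq\{1,\ldots,d\}$, hence $\mb{J}_u(\mb{I}')\in\mathfrak{I}(\la_{I_u})$, and conversely any choice of such a partition of each $I_u$ (either trivial or in $\mathfrak{I}(\la_{I_u})$) glues to an element of $\mathfrak{I}(\la)$ refining $\mb{I}$. Under this correspondence $\chi_u(\mb{I}')=\#\bigl(\mb{J}_u(\mb{I}')\bigr)$, and $\mb{I}'=\mb{I}$ is the image of the all-trivial tuple $(\{I_1\},\ldots,\{I_l\})$.

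Next I would use (\ref{eq.4.B}) twice. Grouping the parts of $\mb{I}'$ according to the part of $\mb{I}$ containing them gives, for $\mb{I}'$ corresponding to $(\mb{J}_1,\ldots,\mb{J}_l)$,
\[
 \mult_{E_d(\mb{I}')}(\varphi_1,\ldots,\varphi_{d-\#(\mb{I}')})
  =\prod_{u=1}^{l}\;\prod_{J\in\mb{J}_u}\Bigl((\#(J)-1)\cdot\#\bigl(S_{\#(J)}(\la_J)\bigr)\Bigr)
  =:\prod_{u=1}^{l}P(\mb{J}_u);
\]
here $P(\{I_u\})=(d_u-1)\cdot\#\bigl(S_{d_u}(\la_{I_u})\bigr)$ is exactly the $u$-th factor of (\ref{eq.4.B}) for $\la$, while applying (\ref{eq.4.B}) with $\la$ replaced by $\la_{I_u}$ gives $P(\mb{J}_u)=\mult_{E_{d_u}(\mb{J}_u)}(\varphi_1,\ldots,\varphi_{d_u-\#(\mb{J}_u)})$ whenever $\mb{J}_u\in\mathfrak{I}(\la_{I_u})$. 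Because $\prod_{k=d_u-\chi_u(\mb{I}')+1}^{d_u-1}k=(d_u-1)!/(d_u-\#(\mb{J}_u))!$, the general term of the sum in (\ref{eq.4.D}) factors as $\prod_{u=1}^{l}P(\mb{J}_u)\cdot(d_u-1)!/(d_u-\#(\mb{J}_u))!$, and summing over every tuple except the all-trivial one and distributing yields
\[
 \sum_{\substack{\mb{I}'\in\mathfrak{I}(\la),\ \mb{I}'\succ\mb{I}\\ \mb{I}'\ne\mb{I}}}(\cdots)
  =\prod_{u=1}^{l}\left(\sum_{\mb{J}}P(\mb{J})\cdot\frac{(d_u-1)!}{(d_u-\#(\mb{J}))!}\right)
   -\prod_{u=1}^{l}\Bigl((d_u-1)\cdot\#\bigl(S_{d_u}(\la_{I_u})\bigr)\Bigr),
\]
where the inner sums run over all partitions $\mb{J}$ of $I_u$.

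It then remains to show that each inner sum equals $(d_u-1)!$. Isolating the trivial partition $\mb{J}=\{I_u\}$, dividing by $(d_u-2)!$, and rewriting $(d_u-2)!/(d_u-\#(\mb{J}))!=\prod_{k=d_u-\#(\mb{J})+1}^{d_u-2}k$, this amounts to
\[
 (d_u-2)!=\#\bigl(S_{d_u}(\la_{I_u})\bigr)
  +\sum_{\mb{J}\in\mathfrak{I}(\la_{I_u})}\mult_{E_{d_u}(\mb{J})}(\varphi_1,\ldots,\varphi_{d_u-\#(\mb{J})})\cdot\prod_{k=d_u-\#(\mb{J})+1}^{d_u-2}k,
\]
which is precisely the rearrangement of Proposition~\ref{pr.C} for $\la_{I_u}$ when $d_u\ge4$, and which holds trivially when $d_u\le3$ since then $\mathfrak{I}(\la_{I_u})=\emptyset$ and $\#\bigl(S_{d_u}(\la_{I_u})\bigr)=1$ by the convention in Theorem~\ref{thm.B} (note $d_u\ge2$ always, as $I_u\in\mathcal{I}(\la)$). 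Substituting back, the first product above becomes $\prod_{u=1}^{l}(d_u-1)!$, so (\ref{eq.4.D}) collapses to $\prod_{u=1}^{l}\bigl((d_u-1)\cdot\#(S_{d_u}(\la_{I_u}))\bigr)$, which is the right-hand side of (\ref{eq.4.B}). The only delicate points are setting up the bijection cleanly and tracking the empty-product conventions --- in particular noticing that the blocks with $d_u\le3$ are covered by the conventions of Theorem~\ref{thm.B} rather than by Proposition~\ref{pr.C} itself; granting these, the argument reduces to the distributive law together with Proposition~\ref{pr.C}.
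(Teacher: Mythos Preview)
Your argument is correct and follows essentially the same route as the paper's proof: both set up the bijection between refinements $\mb{I}'\succ\mb{I}$ and tuples of partitions $(\mb{J}_1,\ldots,\mb{J}_l)$ of the blocks $I_u$, use the multiplicativity in~(\ref{eq.4.B}) to factor the general term, and then invoke Proposition~\ref{pr.C} for each $\la_{I_u}$ to collapse the inner sums to $(d_u-1)!$. The paper streamlines the bookkeeping slightly by enlarging $\mathfrak{I}(\la_{I_u})$ to include the trivial partition $\{I_u\}$ with the convention $e_{\{I_u\}}(\la_{I_u})=(d_u-1)\cdot\#(S_{d_u}(\la_{I_u}))$, which absorbs both the all-trivial term and the small-$d_u$ cases into a single formula; your explicit separation of these cases is equivalent.
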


Theorem~\ref{thm.A} is just a corollary of Theorem~\ref{thm.B}
by Remark~\ref{rm.4.10}.
However the proof of Theorem~\ref{thm.B} is much harder than that
of Theorem~\ref{thm.A}.
Therefore we prove Theorem~\ref{thm.A} first in Section~\ref{sec.5}, and 
based on its proof we prove Theorem~\ref{thm.B} in Section~\ref{sec.6}.
Proposition~\ref{pr.D} is also proved in Section~\ref{sec.6}.

\section{Proof of Theorem~\ref{thm.A}}\label{sec.5}

In this section we prove Theorem~\ref{thm.A} introduced in Section~\ref{sec.4},
together with preparing for the proof of Theorem~\ref{thm.B}.

We fix our notation first, which is valid
throughout Sections~\ref{sec.5} and~\ref{sec.6}.
For a given $\la \in V_d$
 and $\mb{I} = \{I_1,\ldots,I_l\} \in \mathfrak{I}(\la)$,
we put
 $\#(I_u) =: r_u +1$,
 $(\z_i)_{i\in I_u}=:(\z_{u,0},\z_{u,1},\ldots,\z_{u,r_u})$,
 $(\la_i)_{i\in I_u}=:(\la_{u,0},\la_{u,1},\ldots,\la_{u,r_u})$
and
 $m_{u,i}:=\frac{1}{1-\la_{u,i}}$.
Moreover we assume $\z_{l,0} = \z_d = 0$.
Then we have
 $\sum_{u=1}^l (r_u + 1) = d$,
 $\sum_{i=0}^{r_u} m_{u,i} = 0$,
 $\varphi_k(\z)
   = \sum_{u=1}^l \sum_{i=0}^{r_u} m_{u,i} \z_{u,i}^k$ and
\[
 E_d(\mb{I}) 
  = \left\{\z \in \Pd \bigm| \z_{u,0} = \z_{u,1} = \cdots =\z_{u,r_u}
     \textrm{ for } 1 \le u \le l \right\}
  \cong \mb{P}^{l-2}.
\]
Furthermore let $\alpha_1,\alpha_2,\ldots,\alpha_l$ be any mutually distinct 
complex numbers with $\alpha_l = 0$,
and we denote by $\alpha$ the point $\z \in E_d(\mb{I})$ which satisfies
$\z_{u,i} = \alpha_u$ for any $u$ and $i$.
In the following,
we find $\mult_{E_d(\mb{I})}(\varphi_1,\ldots,\varphi_{d-l})$
by cutting $E_d(\mb{I})$ at $\alpha$
by the plane
$\mathcal{H}(\alpha):=
 \left\{\z\in\Pd \bigm| \z_{u,0}=\alpha_u \text{ for } 1 \le u \le l \right\}$.
We put $\xi_{u,i} := \z_{u,i} - \alpha_u$,
$\xi_u := (\xi_{u,1},\ldots,\xi_{u,r_u}) \in \mb{C}^{r_u}$,
$\xi := (\xi_1,\ldots,\xi_l)\in \mb{C}^{d-l}$
and
\begin{equation}\label{eq.5.defpsi} 
 \psi_k(\xi) := \varphi_k(\alpha + \xi) =
   \sum_{u=1}^l\left( 
    m_{u,0}\alpha_u^k
    + \sum_{i=1}^{r_u}m_{u,i}\left(\alpha_u + \xi_{u,i}\right)^k
   \right).
\end{equation}
Then $\xi$ is a local coordinate system of $\mathcal{H}(\alpha)$
centered at $\alpha$.
\begin{proposition}\label{pr.5.0}
 For any $\mb{I} = \{I_1,\ldots,I_l\} \in \mathfrak{I}(\la)$
 and for generic $\alpha \in E_d(\mb{I})$,
 we have
 \begin{equation}\label{eq.5.1}
  \mult_{E_d(\mb{I})}(\varphi_1,\ldots,\varphi_{d-l})
   = \mult_0(\psi_1,\ldots,\psi_{d-l}).
 \end{equation}
\end{proposition}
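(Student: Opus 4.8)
The plan is to deduce Proposition~\ref{pr.5.0} from the general principle that both the family $\mathcal{C}(\,\cdot\,)$ of Definition~\ref{df.4.0} and the number $\mult_{(\cdot)}(\,\cdot\,)$ of Definition~\ref{df.4.1} are compatible with restriction to a linear subspace which meets a distinguished component transversally, in complementary dimension, at a generic point of that component. First I would record the geometry of the slice. The set $\mathcal{H}(\alpha)$ is the $(d-l)$-dimensional linear subspace of $\Pd$ obtained by fixing $\z_{u,0}=\alpha_u$; the free coordinates on it are exactly the $\sum_u r_u=d-l$ coordinates $\z_{u,i}$ with $i\ge1$, and in the local coordinate $\xi$ centred at $\alpha$ the functions $\psi_1,\ldots,\psi_{d-l}$ of~(\ref{eq.5.defpsi}) are literally the restrictions $\varphi_1|_{\mathcal{H}(\alpha)},\ldots,\varphi_{d-l}|_{\mathcal{H}(\alpha)}$. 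The key elementary observation is that whenever $\alpha_1,\ldots,\alpha_{l-1}$ are nonzero and mutually distinct, the tangent space $T_\alpha\mathcal{H}(\alpha)$, which is spanned by the directions $\partial/\partial\z_{u,i}$ with $i\ge1$ and is the same subspace for every such $\alpha$, is a complement of $T_\alpha E_d(\mb{I})$ in $T_\alpha\Pd$; consequently $\mathcal{H}(\alpha)\cap E_d(\mb{I})=\{\alpha\}$ transversally, and $\alpha$ is a smooth point of $E_d(\mb{I})\cong\mb{P}^{l-2}$.

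The core of the argument is then an induction on $k=1,\ldots,d-l$ showing that, for generic $\alpha\in E_d(\mb{I})$, the assignment $C\mapsto$ (the germ at $\alpha$ of) $C\cap\mathcal{H}(\alpha)$ gives a codimension-preserving bijection from $\{C\in\mathcal{C}(\varphi_1,\ldots,\varphi_k)\mid\alpha\in C\}$ onto the family $\mathcal{C}(\psi_1,\ldots,\psi_k)$ of germs at $\alpha$, and that $\mult_C(\varphi_1,\ldots,\varphi_j)=\mult_{C\cap\mathcal{H}(\alpha)}(\psi_1,\ldots,\psi_j)$ holds for every $C$ with $\mathrm{codim}\,C=j\le k$. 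The base case $k=1$ says that restriction to $\mathcal{H}(\alpha)$ sends the irreducible components of $\{\varphi_1=0\}$ through $\alpha$ bijectively to those of $\{\psi_1=0\}$ at $\alpha$ and preserves the order of vanishing; this is a Bertini-type transversality statement. In the inductive step one runs through the two operations in Definition~\ref{df.4.0} --- keeping a $C'$ when $C'\subseteq\{\varphi_{k+1}=0\}$, otherwise replacing it by the irreducible components of $C'\cap\{\varphi_{k+1}=0\}$ --- and checks that each is undisturbed by passing to the transversal slice; for the multiplicity one invokes~(\ref{eq.4.1}), where the factors $\mult_{C'}(\varphi_1,\ldots,\varphi_k)$ are handled by the inductive hypothesis and the factors $\mult_C(C',\varphi_{k+1})$ are ordinary intersection multiplicities of a variety with a hypersurface, which are preserved under a transversal slice of complementary dimension --- this is precisely the reduction built into the definition recalled in Section~\ref{sec.3}. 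Taking $k=d-l$ and $C=E_d(\mb{I})$, whose codimension is $d-l$ and whose slice $E_d(\mb{I})\cap\mathcal{H}(\alpha)=\{\alpha\}$ is the origin of the $\xi$-coordinates, gives~(\ref{eq.5.1}).

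The hard part is the genericity bookkeeping that feeds these transversality statements, because $\mathcal{H}(\alpha)$ is not a fully generic $(d-l)$-plane: it is forced to pass through $\alpha$ and to have the fixed tangent direction above. One must therefore show that the restricted family $\{\mathcal{H}(\alpha)\mid\alpha\in E_d(\mb{I})\}$ still suffices. Only finitely many subvarieties $C$ occur in building $\mathcal{C}(\varphi_1,\ldots,\varphi_{d-l})$; for each, either $E_d(\mb{I})\not\subseteq C$, and then $C\cap E_d(\mb{I})$ is a proper closed subset of $E_d(\mb{I})$ that generic $\alpha$ avoids, or $E_d(\mb{I})\subseteq C$, and then $T_\alpha C\supseteq T_\alpha E_d(\mb{I})$ for every $\alpha\in E_d(\mb{I})$, so the complementarity of the first paragraph forces $T_\alpha C+T_\alpha\mathcal{H}(\alpha)=T_\alpha\Pd$, i.e. $\mathcal{H}(\alpha)$ meets $C$ properly and transversally at the point $\alpha$ for all $\alpha$ off a proper closed subset (where $\alpha$ is in addition a smooth point of $C$). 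Intersecting the finitely many resulting good loci of $\alpha$ produces the generic $\alpha$ for which all the needed transversality holds at once. A point to state with care at the outset is that $\mult_0(\psi_1,\ldots,\psi_{d-l})$ must be understood as the local analogue at the origin of Definition~\ref{df.4.1} --- so that it can vanish when the origin is absorbed into a larger element of $\mathcal{C}(\psi_1,\ldots,\psi_{d-l})$, as happens for the point $E_6(\mb{I}_5)$ of Example~\ref{ex.3} --- rather than as the colength of the ideal $(\psi_1,\ldots,\psi_{d-l})$.
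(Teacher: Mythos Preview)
Your proposal is correct and is essentially the same approach as the paper's. The paper's entire proof is the single sentence ``Obvious by definition,'' since the intersection multiplicity recalled in Section~\ref{sec.3} is \emph{defined} via restriction to a transversal slice of complementary dimension at a generic smooth point, and Definition~\ref{df.4.1} is built inductively from that; your argument is simply a careful (and correct) unpacking of why those definitions make~(\ref{eq.5.1}) immediate once one slices by $\mathcal{H}(\alpha)$ at a generic $\alpha\in E_d(\mb{I})$.
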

\begin{proof}
 Obvious by definition.
\end{proof}
In practice, the equality~(\ref{eq.5.1}) always holds for any $\alpha$
if $\alpha_1,\ldots,\alpha_l$ are mutually distinct,
which will be verified in Proposition~\ref{pr.6.6}.

We shall rewrite the equations $\psi_k(\xi)=0$.
Putting
\[ 
 p_{u,k}(\xi_u) = \sum_{i=1}^{r_u}m_{u,i}\xi_{u,i}^k
\]
for each $u$ and $k$,
we have
\begin{equation}\label{eq.5.psip}
 \begin{split}
 \psi_k(\xi)
 &= 
    \sum_{u=1}^l\left(
      \left( \sum_{i=0}^{r_u}m_{u,i} \right) \alpha_u^k
       + \sum_{i=1}^{r_u} \sum_{h=1}^k
           m_{u,i} \binom{k}{h} \alpha_u^{k-h} \xi_{u,i}^h
     \right) \\
 &= 
    \sum_{u=1}^l \sum_{h=1}^k \binom{k}{h}\alpha_u^{k-h} p_{u,h}(\xi_u),
 \end{split}
\end{equation}
where $\binom{k}{h}=\frac{k(k-1)\cdots (k-h+1)}{h!}$
denotes the binomial coefficient.
Hence $\psi_k(\xi)$ is a linear combination of
$p_{u,h}(\xi_u)$ for $1 \le u \le l$ and $1 \le h \le k$.

\begin{proposition}\label{pr.5.4}
 The equations 
  $\psi_k(\xi)=0$
 for $1 \le k \le d-l$ are equivalent to the equations
 \begin{equation}\label{eq.5.linear} 
  p_{u,k}(\xi_u)
   = \sum_{v=1}^l\sum_{h=r_v+1}^{d-l}a_{u,k,v,h}p_{v,h}(\xi_v)
 \end{equation}
 for $1 \le u \le l$ and $1 \le k \le r_u$,
 where the coefficients
 $a_{u,k,v,h}$
 are some constants which depend only on
 $r_1,\ldots,r_l$ and $\alpha_1,\ldots,\alpha_l$.
\end{proposition}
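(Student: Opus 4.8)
The plan is to use the triangular structure of the expansion~(\ref{eq.5.psip}) of $\psi_k$ in terms of the $p_{u,h}$. First I would record that, by~(\ref{eq.5.psip}), each $\psi_k(\xi)$ is a linear combination of the quantities $p_{u,h}(\xi_u)$ with $1\le u\le l$ and $1\le h\le k$; in fact, the coefficient of $p_{u,h}(\xi_u)$ in $\psi_k$ is $\binom{k}{h}\alpha_u^{k-h}$, and it is nonzero whenever $h\le k$ (here $\binom{k}{h}\ne 0$ as $h\le k$, and $\alpha_u^{k-h}$ is understood to be $1$ when $k=h$ even if $\alpha_u=0$). Thus the vector of values $\bigl(\psi_1,\ldots,\psi_{d-l}\bigr)$ is obtained from the vector of values $\bigl(p_{u,h}\bigr)_{1\le u\le l,\ 1\le h\le d-l}$ by a linear map whose ``matrix'' I want to invert partially: I would reorganize the $p_{u,h}$ into the ``low'' block $\{p_{u,k}\mid 1\le u\le l,\ 1\le k\le r_u\}$, which has $\sum_u r_u=d-l$ elements — exactly the number of equations $\psi_1=\cdots=\psi_{d-l}=0$ — and the ``high'' block $\{p_{v,h}\mid 1\le v\le l,\ r_v+1\le h\le d-l\}$.

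The key step is to show that the $(d-l)\times(d-l)$ submatrix expressing $(\psi_1,\ldots,\psi_{d-l})$ in terms of the low block $\{p_{u,k}\}$ is invertible. Fix an ordering of the pairs $(u,k)$ with $1\le k\le r_u$ by increasing $k$ (breaking ties by $u$ arbitrarily), and the same ordering on the index set of the $\psi$'s by identifying $\psi_j$ with the $j$-th pair in a compatible enumeration; I would argue that with a suitable such ordering the submatrix is block lower-triangular with invertible diagonal blocks. Concretely, grouping the $\psi_k$ for a fixed total degree and the $p_{u,k}$ for the same fixed $k$, the ``diagonal block'' at level $k$ is governed by the Vandermonde-type matrix $\bigl(\alpha_u^{j}\bigr)$ in the distinct numbers $\alpha_1,\ldots,\alpha_l$, which is nonsingular precisely because $\alpha_1,\ldots,\alpha_l$ are mutually distinct; the contributions of $p_{u,h}$ with $h<k$ to $\psi_k$ land in strictly earlier blocks, giving the triangularity. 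Once this submatrix is invertible, I invert it: solving the linear system $\psi_k=0$ ($1\le k\le d-l$) for the low-block variables $p_{u,k}$ in terms of the high-block variables $p_{v,h}$ ($r_v+1\le h\le d-l$) yields exactly the equations~(\ref{eq.5.linear}), with the $a_{u,k,v,h}$ being entries of the inverse matrix composed with the high-block coefficients — rational functions of $\alpha_1,\ldots,\alpha_l$ depending only on $r_1,\ldots,r_l$ and $\alpha_1,\ldots,\alpha_l$, as claimed. Conversely, reversing the linear algebra shows~(\ref{eq.5.linear}) implies $\psi_k=0$ for all $k$, so the two systems are equivalent.

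The main obstacle I anticipate is the careful bookkeeping of the triangular structure — choosing the enumeration of the indices $(u,k)$ and of $\psi_1,\ldots,\psi_{d-l}$ so that the relevant $(d-l)\times(d-l)$ matrix is genuinely block-triangular, and then checking that each diagonal block is a full Vandermonde (or a minor of one) in the $\alpha_u$'s so that distinctness of $\alpha_1,\ldots,\alpha_l$ suffices for invertibility. A minor subtlety is the case $\alpha_l=0$: the factor $\alpha_l^{k-h}$ vanishes for $k>h$ but equals $1$ for $k=h$, so the diagonal entries of the blocks are never killed, and the triangular argument is unaffected; I would note this explicitly. Everything else is routine linear algebra.
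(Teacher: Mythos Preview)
Your overall strategy is right and matches the paper: split the $p_{u,h}$ into a ``low'' block ($1\le h\le r_u$, giving $\sum_u r_u=d-l$ variables) and a ``high'' block ($r_v+1\le h\le d-l$), then show that the $(d-l)\times(d-l)$ matrix expressing $(\psi_1,\ldots,\psi_{d-l})$ in terms of the low block is invertible, so that one can solve for the low block in terms of the high block. The gap is in your invertibility argument.

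The block-triangular structure you describe does not exist in general. Ordering the columns $(u,h)$ by increasing $h$, the entry of $\psi_k$ in column $(u,h)$ is $\binom{k}{h}\alpha_u^{k-h}$, which vanishes precisely for $k<h$. So at each level $h$ there are $n_h:=\#\{u:r_u\ge h\}$ columns that first become nonzero in row $h$, but there is only the single row $\psi_h$ available there: the purported diagonal block at level $h$ has shape $1\times n_h$, not $n_h\times n_h$, and there is no Vandermonde sitting on the diagonal. Already at $h=1$ the equation $\psi_1=0$ reads $\sum_{u=1}^l p_{u,1}=0$, one relation among $l$ unknowns. No reordering of rows and columns repairs this, since the ``new columns appearing at row $k$'' come in batches of size $n_k$ while the rows arrive one at a time.

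The matrix in question is (after rescaling rows and columns by factorials, as in Lemma~\ref{lm.5.6}) a \emph{confluent Vandermonde} matrix with node $\alpha_u$ of multiplicity $r_u$; its nonvanishing for distinct $\alpha_u$ is classical but is not a triangularity argument. The paper isolates this as Lemma~\ref{lm.5.5}, computing the determinant explicitly as
\[
\det M=\frac{(d-l)!}{r_1!\cdots r_l!}\prod_{1\le v<u\le l}(\alpha_u-\alpha_v)^{r_vr_u}
\]
by induction on $l$. You may instead cite any standard proof of the confluent Vandermonde determinant, but some such argument is genuinely needed here.
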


\begin{proof}
 It suffices to show the invertibility of
 the square matrix composed of the coefficients of 
 $p_{u,h}(\xi_u)$ for $1 \le u \le l$ and $1 \le h \le r_u$ in the
 right hand side of the expressions~(\ref{eq.5.psip}).
 Proposition~\ref{pr.5.4} is therefore reduced to
 the problem on linear algebra,
 whose proof is given in Lemma~\ref{lm.5.5} at the end of this section.
\end{proof}

By the aid of Propositions~\ref{pr.5.0} and~\ref{pr.5.4}, we have reduced
Theorem~\ref{thm.A} to the following:

\begin{proposition}\label{pr.5.9}
 Suppose that an element $\mb{I}\in\mathfrak{I}(\la)$ is maximal.
 Then
 for any complex numbers $a_{u,k,v,h}$, the origin
 $0$ is a discrete solution of the equations~(\ref{eq.5.linear})
 for $1\le u \le l$ and $1 \le k \le r_u$
 with its intersection multiplicity $r_1!\cdots r_l!$.
\end{proposition}

In the following, we prove
Proposition~\ref{pr.5.9}.

\begin{lemma}\label{lm.5.10}
 Let $m_1,\ldots,m_r$ be complex numbers
 such that $\sum_{i\in I}m_i \ne 0$ holds
 for any non-empty $I \subseteq \{1,\ldots,r\}$.
 We put
  $p_k(\xi) :=\sum_{i=1}^r m_i \xi_i^k$
 for $\xi=(\xi_1,\ldots,\xi_r) \in \mb{C}^r$.
 Then $0$ is the only solution of the equations
 $p_k(\xi)=0$ for $1 \le k \le r$ with its intersection multiplicity
  $\mult_0(p_1,\ldots,p_r) = r!$.
\end{lemma}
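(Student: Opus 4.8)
The plan is to prove the two assertions of the lemma separately: first that the origin is the only common zero of $p_1,\ldots,p_r$, and then that $\mult_0(p_1,\ldots,p_r)=r!$. For uniqueness, let $\xi=(\xi_1,\ldots,\xi_r)$ be an arbitrary common zero, let $\alpha_1,\ldots,\alpha_s$ be the distinct values occurring among $\xi_1,\ldots,\xi_r$, and for $1\le t\le s$ put $I_t:=\{\,i\mid \xi_i=\alpha_t\,\}$ and $n_t:=\sum_{i\in I_t}m_i$. By the hypothesis on the $m_i$ we have $n_t\ne 0$ for every $t$, since each $I_t$ is a non-empty subset of $\{1,\ldots,r\}$; here it matters that the hypothesis is imposed on \emph{all} non-empty subsets, including $\{1,\ldots,r\}$ itself. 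The equations $p_k(\xi)=0$ ($1\le k\le r$) become $\sum_{t=1}^{s}n_t\alpha_t^{\,k}=0$. Discarding the (at most one) index with $\alpha_t=0$, which contributes nothing for $k\ge 1$, and letting $s'\le s\le r$ be the number of remaining indices, the equations for $1\le k\le s'$ form a linear system in the unknowns $n_t\alpha_t$ whose coefficient matrix is the Vandermonde matrix $\bigl(\alpha_t^{\,k-1}\bigr)_{1\le k,t\le s'}$, which is invertible because the surviving $\alpha_t$ are distinct. Hence $n_t\alpha_t=0$, so $n_t=0$ (as $\alpha_t\ne 0$), contradicting $n_t\ne 0$ unless $s'=0$; therefore every $\alpha_t$ equals $0$, i.e. $\xi=0$.

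For the multiplicity, the decisive point is that $p_1,\ldots,p_r$ are homogeneous, of degrees $1,2,\ldots,r$, and --- by the first part --- have empty common zero locus in $\mb{P}^{r-1}$. I would regard $\mb{C}^r$ as the affine chart $\{x_0\ne 0\}$ of $\mb{P}^r=\{(x_0:x_1:\cdots:x_r)\}$, perturb the system to $p_k(\xi)=\varepsilon_k$, and homogenize each equation to the degree-$k$ hypersurface $\{p_k(x_1,\ldots,x_r)-\varepsilon_k x_0^k=0\}$ in $\mb{P}^r$. For every $\varepsilon$ these $r$ hypersurfaces have no common point on the hyperplane at infinity $\{x_0=0\}$, precisely because $V(p_1,\ldots,p_r)=\emptyset$ in $\mb{P}^{r-1}$; hence by Bezout's theorem all of their $\prod_{k=1}^{r}k=r!$ common points, counted with multiplicity, lie in $\mb{C}^r$, and as $\varepsilon\to 0$ they must all converge to the unique common zero $0$ of the $p_k$. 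Thus for generic small $\varepsilon$ every one of the $r!$ solutions lies in any prescribed small ball about the origin, which by the definition of the intersection multiplicity gives $\mult_0(p_1,\ldots,p_r)=r!$. Alternatively one may apply Proposition~\ref{pr.4.1} to $p_1,\ldots,p_r$ regarded as homogeneous polynomials on $\mb{P}^r$, whose common zeros form the single point $(1:0:\cdots:0)$.

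The genuinely delicate step is the multiplicity computation: one has to be certain that, under the perturbation, no intersection point runs off to infinity and no positive-dimensional component survives, so that the whole Bezout number $r!$ is realised by points converging to the origin. Homogeneity of the $p_k$ together with the emptiness of $V(p_1,\ldots,p_r)$ in $\mb{P}^{r-1}$ --- established in the first part --- is exactly what excludes both possibilities, so once uniqueness is in hand the rest is routine.
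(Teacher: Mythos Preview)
Your proof is correct. The uniqueness argument is the same Vandermonde computation the paper invokes by reference to Lemma~\ref{lm.4.6}; you have simply written it out.

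For the multiplicity, your route differs from the paper's. The paper works one step at a time: it first observes (via the Jacobian computation of Lemma~\ref{lm.4.1} and the argument of Lemma~\ref{lm.4.6}) that the common zeros of $p_1,\ldots,p_{r-1}$ in $\mb{P}^{r-1}$ consist of $(r-1)!$ reduced points, hence $(r-1)!$ lines through the origin in $\mb{C}^r$, each meeting $\{p_r=0\}$ only at $0$ with local multiplicity $r$; then it sums to get $r\cdot(r-1)!=r!$. This fits directly into the recursive Definition~\ref{df.4.1} the paper uses throughout. Your argument is instead a single application of B\'ezout on $\mb{P}^r$: since the $p_k$ are homogeneous with $V(p_1,\ldots,p_r)=\{0\}$ in $\mb{C}^r$, there are no common zeros on the hyperplane at infinity, so the full B\'ezout number $r!$ is concentrated at the origin. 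Your alternative of applying Proposition~\ref{pr.4.1} directly on $\mb{P}^r$ to the single point $(1:0:\cdots:0)$ is the cleanest phrasing of this. Both approaches are standard; yours is shorter and self-contained, while the paper's dovetails with the inductive framework of Section~\ref{sec.3} and reuses Lemmas~\ref{lm.4.1} and~\ref{lm.4.6}.
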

\begin{proof}
 By the same argument as in the proof of Lemma~\ref{lm.4.6},
 the existence of a solution other than $0$
 implies the equality $\sum_{i\in I}m_i =0$
 for some non-empty $I \subseteq \{1,\ldots,r\}$;
 thus the contradiction assures the uniqueness of the solution.
 
 By Lemmas~\ref{lm.4.1} and~\ref{lm.4.6},
 the set of the common zeros of $p_1,\ldots,p_{r-1}$ in
 $\mb{P}^{r-1}$ is discrete and has $(r-1)!$ points,
 whose intersection multiplicities are all $1$.
 Hence the set of the common zeros of $p_1,\ldots,p_{r-1}$ in
 $\mb{C}^r$ consists of $(r-1)!$ lines $\ell_1,\ldots,\ell_{(r-1)!}$,
 all of which pass the origin. Moreover their intersection multiplicities 
 $\mult_{\ell_i}(p_1,\ldots,p_{r-1})$ are all $1$.
 Since each line $\ell_i$ intersects the hypersurface $\{p_r(\xi)=0\}$ only
 at the origin, the intersection multiplicity $\mult_0(\ell_i,p_r)$ 
 is $r$ for each $i$.
 We thus have the equality
  $\mult_0(p_1,\ldots,p_r) = r \cdot (r-1)! = r!$.
\end{proof}

The most important part in the proof of Proposition~\ref{pr.5.9} is 
to reduce Proposition~\ref{pr.5.9} to Lemma~\ref{lm.5.10}
by replacing all the coefficients $a_{u,k,v,h}$ by $0$.

We denote by
 $A = (a_{u,k,v,h})$
an element of $\mb{C}^{(l-1)(d-l)^2}$,
where the indices $u,k,v,h$ range in $1 \le u \le l$, $1 \le k \le r_u$,
$1 \le v \le l$ and $r_v + 1 \le h \le d-l$.
We put
\[
 D_R := \left\{ A=(a_{u,k,v,h}) \in \mb{C}^{(l-1)(d-l)^2} \Bigm| 
      \left|a_{u,k,v,h}\right| < R \ \text{ for any } u,k,v,h \right\}
\]
and define the map
 $F : \mb{C}^{d-l} \times D_R \to \mb{C}^{d-l} \times D_R$
by
\[
 (\xi,A) \mapsto 
                  \left(\left(
                        p_{u,k}(\xi_u) - \sum_{v,h}a_{u,k,v,h}p_{v,h}(\xi_v)
                       \right)_{u,k} ,A \right),
\]
where the indices $u,k$ range in $1 \le u \le l$ and $1 \le k \le r_u$.

\begin{proposition}\label{pr.5.11}
 Suppose that an element $\mb{I}\in \mathfrak{I}(\la)$ is maximal.
 Then for any positive real number $R$ and any open neighborhood $U_0$
 of $0$ in $\mb{C}^{d-l}$,
 there exist open neighborhoods $U,W$ of $0$ in $\mb{C}^{d-l}$
 with $U \subseteq U_0$ such that the map
 \begin{equation}\label{eq.5.11}
  \left(U \times D_R \right) \cap F^{-1}\left( W \times D_R \right)
  \stackrel{F}{\to} W \times D_R
 \end{equation}
 is proper, and therefore a finite branched covering.
\end{proposition}

 In the following, we prove Proposition~\ref{pr.5.9} first 
 under the assumption of Proposition~\ref{pr.5.11}, and
 secondly we prove Proposition~\ref{pr.5.11}.
 
\begin{proof}[Proof of Proposition~\ref{pr.5.9}]
 First for any given coefficients $a_{u,k,v,h}$,
 we take a positive real number 
 $R$ sufficiently large such that the ball $D_R$ contains $A=(a_{u,k,v,h})$.
 Then the discreteness of the solution $0$ is verified by the 
 finiteness of the map~(\ref{eq.5.11}).
 Secondly we take an open neighborhood $U_0$ of $0$ in $\mb{C}^{d-l}$
 sufficiently small
 such that
 the only solution of the equations~(\ref{eq.5.linear}) in $U_0$ is $0$.
 Then
 the intersection multiplicity of the equations~(\ref{eq.5.linear}) at $0$
 is equal to the degree of the branched covering map~(\ref{eq.5.11}),
 which is also equal to
 the intersection multiplicity of the equations~(\ref{eq.5.linear}) at $0$
 with all the coefficients $a_{u,k,v,h}$ equal to $0$.
 Therefore it
 is $r_1! \cdots r_l!$ by Lemma~\ref{lm.5.10},
 which completes the proof of Proposition~\ref{pr.5.9}.
\end{proof}

\begin{proof}[Proof of Proposition~\ref{pr.5.11}]
 We put
 $|\xi_u| := \max_{1 \le i \le r_u} |\xi_{u,i}|$,
 $Z_u := \left\{\xi_u\in\mb{C}^{r_u} \bigm| |\xi_u|=1 \right\}$ and
 $\delta_u := \inf_{\xi_u \in Z_u}
   \max_{1 \le k \le r_u} \left| p_{u,k}(\xi_u) \right|$
 for each $u$.
 Then by the maximality of $\mb{I}\in \mathfrak{I}(\la)$
 and Lemma~\ref{lm.5.10},
 we have $\delta_u >0$ for each $u$,
 which implies the inequality
  $\max_{1\le k\le r_u}\left|p_{u,k}(\xi_u)\right| \ge \delta_u |\xi_u|^{r_u}$
 for any $\xi_u \in \mb{C}^{r_u}$ with $|\xi_u| \le 1$.
 Hence putting
  $\delta := \min_{1\le u\le l}\delta_u$ and
  $||\xi ||:= \max_{1\le u\le l} |\xi_u |^{r_u}$,
 we have the inequality
 \begin{equation}\label{eq.5.11.1} 
  \max_{u,k}\left|p_{u,k}(\xi_u)\right| \ge \delta \cdot ||\xi ||
 \end{equation}
 for $||\xi || \le 1$.

 On the other hand, for any $A=(a_{u,k,v,h}) \in D_R$ and
 $\xi \in \mb{C}^{d-l}$ with $||\xi || \le 1$,
 we have
 \begin{equation}\label{eq.5.11.2} 
  \begin{split} 
   \max_{u,k}
   \left| \sum_{v,h} a_{u,k,v,h} p_{v,h}(\xi_v) \right|
    &\le 
      \sum_{v,h}R\left(\sum_{i=1}^{r_v}|m_{v,i}|\right) |\xi_v|^h \\
   &\le L \cdot ||\xi ||^{1+\mu},
  \end{split}
 \end{equation}
 where we put
 $L := R\sum_{v=1}^l (d-l-r_v)\left(\sum_{i=1}^{r_v}|m_{v,i}|\right)$
 and
  $\mu :=  \frac{1}{\max_u r_u}$.
 
 Therefore if we take $\xi \in \mb{C}^{d-l}$
 with
  $||\xi || \le \left( \frac{\delta}{2L} \right)^{1/\mu}$,
 then by the inequalities~(\ref{eq.5.11.1}) and~(\ref{eq.5.11.2}),
 we have
 \begin{align*} 
   \max_{u,k}
  &\left| 
     p_{u,k}(\xi_u)-\sum_{v,h}a_{u,k,v,h}p_{v,h}(\xi_v)\right|\\
  &\ge \max_{u,k} \left| p_{u,k}(\xi_u) \right| 
   - \max_{u,k} \left| 
     \sum_{v,h}a_{u,k,v,h}p_{v,h}(\xi_v)\right| \\
  &\ge \delta \cdot ||\xi|| - L\cdot ||\xi ||^{1+\mu}
  \ge \delta \cdot ||\xi|| - L\cdot 
    \frac{\delta}{2L} \cdot ||\xi ||
  = 
     \frac{\delta}{2}\cdot ||\xi||.
 \end{align*}
 We define a positive number $\epsilon$ sufficiently small
 such that the inequality
   $0 < \epsilon < \left( \frac{\delta}{2L} \right)^{1/\mu}$
 holds and that the set
    $U := \left\{ \xi  \in \mb{C}^{d-l} \bigm| ||\xi || < \epsilon \right\}$
 is included in $U_0$. Moreover we put
 \[
  W := \left\{ \eta = (\eta_{u,k}) \in \mb{C}^{d-l} \ \left| \ 
        |\eta | = \max_{u,k}|\eta_{u,k}| < \tfrac{1}{2}\delta\epsilon
             \right.\right\}.
 \]
 Then we can easily verify that the map~(\ref{eq.5.11}) is proper.
 Therefore by Lemma~\ref{lm.5.11.2} below, 
 the map~(\ref{eq.5.11}) is a finite branched covering.
\end{proof}

\begin{lemma}\label{lm.5.11.2}
 Let $U, V$ be connected open subsets of $\mathbb{C}^n$, and $f:U \to V$ a proper holomorphic map. Then $f:U \to V$ is a finite branched covering.
\end{lemma}

\begin{proof}[Proof of Lemma~\ref{lm.5.11.2}]
 Note that there does not exist a compact analytic subset of $\mathbb{C}^n$ 
 whose dimension is greater than or equal to 1.
 Since $K:= \left\{z \in U \mid \det(Df)(z)=0 \right\}$ is an analytic subset of $U$ 
 with $K \ne U$, $f(K)$ is also an analytic subset of $V$ by proper mapping theorem.
 Hence the map $U \setminus f^{-1}\circ f(K) \to V \setminus f(K)$ is 
 proper and locally homeomorphic, and therefore is a covering space of finite degree, 
 which implies that $f$ is a finite branched covering.
\end{proof}

\vspace{12pt}

The rest of this section
is devoted to Lemma~\ref{lm.5.5} and its proof.

\begin{definition}
 For non-negative integers $n,b,k,h$ with $n>k$ and $b>h$,
 we denote by $A_{n,k}^{b,h}(\alpha)$ the $(n-k,b-h)$ matrix
 whose $(i,j)$-th entry is $\binom{i+k-1}{j+h-1} \alpha^{(i+k)-(j+h)}$
 for each $i$ and $j$.
 Moreover we put $A_{n,k}^b(\alpha):=A_{n,k}^{b,0}(\alpha)$ and
 $A_n^b(\alpha):=A_{n,0}^{b,0}(\alpha)$.
\end{definition}

By definition,
the matrix $A_{n,k}^{b,h}(\alpha)$ is obtained from the $(n,b)$ matrix
\[
 A_n^b(\alpha)
  = \begin{pmatrix}
      1        & 0         & 0         & 0       & \cdots & 0      \\
      \alpha   & 1         & 0         & 0       & \cdots & 0      \\
      \alpha^2 & 2\alpha   & 1         & 0       & \cdots & 0      \\
      \alpha^3 & 3\alpha^2 & 3\alpha   & 1       & \cdots & 0      \\
      \vdots   & \vdots    & \vdots    & \vdots  & \ddots & \vdots \\
      \alpha^{n-1} & (n-1)\alpha^{n-2} & \binom{n-1}{2}\alpha^{n-3} &
        \binom{n-1}{3}\alpha^{n-4} & \cdots & \ddots
     \end{pmatrix}
\]
by cutting off the upper $k$ rows and the left $h$ columns.

\begin{lemma}\label{lm.5.5}
 We put $r := r_1 + \cdots + r_l = d -l$, and
 denote by $M$ the $(r,r)$ square matrix defined by
 \[
  M = \Bigl( A_{r+1,1}^{r_1+1,1}(\alpha_1), \ldots,
             A_{r+1,1}^{r_l+1,1}(\alpha_l) \Bigr).
 \]
 Then we have
 \[
  \det M = \frac{r!}{r_1!\cdots r_l!}\cdot
  \prod_{1\le v<u \le l} (\alpha_u -\alpha_v)^{r_v r_u}.
 \]
\end{lemma}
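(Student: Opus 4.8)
The plan is to recognise $M$ as a confluent Vandermonde matrix in disguise, so that the determinant reduces to the classical confluent Vandermonde formula up to one explicit scalar factor. Set $\mathbf{p}(x) := (x, x^2, \ldots, x^r)^{\top}$. By the definition of $A_{r+1,1}^{b,h}$, the $(i,j)$-entry of the block $A_{r+1,1}^{r_u+1,1}(\alpha_u)$ is $\binom{i}{j}\alpha_u^{i-j} = \frac{1}{j!}\bigl(\tfrac{d}{dx}\bigr)^{j}(x^{i})\big|_{x=\alpha_u}$, so its columns are $\frac{1}{j!}\mathbf{p}^{(j)}(\alpha_u)$ for $1 \le j \le r_u$. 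Equivalently, if $V$ denotes the $r$-dimensional space of polynomials of degree $\le r$ with vanishing constant term, equipped with the monomial basis $x, x^2, \ldots, x^r$, then $M$ is the matrix on that basis of the $r$ linear functionals $g \mapsto \frac{1}{j!}g^{(j)}(\alpha_u)$, with $1 \le u \le l$ and $1 \le j \le r_u$.

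Next I would differentiate. Let $W$ be the space of polynomials of degree $\le r-1$ with basis $1, x, \ldots, x^{r-1}$, and $\partial\colon V \to W$, $g \mapsto g'$; since $\partial(x^{i}) = i\,x^{i-1}$, its matrix $D$ in these bases has $D_{i-1,i} = i$ and all other entries $0$, so $\det D = r!$. From $\frac{1}{j!}g^{(j)}(\alpha_u) = \frac{1}{j}\cdot\frac{1}{(j-1)!}(\partial g)^{(j-1)}(\alpha_u)$, together with the fact that $(u,j) \mapsto (u,j-1)$ identifies the index set $\{(u,j) : 1 \le j \le r_u\}$ with the standard confluent-Vandermonde index set $\{(u,s) : 0 \le s \le r_u - 1\}$ of the node $\alpha_u$ of multiplicity $r_u$, one obtains the matrix identity $M^{\top} = \Lambda\,\widetilde{V}^{\top} D$, where $\Lambda$ is diagonal with $(u,j)$-entry $1/j$ and $\widetilde{V}$ is the confluent Vandermonde matrix of the nodes $\alpha_1, \ldots, \alpha_l$ with multiplicities $r_1, \ldots, r_l$ (i.e.\ the matrix on the basis of $W$ of the functionals $g \mapsto \frac{1}{s!}g^{(s)}(\alpha_u)$). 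Taking determinants and using $\prod_{u=1}^{l}\prod_{j=1}^{r_u}\frac{1}{j} = \prod_{u=1}^{l}\frac{1}{r_u!}$ gives
\[
 \det M = \frac{r!}{r_1!\cdots r_l!}\,\det\widetilde{V}.
\]

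It then remains to establish the classical identity $\det\widetilde{V} = \prod_{1\le v<u\le l}(\alpha_u - \alpha_v)^{r_v r_u}$, which I would include for completeness. Both sides are polynomials in $\alpha_1, \ldots, \alpha_l$; a weight count shows $\det\widetilde{V}$ is homogeneous of degree $\binom{r}{2} - \sum_{u}\binom{r_u}{2} = \sum_{v<u} r_v r_u$, matching the right-hand side. For divisibility, fix $v<u$ and note that $\partial_{\alpha_u}$ acts only on the columns of block $u$ of $\widetilde{V}$, sending the column of derivative-order $s$ to a scalar multiple of the one of order $s+1$; hence each term of $\partial_{\alpha_u}^{N}\det\widetilde{V}\big|_{\alpha_u=\alpha_v}$ is a determinant in which the promoted block-$u$ orders and the block-$v$ orders $\{0, \ldots, r_v-1\}$ all refer to the one node $\alpha_v$, and it therefore vanishes unless the promoted orders are distinct and lie in $\{r_v, \ldots, r-1\}$ — which forces at least $r_v r_u$ derivatives in total, i.e.\ $N \ge r_v r_u$. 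Thus $(\alpha_u-\alpha_v)^{r_v r_u}$ divides $\det\widetilde{V}$; since these factors are pairwise coprime and the total degrees agree, $\det\widetilde{V} = c\prod_{v<u}(\alpha_u-\alpha_v)^{r_v r_u}$ for a constant $c$, and comparing the coefficients of the top power of $\alpha_l$ on both sides reduces, by induction on $l$, to the elementary identity $\det\bigl(\binom{N+a}{s}\bigr)_{0 \le a,s \le m-1} = 1$ (immediate from the row operations $R_a \mapsto R_a - R_{a-1}$ and Pascal's rule), giving $c = 1$. Combining the two identities gives the claimed formula.

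The reduction $M^{\top} = \Lambda\widetilde{V}^{\top}D$ and the passage to $\det\widetilde{V}$ are purely mechanical; the only place requiring genuine care is the divisibility step, where one must check that the surviving configurations of derivative-orders need exactly $r_v r_u$ derivatives and that the degree count leaves no room for a spurious extra factor.
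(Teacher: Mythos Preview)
Your argument is correct. The first reduction --- factoring $M$ as a diagonal times the confluent Vandermonde $\widetilde{V}$ times a diagonal --- is exactly the paper's first step: the identity $A_{r+1,1}^{r_u+1,1}(\alpha_u)=X_rA_r^{r_u}(\alpha_u)X_{r_u}^{-1}$ from Lemma~5.6 is precisely your $M=D\widetilde{V}\Lambda$ written block by block (your $D$ is the paper's $X_r$, your $\Lambda$ is $\mathrm{diag}(X_{r_1}^{-1},\ldots,X_{r_l}^{-1})$, and your $\widetilde{V}$ is the paper's $M'$). So this part differs only in language, not in substance.

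The genuine difference is in how $\det\widetilde{V}=\prod_{v<u}(\alpha_u-\alpha_v)^{r_vr_u}$ is established. The paper argues by induction on $l$ with explicit matrix manipulations: it left-multiplies by $A_r^r(-\alpha_1)$ to translate all nodes by $-\alpha_1$ (second identity of Lemma~5.6), which makes the first block upper-triangular with identity on the diagonal; then it right-multiplies each remaining block by a unipotent matrix (third identity of Lemma~5.6) to extract the factor $(\alpha_u-\alpha_1)^{r_1r_u}$ and reduce to an $(l-1)$-block determinant of the same shape. Your route is the classical polynomial argument: homogeneity gives the total degree, the Leibniz-on-columns computation shows $(\alpha_u-\alpha_v)^{r_vr_u}$ divides, coprimality and the degree match pin down the answer up to a constant, and the induction on $l$ via the top coefficient of $\alpha_l$ (together with $\det\bigl(\binom{N+a}{s}\bigr)=1$) fixes the constant. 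Both are standard; the paper's version is fully self-contained and never names the confluent Vandermonde, whereas yours is shorter and more conceptual but leans on that identification. Your divisibility step is the one place a reader may want more detail, though your sketch is sound: the minimum of $\sum_s n_s$ subject to the $r_u$ promoted orders $s+n_s$ being distinct and $\ge r_v$ is indeed $r_vr_u$.
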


The matrix $M$ defined above is the same as the square matrix
composed of the coefficients of $p_{u,h}(\xi_u)$ for $1 \le u \le l$ and
$1 \le h \le r_u$ in the right hand side of the expressions~(\ref{eq.5.psip});
hence Proposition~\ref{pr.5.4} is reduced to Lemma~\ref{lm.5.5}.

To prove Lemma~\ref{lm.5.5}, we give a definition and a lemma.

\begin{definition}
 For a positive integer $b$,
 we denote by $X_b$ the $(b,b)$ diagonal matrix
 whose $(i,i)$-th entry is $i$ for $1 \le i \le b$,
 and by $N_b$ the $(b,b)$ nilpotent matrix
 whose $(i,i+1)$-th entry is $1$ for $1 \le i \le b-1$
 and whose other entries are $0$,
 i.e.,
 \[
  X_b = \begin{pmatrix}
	 1      & 0      & \cdots & 0 \\
	 0      & 2      & \cdots & 0 \\
	 \vdots & \vdots & \ddots & \vdots \\
	 0      & 0      & \cdots & b
	\end{pmatrix}
  \quad \text{and} \quad
  N_b = \begin{pmatrix}
	 0      & 1      & 0      & \cdots & 0 \\
	 0      & 0      & 1      & \cdots & 0 \\
	 \vdots & \vdots & \vdots & \ddots & \vdots \\
	 0      & 0      & 0      & \cdots & 1 \\
	 0      & 0      & 0      & \cdots & 0
	\end{pmatrix}.
 \]
\end{definition}

\begin{lemma}\label{lm.5.6}
 For positive integers $n$ and $b$, we have the equalities
 \[
  A_{n+1,1}^{b+1,1}(\alpha) = X_n \cdot A_n^b(\alpha) \cdot {X_b}^{-1}
   \quad \text{and} \quad
  A_n^n(\beta) \cdot A_n^b(\alpha) = A_n^b(\beta + \alpha).
 \]
 Moreover for positive integers $n,b,k$ with $n>k$
 and a non-zero complex number $\alpha$,
 we have the equality
 \[ 
  A_{n,k}^b(\alpha) \cdot
  \sum_{h=0}^{b-1} \binom{-k}{h} \left(\alpha^{-1}N_b\right)^h
  = \alpha^k A_{n-k}^b(\alpha),
 \]
 where $\left(\alpha^{-1}N_b\right)^0$ denotes the identity matrix of
 size $(b,b)$.
\end{lemma}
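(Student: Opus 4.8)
The plan is to verify all three identities by direct comparison of matrix entries, reducing each to an elementary binomial identity; no geometry or analysis is needed, only the combinatorics of binomial coefficients. For the first identity, by the defining formula the $(i,j)$-th entry of $A_{n+1,1}^{b+1,1}(\alpha)$ is $\binom{i}{j}\alpha^{i-j}$, whereas left multiplication of $A_n^b(\alpha)$ by $X_n$ scales its $i$-th row by $i$ and right multiplication by $X_b^{-1}$ scales its $j$-th column by $1/j$, so the $(i,j)$-th entry of $X_n\,A_n^b(\alpha)\,X_b^{-1}$ is $\frac{i}{j}\binom{i-1}{j-1}\alpha^{i-j}$; the elementary identity $\binom{i}{j}=\frac{i}{j}\binom{i-1}{j-1}$ settles this case.

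For the second identity I would first record the interpretation that the $i$-th row of $A_n^b(\alpha)$ lists the coefficients of $(x+\alpha)^{i-1}$ in ascending powers of $x$, truncated at degree $b-1$, so that the transpose of $A_n^b(\alpha)$ represents in the monomial basis the substitution $x\mapsto x+\alpha$ on polynomials of degree $<n$; since such substitutions compose additively, one immediately gets $A_n^n(\beta)\,A_n^b(\alpha)=A_n^b(\alpha+\beta)$. The same thing can be checked by hand: the $(i,j)$-th entry of the product is $\sum_s\binom{i-1}{s-1}\binom{s-1}{j-1}\beta^{i-s}\alpha^{s-j}$, and after applying $\binom{i-1}{s-1}\binom{s-1}{j-1}=\binom{i-1}{j-1}\binom{i-j}{s-j}$ and then the binomial theorem in $\alpha,\beta$ this becomes $\binom{i-1}{j-1}(\alpha+\beta)^{i-j}$, which is the $(i,j)$-th entry of $A_n^b(\alpha+\beta)$ (the terms falling outside the natural range vanish automatically).

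For the third identity I would first observe that $N_b$ is nilpotent with $N_b^b=0$, so the finite sum $\sum_{h=0}^{b-1}\binom{-k}{h}(\alpha^{-1}N_b)^h$ is meaningful and equals $(I_b+\alpha^{-1}N_b)^{-k}$; its $(s,j)$-th entry is $\binom{-k}{j-s}\alpha^{-(j-s)}$ when $j\ge s$ and $0$ otherwise. Since the $(i,s)$-th entry of $A_{n,k}^b(\alpha)$ is $\binom{i+k-1}{s-1}\alpha^{(i+k)-s}$, the $(i,j)$-th entry of the left-hand product equals $\alpha^{(i+k)-j}\sum_{a=0}^{j-1}\binom{i+k-1}{a}\binom{-k}{(j-1)-a}$, and the Chu--Vandermonde convolution $\sum_a\binom{x}{a}\binom{y}{m-a}=\binom{x+y}{m}$---which holds as a polynomial identity in $x$ and $y$ and therefore applies with $y=-k$---turns this into $\binom{i-1}{j-1}\alpha^{(i+k)-j}$, precisely the $(i,j)$-th entry of $\alpha^k A_{n-k}^b(\alpha)$.

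I do not expect any genuine obstacle; the only thing requiring attention is the index bookkeeping, namely checking that the truncations imposed by the matrix dimensions and by the upper-triangular shape of the powers of $N_b$ are compatible with the summation ranges occurring in the binomial identities invoked, and noting that the negative upper argument $-k$ in the last identity is harmless because the Chu--Vandermonde identity is formal in both of its parameters.
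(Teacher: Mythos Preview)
Your proposal is correct and follows essentially the same approach as the paper, which also reduces all three identities to the elementary binomial relations $\binom{i}{j}=\frac{i}{j}\binom{i-1}{j-1}$, $\binom{i}{h}\binom{h}{j}=\binom{i}{j}\binom{i-j}{h-j}$ together with the binomial theorem, and the Chu--Vandermonde convolution $\sum_{h}\binom{x}{h}\binom{y}{j-h}=\binom{x+y}{j}$, respectively. Your added interpretation of $A_n^n(\beta)$ as the matrix of the shift $x\mapsto x+\beta$ is a pleasant conceptual gloss on the second identity but does not depart from the paper's computation.
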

\begin{proof}
 The first equality is verified
 by $\binom{i}{j}=\binom{i-1}{j-1}\cdot\frac{i}{j}$,
 the second one by
 $\binom{i}{h}\binom{h}{j}=\binom{i}{j}\binom{i-j}{h-j}$
 and $\sum_{h=0}^k \binom{k}{h}\alpha^h\beta^{k-h} = (\alpha+\beta)^k$,
 and the last one by the equality
 $\sum_{h=0}^j \binom{x}{h}\binom{y}{j-h} = \binom{x+y}{j}$.
\end{proof}

\begin{proof}[Proof of Lemma~\ref{lm.5.5}]
 By Lemma~\ref{lm.5.6}, we have
  $A_{r+1,1}^{r_u+1,1}(\alpha_u)
   = X_r \cdot A_r^{r_u}(\alpha_u) \cdot \left(X_{r_u}\right)^{-1}$
 for each $1 \le u \le l$.
 Hence putting 
  $M' = \Bigl( A_r^{r_1}(\alpha_1), \ldots,
              A_r^{r_l}(\alpha_l) \Bigr)$,
 we have the equalities
 \[ 
  \det M
   = \det X_r \cdot \det M' \cdot \prod_{u=1}^l \det \left(X_{r_u}\right)^{-1}
   = \frac{r!}{r_1!\cdots r_l!} \cdot \det M'.
 \]
 Therefore to prove Lemma~\ref{lm.5.5}, we only need to show the equality
 \begin{equation}\label{eq.5.Mprime} 
  \det M' = \prod_{1\le v<u \le l} (\alpha_u -\alpha_v)^{r_v r_u}.
 \end{equation}
 
 If there exist distinct indices $u,v$ with $\alpha_u=\alpha_v$,
 then both hand sides of the equality~(\ref{eq.5.Mprime})
 are clearly zero;
 hence we only need to consider the equality~(\ref{eq.5.Mprime}) in the case
 that $\alpha_1,\ldots,\alpha_l$ are mutually distinct.
 Moreover if $l=1$,
 the equality~(\ref{eq.5.Mprime}) trivially holds since $\det M'=1$.
 In the following,
 we show the equality~(\ref{eq.5.Mprime}) by induction of $l$.

 We put $r' = r_2+\cdots +r_l$
 and $\alpha'_u = \alpha_u - \alpha_1$ for $2 \le u \le l$.
 Then by Lemma~\ref{lm.5.6}, we have
 \[
  A_r^r(-\alpha_1) \cdot M'
  = \Bigl( A_r^{r_1}(0),A_r^{r_2}(\alpha'_2),\ldots,A_r^{r_l}(\alpha'_l) \Bigr)
  = \begin{pmatrix}
     I_{r_1} & * \\
     O       & \widetilde{M}
    \end{pmatrix},
 \]
 where we put $\widetilde{M}=\Bigl( A_{r,r_1}^{r_2}(\alpha'_2), \ldots,
    A_{r,r_1}^{r_l}(\alpha'_l) \Bigr)$, and
 $I_{r_1}$ denotes the identity matrix of size $(r_1, r_1)$.
 Moreover by Lemma~\ref{lm.5.6}, we have
 \[ 
  A_{r,r_1}^{r_u}(\alpha'_u) \cdot
   \sum_{h=0}^{r_u-1}\binom{-r_1}{h}\left((\alpha'_u)^{-1}N_{r_u}\right)^h
  = (\alpha'_u)^{r_1} \cdot A_{r'}^{r_u}(\alpha'_u)
 \]
 for each $2 \le u \le l$.
 Hence putting
 $M''=\Bigl( A_{r'}^{r_2}(\alpha'_2), \ldots, A_{r'}^{r_l}(\alpha'_l) \Bigr)$,
 we have the equalities
 \begin{equation*}
  \det M' = \det \widetilde{M} = \det M'' \cdot
    \prod_{u=2}^l \left(\alpha'_u\right)^{r_1 r_u},
 \end{equation*}
 which completes the proof by induction of $l$.
\end{proof}

\section{Proof of Theorem~\ref{thm.B}}\label{sec.6}

In this section we give the proofs of Theorem~\ref{thm.B} and
Proposition~\ref{pr.D} introduced in Section~\ref{sec.4},
which are also the most crucial part in the proof of the main theorems.
We first give a key estimate in Proposition~\ref{pr.6.1},
which is a substitute for the inequalities~(\ref{eq.5.11.1}) and~(\ref{eq.5.11.2})
in the case that $\mb{I}\in\mathfrak{I}(\la)$ is not necessarily maximal.

\begin{proposition}\label{pr.6.1}
 Let $r$ be a positive integer, and $m_1,\ldots, m_r$ non-zero complex
 numbers with $\sum_{i=1}^r m_i \ne 0$.
 We put $m=(m_1,\ldots, m_r)$,
 \[
  p_k(\xi) := 
                 \sum_{i=1}^r m_i \xi_i^k, \quad 
  B(m) := \left\{ \xi \in \mb{C}^r \bigm|
          p_k(\xi)=0 \text{ for } 1 \le k \le r \right\},
 \]
 and
 $|\xi | := \max_{1 \le i \le r}|\xi_i |$
 for $\xi = (\xi_1,\ldots,\xi_r) \in \mb{C}^r$.
 Then
 \begin{enumerate}
  \item  for each positive integer $h$, there exists a positive real
	 number $L_h$ such that the inequality
	 \begin{equation}\label{eq.6.1.2}
	  \left| p_h(\xi)\right| \le L_h \cdot
	   \max_{1 \le k \le r}\left| p_k(\xi)\right|
	 \end{equation}\label{en.6.1.2}
	 holds for any $\xi \in \mb{C}^r$ with $|\xi |=1$.
  \item  There exist an open neighborhood $O$ of 
	 $B(m) \cap \{ \xi \in \mb{C}^r \bigm| |\xi | =1 \}$ in
	 $\mb{C}^r$
	 and a positive real number $L'$ such that the inequality
	 \begin{equation}\label{eq.6.1.2.2}
	  \left| p_r(\xi)\right| 
	   \le L'\cdot\max_{1\le k\le r-1}\left| p_k(\xi)\right|
	 \end{equation}
	 holds for any $\xi \in O$. \label{en.6.1.3}
 \end{enumerate}
\end{proposition}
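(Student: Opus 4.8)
The plan is to prove the two assertions separately: assertion~(\ref{en.6.1.2}) from the Cayley--Hamilton relations among the weighted power sums $p_k$, and assertion~(\ref{en.6.1.3}) as a \L{}ojasiewicz-type inequality near the compact set $B(m)\cap\{|\xi|=1\}$ which I will ultimately reduce to a statement about a single ``cluster'' of coordinates. For~(\ref{en.6.1.2}): each coordinate $\xi_i$ is a root of $\prod_{j=1}^{r}(t-\xi_j)=\sum_{j=0}^{r}(-1)^{j}e_{j}(\xi)\,t^{r-j}$, where $e_{j}(\xi)$ is the $j$-th elementary symmetric polynomial of $\xi_1,\ldots,\xi_r$ and $e_0=1$; hence $\xi_i^{r}=\sum_{j=1}^{r}(-1)^{j-1}e_{j}(\xi)\,\xi_i^{r-j}$, and multiplying by $m_i$ and summing over $i$ gives the recursion $p_{h}=\sum_{j=1}^{r}(-1)^{j-1}e_{j}(\xi)\,p_{h-j}$, valid for every $h\ge r$ once we set $p_{0}:=\sum_{i=1}^{r}m_i$. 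On $\{|\xi|=1\}$ we have $|e_{j}(\xi)|\le\binom{r}{j}$, hence $|p_{h}(\xi)|\le\sum_{j=1}^{r}\binom{r}{j}\,|p_{h-j}(\xi)|$ there; since~(\ref{eq.6.1.2}) holds trivially with $L_h=1$ for $1\le h\le r$, and for $h\ge r+1$ every index $h-j$ lies in $\{1,\ldots,h-1\}$ (so the constant $p_0$ never enters), an induction on $h$ produces all the constants $L_h$. I would carry out exactly this.

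For~(\ref{en.6.1.3}) I would first record a structural fact. We may assume $B(m)\ne\{0\}$, since otherwise $B(m)\cap\{|\xi|=1\}=\emptyset$ and there is nothing to prove. Put $W:=\{\xi\in\mb{C}^{r}\mid p_k(\xi)=0\text{ for }1\le k\le r-1\}$. Grouping the coordinates of a point of $W$ by common value and running the Vandermonde argument used in the proofs of Lemmas~\ref{lm.4.6} and~\ref{lm.5.10}, I would show that each irreducible component $C$ of $W$ is either contained in $B(m)$, in which case $p_r$ vanishes identically on $C$, or else its generic point has $r$ pairwise distinct nonzero coordinates, which by the Jacobian computation in the proof of Lemma~\ref{lm.4.1} forces $\dim C=1$, so that $C$ is a line through the origin with $C\cap B(m)=C\cap\{p_r=0\}=\{0\}$. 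Thus $B(m)$ is exactly the union of the components of $W$ on which $p_r$ vanishes, while the remaining (``bad'') components are closed cones meeting $B(m)$ only at the origin; consequently, on $\{|\xi|=1\}$ the bad components stay a positive distance away from the compact set $B(m)\cap\{|\xi|=1\}$. Taking $O$ to be a sufficiently thin neighbourhood of $B(m)\cap\{|\xi|=1\}$, every point of $O$ then has a neighbourhood $U$ in $\mb{C}^{r}$ with $W\cap U\subseteq B(m)\subseteq\{p_r=0\}$.

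It then remains to upgrade this to the linear bound~(\ref{eq.6.1.2.2}). Setting $h=r$ in the recursion above gives the identity $p_r=\sum_{j=1}^{r-1}(-1)^{j-1}e_{j}(\xi)\,p_{r-j}+(-1)^{r-1}\bigl(\sum_{i=1}^{r}m_i\bigr)\,\xi_1\xi_2\cdots\xi_r$; since $\sum_i m_i\ne0$ and the $e_j$ are bounded on any bounded neighbourhood of $B(m)\cap\{|\xi|=1\}$, the inequality~(\ref{eq.6.1.2.2}) is equivalent to a bound $|\xi_1\xi_2\cdots\xi_r|\le L''\max_{1\le k\le r-1}|p_k(\xi)|$ on such a neighbourhood. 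Each $\xi^{*}\in B(m)\cap\{|\xi|=1\}$ has a vanishing coordinate and lies on a component of $B(m)$ of the form ``certain coordinates vanish and the others are grouped by value''. Localizing at $\xi^{*}$ and expanding $p_1,\ldots,p_r$ cluster by cluster as in Section~\ref{sec.5} --- writing $\xi_i=\alpha^{*}_{c}+\xi'_i$ over each cluster $c$ and using that the total weight of every nonzero cluster of $\xi^{*}$ vanishes --- Proposition~\ref{pr.5.4} and Lemma~\ref{lm.5.5} let me express the power sums of the perturbations within each nonzero cluster as linear combinations of $p_1,\ldots,p_{r-1}$, reducing the desired bound to the analogous one for the single zero cluster; that case I would settle by induction on $r$, and then a compactness argument over $B(m)\cap\{|\xi|=1\}$ would yield the uniform $O$ and $L'$.

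The hard part will be precisely this last reduction to the zero cluster. Its total weight $\sum_i m_i$ is nonzero, but a \emph{proper} sub-sum $\sum_{i\in J}m_i$ may still vanish, so neither Lemma~\ref{lm.5.10} nor the clean finite-branched-covering picture of Proposition~\ref{pr.5.11} applies to it directly; one has to recurse into the sub-cluster structure of the zero cluster while keeping the estimate uniform over the compact base $B(m)\cap\{|\xi|=1\}$. This is also where the exponent in the \L{}ojasiewicz inequality is forced down to~$1$.
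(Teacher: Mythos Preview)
Your argument for assertion~(\ref{en.6.1.2}) via the recursion $p_h=\sum_{j=1}^{r}(-1)^{j-1}e_j(\xi)\,p_{h-j}$ is correct and more elementary than the paper's: the paper obtains~(\ref{en.6.1.2}) only as one leg of a three-statement simultaneous induction on a ``depth'' parameter $\tau_m(0)$ measuring the maximal length of a chain of refinements in a partition poset $\mathfrak{I}(m)$, whereas you get it in one stroke.

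For assertion~(\ref{en.6.1.3}) your plan --- localise at $\xi^{*}\in B(m)\cap\{|\xi|=1\}$, split into clusters by common coordinate value, and invert the linear system of Lemma~\ref{lm.5.5} --- is exactly the paper's mechanism (its Lemma~\ref{lm.6.2}). But you are making the endgame harder than it is. The ``induction on $r$'' for the zero cluster and the worry about proper sub-sums $\sum_{i\in J}m_i$ vanishing are both unnecessary: your own proof of~(\ref{en.6.1.2}) already applies to \emph{every} cluster mass vector $(m_{u,1},\ldots,m_{u,r_u})$, since each has $\sum_{i}m_{u,i}=-m_{u,0}\ne 0$ (the zero cluster included, with $-m_{u,0}=\sum_{i=1}^{r}m_i$), and your recursion argument is completely indifferent to whether proper sub-sums vanish. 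This immediately gives $|\theta_{u,h}(x_u)|\le L_{u,h}\,|x_u|\max_{1\le k\le r_u}|\theta_{u,k}(x_u)|$ for $h>r_u$ and $|x_u|\le 1$; feeding that into the inverted system~(\ref{eq.6.linear2}) absorbs the higher-order corrections for small $|x|$ and yields $\max_{u,k}|\theta_{u,k}|\le L\max_{1\le h\le r+1-l}|p_h|$, hence $|p_r|\le L'\max_{1\le h\le r+1-l}|p_h|$ via~(\ref{eq.6.ptheta}). Since $l\ge 2$ this is~(\ref{eq.6.1.2.2}) locally, and compactness of $B(m)\cap\{|\xi|=1\}$ globalises. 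In short, your direct proof of~(\ref{en.6.1.2}) collapses the paper's depth induction to a single localisation step; the detour through $e_r=\xi_1\cdots\xi_r$ and the recursion into the zero cluster can be dropped entirely.
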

\begin{proof}
 We put $m_0 := -\sum_{i=1}^r m_i$,
 \begin{align*}
  \mathfrak{I}(m) &:= \left\{ \left\{I_1,\ldots,I_l\right\}\ \left| \
   \begin{matrix}
    I_1 \amalg \cdots \amalg I_l = \{0,\ldots,r \}, \quad l \ge 1 \\
    I_u \ne \emptyset \text{ and } \sum_{i \in I_u} m_i = 0
       \text{ for } 1 \le u \le l \\
   \end{matrix}
   \right.\right\}, \\
  E(\mb{I}) &:= \left\{ (\xi_1,\ldots,\xi_r) \in \mb{C}^r \bigm|
    \text{If $i,j \in I \in \mb{I}$, then $\xi_i=\xi_j$}
           \right\} \\
  \intertext{for each $\mb{I} \in \mathfrak{I}(m)$, and}
  \mb{I}(\xi) &:= \left\{ I \subseteq \{0,1,\ldots,r\} \ \left| \
   \begin{matrix}
    I \ne \emptyset. \qquad
    \text{If $i,j\in I$, then $\xi_i = \xi_j$.} \\
    \text{If $i \in I$ and $j \in \{0,1,\ldots,r\} \setminus I$,
      then $\xi_i \ne \xi_j$}.
   \end{matrix}
  \right.\right\}
 \end{align*}
 for each $\xi \in B(m)$,
 where we are assuming $\xi_0 = 0$.
 Then we have the equality
	 \begin{equation}\label{eq.6.1.1} 
	  B(m) = \bigcup_{\mb{I} \in \mathfrak{I}(m)}E(\mb{I}),
	 \end{equation}
 and we also have $\mb{I}(\xi) \in \mathfrak{I}(m)$ and
 $\xi \in E(\mb{I}(\xi))$ for each $\xi \in B(m)$
 by the same argument as the proof of Lemma~\ref{lm.4.6}.
 Note that in this setting,
 the set $\mathfrak{I}(m)$ always contains
 the element $\mb{I}_0:=\left\{\{0,\ldots,r\}\right\}$,
 and that the equalities $E(\mb{I}_0)= \{0\}$
 and $\mb{I}(0)=\mb{I}_0$ hold.
 
 We make use of the following auxiliary lemmas:

 \begin{lemma}\label{lm.6.1}
  There exists an open neighborhood $O$ of 
  $B(m) \cap \{ \xi \in \mb{C}^r \bigm| |\xi | =1 \}$ in $\mb{C}^r$
  such that for each positive integer $h$,
  there exists a positive real number $L'_h$ such that the inequality
  \begin{equation}\label{eq.6.1.3}
   \left| p_h(\xi)\right| \le
    L'_h\cdot\max_{1\le k\le r-1}\left| p_k(\xi)\right|
  \end{equation}
  holds for any $\xi \in O$.
 \end{lemma}

 \begin{lemma}\label{lm.6.2}
  Let $\alpha$ be a point in $B(m)\setminus \{0\}$. Then
  there exists an open neighborhood $O_{\alpha}$ of $\alpha$ in $\mb{C}^r$
  such that for each positive integer $h$,
  there exists a positive real number $L_{\alpha,h}$ such that 
  the inequality
  \begin{equation}\label{eq.6.2}
   |p_h(\xi)| \le L_{\alpha,h} \cdot
    \max_{1\le k\le r+1-\#\left(\mb{I}(\alpha)\right)} |p_k(\xi)|
  \end{equation}
  holds for any $\xi \in O_{\alpha}$.
 \end{lemma}

 Note that the implications
 \begin{center}
  ``Proposition~\ref{pr.6.1} $\Longrightarrow$ Lemma~\ref{lm.6.1}
  $\Longrightarrow$
  The assertion~(\ref{en.6.1.3}) in Proposition~\ref{pr.6.1}''
 \end{center}
 are clear.
 In the following, we prove Lemmas~\ref{lm.6.1},~\ref{lm.6.2} and
 the assertion~(\ref{en.6.1.2})
 in Proposition~\ref{pr.6.1} simultaneously by induction.
 To make the induction work well, we define
 the ``depth'' of a point $\alpha \in B(m)$ by
 \[
  \tau_m(\alpha) := \max \left\{\nu \ \left| \
   \begin{matrix}
    \mb{I}(\alpha)=:\mb{I}_1 \precneqq \mb{I}_2
          \precneqq \cdots \precneqq \mb{I}_{\nu} \\
    \mb{I}_{\omega} \in
      \mathfrak{I}(m) \ \text{ for } 1 \le \omega \le \nu
   \end{matrix}
    \right. \right\},
 \]
 where the symbol $\mb{I}\precneqq\mb{I}'$ 
 for $\mb{I},\mb{I}'\in\mathfrak{I}(m)$ denotes that
 $\mb{I}'$ is a refinement of $\mb{I}$
 with
 $\mb{I}\ne\mb{I}'$.
 Note that the inequality $\tau_m(0) > \tau_m(\alpha)$ holds
 for any $\alpha \in B(m) \setminus \{0\}$
 and that the equality $\tau_m(0)=1$ holds if and only if $B(m)=\{0\}$.

 We consider the following assertions
 for each non-negative integer $\nu$:
 \begin{enumerate}
  \item[$(1)_{\nu}$] if $\tau_m(0) \le \nu +1$,
	     then the assertion~(\ref{en.6.1.2}) in
	     Proposition~\ref{pr.6.1} holds.
  \item[$(2)_{\nu}$] If $\tau_m(0) \le \nu +1$,
	     then Lemma~\ref{lm.6.1} holds.
  \item[$(3)_{\nu}$] If $\tau_m(\alpha) \le \nu$,
	     then Lemma~\ref{lm.6.2} holds.
 \end{enumerate}
 Note that the assertion~$(2)_{0}$ trivially holds
 since $\tau_m(0) \le 1$ implies $B(m)=\{0\}$.
 In the following, we show the implications
 \[
  (1)_{\nu-1} \Rightarrow (3)_{\nu} \Rightarrow
  (2)_{\nu} \Rightarrow (1)_{\nu}
 \]
 for each $\nu$,
 which will complete the proofs of Lemmas~\ref{lm.6.1},~\ref{lm.6.2} and
 Proposition~\ref{pr.6.1}.
 We put
 \[
  Z := \{ \xi \in \mb{C}^r \bigm| |\xi | =1 \}.
 \]
 
 \begin{proof}[Proof of the implication~$(3)_{\nu} \Rightarrow (2)_{\nu}$]
  We suppose $(3)_{\nu}$ and prove $(2)_{\nu}$.
  When $\tau_m(0) \le \nu +1$, the inequality $\tau_m(\alpha) \le \nu$ holds
  for any $\alpha \in Z \cap B(m)$.
  Hence by the assumption $(3)_{\nu}$,
  we can choose, for each $\alpha \in Z \cap B(m)$,
  an open neighborhood $O_{\alpha}$ of $\alpha$
  and a positive real number $L_{\alpha,h}$
  for each $h \in \mb{N}$ such that the inequality~(\ref{eq.6.2})
  holds for any $\xi \in O_{\alpha}$.
  Since $Z \cap B(m)$ is compact, there exist
  finite number of open neighborhoods $O_{\alpha_1},\ldots,O_{\alpha_{\mu}}$
  which cover $Z \cap B(m)$.
  On the other hand,
  since $\#\left(\mb{I}(\alpha)\right) \ge 2$
  for any $\alpha \in Z \cap B(m)$,
  we always have $r+1-\#\left(\mb{I}(\alpha)\right) \le r-1$.
  Therefore, putting
   $O :=\bigcup_{1 \le \omega \le \mu}O_{\alpha_{\omega}}$
  and
   $L'_h := \max_{1 \le \omega \le \mu} L_{\alpha_{\omega},h}$
  for each $h$,
  we have,
  by the inequality~(\ref{eq.6.2}),
  the inequality~(\ref{eq.6.1.3}) for any $\xi \in O$.
 \end{proof}

 \begin{proof}[Proof of the implication~$(2)_{\nu} \Rightarrow (1)_{\nu}$]
 We suppose $(2)_{\nu}$ and verify $(1)_{\nu}$. 
 The set $Z\setminus O$ is compact
 and does not have common zeros of $p_1,\ldots,p_r$.
 Hence the infimum\\
 $\inf_{\xi \in Z\setminus O} \max_{1 \le k \le r}|p_k(\xi)|$
 is positive, which assures the existence of a positive real number $L_h$
 for each $h \in \mb{N}$
 satisfying the inequality~(\ref{eq.6.1.2}) for any $\xi \in Z\setminus O$.
 Replacing the maximum of $L_h$ and $L'_h$ by $L_h$,
 we have the inequality~(\ref{eq.6.1.2}) for any $\xi \in Z$.
 \end{proof}

 In the rest of the proof,
 we suppose $(1)_{\nu-1}$ and prove $(3)_{\nu}$.
 We fix $\alpha \in B(m) \setminus \{0\}$ with $\tau_m(\alpha) \le \nu$,
 put
  $\mb{I}(\alpha) =: \left\{I_1,\ldots,I_l \right\}$,
 and denote by $\alpha_u^0$ the $i$-th coordinate of $\alpha$ for $i \in I_u$.
 Note that $\alpha_1^0,\ldots,\alpha_l^0$ are mutually distinct.
 We put $\#(I_u) = r_u+1$,
 $(\xi_i)_{i\in I_u} = (\xi_{u,0},\xi_{u,1},\ldots,\xi_{u,r_u})$,
 $(m_i)_{i\in I_u} = (m_{u,0},m_{u,1},\ldots,m_{u,r_u})$,
 $m(I_u)=(m_{u,1},\ldots,m_{u,r_u})$,
 $x_{u,i}=\xi_{u,i}-\xi_{u,0}$, $\alpha_u=\xi_{u,0}$, 
 $x_u = (x_{u,1},\ldots,x_{u,r_u})$, $x=(x_1,\ldots,x_l)$,
 $|x_u|= \max_{1 \le i \le r_u}|x_{u,i}|$
 and $|x|= \max_{1\le u \le l}|x_u|$.
 We may assume $\alpha_l=\alpha_l^0=\xi_{l,0}=\xi_0=0$.
 We may also consider
 the coordinates $(\alpha_1,\ldots,\alpha_{l-1},x)$
 as a local coordinate system around $\alpha$ in $\mb{C}^r$.
 Note that the point $(\alpha_1,\ldots,\alpha_{l-1},x)$ coincides
 with $\alpha$ if and only if $x=0$ and $\alpha_u=\alpha_u^0$ for 
 $1 \le u \le l-1$,
 and that the point $(\alpha_1,\ldots,\alpha_{l-1},x)$ belongs to
 $E\left(\mb{I}(\alpha)\right)$ if and only if $x=0$.
 Furthermore we put
 \[ 
  \theta_{u,k}(x_u) = \sum_{i=1}^{r_u} m_{u,i} x_{u,i}^k
 \]
 for $1 \le u \le l$ and $k \in \mb{N}$.

 Then we have the equality
 \begin{equation}\label{eq.6.ptheta} 
  p_k(\xi) =
   \sum_{u=1}^l \sum_{h=1}^k \binom{k}{h}\alpha_u^{k-h} \theta_{u,h}(x_u)
 \end{equation}
 by the same computation as in the equalities~(\ref{eq.5.psip}).
 Moreover by Lemma~\ref{lm.5.5},
 the equalities~(\ref{eq.6.ptheta}) for $1 \le k \le r+1-l$
 are equivalent in some neighborhood of $\alpha$ to the equalities
 \begin{equation}\label{eq.6.linear2} 
  \theta_{u,k}(x_u) = \sum_{h=1}^{r+1-l} b_{u,k,h} p_h(\xi)
   + \sum_{v=1}^l \sum_{h=r_v +1}^{r+1-l} a_{u,k,v,h} \theta_{v,h}(x_v)
 \end{equation}
 for $1 \le u \le l$ and $1 \le k \le r_u$,
 where the coefficients $b_{u,k,h}$ and $a_{u,k,v,h}$
 depend only on $r_1,\ldots,r_l$ and $\alpha_1,\ldots,\alpha_{l-1}$.
 Moreover its dependence is continuous
 on the domain where $\alpha_1,\ldots,\alpha_{l-1}$ and $0$ are
 mutually distinct.
 Therefore taking a small open neighborhood $\Delta$
 of $(\alpha_1^0,\ldots,\alpha_{l-1}^0)$ in $\mb{C}^{l-1}$
 and a sufficiently large real number $R$,
 we may assume that the inequalities
 \[
  \left| \alpha_u \right| \le R, \quad \left| b_{u,k,h} \right| \le R
   \quad \text{and}\quad \left| a_{u,k,v,h} \right| \le R
 \]
 hold for all $u,k,v,h$ and
 for any $(\alpha_1,\ldots,\alpha_{l-1}) \in \Delta$.

 On the other hand, since $\tau_m(\alpha) \le \nu$,
 we always have $\tau_{m(I_u)}(0) \le \nu$ for any $u$.
 Hence by the assumption $(1)_{\nu-1}$, there exists,
 for each $u$ and for each positive integer $h$, a positive real
 number $L_{u,h}$ such that the inequality
 \[
  \left| \theta_{u,h}(x_u)\right| \le
   L_{u,h} \cdot \max_{1 \le k \le r_u}\left| \theta_{u,k}(x_u)\right|
 \]
 holds for any $x_u \in \mb{C}^{r_u}$ with $|x_u| = 1$.
 Hence by the homogeneity of $\theta_{u,k}(x_u)$,
 the inequality
 \[
  \left| \theta_{u,h}(x_u)\right| \le
   L_{u,h} \cdot
   \max_{1 \le k \le r_u}\left| \theta_{u,k}(x_u)\right| \cdot |x_u|
 \]
 holds for $h \ge r_u +1$ and
 for any $x_u \in \mb{C}^{r_u}$ with $|x_u| \le 1$.
 Therefore by the equality~(\ref{eq.6.linear2}),
 we have the following
 for $(\alpha_1,\ldots,\alpha_{l-1}) \in \Delta$ and $|x|\le 1$:
 \begin{align*}
  \max_{u,k} \left| 
    \sum_{h=1}^{r+1-l} b_{u,k,h} p_h(\xi) \right|
  &\ge \max_{u,k} \left| \theta_{u,k}(x_u) \right| -
        \max_{u,k} \left| 
                          \sum_{v=1}^l \sum_{h=r_v +1}^{r+1-l}
         a_{u,k,v,h} \theta_{v,h}(x_v) \right| \\
  &\ge
   \left( 
     1 - R\sum_{v=1}^l\sum_{h=r_v +1}^{r+1-l}L_{v,h}\cdot |x|\right)
       \max_{u,k} \left| \theta_{u,k}(x_u) \right|.
 \end{align*}
 Hence putting
 \[
  \begin{split}
   &J := \max \left\{1,\  
    2R\sum_{v=1}^l \sum_{h=r_v +1}^{r+1-l}L_{v,h} \right\},
    \quad L := 2R(r+1-l) \\
   \text{and} \quad
   &O_{\alpha} := \left\{(\alpha_1,\ldots,\alpha_{l-1},x) \in \mb{C}^{r}
   \bigm| (\alpha_1,\ldots,\alpha_{l-1}) \in \Delta,
    |x| < 1/J  \right\},
  \end{split}
 \]
 we have,
 for any $\xi = (\alpha_1,\ldots,\alpha_{l-1},x) \in O_{\alpha}$,
 the inequality
 \begin{equation}\label{eq.6.3.1}
  \max_{u,k} \left| \theta_{u,k}(x_u) \right| \le
   2 \max_{u,k} \left| 
		 \sum_{h=1}^{r+1-l}b_{u,k,h}p_h(\xi)\right| \le
   L \cdot \max_{1 \le k \le r+1-l} |p_k(\xi)|.
 \end{equation}

 On the other hand, by the equality~(\ref{eq.6.ptheta}),
 we have, for each positive integer $h$,
 the inequalities
 \begin{equation}\label{eq.6.3.2}
  \left| p_h(\xi) \right|
   \le 
      \sum_{u=1}^l \sum_{k=1}^{h}\binom{h}{k} R^{h-k} L_{u,k} \cdot
      \max_{1 \le k \le r_u} \left| \theta_{u,k}(x_u) \right|
   \le L_h \cdot \max_{u,k} \left| \theta_{u,k}(x_u) \right|
 \end{equation}
 for any $(\alpha_1,\ldots,\alpha_{l-1},x) \in O_{\alpha}$,
 where we put
 $L_h := \sum_{u=1}^l \sum_{k=1}^{h}\binom{h}{k} R^{h-k} L_{u,k}$.
 Therefore by the inequalities~(\ref{eq.6.3.1}) and~(\ref{eq.6.3.2}),
 we have
 \[
  \left| p_h(\xi) \right|
  \le L_h L \cdot \max_{1 \le k \le r+1-l} \left| p_k(\xi) \right|
 \]
 for any $\xi = (\alpha_1,\ldots,\alpha_{l-1},x) \in O_{\alpha}$
 and for each $h$.
 Thus the assertion~$(3)_{\nu}$ is proved, which completes the proof of
 Lemmas~\ref{lm.6.1},~\ref{lm.6.2} and Proposition~\ref{pr.6.1}.
\end{proof}

In the rest of this section,
the notation follows that in Section~\ref{sec.5}.
Therefore $\la$ is an element of $V_d$,
and $\mb{I}=\{I_1,\ldots,I_l\}$ an element of $\mathfrak{I}(\la)$, which
are fixed throughout the rest of this section.
Moreover the notation $r_u$, $\z_{u,i}$, $\la_{u,i}$, $m_{u,i}$, $\alpha_u$,
$\alpha$, $\xi_{u,i}$, $\xi_u$, $\xi$, $\psi_k(\xi)$, $p_{u,k}(\xi_u)$,
$A=(a_{u,k,v,h})$, $D_R$ and the map $F$
is the same as in Section~\ref{sec.5}.
Note that Propositions~\ref{pr.5.0} and~\ref{pr.5.4} are valid 
for non-maximal $\mb{I} \in \mathfrak{I}(\la)$

We give a proposition next which is the most important part in 
the proof of Theorem~\ref{thm.B}, whose proof is essentially based on
Proposition~\ref{pr.6.1}.

\begin{proposition}\label{pr.6.4}
 For any positive real numbers $R$ and
 $1 > \epsilon >0$,
 and for any open neighborhood $U_0$ of $0$ in $\mb{C}^{d-l}$,
 there exist open neighborhoods $U,W$ of $0$ in $\mb{C}^{d-l}$
 with $U \subseteq U_0$ such that the map
 \begin{equation}\label{eq.6.4.1}
  \left(U \times D_R \right) \cap F^{-1}\left( W_{\epsilon} \times D_R \right)
  \stackrel{F}{\to} W_{\epsilon} \times D_R
 \end{equation}
 is proper, and therefore a finite branched covering,
 where 
 \[
  W_{\epsilon} := W \cap \Xi_{\epsilon} \quad \text{and} \quad
  \Xi_{\epsilon} := \left\{
   \eta= \left(\eta_{u,k}\right) \in \mb{C}^{d-l} \ \left| \
   \min_{1 \le u \le l} \left| \eta_{u,r_u} \right| >
    \epsilon \cdot \max_{u,k} \left| \eta_{u,k} \right|
   \right. \right\}.
 \]
\end{proposition}
\begin{proof}
 Remember that
 the map $F : \mb{C}^{d-l}\times D_R \to \mb{C}^{d-l}\times D_R$
 is defined by $F(\xi, A) = (\eta, A)$, where $\xi = (\xi_{u,i})$,
 $\eta = (\eta_{u,k})$, $A=(a_{u,k,v,h})$ and 
 \[ 
  \eta_{u,k} = p_{u,k}(\xi_u) - \sum_{v=1}^l \sum_{h = r_v+1}^{d-l}
   a_{u,k,v,h} p_{v,h}(\xi_v)
 \]
 for $1 \le u \le l$ and $1 \le k \le r_u$.
 We put
 \begin{align*}
  |\xi_u| &:= \max_{1 \le i \le r_u} |\xi_{u,i}|, \quad
   |\xi | := \max_{1 \le u \le l} |\xi_u|, \quad
   |\eta | := \max_{u,k} |\eta_{u,k}|, \\
  \widetilde{B}_u(\la_{I_u}) &:= \left\{ \xi_u \in \mb{C}^{r_u} \bigm|
   p_{u,k}(\xi_u)=0 \text{ for } 1 \le k \le r_u \right\} \text{ and} \\
  Z_u &:= \left\{ \xi_u \in \mb{C}^{r_u} \bigm| |\xi_u| = 1 \right\}.
 \end{align*}

 By the assertion~(\ref{en.6.1.2}) in Proposition~\ref{pr.6.1},
 there exists a positive real number $L_{u,h}$
 for each $u$ and $h$
 such that the inequality
 \[
  \bigl| p_{u,h}(\xi_u) \bigr| \le L_{u,h} \cdot
   \max_{1 \le k \le r_u} \bigl| p_{u,k}(\xi_u) \bigr|
 \]
 holds for any $\xi_u \in Z_u$.
 Hence by the homogeneity of $p_{u,k}(\xi_u)$, we have
 \begin{equation}\label{eq.6.4.2}
  \bigl| p_{u,h}(\xi_u) \bigr| \le L_{u,h} \cdot
   \max_{1 \le k \le r_u} \bigl| p_{u,k}(\xi_u) \bigr| \cdot |\xi_u|
 \end{equation}
 for any $\xi_u \in \mb{C}^{r_u}$ with $|\xi_u| \le 1$
 and for each $h \ge r_u +1$.

 On the other hand,
 by the assertion~(\ref{en.6.1.3}) in Proposition~\ref{pr.6.1},
 there exist an open neighborhood $O_u$ of
 $\widetilde{B}_u(\la_{I_u})\cap Z_u$ in $\mb{C}^{r_u}$
 and a positive real number $L'_u$ for each $u$ such that the inequality
 \[
  \bigl| p_{u,r_u}(\xi_u) \bigr| \le L'_u \cdot
   \max_{1 \le k \le r_u - 1} \bigl| p_{u,k}(\xi_u) \bigr|
 \]
 holds for any $\xi_u \in O_u$.
 We put
 \[
  \Omega_u
   :=\left\{(t\xi_{u,1},\ldots,t\xi_{u,r_u})\in \mb{C}^{r_u} \bigm|
      t \in \mb{R},\ \ t>0,\ \
       (\xi_{u,1},\ldots,\xi_{u,r_u}) \in O_u \cap Z_u
     \right\}  
 \]
 for each $u$ and
 \[
  \Omega := \left\{\xi = (\xi_1,\ldots, \xi_l) \in \mb{C}^{d-l} \bigm|
    \xi_u \in \Omega_u \text{ holds for some $1 \le u \le l$} \right\}.
 \]
 Then $\Omega_u$ is an open neighborhood
 of $\widetilde{B}_u(\la_{I_u}) \setminus \{0\}$ in $\mb{C}^{r_u} \setminus \{0\}$,
 and $\Omega$ is an open set in $\mb{C}^{d-l}$.
 Moreover for $\xi_u \in \mb{C}^{r_u} \setminus \{0\}$,
 the point $\xi_u /|\xi_u|$ belongs
 to the set $O_u \cap Z_u = \Omega_u \cap Z_u$
 if and only if $\xi_u \in \Omega_u$.
 Hence by the homogeneity of $p_{u,k}(\xi_u)$,
 we have the inequality
 \begin{equation}\label{eq.6.4.3}
  \bigl| p_{u,r_u}(\xi_u) \bigr| \le L'_u \cdot
   \max_{1 \le k \le r_u - 1} \bigl| p_{u,k}(\xi_u) \bigr| \cdot |\xi_u|
 \end{equation}
 for any $\xi_u \in \Omega_u$ with $|\xi_u| \le 1$.

 For the simplicity of notation, we put
 \[ 
  L:= \max_{1 \le u \le l}\Bigl( \max_{r_u+1 \le h \le d-l} L_{u,h} \Bigr)
  \quad \text{and} \quad L':= \max_{1 \le u \le l} L'_u.
 \]
 For any positive real numbers $R$ and
 $1> \epsilon >0$,
 and for any open neighborhood $U_0$ of $0$ in $\mb{C}^{d-l}$,
 we take a positive real number $\delta$ such that the inequality
 \[ 
  0 < \delta < \min \left\{1,\,\, \frac{\epsilon}{3(l-1)(d-l)RL},\,\, 
                           \frac{\epsilon}{3L'} \right\}
 \]
 holds and that the set
 \[
  U:= \left\{ \xi \in \mb{C}^{d-l} \Bigm| |\xi| < \delta \right\}
 \]
 is included in $U_0$.

 Then for any $A=(a_{u,k,v,h}) \in D_R$ and $\xi \in U$,
 we have
 \begin{align*}
  \max_{u,k}
  \left| \sum_{v=1}^l \sum_{h = r_v+1}^{d-l} a_{u,k,v,h} p_{v,h}(\xi_v) \right|
   &\le \sum_{v=1}^l \sum_{h = r_v+1}^{d-l} R \cdot L_{v,h} \cdot |\xi_v| \cdot
        \max_{1 \le k \le r_v} \bigl| p_{v,k}(\xi_v) \bigr| \\
   &\le \frac{\epsilon}{3} \cdot \max_{u,k}\bigl| p_{u,k}(\xi_u) \bigr|
 \end{align*}
 by the inequality~(\ref{eq.6.4.2}),
 which implies
 \begin{equation}\label{eq.6.etap}
  \begin{split}
   |\eta | =
    \max_{u,k}\left| \eta_{u,k} \right|
    &\ge \max_{u,k} \bigl| p_{u,k}(\xi_u) \bigr| - \max_{u,k} \left|
     \sum_{v=1}^l \sum_{h = r_v+1}^{d-l} a_{u,k,v,h} p_{v,h}(\xi_v) \right| \\
    &\ge \frac{2}{3} \max_{u,k} \bigl| p_{u,k}(\xi_u) \bigr|.
  \end{split}
 \end{equation}

 On the other hand,
 for $A=(a_{u,k,v,h}) \in D_R$ and $\xi \in U \cap \Omega$,
 we have $\xi_u \in \Omega_u$ for some $u$, which implies
 \begin{align*}
  |\eta_{u,r_u}|
  &\le \bigl| p_{u,r_u}(\xi_u) \bigr| + \left| \sum_{v=1}^l
      \sum_{h = r_v+1}^{d-l} a_{u,r_u,v,h} p_{v,h}(\xi_v) \right| \\
  &\le L'_u \cdot
     \max_{1 \le k \le r_u - 1} \bigl| p_{u,k}(\xi_u) \bigr| \cdot |\xi_u|
     + \frac{\epsilon}{3} \cdot \max_{u,k} \bigl| p_{u,k}(\xi_u) \bigr| \\
  &\le \frac{2\epsilon}{3} \cdot \max_{u,k} \bigl| p_{u,k}(\xi_u) \bigr|
   \le \epsilon \cdot |\eta |
 \end{align*}
 by the inequality~(\ref{eq.6.4.3}).
 Therefore we have
 \begin{lemma}\label{lm.6.5}
  For $(\xi, A) \in (U \cap \Omega) \times D_R$,
  we have $F(\xi, A) \notin \Xi_{\epsilon}\times D_R$.
 \end{lemma}

 We put
 \[
  \mu_u := \min_{\xi_u \in Z_u \setminus \Omega_u} \max_{1 \le k \le r_u}
   \bigl| p_{u,k}(\xi_u) \bigr|
   \quad \text{and} \quad
  \mu := \min_{1 \le u \le l} \mu_u.
 \]
 Then $\mu$ is positive
 by the compactness of $Z_u \setminus \Omega_u$ for each $u$.
 Moreover
 by the homogeneity of $p_{u,k}(\xi_u)$, we have the inequality
 \begin{equation}\label{eq.6.4.4}
  \max_{1 \le k \le r_u}\bigl| p_{u,k}(\xi_u)\bigr| \ge \mu_u |\xi_u|^{r_u}
 \end{equation}
 for any $\xi_u \in \mb{C}^{r_u} \setminus \Omega_u$ with $|\xi_u| \le 1$.
 We put $r := \max_{u}r_u$.
 \begin{lemma}\label{lm.6.6}
  For $(\xi,A) \in (U \setminus \Omega) \times D_R$,
  we have
   $|\eta | \ge \frac{2}{3} \mu |\xi|^r$.
 \end{lemma}
 \begin{proof}
  For $\xi \in U \setminus \Omega$,
  we have $\xi_u \notin \Omega_u$ for any $u$.
  Hence for $(\xi,A) \in (U \setminus \Omega) \times D_R$,
  by the inequalities~(\ref{eq.6.etap}) and~(\ref{eq.6.4.4}),
  we have
  \[
   |\eta | \ge \tfrac{2}{3} \max_{u,k}\bigl| p_{u,k}(\xi_u) \bigr|
    \ge \tfrac{2}{3} \max_{1 \le u \le l} \mu_u |\xi_u|^{r_u}
    \ge \tfrac{2}{3} \mu |\xi|^r.
  \]
 \end{proof}
 We put
 \[ 
  W := \left\{\left. \eta= (\eta_{u,k}) \in \mb{C}^{d-l} \ \right| \
       |\eta | < \tfrac{2}{3} \mu \cdot \delta^r
        \right\}.
 \]
 Then Lemma~\ref{lm.6.5} implies the inclusion relation
 \[
  \left(U \times D_R \right) \cap F^{-1}\left( W_{\epsilon} \times D_R \right)
   \subseteq (U \setminus \Omega) \times D_R.
 \]
 Therefore for any $(\xi, A) \in
 \left(U \times D_R \right) \cap F^{-1}\left( W_{\epsilon} \times D_R \right)$,
 we have the inequality $|\eta | \ge \frac{2}{3} \mu |\xi|^r$
 by Lemma~\ref{lm.6.6},
 which assures that the map~(\ref{eq.6.4.1}) is proper.
 Hence by Lemma~\ref{lm.5.11.2} the map~(\ref{eq.6.4.1}) is a finite branched covering.
\end{proof}

\begin{proposition}\label{pr.6.5}
 The degree of the branched covering map~(\ref{eq.6.4.1})
 defined in Proposition~\ref{pr.6.4} is equal to the right hand side of 
 the equality~(\ref{eq.4.B}) in Theorem~\ref{thm.B}.
\end{proposition}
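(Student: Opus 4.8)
The plan is to read off the degree of the branched covering~(\ref{eq.6.4.1}) as the cardinality of a generic fibre and to compute that fibre over the slice $\{A=0\}$, where the equations defining $F$ split block by block. Since~(\ref{eq.6.4.1}) is a finite branched covering over the connected open set $W_{\epsilon}\times D_R$, its degree equals $\#\bigl(F^{-1}(\eta,0)\bigr)$ for a generic $\eta\in W_{\epsilon}$; this is legitimate because $\{A=0\}$ is not contained in the branch locus of $F$ (when $A=0$ the Jacobian of $F$ in the $\xi$–variables is block diagonal with blocks $\bigl(k\,m_{u,i}\,\z_{u,i}^{\,k-1}\bigr)_{1\le k,i\le r_u}$, invertible whenever the coordinates in each block are distinct and nonzero, so the branch locus meets $\{A=0\}$ in a proper analytic subset). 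For $A=0$ the system $F(\xi,0)=(\eta,0)$ reads $p_{u,k}(\xi_u)=\eta_{u,k}$ for $1\le u\le l$ and $1\le k\le r_u$, and these involve pairwise disjoint sets of variables, whence
\[
 \deg\;=\;\#\bigl(F^{-1}(\eta,0)\bigr)\;=\;\prod_{u=1}^{l}N_u,
\]
where $N_u$ is the number of $\xi_u$ in the small polydisc cut out by $U$ with $p_{u,k}(\xi_u)=\eta_{u,k}$ for $1\le k\le r_u$.

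It remains to prove $N_u=(\#(I_u)-1)\cdot\#\bigl(S_{\#(I_u)}(\la_{I_u})\bigr)$ for each $u$. First, the assertion~(\ref{en.6.1.3}) of Proposition~\ref{pr.6.1} applied to $m=m(I_u)=(m_{u,1},\dots,m_{u,r_u})$ shows, exactly as in Lemma~\ref{lm.6.5}, that every solution counted by $N_u$ avoids the neighbourhood $\Omega_u$ of $\widetilde{B}_u(\la_{I_u})\setminus\{0\}$: inside $\Omega_u$ one has $|\eta_{u,r_u}|=|p_{u,r_u}(\xi_u)|\le L'_u\,|\xi_u|\,\max_{k\le r_u-1}|p_{u,k}(\xi_u)|<\epsilon\max_k|\eta_{u,k}|$ once $U$ is small, contradicting $\eta\in\Xi_{\epsilon}$. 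Next, in $\mb{P}^{\,\#(I_u)-2}$ the common zeros of $p_{u,1},\dots,p_{u,r_u-1}$ form $T_{\#(I_u)}(\la_{I_u})=S_{\#(I_u)}(\la_{I_u})\amalg B_{\#(I_u)}(\la_{I_u})$; by Lemma~\ref{lm.4.1} the affine cone over $S_{\#(I_u)}(\la_{I_u})$ in $\mb{C}^{r_u}$ is a union of exactly $\#\bigl(S_{\#(I_u)}(\la_{I_u})\bigr)$ lines through the origin, each with intersection multiplicity $1$ in $\{p_{u,1}=\dots=p_{u,r_u-1}=0\}$, and on each such line $\ell$ the degree-$r_u$ form $p_{u,r_u}$ vanishes only at $0$ — otherwise $p_{u,1}=\dots=p_{u,r_u}=0$ at a point with distinct nonzero coordinates, forcing all $m_{u,i}=0$ by the Vandermonde argument used in the proof of Lemma~\ref{lm.5.10} — hence to order $r_u$. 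A conservation-of-number argument localised at the origin then yields, for generic small $\eta_u$ with $\eta\in\Xi_{\epsilon}$, exactly one simple branch of $\{p_{u,1}=\eta_{u,1},\dots,p_{u,r_u-1}=\eta_{u,r_u-1}\}$ near each of these $\#\bigl(S_{\#(I_u)}(\la_{I_u})\bigr)$ lines, on each of which $p_{u,r_u}=\eta_{u,r_u}$ contributes $r_u=\#(I_u)-1$ simple solutions; combined with the fact that no solution accumulates near the remaining (``bad'') part of $T_{\#(I_u)}(\la_{I_u})$, this gives $N_u=(\#(I_u)-1)\cdot\#\bigl(S_{\#(I_u)}(\la_{I_u})\bigr)$. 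Multiplying over $u$ produces exactly the right-hand side of~(\ref{eq.4.B}).

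The crux, and the main obstacle, is the last clause of the previous paragraph: one must show that for $\eta\in\Xi_{\epsilon}$ generic and small no solution of $F(\xi,0)=(\eta,0)$ shrinks to the origin while staying near a component $E_{\#(I_u)}(\mb{I}')$ of $B_{\#(I_u)}(\la_{I_u})$ on which $p_{u,r_u}$ does not vanish identically (such a solution need not lie in $\Omega_u$). This is controlled by the estimate of Lemma~\ref{lm.6.2} applied to $m=m(I_u)$, whose right-hand side involves $\max_{k\le r_u+1-\#(\mb{I}')}|p_{u,k}|$ with $\#(\mb{I}')\ge 2$, together with the defining inequality of $\Xi_{\epsilon}$; and since every $\mb{I}=\{I_1,\dots,I_l\}\in\mathfrak{I}(\la)$ has $l\ge 2$, so that $\#(I_u)<d$, one may run the whole argument of Section~\ref{sec.6} by induction on $d$, reducing this accounting to Theorem~\ref{thm.B} in strictly smaller degree — the base case $\#(I_u)\le 3$, where $\widetilde{B}_u(\la_{I_u})=\{0\}$ and the block map is a genuine finite covering, being handled directly by Lemma~\ref{lm.5.10} (which gives $N_u=r_u!=(\#(I_u)-1)\cdot 1$). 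A routine point is that $W_{\epsilon}=W\cap\Xi_{\epsilon}$ is open and nonempty, so ``generic $\eta\in W_{\epsilon}$'' makes sense.
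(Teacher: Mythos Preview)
Your overall strategy coincides with the paper's: restrict to the slice $A=0$, where the system decouples into blocks $F_u(\xi_u)=\eta_u$, and read off the degree as a product $\prod_u N_u$. The divergence comes in computing each $N_u$. The paper does not take a generic $\eta$; instead it evaluates the fibre at the \emph{specific} point $\eta_{u,k}=0$ for $1\le k\le r_u-1$ and $\eta_{u,r_u}\ne 0$ (the slice the paper calls $\widetilde{W}'$). This point lies in $W_{\epsilon}$, and over it the block equations are simply $p_{u,1}=\cdots=p_{u,r_u-1}=0$, $p_{u,r_u}=\eta_{u,r_u}\ne 0$. The solution set is then $X_u:=\{p_{u,1}=\cdots=p_{u,r_u-1}=0,\ p_{u,r_u}\ne 0\}$ intersected with a level of $p_{u,r_u}$; since $X_u$ is precisely the affine cone (minus origin) over $S_{\#(I_u)}(\la_{I_u})$, it is a disjoint union of $\#\bigl(S_{\#(I_u)}(\la_{I_u})\bigr)$ punctured lines, on each of which $p_{u,r_u}$ has degree $r_u$. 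A direct Jacobian check (the paper's Lemma~6.8) shows the fibre is unramified there, and the count $N_u=r_u\cdot\#\bigl(S_{\#(I_u)}(\la_{I_u})\bigr)$ is immediate. No conservation-of-number argument, no induction on $d$, and no appeal to Theorem~\ref{thm.B} in lower degree is needed.

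Your ``crux'' paragraph is therefore unnecessary, and in fact the worry it raises is based on a misconception. First, $p_{u,r_u}$ vanishes identically on \emph{every} component of $B_{\#(I_u)}(\la_{I_u})$: for a common zero of $p_{u,1},\ldots,p_{u,r_u-1}$, one has $p_{u,r_u}\ne 0$ if and only if $0,\xi_{u,1},\ldots,\xi_{u,r_u}$ are mutually distinct (Vandermonde), which fails on $B_{\#(I_u)}$ by definition. Second, you have already established that solutions avoid $\Omega_u$, and $\Omega_u$ is by construction a full open neighbourhood of the cone over $B_{\#(I_u)}(\la_{I_u})$ minus the origin; so your parenthetical ``such a solution need not lie in $\Omega_u$'' is simply false. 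Once this is recognised, your route also closes---but the paper's choice of slice makes the whole detour disappear.
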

\begin{proof}
 To consider the map $F$ on
 \[
  \widetilde{W}'
    := \left\{(\eta,0) \in W_{\epsilon} \times D_R \bigm| \eta_{u,k}=0
       \ \ \text{for $1 \le u \le l$ and $1 \le k \le r_u -1$} \right\},
 \]
 we define the map $F_u : \mb{C}^{r_u} \to \mb{C}^{r_u}$ by 
  $F_u(\xi_u) = (p_{u,1}(\xi_u), \ldots, p_{u,r_u}(\xi_u))$,
 and put
 \[
  X_u := \left\{ \xi_u \in \mb{C}^{r_u} \ \left| \
           \begin{split}
	    p_{u,k}(\xi_u) &= 0 \ \text{ for } 1 \le k \le r_u -1 \\
	    p_{u,r_u}(\xi_u) &\ne 0
	   \end{split} 
          \right. \right\}
 \]
 for each $u$.
 We consider the following two lemmas:
 \begin{lemma}\label{lm.6.8}
  The Jacobian of the map $F_u$ is not zero at any point of $X_u$.
 \end{lemma}
 \begin{lemma}\label{lm.6.9}
  The degree of the map
  $p_{u,r_u}|_{X_u} : X_u \to \mb{C}^{*}$
  is 
  $r_u \cdot \#\left( S_{r_u + 1}(\la_{I_u}) \right)$,
  where we define $\#\left( S_{r_u + 1}(\la_{I_u}) \right)=1$ if $r_u \le 2$.
 \end{lemma}
 Lemma~\ref{lm.6.8} assures
 that the branched covering map~(\ref{eq.6.4.1}) is unbranched
 on some neighborhood of $\widetilde{W}'$ in $W_{\epsilon}\times D_R$,
 that $X_u$ is a smooth Riemann surface,
 and that the map $p_{u,r_u}|_{X_u} : X_u \to \mb{C}^{*}$
 is unbranched.
 Therefore the degree of the map~(\ref{eq.6.4.1}) is equal to that of the map
 $\left(U \times D_R \right) \cap F^{-1}(\widetilde{W}')
  \stackrel{F}{\to} \widetilde{W}'$,
 which is also equal to
 $\prod_{1 \le u \le l} \deg\left(p_{u,r_u}|_{X_u}\right)$;
 hence Lemmas~\ref{lm.6.8} and~\ref{lm.6.9} imply the proposition.

 We show Lemma~\ref{lm.6.8} first. Since
  $p_{u,k}(\xi_u) = \sum_{i=1}^{r_u} m_{u,i}\xi_{u,i}^k$,
 we have
 \[ 
  \det (dF_u)(\xi_u) = r_u!\cdot \prod_{i=1}^{r_u}m_{u,i}\cdot
    \prod_{1 \le i < j \le r_u}(\xi_{u,j} - \xi_{u,i})
 \]
 by a similar computation to the proof of Lemma~\ref{lm.4.1}.
 Hence the Jacobian is not equal to zero if and only if
 $\xi_{u,1},\ldots,\xi_{u,r_u}$ are mutually distinct.
 On the other hand,
 by a similar argument to the proof of Lemma~\ref{lm.4.6},
 we find that
 for a common zero $\xi_u = (\xi_{u,1},\ldots,\xi_{u,r_u})$
 of $p_{u,1},\ldots,p_{u,r_u-1}$,
 the inequality $p_{u,r_u}(\xi_u) \ne 0$ holds
 if and only if
 $0,\xi_{u,1},\ldots,\xi_{u,r_u}$ are mutually distinct.
 Hence for any $\xi_u \in X_u$, the Jacobian $\det (dF_u)(\xi_u)$ is not
 zero,
 which completes the proof of Lemma~\ref{lm.6.8}.

 We show Lemma~\ref{lm.6.9} next.
 Since $p_{u,k}(\xi_u)$ is homogeneous for any $u$ and $k$,
 the Riemann surface $X_u$ is invariant under the action of $\mb{C}^*$;
 hence the set
 \[
  \left\{ (\xi_{u,1}:\cdots: \xi_{u,r_u}) \in \mb{P}^{r_u-1} \bigm|
          (\xi_{u,1},\ldots,\xi_{u,r_u}) \in X_u \right\}
 \]
 is well-defined and
 is equal to
 $S_{r_u+1}\left(\la_{I_u}\right)$ by definition.
 Therefore $X_u$ consists of\\
 $\#\left(S_{r_u+1}\left(\la_{I_u}\right)\right)$
 components, each of which is biholomorphic to $\mb{C}^*$.
 Moreover on each component of $X_u$, the degree of the map $p_{u,r_u}$
 is $\deg p_{u,r_u} = r_u$,
 which completes the proofs of Lemma~\ref{lm.6.9} and the proposition.
\end{proof}

On the basis of Propositions~\ref{pr.6.4} and~\ref{pr.6.5},
we prove the following:

\begin{proposition}\label{pr.6.6}
 Let $\psi_k(\xi)$ be the expression
 defined in the equality~(\ref{eq.5.defpsi}).
 Then the number
 \[
  \mult_{0}(\psi_1,\ldots,\psi_{d-l})
 \]
 is equal to the right hand side of the equality~(\ref{eq.4.B})
 in Theorem~\ref{thm.B}.
\end{proposition}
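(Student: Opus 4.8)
The plan is to identify $\mult_0(\psi_1,\ldots,\psi_{d-l})$ with the degree of the branched covering map~(\ref{eq.6.4.1}), which Proposition~\ref{pr.6.5} has already shown to equal the right hand side of~(\ref{eq.4.B}); the overall shape of the argument parallels the proof of Proposition~\ref{pr.5.9}, with Lemma~\ref{lm.5.10} and Proposition~\ref{pr.5.11} replaced by Propositions~\ref{pr.6.4} and~\ref{pr.6.5}, the new feature being the restriction to the cone $\Xi_\epsilon$. This will also confirm the remark after Proposition~\ref{pr.5.0} that the equality~(\ref{eq.5.1}) holds for every $\alpha$ with $\alpha_1,\ldots,\alpha_l$ mutually distinct.

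First I would invoke Proposition~\ref{pr.5.4}: the equations $\psi_k(\xi)=0$ for $1\le k\le d-l$ are carried to the equations~(\ref{eq.5.linear}) by an invertible constant linear substitution (Lemma~\ref{lm.5.5}), whose coefficients $a_{u,k,v,h}$ depend only on $r_1,\ldots,r_l$ and $\alpha_1,\ldots,\alpha_l$. Hence the two systems cut out the same germ at $0$, so that $\mult_0(\psi_1,\ldots,\psi_{d-l})$ equals the local multiplicity at $0$ of the system~(\ref{eq.5.linear}) for this one fixed tuple $A_0=(a_{u,k,v,h})$, that is, of $F$ restricted to the slice $\{A=A_0\}$: the map $\xi\mapsto\bigl(p_{u,k}(\xi_u)-\sum_{v,h}a_{u,k,v,h}\,p_{v,h}(\xi_v)\bigr)_{u,k}$.

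Next I would choose $R$ so large that $A_0\in D_R$, fix any $\epsilon$ with $0<\epsilon<1$ and a small neighbourhood $U_0$ of $0$, and apply Propositions~\ref{pr.6.4} and~\ref{pr.6.5}: there exist neighbourhoods $U\subseteq U_0$ and $W$ of $0$ for which $F\colon (U\times D_R)\cap F^{-1}(W_\epsilon\times D_R)\to W_\epsilon\times D_R$ is a finite branched covering of degree equal to the right hand side of~(\ref{eq.4.B}). Since $F$ preserves the $A$-coordinate, for each $\eta\in W_\epsilon$ the fibre over $(\eta,A_0)$ is exactly the set of $\xi\in U$ solving~(\ref{eq.5.linear}) with $A=A_0$ and right hand side prescribed by $\eta$, and the number of such $\xi$, counted with multiplicity, equals this degree for every such $\eta$ (the constancy of the fibre degree is already used in Proposition~\ref{pr.6.5}, which computes it over the subset $\widetilde{W}'$). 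Letting $\eta\to 0$ inside $W_\epsilon$, the properness estimate from the proof of Proposition~\ref{pr.6.4} (Lemma~\ref{lm.6.6}, an inequality of the form $|\eta|\ge c\,|\xi|^{r}$ on the relevant locus) forces every such $\xi$ into an arbitrarily small ball about $0$; thus the branched-covering degree is precisely the number --- stable as $\eta\to 0$ --- of solutions of the perturbed system that accumulate at $0$, which is what $\mult_0(\psi_1,\ldots,\psi_{d-l})$ denotes.

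The hard part is not this covering construction but the meaning of $\mult_0(\psi_1,\ldots,\psi_{d-l})$ when $\mb{I}$ is not maximal: then $0$ is a non-isolated common zero of $\psi_1,\ldots,\psi_{d-l}$, since it lies in every higher-dimensional $E_d(\mb{I}')$ with $\mb{I}'\succ\mb{I}$, so the multiplicity must be read in the extended sense of Definition~\ref{df.4.1}, and one must check that the recursively defined number there is genuinely the branched-covering degree and does not depend on the auxiliary data $R,\epsilon,U,W$. The cone $\Xi_\epsilon$ is exactly the device that singles out the perturbation directions along which the nearby solutions stay bounded and tend to $0$; reconciling the limiting count with the recursion~(\ref{eq.4.1}) of Definition~\ref{df.4.1} is the delicate step.
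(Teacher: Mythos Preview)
Your overall strategy---identify $\mult_0(\psi_1,\ldots,\psi_{d-l})$ with the degree of the branched covering~(\ref{eq.6.4.1}) and then quote Proposition~\ref{pr.6.5}---is exactly what the paper does, but you stop precisely at the point where the real work begins, and the argument you sketch before that point does not quite match what is needed.

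The issue is your treatment of $\mult_0$. You propose to take a generic $\eta\to 0$ inside $W_\epsilon$ and count nearby solutions; you then correctly note that when $\mb{I}$ is not maximal, $0$ is not isolated and $\mult_0$ is the recursive quantity of Definition~\ref{df.4.1}, so that this generic count is not obviously the right thing---and you leave it there. The paper resolves this not by a generic perturbation but by a \emph{specific} one: it considers the curve $Y=\{\psi_1=\cdots=\psi_{d-l-1}=0,\ \psi_{d-l}\ne 0\}$ and proves (Lemma~\ref{lm.6.11}) that $\mult_0(\psi_1,\ldots,\psi_{d-l})$ equals the degree of $\psi_{d-l}|_{Y}$ over a small punctured disc. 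This is what the recursion~(\ref{eq.4.1}) actually gives, once one checks that the Jacobian $\det(d\Psi)$ is nonzero along $Y$ near $0$ (so that every component $Y'$ of $Y$ has $\mult_{Y'}(\psi_1,\ldots,\psi_{d-l-1})=1$). That Jacobian check is a genuine argument: one must rule out coincidences $\xi_{u,i}=\xi_{u,j}$ on $Y$, which the paper does via the invertibility of the smaller matrix $M_{(r_1-1,r_2,\ldots,r_l)}$ together with the inequality~(\ref{eq.6.etap}).

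Having expressed $\mult_0$ as a covering degree along the single direction $e=(0,\ldots,0,1)$ in the $\psi$-coordinates, one must then check that this direction, transported to the $p$-coordinates as $\eta=M_{(r_1,\ldots,r_l)}^{-1}e$, actually lies in $\Xi_\epsilon$ for some $\epsilon$, i.e.\ that $\eta_{u,r_u}\ne 0$ for every $u$. The paper proves this by a short linear-algebra contradiction using the invertibility of the truncated matrices $M_{(r_1,\ldots,r_{u}-1,\ldots,r_l)}$ from Lemma~\ref{lm.5.5}. Your proposal does not contain either of these two concrete ingredients; without them the identification of $\mult_0$ with the branched-covering degree is asserted rather than proved.
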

\begin{proof}
 We define the map $\Psi : \mb{C}^{d-l} \to \mb{C}^{d-l}$ by
  $\Psi(\xi):=(\psi_k(\xi))_{1\le k \le d-l}$,
 and put
 \[
  Y:= \left\{\xi\in\mb{C}^{d-l} \bigm|
     \psi_1(\xi)=\cdots = \psi_{d-l-1}(\xi)=0,\ \psi_{d-l}(\xi) \ne 0\right\}.
 \]
 We denote by $M_{(r_1,\ldots,r_l)}$ the square matrix $M$ defined
 in Lemma~\ref{lm.5.5}.
 \begin{lemma}\label{lm.6.11}
  For any open neighborhood $\widetilde{U}'$ of $0$ in $\mb{C}^{d-l}$,
  there exist open neighborhoods $U', W'$ of $0$
  with $U' \subset \widetilde{U}'$ and $W' \subset \mb{C}$ such that
  $Y \cap U'$ is a smooth Riemann surface,
  that the map
  \begin{equation}\label{eq.6.6.1}
   Y \cap U' \cap \psi_{d-l}^{-1}(W'\setminus \{0\}) \stackrel{\psi_{d-l}}{\to}
    W'\setminus \{0\}
  \end{equation}
  is an unbranched covering,
  and that the number $\mult_{0}(\psi_1,\ldots,\psi_{d-l})$
  is equal to the degree of the map~(\ref{eq.6.6.1}).
 \end{lemma}
 \begin{proof}
  First we shall check that 
  $\det (d\Psi)(\xi) \ne 0$ holds for any $\xi \in Y\cap U'$,
  if we take $U'$ sufficiently small.
  By a similar argument to the proof of Lemma~\ref{lm.4.1},
  the equality $\det (d\Psi)(\xi) = 0$ holds for $\xi \in U'$
  if and only if $\alpha_u+\xi_{u,i} =\alpha_v+\xi_{v,j}$ holds
  for some $u,i,v$ and $j$ with $(u,i)\ne (v,j)$,
  which is equivalent to the condition that
  $\xi_{u,i}=\xi_{u,j}$ holds for some $u,i$ and $j$ with $i \ne j$
  if we take $U'$ sufficiently small.
  Suppose for instance
  that $\xi_{1,1}= \xi_{1,2}$ holds for some $\xi \in Y\cap U'$.
  Then putting $\Psi'(\xi):=(\psi_k(\xi))_{1\le k \le d-l-1}$,
  considering the map $M_{(r_1-1,r_2,\ldots,r_l)}^{-1}\circ\Psi'$,
  and keeping in mind the inequalities~(\ref{eq.6.etap}),
  we have $p_{u,k}(\xi)=0$ for any $u$ and $k$, which contradicts
  $\psi_{d-l}(\xi)\ne 0$.
  Therefore we have $\det (d\Psi)(\xi) \ne 0$ for any $\xi \in Y\cap U'$,
  which assures that $Y\cap U'$ is a smooth Riemann surface, and that
  the map~(\ref{eq.6.6.1}) is an unbranched covering if we take $W'$
  sufficiently small.
  Moreover
  since $\det (d\Psi)(\xi) \ne 0$ for any $\xi \in Y\cap U'$,
  we have $\mult_{Y'}(\psi_1,\ldots,\psi_{d-l-1}) =1$
  for any connected component $Y'$ of $Y \cap U'$;
  hence we have
  $\mult_{0}(\psi_1,\ldots,\psi_{d-l})
    = \mult_0(\overline{Y} \cap U' , \psi_{d-l})$
  by definition,
  where $\overline{Y} \cap U'$
  is the closure of $Y \cap U'$ in $U'$.
  Since $\mult_0(\overline{Y} \cap U' , \psi_{d-l})$ is clearly equal to
  the degree of the covering map~(\ref{eq.6.6.1}),
  all the assertions in Lemma~\ref{lm.6.11} are verified.
 \end{proof}

 We proceed the proof of the proposition.
 It is clear that there exists $A=(a_{u,k,v,h}) \in \mb{C}^{(l-1)(d-l)^2}$
 such that
 the equality $F(\xi,A)=(M_{(r_1,\ldots,r_l)}^{-1}\circ\Psi(\xi),A)$
 holds for any $\xi\in \mb{C}^{d-l}$.
 Let $e$ be the $(d-l,1)$ column vector whose $(d-l)$-th entry is $1$
 and whose other entries are $0$.
 Moreover we put $M_{(r_1,\ldots,r_l)}^{-1}e=:\eta=
 (\eta_{u,k})_{1\le u \le l,1\le k\le r_u}$.
 Then the equality $Y\times \{A\} = F^{-1}(\mb{C}\eta\setminus\{0\},A)$ holds,
 and the map $F|_{Y\times \{A\}}$ is equal
 to the map $M_{(r_1,\ldots,r_l)}^{-1}\circ\Psi|_{Y}$.
 Hence,
 if we can show $\eta_{u,r_u} \ne 0$ for $1 \le u \le l$, then we have
 $\left(\mb{C}\eta \setminus \{0\}\right) \cap W \subseteq W_{\epsilon}$
 for some $\epsilon$, which assures that
 the degree of the covering map~(\ref{eq.6.6.1}) is equal to that
 of the branched covering map~(\ref{eq.6.4.1});
 thus the proposition will be verified
 by Proposition~\ref{pr.6.5} and Lemma~\ref{lm.6.11}.

 We show $\eta_{u,r_u} \ne 0$ for $1 \le u \le l$.
 Suppose $\eta_{l,r_l}=0$ for instance,
 and put $\eta'= {^t}(\eta_{1,1},\ldots,\eta_{l,r_l-1})\in \mb{C}^{d-l-1}$
 so that the equality $\eta = {^t}({^t}\eta',0)$ holds.
 Then by the equality $e=M_{(r_1,\ldots,r_l)}\eta$,
 we have $0 = M_{(r_1,\ldots,r_{l-1},r_l-1)}\eta'$.
 Since $M_{(r_1,\ldots,r_{l-1},r_l-1)}$ is invertible,
 we have $\eta' =0$,
 which implies $\eta = 0$ and
 the contradiction $e = M_{(r_1,\ldots,r_l)}0 = 0$.
 Therefore $\eta_{u,r_u} \ne 0$ holds for any $1 \le u \le l$,
 which completes the proof of the proposition.
\end{proof}

We complete the proof of Theorem~\ref{thm.B}.

\begin{proof}[Proof of Theorem~\ref{thm.B}]
 Remember the definition of $\mb{I}(\alpha) \in \mathfrak{I}(\la)$
 for $\alpha \in B_d(\la)$ in the proof of Lemma~\ref{lm.4.6}.
 By Lemma~\ref{lm.6.2}, we can easily verify that
 for any $\alpha \in B_d(\la)$
 there exists an open neighborhood $O_{\alpha}$ of $\alpha$ in $\Pd$
 such that the equality
 \[
  \left\{\z \in O_{\alpha} \bigm|
   \varphi_k(\z) = 0 \
   \text{ for } 1 \le k \le d-\#\left(\mb{I}(\alpha)\right) \right\}
   = B_d(\la) \cap O_{\alpha}
 \]
 holds,
 which implies
 the first two assertions
 in Theorem~\ref{thm.B}.
 On the other hand, the last assertion
 in Theorem~\ref{thm.B} is 
 the direct consequence of Propositions~\ref{pr.5.0} and~\ref{pr.6.6}.
\end{proof}

At the end of this section, we prove Proposition~\ref{pr.D}.

\begin{proof}[Proof of Proposition~\ref{pr.D}]
 For the brevity of notation, we put
 \begin{align*}
  \mathfrak{I}'(\la) &:=
   \mathfrak{I}(\la) \cup \Bigl\{\bigl\{\{1,\ldots,d\}\bigr\}\Bigr\}
 \quad \text{for $\la \in V_d$,} \\
  e_{\mb{I}}(\la) &:= 
   \mult_{E_d(\mb{I})}(\varphi_1, \ldots, \varphi_{d-\#\left(\mb{I}\right)})
  \quad \text{for each $\mb{I}\in\mathfrak{I}(\la)$, and} \\
  e_{\{\{1,\ldots,d\}\}}(\la) &:= (d-1)\cdot \#\left(S_d(\la)\right).
 \end{align*}
 Note that $\{\{1,\ldots,d\}\}$ is the only minimum element
 of $\mathfrak{I}'(\la)$ with respect to the partial order $\prec$.

 Under the notation above,
 the equality~(\ref{eq.4.C}) in Proposition~\ref{pr.C} is equivalent 
 to the equality
 \begin{equation}\label{eq.6.C}
  (d-1)! = \sum_{\mb{I} \in \mathfrak{I}'(\la)} \left( e_{\mb{I}}(\la)\cdot
   \prod_{k= d - \#\left(\mb{I}\right) +1}^{d-1} k \right),
 \end{equation}
 whereas the equality~(\ref{eq.4.B}) in Theorem~\ref{thm.B} is rewritten
 in the form
 \begin{equation}\label{eq.6.B}
  e_{\mb{I}}(\la) = \prod_{u=1}^l e_{\{I_u\}}\left(\la_{I_u}\right)
   = \prod_{I \in \mb{I}}e_{\{I\}}\left(\la_I\right),
 \end{equation}
 where $\mb{I} = \left\{I_1,\ldots,I_l\right\} \in \mathfrak{I}(\la)$,
 and $\{I\}$ denotes the minimum element
 of the set $\mathfrak{I}'\left(\la_I\right)$
 for each $I \in \mathcal{I}(\la)$.
 On the other hand, Proposition~\ref{pr.D} is rewritten in the form
 \begin{equation}\label{eq.6.D}
  \prod_{u=1}^l \bigl(\#\left(I_u \right)-1\bigr)!
   =  \sum_{\mb{I}'\in \mathfrak{I}(\la),\,\, \mb{I}' \succ \mb{I}} \left(
       e_{\mb{I}'}(\la) \cdot
       \prod_{u=1}^l \left(
        \prod_{k=\#\left(I_u\right)-\chi_u\left(\mb{I}'\right)+1}^{\#\left(
           I_u\right)-1} k
       \right)\right)
 \end{equation}
 for $\mb{I} = \left\{I_1,\ldots,I_l\right\} \in \mathfrak{I}(\la)$,
 where $\chi_u(\mb{I}')$ is the one defined in
 Main Theorem~\ref{mthm.3}.
 Note that $\mb{I}\succ\mb{I}$ holds for any $\mb{I}\in\mathfrak{I}'(\la)$.
 To complete the proof of Proposition~\ref{pr.D},
 we only need to derive the equality~(\ref{eq.6.D})
 from the equalities~(\ref{eq.6.C}) and~(\ref{eq.6.B}).

 Note that
 for $\mb{I} = \left\{I_1,\ldots,I_l\right\} \in \mathfrak{I}'(\la)$,
 we have
 \[
  \left\{ \mb{I}' \in \mathfrak{I}'(\la) \bigm| \mb{I}' \succ \mb{I} \right\}
   = \left\{ \mb{I}_1 \cup \cdots \cup \mb{I}_l \bigm| 
      \mb{I}_u \in \mathfrak{I}'\left(\la_{I_u}\right)
       \ \text{ for } 1 \le u \le l \right\}
 \]
 by definition.
 Hence we have the following equalities
 for $\mb{I} = \{I_1,\ldots,I_l\} \in \mathfrak{I}(\la)$
 from the equalities~(\ref{eq.6.C}) and~(\ref{eq.6.B}):
 \begin{align*}
  \prod_{u=1}^l \bigl(\#\left(I_u \right)-1\bigr)!
   &= \prod_{u=1}^l \left(
       \sum_{\mb{I}_u \in \mathfrak{I}'\left(\la_{I_u}\right)} \left(
        e_{\mb{I}_u}\left(\la_{I_u}\right) \cdot 
        \prod_{k=\#\left(I_u\right)-\#\left(\mb{I}_u\right)+1}^{\#\left(
           I_u\right)-1} k
      \right)\right) \\
   &= \sum_{\mb{I}_1 \in \mathfrak{I}'\left(\la_{I_1}\right)} \cdots
      \sum_{\mb{I}_l \in \mathfrak{I}'\left(\la_{I_l}\right)}
       \prod_{u=1}^l \left(
         \prod_{I'_u \in \mb{I}_u} e_{\{I'_u\}}\left(\la_{I'_u}\right) \cdot
        \prod_{k=\#\left(I_u\right)-\#\left(\mb{I}_u\right)+1}^{\#\left(
           I_u\right)-1} k
       \right) \\
   &= \sum_{\mb{I}_1 \in \mathfrak{I}'\left(\la_{I_1}\right)} \cdots
      \sum_{\mb{I}_l \in \mathfrak{I}'\left(\la_{I_l}\right)} \left(
       e_{\mb{I}_1 \cup \cdots \cup \mb{I}_l}(\la) \cdot
       \prod_{u=1}^l \left(
        \prod_{k=\#\left(I_u\right)-\#\left(\mb{I}_u\right)+1}^{\#\left(
           I_u\right)-1} k
       \right) \right) \\
   &= \sum_{\mb{I}'\in \mathfrak{I}(\la),\,\, \mb{I}' \succ \mb{I}} \left(
       e_{\mb{I}'}(\la) \cdot
       \prod_{u=1}^l \left(
        \prod_{k=\#\left(I_u\right)-\chi_u\left(\mb{I}'\right)+1}^{\#\left(
           I_u\right)-1} k
       \right)\right).
 \end{align*}
  The equality~(\ref{eq.6.D}) is thus obtained, which completes
 the proof of Proposition~\ref{pr.D}.
\end{proof}

\section{Relation between the sets $S_d(\la)$ and
$\Phi_d^{-1}\left(\bar{\la}\right)$}\label{sec.7}

In this section we state the explicit relation between the 
cardinalities  $\#\left(S_d(\la)\right)$ and\\ 
$\#\left(\Phi_d^{-1}\left(\bar{\la}\right)\right)$.
Let $\la$ be an element of $V_d$, which is fixed throughout this section.
Remember the definitions of $K_1,\ldots,K_q$, $\kappa_1,\ldots,\kappa_q$,
$g_1,\ldots,g_q$ defined in Definition~\ref{df.1.6},
and $\mathfrak{S}\left(\mathcal{K}(\la)\right)$
defined in Definition~\ref{df.2.2}.
We put
\[
 \Sigma_d(\la) := \left\{(\z_1:\cdots:\z_d) \in \mb{P}^{d-1} \
        \left|\ \begin{matrix}
		 \sum_{i=1}^d \z_i = 0 \\
		 \sum_{i=1}^d m_i\z_i^k = 0 \quad 
		  \textrm{for} \quad 1 \le k \le d-2 \\
		 \z_1,\ldots,\z_d \textrm{ are mutually distinct}
		\end{matrix}
        \right. \right\}.
\]

\begin{proposition}\label{pr.7.1}
 The bijection
  $\tilde{\iota} : \Sigma_d(\la) \to S_d(\la)$
 is
 defined by
 \[
  (\z_1:\cdots:\z_d) \mapsto (\z_1 - \z_d:\cdots:\z_{d-1}-\z_d).
 \]
 The group $\mathfrak{S}(\mathcal{K}(\la))$ acts on $\Sigma_d(\la)$
 by the permutation of the homogeneous coordinates.
 Moreover the actions of $\mathfrak{S}(\mathcal{K}(\la))$
 on $S_d(\la)$ and $\Sigma_d(\la)$ commute with the map $\tilde{\iota}$;
 hence we have the bijection
  $\Sigma_d(\la)/\mathfrak{S}(\mathcal{K}(\la)) \stackrel{\cong}{\to}
   \Phi_d^{-1}\left(\bar{\la}\right)$.
\end{proposition}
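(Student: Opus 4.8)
The plan is to prove that $\tilde{\iota}$ is a bijection by writing down its inverse explicitly, then to check that $\tilde{\iota}$ is $\mathfrak{S}(\mathcal{K}(\la))$-equivariant, and finally to transport the bijection $\overline{\pi(\la)}$ of Proposition~\ref{pr.2.3} through $\tilde{\iota}$.

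First I would check that $\tilde{\iota}$ really lands in $S_d(\la)$. For $(\z_1:\cdots:\z_d)\in\Sigma_d(\la)$ the coordinates $\z_1,\ldots,\z_d$ are mutually distinct, so $\z_i-\z_d\ne0$ for $1\le i\le d-1$ and $(\z_1-\z_d:\cdots:\z_{d-1}-\z_d)$ is a genuine point of $\Pd$; moreover $\z_1-\z_d,\ldots,\z_{d-1}-\z_d$ together with $0=\z_d-\z_d$ stay mutually distinct. To see it satisfies the defining equations of $T_d(\la)$, recall that $\z_d=0$ in the $S_d$-convention and expand $\sum_{i=1}^{d-1}m_i(\z_i-\z_d)^k=\sum_{i=1}^{d}m_i(\z_i-\z_d)^k=\sum_{j=0}^{k}\binom{k}{j}(-\z_d)^{k-j}\sum_{i=1}^{d}m_i\z_i^{\,j}$; for $1\le k\le d-2$ the inner sums with $1\le j\le k$ vanish by the defining equations of $\Sigma_d(\la)$, and the $j=0$ term vanishes by the identity $\sum_{i=1}^{d}m_i=0$ valid on $V_d$. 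Hence $\tilde{\iota}(\Sigma_d(\la))\subseteq S_d(\la)$.

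Next I would define $\kappa:S_d(\la)\to\mb{P}^{d-1}$ by $(w_1:\cdots:w_{d-1})\mapsto(w_1-c:\cdots:w_{d-1}-c:-c)$ with $c:=\tfrac1d\sum_{i=1}^{d-1}w_i$, where as usual $w_d:=0$ so that $c=\tfrac1d\sum_{i=1}^{d}w_i$. Since $c$ is homogeneous of degree one in the $w_i$, the map is well defined on $\Pd$; its value is never the zero vector, for that would force $w_1=\cdots=w_{d-1}=c$ and $w_d=0=c$, contradicting mutual distinctness. The same binomial expansion, now using $w\in T_d(\la)$ and $\sum_i m_i=0$, shows the image satisfies $\sum_{i=1}^{d}m_i(\,\cdot\,)^k=0$ for $1\le k\le d-2$; the identity $\sum_{i=1}^{d}(w_i-c)=0$ holds by the choice of $c$; and translation preserves distinctness; hence $\kappa$ maps $S_d(\la)$ into $\Sigma_d(\la)$. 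That $\kappa$ and $\tilde{\iota}$ are mutually inverse is a short computation: for $(\z_1:\cdots:\z_d)\in\Sigma_d(\la)$ the key identity $\sum_{i=1}^{d-1}(\z_i-\z_d)=-d\z_d$, which follows from $\sum_{i=1}^{d}\z_i=0$, shows that the constant produced by $\kappa\circ\tilde{\iota}$ equals $-\z_d$, so the original point is recovered, while $\tilde{\iota}\circ\kappa$ is immediate.

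Finally I would treat the action and the conclusion. If $\sigma\in\mathfrak{S}(\mathcal{K}(\la))$ then $m_{\sigma(i)}=m_i$ for all $i$, so permuting the homogeneous coordinates of a point of $\Sigma_d(\la)$ preserves $\sum_i\z_i=0$, preserves each equation $\sum_i m_i\z_i^k=0$ (reindex and use $m_{\sigma(i)}=m_i$), and preserves mutual distinctness; thus $\mathfrak{S}(\mathcal{K}(\la))$ acts on $\Sigma_d(\la)$ as claimed. Writing $w_i:=\z_i-\z_d$ for $1\le i\le d$ (so $w_d=0$ and $\tilde{\iota}(\z)=(w_1:\cdots:w_{d-1})$), one has $w_{\sigma^{-1}(j)}-w_{\sigma^{-1}(d)}=\z_{\sigma^{-1}(j)}-\z_{\sigma^{-1}(d)}$, which is simultaneously the $j$-th coordinate of $\sigma\cdot\tilde{\iota}(\z)$ in the action of Proposition~\ref{pr.2.3} and of $\tilde{\iota}(\sigma\cdot\z)$; hence $\tilde{\iota}$, and therefore $\kappa=\tilde{\iota}^{-1}$, is equivariant. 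Passing to quotients gives $\Sigma_d(\la)/\mathfrak{S}(\mathcal{K}(\la))\cong S_d(\la)/\mathfrak{S}(\mathcal{K}(\la))$, and composing with $\overline{\pi(\la)}$ yields the bijection $\Sigma_d(\la)/\mathfrak{S}(\mathcal{K}(\la))\stackrel{\cong}{\to}\Phi_d^{-1}(\bar\la)$. Nothing here is deep; the only points demanding care are the well-definedness of $\kappa$ on projective space and the two binomial bookkeeping computations, so that is where I would concentrate.
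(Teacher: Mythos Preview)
Your argument is correct. The only cosmetic wrinkle is the phrase ``recall that $\z_d=0$ in the $S_d$-convention'' in the first verification; what you actually use there is simply that the extra term $m_d(\z_d-\z_d)^k$ vanishes, so the sentence could be dropped without loss.

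The paper's own proof is a one-line citation of Proposition~\ref{pr.2.8}: it identifies $\Sigma_d(\la)$ with $\widetilde{Q}_d(\la)=Q_d(\la)/\Gamma$ (the centroid condition $\sum_i\z_i=0$ being one slice for the translation part of $\Gamma$, projectivisation handling the scaling) and then observes that under this identification $\tilde{\iota}$ is just $\iota(\la)^{-1}$, whose bijectivity and equivariance were already established there. Your explicit inverse $\kappa$ is exactly what one obtains by unpacking that identification: you pass from the slice $\z_d=0$ to the slice $\sum_i\z_i=0$ by translating by the centroid. So the two proofs are the same idea viewed from different angles; the paper's is shorter because the bookkeeping was prepaid in Proposition~\ref{pr.2.8}, while yours is self-contained and makes the two normalisations of $\Gamma$-orbits visible on the page.
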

\begin{proof}
 The bijectivity of the map $\iota(\la)$ in
 Proposition~\ref{pr.2.8}
 implies the proposition.
\end{proof}

\begin{lemma}\label{lm.7.2}
 Let $\z=(\z_1:\cdots:\z_d)$ be an element of $\Sigma_d(\la)$
 and suppose that there exists
 a non-identity permutation $\sigma \in \mathfrak{S}(\mathcal{K}(\la))$
 with $\sigma\cdot\z=\z$.
 Then there exists a unique suffix $i$ with $\z_i = 0$.
 Moreover if $i\in K_w$, then the fixing subgroup
  $\left\{\sigma\in\mathfrak{S}\left(\mathcal{K}(\la)\right)
    \bigm| \sigma\cdot\z=\z\right\}$
 of $\z$ is a cyclic group whose order divides $g_w$.
\end{lemma}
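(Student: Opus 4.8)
The plan is to control, for a non-identity $\sigma$ fixing $\z=(\z_1:\cdots:\z_d)\in\Sigma_d(\la)$, the scalar by which it acts. Since $\sigma\cdot\z=\z$ in $\mb{P}^{d-1}$, the coordinate vectors $(\z_{\sigma^{-1}(1)},\ldots,\z_{\sigma^{-1}(d)})$ and $(\z_1,\ldots,\z_d)$ are proportional, so there is a unique $\mu=\mu(\sigma)\in\mb{C}^*$ with $\z_{\sigma(j)}=\mu\,\z_j$ for all $j$. Because the $\z_j$ are mutually distinct and $\sigma\ne\mathrm{id}$, we cannot have $\mu=1$; and $\sigma\in\mathfrak{S}(\mathcal{K}(\la))$ forces $m_{\sigma(j)}=m_j$, so $m$ is constant on each orbit of $\sigma$ on $\{1,\ldots,d\}$.

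First I would prove assertion~(1). Suppose for contradiction that no coordinate of $\z$ vanishes. On an orbit $O$ of $\sigma$ with representative $i_O$, the numbers $\z_{i_O},\mu\z_{i_O},\ldots$ are mutually distinct, while $\z_{\sigma^{|O|}(i_O)}=\mu^{|O|}\z_{i_O}=\z_{i_O}$ with $\z_{i_O}\ne 0$; hence $|O|$ equals the order $\ell$ of $\mu$, so $\ell\ge 2$ and every orbit has the same length $\ell$, with $d=w\ell$ for $w:=d/\ell$. Grouping the defining equations of $\Sigma_d(\la)$ by orbits and using $\sum_{p=0}^{\ell-1}\mu^{pk}=\ell$ or $0$ according as $\ell\mid k$ or not, the relations $\sum_{i=1}^d m_i\z_i^k=0$ for $1\le k\le d-2$ collapse to $\sum_O m_{i_O}\,\beta_O^{\,t}=0$ for $1\le t\le w-1$ (note $(w-1)\ell\le d-2$ since $\ell\ge2$), where $\beta_O:=\z_{i_O}^{\ell}$ is well defined because $\mu^\ell=1$; moreover $\sum_{i=1}^d m_i=0$ yields the same relation for $t=0$. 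The $\beta_O$ are nonzero and pairwise distinct — if $\beta_O=\beta_{O'}$ then $\z_{i_O}=\mu^p\z_{i_{O'}}=\z_{\sigma^p(i_{O'})}$ for some $p$, contradicting distinctness of coordinates — so the resulting $w\times w$ Vandermonde system forces $m_{i_O}=0$ for every $O$, contradicting $m_i\ne0$. Hence some $\z_i=0$, and it is unique, since two vanishing coordinates would be equal.

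Next, for assertion~(2), put $H:=\{\sigma\in\mathfrak{S}(\mathcal{K}(\la))\mid\sigma\cdot\z=\z\}$ and let $i$ be the unique index with $\z_i=0$, say $i\in K_w$. The map $\mu:H\to\mb{C}^*$, $\sigma\mapsto\mu(\sigma)$, is a homomorphism ($\z_{\sigma\tau(j)}=\mu(\sigma)\z_{\tau(j)}=\mu(\sigma)\mu(\tau)\z_j$) and is injective, since $\mu(\sigma)=1$ gives $\z_{\sigma(j)}=\z_j$ for all $j$, hence $\sigma=\mathrm{id}$ by distinctness. A finite subgroup of $\mb{C}^*$ is cyclic, so $H$ is cyclic. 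Write $t:=\#(H)$, let $\sigma$ generate $H$, so $\mathrm{ord}(\sigma)=t$ and $\mu(\sigma)$ is a primitive $t$-th root of unity. A fixed point $j$ of $\sigma$ satisfies $\z_j=\mu(\sigma)\z_j$, hence $\z_j=0$, hence $j=i$; so $\{i\}$ is the only fixed orbit. Any other orbit has length dividing $\mathrm{ord}(\sigma)=t$ and divisible by $t$ (because $\sigma^{\ell_O}(j)=j$ with $\z_j\ne0$ forces $\mu(\sigma)^{\ell_O}=1$), hence of length exactly $t$.

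Finally, since $\sigma\in\mathfrak{S}(\mathcal{K}(\la))$ preserves each block $K_{w'}$ setwise, $K_w$ is partitioned into the singleton $\{i\}$ together with orbits of length $t$, so $t\mid\kappa_w-1$, while each $K_{w'}$ with $w'\ne w$ is partitioned into orbits of length $t$, so $t\mid\kappa_{w'}$; therefore $t$ divides $\gcd(\kappa_1,\ldots,\kappa_{w-1},\kappa_w-1,\kappa_{w+1},\ldots,\kappa_q)=g_w$, which is the assertion. The main obstacle is the argument for~(1): one must verify that the orbit-wise grouping really reduces the equations $\sum m_i\z_i^k=0$ $(1\le k\le d-2)$ together with $\sum m_i=0$ to exactly $w$ independent linear conditions on the $w$ weights $m_{i_O}$, and that the $\beta_O$ are nonzero and pairwise distinct; once~(1) is in hand, assertion~(2) is straightforward bookkeeping with the character $\mu$ and the cycle type of a generator of $H$.
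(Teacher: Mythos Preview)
Your proof is correct and follows essentially the same approach as the paper: both arguments introduce the scalar character $\sigma\mapsto\mu(\sigma)$ on the stabilizer, deduce a contradiction in the case of no vanishing coordinate by collapsing the defining equations of $\Sigma_d(\la)$ along $\sigma$-orbits to a Vandermonde system in the variables $\z_{i_O}^{\ell}$, and then use injectivity of the character to see that the stabilizer is cyclic with order dividing $g_w$. Your write-up is slightly more explicit than the paper's in justifying the $t=0$ equation via $\sum_i m_i=0$ and in checking that the $\beta_O$ are well defined and pairwise distinct, but the underlying ideas are identical.
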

\begin{proof}
 For any $\sigma\in\mathfrak{S}\left(\mathcal{K}(\la)\right)$
 with $\sigma\cdot\z=\z$,
 there exists a non-zero complex number $a$
 satisfying
 $\z_{\sigma^{-1}(i)}=a\z_i$ for $1\le i\le d$,
 which induces the injective group homomorphism
 \[
 \mathfrak{S}(\z) := \left\{\sigma\in\mathfrak{S}\left(\mathcal{K}(\la)\right)
  \bigm| \sigma\cdot\z=\z\right\} \ni \sigma
  \stackrel{\mathfrak{a}}{\mapsto}
  a \in \left\{a \in \mb{C}^* \bigm| |a|=1 \right\}.
 \]

 In the following,
 we fix non-identity $\sigma\in\mathfrak{S}(\z)$, and
 denote by $t$ the order of $\sigma$.
 Then
 $a=\mathfrak{a}(\sigma)$ is a primitive $t$-th radical root of $1$.
 Moreover
 the cardinality $\#\left(\left\{\sigma^s(i)\bigm|s\in\mb{Z}\right\}\right)$
 is equal to $1$ or $t$ according as $\z_i$ is equal to $0$ or not.

 Suppose that $\z_i \ne 0$ holds for any $i$.
 Then $t$ is a common divisor of $\kappa_1,\ldots,\kappa_q$.
 We may assume
 \[
  m=(\underbrace{m_1,\ldots,m_1}_{t},\ldots,
      \underbrace{m_{d/t},\ldots,m_{d/t}}_{t})
 \]
 and
 \[
  \z=(\z_1:a\z_1:\cdots:a^{t-1}\z_1:\cdots:
      \z_{d/t}:a\z_{d/t}:\cdots:a^{t-1}\z_{d/t}).
 \]
 Under the above notation,
 the equations $\varphi_k(\z)=0$ for $1 \le k \le d-2$
 are equivalent to the equations
  $\sum_{i=1}^{d/t}m_i\z_i^{tk} = 0$
 for $1 \le k \le \frac{d}{t}-1$,
 which implies $m_i=0$ for any $i$
 by the mutual distinctness of $0,\z_1^t,\ldots,\z_{d/t}^t$.
 We thus obtain contradiction,
 which assures the existence of $i$ with $\z_i=0$.

 Next we suppose $\z_i=0$ and $i \in K_w$.
 Then for any $\sigma\in\mathfrak{S}(\z)$,
 the order $t$ of $\sigma$ is a common divisor
 of $\kappa_1,\ldots,\kappa_{w-1}$, $\kappa_w-1$,
 $\kappa_{w+1},\ldots,\kappa_q$, i.e., a divisor of $g_w$.
 Therefore $\mathfrak{S}(\z)$ is isomorphic
 to a subgroup of $\left\{a \in \mb{C}^* \bigm| a^{g_w}=1\right\}$
 by the map $\mathfrak{a}$,
 which completes the proof.
\end{proof}

Remember the definitions of $d[t]$ and $\la[t]$ in Definition~\ref{df.1.6}.
In the following, the symbol $a|b$ denotes
that $a$ divides $b$ for positive integers $a$ and $b$.

\begin{vartheorem}\label{thm.E}
 If we put $s_d(\la):= \#\left(S_d(\la)\right) = \#\left(\Sigma_d(\la)\right)$
 for $\la \in V_d$,
 then the third and fourth steps in Main Theorem~\ref{mthm.3} hold.
\end{vartheorem}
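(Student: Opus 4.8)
The plan is to reduce everything to counting the orbits of the finite group $G:=\mathfrak{S}\left(\mathcal{K}(\la)\right)\cong\mathfrak{S}_{\kappa_1}\times\cdots\times\mathfrak{S}_{\kappa_q}$ on the finite set $\Sigma_d(\la)$ (finite by Lemma~\ref{lm.4.1}), since $\#\left(\Phi_d^{-1}\left(\bar{\la}\right)\right)$ equals this number of orbits by Proposition~\ref{pr.7.1}. First I would extract from Lemma~\ref{lm.7.2} and its proof the exact shape of the point stabilizers: if $\z\in\Sigma_d(\la)$ has non-trivial stabilizer, then $\z$ has a unique zero coordinate, say $\z_i=0$ with $i\in K_w$, the stabilizer $\mathfrak{S}(\z)$ is cyclic of order $t$ dividing $g_w$, and any generator $\sigma$ is of \emph{type $(w,t)$}: it fixes $i$ and otherwise consists of $\kappa_v/t$ cycles of length $t$ inside $K_v$ for $v\neq w$ and $(\kappa_w-1)/t$ cycles of length $t$ inside $K_w$. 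A one-line divisibility remark (if $t\mid g_w$ and $t\mid g_{w'}$ with $w\neq w'$ then $t$ divides both $\kappa_w-1$ and $\kappa_w$, so $t=1$) shows that for $t\geq2$ the index $w=w(t)$ is determined by $t$. I then \emph{define} $c_1(\la)$ to be the number of orbits with trivial stabilizer and, for each $t\geq2$ dividing some $g_w$, $c_t(\la)$ to be the number of orbits whose stabilizer has order $t$; these are visibly non-negative integers, and partitioning the orbit set according to the order of the stabilizer gives at once the equality~(\ref{eq.7.E3}). It remains to check that these numbers satisfy~(\ref{eq.7.E1}) and~(\ref{eq.7.E2}) and that this system determines them uniquely; uniqueness is easy, since within each fixed $w$ the equations~(\ref{eq.7.E1}) are triangular in the $c_t(\la)$ with respect to divisibility with coefficient $1$ on the diagonal term, so one solves for $c_t(\la)$ by descending induction on $t$ and then reads off $c_1(\la)$ from~(\ref{eq.7.E2}).

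The heart of the matter is the count of $\mathrm{Fix}(\sigma):=\left\{\z\in\Sigma_d(\la)\mid\sigma\z=\z\right\}$ for $\sigma$ of type $(w,t)$ with $t\geq2$. For $\z\in\mathrm{Fix}(\sigma)$ let $a$ be the primitive $t$-th root of unity with $\z_{\sigma^{-1}(j)}=a\,\z_j$ for all $j$ (as in the proof of Lemma~\ref{lm.7.2}); I would split $\mathrm{Fix}(\sigma)$ according to the value of $a$, which ranges over all $\phi(t)$ primitive $t$-th roots of unity ($\phi$ being Euler's totient). For fixed $a$, a point $\z$ has $\z_i=0$ and is determined by its values $\z_{j_\mu}$ at one representative $j_\mu$ of each of the $(d-1)/t$ nontrivial $\sigma$-orbits $\mathcal{O}_\mu$, via $\z_{\sigma^{-s}(j_\mu)}=a^{s}\z_{j_\mu}$. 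The identity $\sum_{s=0}^{t-1}a^{sk}=t$ or $0$ according as $t\mid k$ or not turns the conditions $\varphi_k(\z)=0$ $(1\leq k\leq d-2)$ into $\sum_\mu m_{v(\mu)}\bigl(\z_{j_\mu}^{\,t}\bigr)^{\ell}=0$ for $1\leq\ell\leq d[t]-2$ (where $d[t]-1=(d-1)/t$), while the distinctness of the coordinates of $\z$ becomes the distinctness of $0$ and the $\z_{j_\mu}^{\,t}$. Since an orbit inside $K_v$ carries the weight $m_v$, and these weights occur with exactly the multiplicities $\kappa_v/t$ $(v\neq w)$ and $(\kappa_w-1)/t$ $(v=w)$ appearing in $\la[t]$, the well-defined (because $a^{t}=1$) map $\z\mapsto\bigl(\z_{j_\mu}^{\,t}\bigr)_\mu\in\mathbb{P}^{d[t]-2}$ is a surjection onto $S_{d[t]}(\la[t])$; each of its fibres is a single orbit of the free action that rescales the $(d-1)/t$ blocks independently by $t$-th roots of unity, taken modulo simultaneous rescaling, hence of constant size $t^{(d-1)/t}/t=t^{\,d[t]-2}$. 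Therefore
\[
\#\bigl(\mathrm{Fix}(\sigma)\bigr)=\phi(t)\cdot t^{\,d[t]-2}\cdot s_{d[t]}(\la[t]),
\]
where $s_{d[t]}(\la[t])=\#\bigl(S_{d[t]}(\la[t])\bigr)$ by Propositions~\ref{pr.C} and~\ref{pr.D} applied to $\la[t]$ (or by the convention $\#\bigl(S_{d'}\bigr)=1$ for $d'\leq3$).

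Granting this, I would get~(\ref{eq.7.E1}) by counting in two ways the pairs $(\z,\sigma)$ with $\sigma$ of type $(w,t)$ and $\sigma\z=\z$. Summing over $\sigma$ — using that the elements of type $(w,t)$ form one $G$-conjugacy class, so there are $|G|/|C_G(\sigma)|$ of them with $|C_G(\sigma)|=t^{\,d[t]-1}\,(\kappa_1/t)!\cdots\bigl((\kappa_w-1)/t\bigr)!\cdots(\kappa_q/t)!$ by the classical cycle-type centralizer formula — gives $\frac{|G|}{|C_G(\sigma)|}\,\phi(t)\,t^{\,d[t]-2}\,s_{d[t]}(\la[t])$. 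Summing over $\z$: a point whose stabilizer has order $b$ and type $(w,b)$ contributes the number of its order-$t$ elements, namely $\phi(t)$ if $t\mid b$ and $0$ otherwise (an order-$t$ element of such a stabilizer is itself of type $(w,t)$), and there are $c_b(\la)\,|G|/b$ such points; so the total is $\phi(t)\,|G|\sum_{t\mid b,\ b\mid g_w}c_b(\la)/b$. Equating the two counts, the factor $\phi(t)\,|G|$ cancels, and multiplying through by $t$ gives exactly~(\ref{eq.7.E1}). Finally~(\ref{eq.7.E2}) is just the identity $s_d(\la)=\#\bigl(\Sigma_d(\la)\bigr)=\sum_{\text{orbits }O}\#O$, with $\#O=|G|$ for a free orbit and $\#O=|G|/b$ for an orbit with stabilizer of order $b$, divided by $|G|=\kappa_1!\cdots\kappa_q!$. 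The non-negative integrality of $e_{\mathbb{I}}(\la)$ and $s_d(\la)$ in the fourth step is already available: $s_d(\la)=\#\bigl(S_d(\la)\bigr)$ is the cardinality of a finite set, and $e_{\mathbb{I}}(\la)$ equals $\mult_{E_d(\mathbb{I})}(\varphi_1,\ldots,\varphi_{d-\#(\mathbb{I})})$ by Proposition~\ref{pr.D}, a non-negative integer by Definition~\ref{df.4.1}. The principal obstacle is the fixed-point count of the second paragraph: matching the weights and multiplicities of $\la[t]$ correctly and pinning down the fibre size $t^{\,d[t]-2}$ of the coordinatewise $t$-th power map, while tracking the $\phi(t)$ admissible values of $a$.
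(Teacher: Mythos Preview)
Your argument is correct, but it takes a genuinely different route from the paper's proof. Both proofs define $c_t(\la)$ as the number of $G$-orbits in $\Sigma_d(\la)$ with stabilizer of order $t$, and both use the $t$-th power coordinate map to relate $\Sigma_d(\la)$ with $S_{d[t]}(\la[t])$; the difference lies in how equation~(\ref{eq.7.E1}) is extracted from this relationship.

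The paper introduces the auxiliary subgroup $\mathfrak{S}\bigl(\mathcal{K}'(\la[t])\bigr)\cong\mathfrak{S}_{\kappa_1/t}\times\cdots\times\mathfrak{S}_{(\kappa_w-1)/t}\times\cdots\times\mathfrak{S}_{\kappa_q/t}$ acting on $S_{d[t]}(\la[t])$, and shows that the $t$-th power map descends to a \emph{bijection of orbit sets} $\Theta_b(\la)\cong\Theta'_{b/t}(\la[t])$ for every $b$ with $t\mid b\mid g_w$. Summing the orbit sizes over $b/t$ then gives~(\ref{eq.7.E1}) directly from the orbit decomposition of $S_{d[t]}(\la[t])$. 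No centralizer computation or Euler function appears; the argument is essentially bijective.

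Your approach instead computes $\#\bigl(\mathrm{Fix}(\sigma)\bigr)=\phi(t)\,t^{d[t]-2}\,s_{d[t]}(\la[t])$ for a single $\sigma$ of type $(w,t)$, then performs a Burnside-style double count of pairs $(\z,\sigma)$ over the full conjugacy class, bringing in the centralizer order $|C_G(\sigma)|=t^{d[t]-1}(\kappa_1/t)!\cdots((\kappa_w-1)/t)!\cdots(\kappa_q/t)!$ and the count $\phi(t)$ of order-$t$ elements in a cyclic group of order $b$. The cancellation of $\phi(t)\,|G|$ and of the power $t^{d[t]-2}$ against $t^{d[t]-1}$ in the centralizer then yields~(\ref{eq.7.E1}). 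This is more computational but has the virtue of being a standard template: once the fixed-point count is in hand, the rest is routine orbit-counting, and one sees transparently why the particular factorials in~(\ref{eq.7.E1}) arise. The paper's bijective argument is shorter and avoids the auxiliary arithmetic, at the cost of requiring one to see directly that the $t$-th power map is well-defined and bijective at the level of orbits rather than points.
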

\begin{proof}
 For each
 $t \in \bigcup_{1\le w \le q}\left\{t \bigm| t|g_w \right\}$, we put
 \[
  \Theta_t(\la) := \left\{
   C \in \Sigma_d(\la) / \mathfrak{S}\left(\mathcal{K}(\la)\right) \ \left| \
   \#(C) = \frac{\#\left(\mathfrak{S}\left(\mathcal{K}(\la)\right)\right)}{t}
  \right. \right\}
 \]
 and
  $c_t(\la) := \#\left(\Theta_t(\la)\right)$.
 Then by Proposition~\ref{pr.7.1} and Lemma~\ref{lm.7.2}, we have
 \[ 
  \Phi_d^{-1}\left(\bar{\la}\right) \stackrel{\cong}{\gets}
  \Sigma_d(\la) / \mathfrak{S}\left(\mathcal{K}(\la)\right) =
  \left( \coprod_{w=1}^q \left( \coprod_{t|g_w,\ t\ge 2}
   \Theta_t(\la) \right) \right)
  \coprod \Theta_1(\la),
 \]
 which implies the equalities~(\ref{eq.7.E2}) and~(\ref{eq.7.E3}).
 Hence to complete the proof,
 we only need to show the equalities~(\ref{eq.7.E1}) for each $t$ with $t \geq 2$.
 In the rest of the proof, we fix $1 \le w \le q$.

 For each $t$ with $t|g_w$ and $t\ge 2$, we define
 the group $\mathfrak{S}\left(\mathcal{K}'\left(\la[t]\right)\right)$
 to be isomorphic to 
 $\mathfrak{S}_{\frac{\kappa_1}{t}}\times\cdots\times
   \mathfrak{S}_{\frac{\kappa_w-1}{t}}\times\cdots\times
   \mathfrak{S}_{\frac{\kappa_q}{t}}$.
 Then $\mathfrak{S}\left(\mathcal{K}'\left(\la[t]\right)\right)$
 naturally acts on $S_{d[t]}(\la[t])$, 
 and we have 
 $\mathfrak{S}\left(\mathcal{K}'\left(\la[t]\right)\right) \subseteq
  \mathfrak{S}\left(\mathcal{K}\left(\la[t]\right)\right)$.
 Note that in some cases
 the equality
 $\mathfrak{S}\left(\mathcal{K}'\left(\la[t]\right)\right) =
  \mathfrak{S}\left(\mathcal{K}\left(\la[t]\right)\right)$
 does not hold, e.g., $\la[2]$ in Example~\ref{ex.0.3} in Section~\ref{sec.1.1.2}.
 For each divisor $b$ of $\frac{g_w}{t}$,
 we put
 \[
  \Theta'_b(\la[t]) := \left\{ C'\in S_{d[t]}(\la[t])/
   \mathfrak{S}\left(\mathcal{K}'\left(\la[t]\right)\right)\ \left|\
   \#(C') = \frac{
            \#\left(\mathfrak{S}\left(\mathcal{K}'\left(\la[t]\right)\right)
           \right)}{b} \right. \right\}.
 \]
 Then we have
 \begin{equation}\label{eq.7.E4} 
  S_{d[t]}(\la[t])/
   \mathfrak{S}\left(\mathcal{K}'\left(\la[t]\right)\right) =
   \coprod_{b|(g_w/t)}\Theta'_b(\la[t])
 \end{equation}
 by a similar argument to the proof of Lemma~\ref{lm.7.2}.

 Let $t,\ b$ be positive integers with $t|b$, $b|g_w$ and $t\ge 2$,
 and $a$ a primitive $b$-th radical root of $1$.
 Then a point
 \[
  \left(\z_1 : a\z_1 : \cdots : a^{b-1}\z_1 : \cdots
   : \z_{d[b]-1} : a\z_{d[b]-1} : \cdots : a^{b-1}\z_{d[b]-1} : 0\right)
  \in \mb{P}^{d-1}   
 \]
 represents an element of $\Theta_b(\la)$
 if and only if
 \[
  \left(\z_1^t : a^t\z_1^t : \cdots : a^{t\left((b/t)-1\right)}\z_1^t : \cdots
   : \z_{d[b]-1}^t : a^t\z_{d[b]-1}^t : \cdots
   : a^{t\left((b/t)-1\right)}\z_{d[b]-1}^t\right) \in \mb{P}^{d[t]-2}
 \]
 represents an element of $\Theta'_{b/t}(\la[t])$,
 which gives the bijection
 between $\Theta_b(\la)$ and $\Theta'_{b/t}(\la[t])$.
 The bijection and the equality~(\ref{eq.7.E4}) imply
 the equalities~(\ref{eq.7.E1}), which completes the proof of
 the theorem.
\end{proof}

\section{Completion of the proof}\label{sec.8}

In Propositions~\ref{pr.mt5},~\ref{pr.mt7},~\ref{pr.mt1} and~\ref{pr.mt4},
we had already proved
the assertions~(\ref{en.mt5}),~(\ref{en.mt7}),~(\ref{en.mt1}) and~(\ref{en.mt4})
in Main Theorem~\ref{mthm.1}.
In this section
we complete the rest of the proofs of the main theorems.

\begin{proposition}\label{pr.mt2}
 Main Theorem~\ref{mthm.3} and
 the assertion~(\ref{en.mt2}) in Main Theorem~\ref{mthm.1} hold.
\end{proposition}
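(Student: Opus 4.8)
The plan is to obtain Main Theorem~\ref{mthm.3} by assembling results already proved — chiefly Propositions~\ref{pr.2.3},~\ref{pr.C},~\ref{pr.D},~\ref{pr.7.1} and Theorem~\ref{thm.E} — so that no genuinely new argument is needed. The first step is to identify the number $e_{\mb{I}}(\la)$ of the first bullet of Main Theorem~\ref{mthm.3} with $\mult_{E_d(\mb{I})}(\varphi_1,\ldots,\varphi_{d-\#(\mb{I})})$ for every $\mb{I}\in\mathfrak{I}(\la)$: the equality defining $e_{\mb{I}}(\la)$ is term for term the recursion~(\ref{eq.4.D}) of Proposition~\ref{pr.D} with $e_{\mb{I}'}(\la)$ in place of $\mult_{E_d(\mb{I}')}(\varphi_1,\ldots,\varphi_{d-\#(\mb{I}')})$, and since $\prec$ is a well-founded partial order on the finite set $\mathfrak{I}(\la)$ and the right-hand side refers only to strictly larger elements $\mb{I}'\succ\mb{I}$ (the maximal ones giving an empty sum, in accordance with Theorem~\ref{thm.A}), the two quantities satisfy the same recursion and hence agree. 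By the remark following Definition~\ref{df.4.1} each $e_{\mb{I}}(\la)$ is then a non-negative integer. Plugging this identity into the definition of $s_d(\la)$ in the second bullet and comparing with~(\ref{eq.4.C}) of Proposition~\ref{pr.C} gives $s_d(\la)=\#\left(S_d(\la)\right)=\#\left(\Sigma_d(\la)\right)$, the last equality by Proposition~\ref{pr.7.1}; in particular $s_d(\la)$ is a non-negative integer.

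This is exactly the hypothesis under which Theorem~\ref{thm.E} yields the third and fourth bullets of Main Theorem~\ref{mthm.3}. Concretely, the numbers $c_t(\la)$ solving the triangular systems~(\ref{eq.7.E1}) and~(\ref{eq.7.E2}) coincide with the cardinalities $\#\left(\Theta_t(\la)\right)$ occurring in the proof of Theorem~\ref{thm.E}; they are therefore non-negative integers, are well-defined even when $t$ divides several of the $g_w$, and satisfy~(\ref{eq.7.E3}). Together with the bijection $\Sigma_d(\la)/\mathfrak{S}(\mathcal{K}(\la))\cong\Phi_d^{-1}(\bar{\la})$ of Proposition~\ref{pr.7.1} (equivalently $\overline{\pi(\la)}$ of Proposition~\ref{pr.2.3}) this gives $\#\left(\Phi_d^{-1}(\bar{\la})\right)=\sum_t c_t(\la)$, completing the proof of Main Theorem~\ref{mthm.3}.

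The assertion~(\ref{en.mt2}) in Main Theorem~\ref{mthm.1} then follows, since every quantity in the above algorithm depends only on $\mathcal{I}(\la)$ and $\mathcal{K}(\la)$ and the algorithm is finite. By Remark~\ref{rm.4.4} the poset $\mathfrak{I}(\la)$ is recovered from $\mathcal{I}(\la)$, so the $e_{\mb{I}}(\la)$ and $s_d(\la)$ depend only on $\mathcal{I}(\la)$; the data $K_1,\ldots,K_q$, $\kappa_w$, $g_w$ depend only on $\mathcal{K}(\la)$; and for each $w$ and each divisor $t\ge2$ of $g_w$ the datum $\mathcal{I}(\la[t])$ is determined by $\mathcal{I}(\la)$, $\mathcal{K}(\la)$, $t$ — using $m\bigl(m^{-1}(t\,m(\la_v))\bigr)=t\,m(\la_v)$, a subset of the coordinates of $\la[t]$ is annihilated by $m$ precisely according to the bounded-coefficient integer relations among the $m(\la_v)$ encoded by $\mathcal{I}(\la)$ together with the multiplicities $\kappa_v$ — so each $s_{d[t]}(\la[t])$, and hence each $c_t(\la)$, is determined by $\mathcal{I}(\la)$ and $\mathcal{K}(\la)$; finiteness of $\mathfrak{I}(\la)$, of the set of pairs $(w,t)$, and of the poset recursion makes the procedure halt. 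The only delicate points, and hence the main obstacle, mild as it is, are to confirm that the recursions for $e_{\mb{I}}(\la)$ and for $\mult_{E_d(\mb{I})}$ coincide index by index and that the over-determined system~(\ref{eq.7.E1}) is consistent for different $w$; the first is Proposition~\ref{pr.D} and the second is built into the orbit-counting bijections in the proof of Theorem~\ref{thm.E}, so in the end Proposition~\ref{pr.mt2} amounts to bookkeeping.
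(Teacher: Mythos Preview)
Your proposal is correct and follows essentially the same route as the paper: the paper's own proof is the single sentence ``By Theorem~\ref{thm.B}, Propositions~\ref{pr.C},~\ref{pr.D} and Theorem~\ref{thm.E}, we obtain Main Theorem~\ref{mthm.3} and the assertion~(\ref{en.mt2}) in Main Theorem~\ref{mthm.1},'' and you have simply unpacked this bookkeeping in full, matching the recursion of Proposition~\ref{pr.D} with the definition of $e_{\mb{I}}(\la)$, identifying $s_d(\la)$ with $\#\left(S_d(\la)\right)$ via Proposition~\ref{pr.C}, and then invoking Theorem~\ref{thm.E}.
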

\begin{proof}
 These two are the direct consequences of
 Theorem~\ref{thm.B}, Propositions~\ref{pr.C},~\ref{pr.D} and 
 Theorem~\ref{thm.E}.
\end{proof}

\begin{proposition}
 Main Theorem~\ref{mthm.2} and
 the assertion~(\ref{en.mt3}) in Main Theorem~\ref{mthm.1} hold.
\end{proposition}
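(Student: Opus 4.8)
The plan is to work throughout in the coordinates $m=(m_1,\ldots,m_d)$ with $m_i=\frac{1}{1-\la_i}$, in which $V_d$ is identified with the open subset $\{m\in(\mb{C}^*)^d\mid\sum_{i=1}^d m_i=0\}$ of the hyperplane $\{\sum m_i=0\}\cong\mb{C}^{d-1}$. The key observation is that \emph{all} the objects built in Sections~\ref{sec.2}--\ref{sec.7} — the polynomials $\varphi_k$, the family $\mathcal{C}$, the numbers $\mult_C$, the set $S_d(\la)$, the subset $B_d(\la)=\bigcup_{\mb{I}\in\mathfrak{I}(\la)}E_d(\mb{I})$, and the group $\mathfrak{S}(\mathcal{K}(\la))$ — depend on $\la$ only through $m$, and $\mathcal{I}(\la),\mathcal{K}(\la)$ are exactly the record of which members of the fixed finite collection of linear forms $\{\sum_{i\in I}m_i\}_{I\subsetneq\{1,\ldots,d\}}$ and $\{m_i-m_j\}_{i<j}$ vanish at $m$. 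Consequences I would record at once: (i) $\widetilde{V}(\mathcal{I},\mathcal{K})$ is the image under $\textit{pr}$ of a Zariski-open subset of a linear subspace of $\{\sum m_i=0\}$, hence is connected; (ii) if $\mathcal{I}(\la)\subseteq\mathcal{I}(\la')$ and $\mathcal{K}(\la)\subseteq\mathcal{K}(\la')$ then $m(\la')$ lies in the closure of $V(\mathcal{I}(\la),\mathcal{K}(\la))$; (iii) the subset $B_d(\la)\subseteq\Pd$ depends only on $\mathfrak{I}(\la)$, hence only on $\mathcal{I}(\la)$, because each $E_d(\mb{I})$ is a fixed linear subspace of $\Pd$; (iv) by Lemma~\ref{lm.4.1} every point of $S_d(\la)$ is a transverse common zero of $\varphi_1,\ldots,\varphi_{d-2}$, and $\#(S_d(\la))=s_d(\la)$ depends only on $\mathcal{I}(\la)$ (Proposition~\ref{pr.C} and the remark after Main Theorem~\ref{mthm.3}).

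Next I would prove Main Theorem~\ref{mthm.2}(\ref{en.mthm2.2}). For (\ref{en.mthm2.2.3}): over $\widetilde{V}(\mathcal{I},\mathcal{K})$ the integer $\#(S_d(\mu))$ is constant by (iv), and by Lemma~\ref{lm.4.1} together with the implicit function theorem each of its points extends to a unique holomorphic local section $\mu\mapsto\z(\mu)\in S_d(\mu)$ (staying in $S_d$ since mutual distinctness of coordinates is an open condition), so $\coprod_\mu S_d(\mu)\to\widetilde{V}(\mathcal{I},\mathcal{K})$ is an unbranched covering. The group $\mathfrak{S}(\mathcal{K})$ is constant along the stratum and preserves $S_d(\mu)$; since $\sigma\cdot\z(\mu)$ is again a local section, uniqueness of the section through $\z(\mu_0)$ forces the stabilizer $\{\sigma\mid\sigma\cdot\z(\mu)=\z(\mu)\}$ to be locally constant, so $\coprod_\mu S_d(\mu)/\mathfrak{S}(\mathcal{K})$, which by Proposition~\ref{pr.2.3} equals $\Phi_d^{-1}(\widetilde{V}(\mathcal{I},\mathcal{K}))$, is again an unbranched covering; restriction to a connected component gives (\ref{en.mthm2.2.3}). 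For (\ref{en.mthm2.2.2}): over $\widetilde{V}(*,\mathcal{K})$ only $\mathfrak{S}(\mathcal{K})$ is fixed; retaining the local section $\mu\mapsto\z(\mu)$ through a given $\z_0\in S_d(\mu_0)$ and using that $\z_0$ is an isolated transverse zero, the only $\mathfrak{S}(\mathcal{K})$-orbit in $S_d(\mu)$ near $\z_0$ is that of $\z(\mu)$, so $\mu\mapsto[\z(\mu)]$ is a continuous bijection from a neighbourhood of $\bar\mu_0$ onto a neighbourhood of $[\z_0]$ whose inverse is $\Phi_d$; hence $\Phi_d$ is locally homeomorphic there. (Fixing $\mathcal{K}$ is essential: if $\mathcal{K}(\mu)$ were strictly smaller than $\mathcal{K}(\mu_0)$, distinct deformations of the points of a single $\mathfrak{S}(\mathcal{K}(\mu_0))$-orbit would all converge to $[\z_0]$ and destroy local injectivity.) For (\ref{en.mthm2.2.1}): here $\mathcal{I}$ is fixed, so by (iii) $B_d(\mu)$ is a fixed subset $B\subseteq\Pd$. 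Using the estimates of Section~\ref{sec.6}, in particular Lemma~\ref{lm.6.2} and the branched-covering construction of Proposition~\ref{pr.6.4} (whose constants depend continuously on $\mu$), one shows that, locally uniformly in $\mu$, the common zeros $T_d(\mu)$ meet a fixed neighbourhood of $B$ only inside $B$; therefore no sequence in $S_d(\mu_n)$ with $\bar\mu_n$ in a compact subset of $\widetilde{V}(\mathcal{I},*)$ can accumulate on $B$, and $\Phi_d$ is proper over $\widetilde{V}(\mathcal{I},*)$ by Proposition~\ref{pr.2.3}.

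For Main Theorem~\ref{mthm.2}(\ref{en.mthm2.1}) I would take $L$ to be translation of the $m$-coordinates by $v:=m(\la')-m(\la)$, restricted to a small enough neighbourhood $U$ of $\la$. Since $\mathcal{I}(\la)=\mathcal{I}(\la')$ and $\mathcal{K}(\la)=\mathcal{K}(\la')$, we have $\sum_i v_i=0$, $\sum_{i\in I}v_i=0$ for every $I\in\mathcal{I}(\la)$, and $v_i=v_j$ whenever $i,j$ lie in a common $\mathcal{K}(\la)$-class; hence $L$ maps $V_d$ near $\la$ biholomorphically onto $V_d$ near $\la'$, sends each of the relevant linear forms to itself up to an additive constant, so preserves $(\mathcal{I}(\la''),\mathcal{K}(\la''))$ for every $\la''\in U$ (this is condition~(\ref{en.mthm2.1.2})), and is $\mathfrak{S}(\mathcal{K}(\la))$-equivariant, so it descends to $\widetilde{L}:\widetilde{U}\to\widetilde{U}'$ with $\textit{pr}\circ L=\widetilde{L}\circ\textit{pr}$. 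The biholomorphism $\mathfrak{L}$ covering $\widetilde{L}$ then exists because, after shrinking $\widetilde{U}$, the restriction of $\Phi_d$ over $\widetilde{U}$ is — as the proof of part~(\ref{en.mthm2.2}) exhibits it — a ``stratified covering'' whose isomorphism type depends only on the combinatorial pattern $(\mathcal{I}(\la),\mathcal{K}(\la))$ and on the way this pattern degenerates near $\bar\la$; since $\widetilde{L}$ identifies the combinatorial stratification of $\widetilde{U}$ with that of $\widetilde{U}'$ and sends $\bar\la$ to $\bar{\la'}$, it lifts to the required biholomorphism, which yields~(\ref{en.mthm2.1.1}).

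Finally, Main Theorem~\ref{mthm.1}(\ref{en.mt3}) would follow by a limiting argument. By (ii) there is a path $\bar\mu_s$, $0<s\le1$, inside $\widetilde{V}(\mathcal{I}(\la),\mathcal{K}(\la))$ with $\bar\mu_s\to\bar{\la'}$ as $s\to0$, and $\bar\mu_1$ in the connected stratum $\widetilde{V}(\mathcal{I}(\la),\mathcal{K}(\la))\ni\bar\la$; by part~(\ref{en.mthm2.2.3}) the cardinality $\#(\Phi_d^{-1}(\bar\mu_s))$ is constant and equal to $\#(\Phi_d^{-1}(\bar\la))$ for $0<s\le1$. On the other hand, each $f'\in\Phi_d^{-1}(\bar{\la'})$ corresponds by Proposition~\ref{pr.2.3} to an $\mathfrak{S}(\mathcal{K}(\la'))$-orbit of a transverse common zero $\z'\in S_d(\la')$, which (as $\la'$ has no multiple fixed point) deforms, by the implicit function theorem and the openness of mutual distinctness, to a point of $S_d(\mu_s)$ for small $s$; distinct orbits give deformations lying in distinct $\mathfrak{S}(\mathcal{K}(\la))$-orbits for $s$ small (by continuity, using $\mathfrak{S}(\mathcal{K}(\la))\subseteq\mathfrak{S}(\mathcal{K}(\la'))$), whence $\#(\Phi_d^{-1}(\bar{\la'}))\le\#(\Phi_d^{-1}(\bar\mu_s))=\#(\Phi_d^{-1}(\bar\la))$. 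I expect the main obstacle to be the properness assertion~(\ref{en.mthm2.2.1}): establishing that the non-escape of $S_d(\mu)$ into $B_d(\mu)$ is \emph{uniform in the parameter} $\mu$ over the stratum $\widetilde{V}(\mathcal{I},*)$, which amounts to re-running the delicate estimates of Section~\ref{sec.6} with $\mu$ varying.
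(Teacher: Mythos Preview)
Your overall plan is sound and matches the paper's in most places, but there are two points of divergence, one of which is a genuine gap.

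\textbf{Properness over $\widetilde{V}(\mathcal{I},*)$.} You flag this as the main obstacle and propose to re-run the Section~\ref{sec.6} estimates uniformly in the parameter. The paper avoids this entirely, and so can you with facts you have already recorded. You noted in (iv) that $\#(S_d(\la))$ depends only on $\mathcal{I}(\la)$, and Lemma~\ref{lm.4.1} gives that the map $\widetilde{\Phi}'_d:(\mathcal{PY})_d\to(\mathcal{PV})_d$ is a local homeomorphism. A local homeomorphism between Hausdorff spaces with \emph{constant} finite fiber cardinality is automatically an unbranched covering; hence $\widetilde{\Phi}'_d$ restricted over $\mathcal{PV}(\mathcal{I},*)$ is a finite covering, and in particular proper. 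Pull back along the $\mb{C}^*$-bundle $V_d\to(\mathcal{PV})_d$ to see that $\Phi'_d$ over $V(\mathcal{I},*)$ is a finite covering, and then compose with the proper quotient map $\textit{pr}$ to conclude that $\Phi_d$ is proper over $\widetilde{V}(\mathcal{I},*)$. No estimates are needed.

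\textbf{The lift $\mathfrak{L}$ in assertion~(\ref{en.mthm2.1}).} Here your argument has a real gap. Your translation $L(m)=m+v$ with $v=m(\la')-m(\la)$ is a good choice: it does satisfy~(\ref{en.mthm2.1.2}) and it is $\mathfrak{S}(\mathcal{K}(\la))$-equivariant, so $\widetilde{L}$ exists. But your justification for the existence of $\mathfrak{L}$ --- that $\Phi_d$ over $\widetilde{U}$ is ``a stratified covering whose isomorphism type depends only on the combinatorial pattern'' --- is precisely the content of assertion~(\ref{en.mthm2.1}), so this is circular. Note that the defining equations $\sum_i m_i\z_i^k=0$ are \emph{not} invariant under $m\mapsto m+v$, so $L$ does not lift naively to the total space; a construction of $\mathfrak{L}$ is genuinely required.

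The paper's construction is different and concrete. One splits a neighborhood of $\la$ in $V_d$ as a product $H_\epsilon(\la)\times U(\la)$, where $U(\la)$ is a small open set in the stratum $V(\mathcal{I},\mathcal{K})$ and $H_\epsilon(\la)$ lies in its orthogonal complement in the hyperplane $\{\sum m_i=0\}$. The point is that $\mathcal{I}(h,m)$ and $\mathcal{K}(h,m)$ depend only on $h\in H_\epsilon(\la)$, not on $m\in U(\la)$; together with the covering statements you already have, this yields a product isomorphism
\[
 \Phi_d^{-1}\bigl(\textit{pr}(H_\epsilon(\la)\times U(\la))\bigr)\ \cong\ \Phi_d^{-1}\bigl(\textit{pr}(H_\epsilon(\la)\times\{\la\})\bigr)\times U(\la)
\]
compatible with $\Phi_d$. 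This product structure \emph{is} the lift $\mathfrak{L}$ for all $\la'$ in $U(\la)$. One then observes that ``the condition in~(\ref{en.mthm2.1}) holds for the pair $(\la,\la')$'' defines an equivalence relation on the connected set $V(\mathcal{I},\mathcal{K})$ whose classes are open, hence there is only one class.
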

\begin{proof}
 In the following, we always identify $V_d$ with
 $\left\{(m_1,\ldots,m_d) \in (\mb{C}^*)^d \ \left| \
   \sum_{i=1}^d m_i = 0
   \right.\right\}$
 by the correspondence $m_i = \frac{1}{1- \la_i}$,
 and define the following spaces:
 \begin{align*}
  \mathrm{MP}'_d &:= \Phi_d^{-1}(\widetilde{V}_d), \\
  \mathcal{X}_d &:= 
   \left\{(\z_1,\ldots,\z_d,\rho) \in \mb{C}^d \times \mb{C}^* \bigm|
     \z_1,\ldots,\z_d \text{ are mutually distinct}\right\}, \\
  \widetilde{\mathcal{X}}_d &:= \mathcal{X}_d / \mathrm{Aut}(\mathbb{C}), \\
  (\mathcal{PX})_d &:= \left\{(\z_1,\ldots,\z_d) \in \mb{C}^d \bigm|
   \z_1,\ldots,\z_d \text{ are mutually distinct}\right\}, \\
  (\widetilde{\mathcal{PX}})_d &:= (\mathcal{PX})_d / \mathrm{Aut}(\mathbb{C}), \\
  (\mathcal{PV})_d &:=\left\{(m_1 : \cdots : m_d) \in \mb{P}^{d-1} \ \left| \
    \sum_{i=1}^d m_i = 0,
   \ m_i \ne 0 \text{ for } 1 \le i \le d   \right.\right\}, \\
  \mathcal{Y}_d &:= \left\{((\z,\rho),m) 
   \in \widetilde{\mathcal{X}}_d \times V_d \ 
   \left| \ \sum_{i=1}^d m_i\z_i^k = \begin{cases}
				      0 & (1 \le k \le d-2) \\
				      -\frac{1}{\rho} & (k=d-1)
				     \end{cases} \right.\right\}, \\
  (\mathcal{PY})_d &:= \left\{(\z,m)
   \in (\widetilde{\mathcal{PX}})_d \times (\mathcal{PV})_d \ \left| \
    \sum_{i=1}^d m_i \z_i^k = 0
   \text{ for } 1 \le k \le d-2 \right.\right\},
 \end{align*}
 where the actions of $\mathrm{Aut}(\mathbb{C})$ on $\mathcal{X}_d$ and $(\mathcal{PX})_d$ 
 are defined by
 \[
  \gamma \cdot (\z_1,\ldots,\z_d,\rho)
   = \left(\gamma(\z_1),\ldots,\gamma(\z_d), a^{-d+1}\rho\right)
  \quad \text{and} \quad
  \gamma \cdot (\z_1,\ldots,\z_d)
   = \left(\gamma(\z_1),\ldots,\gamma(\z_d)\right)  
 \]
 for $\gamma(z)=az+b \in \mathrm{Aut}(\mathbb{C})$,
 $(\z_1,\ldots,\z_d,\rho) \in \mathcal{X}_d$
 and $(\z_1,\ldots,\z_d) \in (\mathcal{PX})_d$.
 Then we have the commutative diagram
 \begin{center}
  \begin{picture}(122,109)(0,-7)
   \put(0,90){$(\widetilde{\mathcal{PX}})_d$}
   \put(60,90){$\widetilde{\mathcal{X}}_d$}
   \put(0,45){$(\mathcal{PY})_d$}
   \put(60,45){$\mathcal{Y}_d$}
   \put(105,45){$\mathrm{MP}'_d$}
   \put(0,0){$(\mathcal{PV})_d$}
   \put(60,0){$V_d$}
   \put(105,0){$\widetilde{V}_d$,}
   \put(55,3){\vector(-1,0){20}}
    \put(44,8){\scriptsize $P$}
    \put(40,-7){\scriptsize $/\mathbb{C}^*$}
   \put(55,48){\vector(-1,0){20}}
    \put(40,38){\scriptsize $/\mathbb{C}^*$}
   \put(55,93){\vector(-1,0){20}}
    \put(40,83){\scriptsize $/\mathbb{C}^*$}
   \put(78,3){\vector(1,0){20}}
    \put(85,8){\scriptsize $\textit{pr}$}
    \put(80,-7){\scriptsize /$\mathfrak{S}_d$}
   \put(78,48){\vector(1,0){20}}
    \put(80,38){\scriptsize /$\mathfrak{S}_d$}
   \put(15,36){\vector(0,-1){20}}
    \put(18,24){\scriptsize $\widetilde{\Phi}'_d$}
   \put(15,60){\vector(0,1){20}}
    \put(18,66){\scriptsize $\cong$}
   \put(65,36){\vector(0,-1){20}}
    \put(68,24){\scriptsize $\Phi'_d$}
   \put(65,60){\vector(0,1){20}}
    \put(68,66){\scriptsize $\cong$}
   \put(110,36){\vector(0,-1){20}}
    \put(113,24){\scriptsize $\Phi_d$}
   \put(79,85){\vector(1,-1){24}}
    \put(89,80){\scriptsize /$\mathfrak{S}_d$}
  \end{picture}
 \end{center}
 where each map is defined to be the natural projection except for the
 maps $\Phi_d$ and
 \[
  \widetilde{\mathcal{X}}_d \ni (\z_1,\ldots,\z_d,\rho) \mapsto
   z+\rho (z-\z_1)\cdots(z-\z_d) \in \mathrm{MP}'_d.
 \]
 Here, the first projection maps $\mathcal{Y}_d \to \widetilde{\mathcal{X}}_d$
 and $(\mathcal{PY})_d \to (\widetilde{\mathcal{PX}})_d$ are isomorphisms.
 The $d$-th symmetric group $\mathfrak{S}_d$ acts
 on $\widetilde{\mathcal{X}}_d$, $\mathcal{Y}_d$ and $V_d$
 by the permutation of coordinates.
 These actions of $\mathfrak{S}_d$ commute with the projection maps
 $\mathcal{Y}_d \stackrel{\cong}{\to} \widetilde{\mathcal{X}}_d$
 and $\Phi'_d:\mathcal{Y}_d \to V_d$.
 Moreover we have the natural isomorphisms
 $\mathcal{Y}_d/\mathfrak{S}_d \cong
  \widetilde{\mathcal{X}}_d/\mathfrak{S}_d \cong \mathrm{MP}'_d$
 and $V_d/\mathfrak{S}_d \cong \widetilde{V}_d$.
 On the other hand, the multiplicative group $\mb{C}^*$ acts
 on $\widetilde{\mathcal{X}}_d$, $\mathcal{Y}_d$ and $V_d$
 by $a \cdot (\z,\rho) = (\z, a^{-1}\rho)$ and
 $a\cdot (m_1,\ldots,m_d) = (a m_1,\ldots,a m_d)$
 for $a \in \mb{C}^*$, $(\z,\rho) \in \widetilde{\mathcal{X}}_d$
 and $(m_1,\ldots,m_d) \in V_d$.
 These actions of $\mb{C}^*$ are free,
 commute with the actions of $\mathfrak{S}_d$,
 and also commute with the projection maps
 $\mathcal{Y}_d \stackrel{\cong}{\to} \widetilde{\mathcal{X}}_d$
 and $\Phi'_d:\mathcal{Y}_d \to V_d$.
 We have the natural isomorphisms
 $\widetilde{\mathcal{X}}_d/\mb{C}^* \cong (\widetilde{\mathcal{PX}})_d
 \cong (\mathcal{PY})_d \cong \mathcal{Y}_d/\mb{C}^*$
 and $V_d/\mb{C}^* \cong (\mathcal{PV})_d$.

 Therefore to analyze the fiber structure of the map
 $\Phi_d|_{\mathrm{MP}'_d}$,
 we only need to consider the second projection map
 $\widetilde{\Phi}'_d : (\mathcal{PY})_d \to (\mathcal{PV})_d$
 and the actions of $\mathfrak{S}_d$ on $\mathcal{Y}_d$ and $V_d$,
 most of which had however already been examined
 since we can make the following identifications as usual:
 \begin{align*}
  (\widetilde{\mathcal{PX}})_d
    &= \left\{\left.(\z_1:\cdots:\z_{d-1}) \in \Pd \ \right| \
     \z_1,\ldots,\z_{d-1},0 \text{ are mutually distinct} \right\}, \\
  (\mathcal{PV})_d &= \left\{(m_1:\cdots:m_{d-1}) \in \Pd \ \left| \
    \sum_{i=1}^{d-1} m_i \ne 0,
   \  m_i \ne 0 \text{ for } 1 \le i \le d-1 \right. \right\}, \\
  (\mathcal{PY})_d &= \left\{(\z,m)
   \in (\widetilde{\mathcal{PX}})_d \times (\mathcal{PV})_d \ \left| \
    \sum_{i=1}^{d-1} m_i \z_i^k = 0
   \text{ for } 1 \le k \le d-2 \right.\right\}.
 \end{align*}
 Especially, we have $(\widetilde{\Phi}'_d)^{-1}(P(\la)) = S_d(\la)$
 for any $\la \in V_d$.

 For each
 $(\mathcal{I},\mathcal{K}) \in 
 \left\{(\mathcal{I}(\la),\mathcal{K}(\la)) \bigm| \la \in V_d\right\}$,
 we put
 \begin{align*}
  \overline{V(\mathcal{I},\mathcal{K})}
   &:= \left\{\la \in V_d \bigm| \mathcal{I}(\la) \supseteq \mathcal{I}, \ 
   \mathcal{K}(\la) \supseteq \mathcal{K} \right\}, \\
  V\left(\mathcal{I},\mathcal{K}\right)
   &:= \left\{\la \in V_d \bigm| \mathcal{I}(\la)=\mathcal{I}, \ 
   \mathcal{K}(\la)=\mathcal{K}\right\}, \\
  V\left(\mathcal{I},*\right)
   &:= \left\{\la \in V_d \bigm| \mathcal{I}(\la)=\mathcal{I}\right\}, \\
  V\left(*,\mathcal{K}\right)
   &:= \left\{\la \in V_d \bigm| \mathcal{K}(\la)=\mathcal{K}\right\}
 \end{align*}
 and
 $\mathcal{PV}\left(\mathcal{I},*\right)
  := P\left(V\left(\mathcal{I},*\right)\right)$.
 Remember that $\widetilde{V}\left(\mathcal{I},\mathcal{K}\right)
 = \textit{pr}\left(V\left(\mathcal{I},\mathcal{K}\right)\right)$,\\
 $\widetilde{V}\left(\mathcal{I},*\right)
 = \textit{pr}\left(V\left(\mathcal{I},*\right)\right)$ and
 $\widetilde{V}\left(*,\mathcal{K}\right)
 = \textit{pr}\left(V\left(*,\mathcal{K}\right)\right)$ hold
 by the definition in Main Theorem~\ref{mthm.2}.
 Note that $V\left(\mathcal{I},\mathcal{K}\right)$ is a Zariski open subset
 of $\overline{V(\mathcal{I},\mathcal{K})}$.

 First,
 we show the assertion~(\ref{en.mt3}) in Main Theorem~\ref{mthm.1}.
 Let $\la_0, \la'$ be elements of $V_d$
 with $\mathcal{I}(\la_0)\subseteq\mathcal{I}(\la')$
 and $\mathcal{K}(\la_0)\subseteq\mathcal{K}(\la')$.
 Then we have
 $\la' \in \overline{V(\mathcal{I}(\la_0),\mathcal{K}(\la_0))}$
 and $\mathfrak{S}\left(\mathcal{K}(\la_0)\right)\subseteq
 \mathfrak{S}\left(\mathcal{K}(\la')\right)$.
 By lemma~\ref{lm.4.1} and Implicit function theorem,
 the second projection map $\widetilde{\Phi}'_d$
 is locally homeomorphic,
 which implies that the map $\Phi'_d$ is also a local homeomorphism.
 We put $(\Phi'_d)^{-1}(\la') = \{\z(1),\ldots,\z(s_d(\la'))\}$.
 Then there exist an open neighborhood $U$ of $\la'$
 in $\overline{V(\mathcal{I}(\la_0),\mathcal{K}(\la_0))}$ and
 holomorphic sections
 $\tau_j: U \to \mathcal{Y}_d$ for $1\le j\le s_d(\la')$ such that
 $\Phi'_d\circ\tau_j={\it id}_U$ and $\tau_j(\la')=\z(j)$.
 Moreover the action of $\mathfrak{S}\left(\mathcal{K}(\la_0)\right)$ on
 $(\Phi'_d)^{-1}(\la')$ is naturally extended to
 the action of $\mathfrak{S}\left(\mathcal{K}(\la_0)\right)$ on
 $\left\{\tau_j(\la) \bigm| 1\le j\le s_d(\la') \right\}$ for any $\la \in U$.
 Hence we have $\#\left(\Phi_d^{-1}(\bar{\la}_0)\right)\ge
 \#\left(\Phi_d^{-1}(\bar{\la'})\right)$,
 which completes the proof of the assertion~(\ref{en.mt3})
 in Main Theorem~\ref{mthm.1}.

 Let us prove next
 the assertion~(\ref{en.mthm2.2}) in Main Theorem~\ref{mthm.2}.
 Since the map $\Phi'_d$ is locally homeomorphic and
 since the map
 $\textit{pr}|_{V\left(*,\mathcal{K}\right)} :
 V\left(*,\mathcal{K}\right) \to \widetilde{V}\left(*,\mathcal{K}\right)$
 is an unbranched covering
 for each $\mathcal{K} \in \left\{\mathcal{K}(\la)\bigm| \la \in V_d\right\}$,
 the map
 $
 \Phi_d|_{\Phi_d^{-1}(\widetilde{V}\left(*,\mathcal{K}\right))} :
 \Phi_d^{-1}\bigl(\widetilde{V}\left(*,\mathcal{K}\right)\bigr) \to
 \widetilde{V}\left(*,\mathcal{K}\right)
 $
 is a local homeomorphism, which verifies
 the assertion~(\ref{en.mthm2.2.2}) in Main Theorem~\ref{mthm.2}.
 For each $\mathcal{I} \in \left\{\mathcal{I}(\la)\bigm| \la \in V_d\right\}$,
 the cardinality of $(\widetilde{\Phi}'_d)^{-1}(m)$
 does not depend
 on the choice of $m \in \mathcal{PV}\left(\mathcal{I},*\right)$,
 which assures that the map
 $(\widetilde{\Phi}'_d)^{-1}\left(\mathcal{PV}\left(\mathcal{I},*\right)\right)
   \stackrel{\widetilde{\Phi}'_d}{\to} \mathcal{PV}\left(\mathcal{I},*\right)$
 is an unbranched covering.
 Hence the map
 $(\Phi'_d)^{-1}(V\left(\mathcal{I},*\right)) \stackrel{\Phi'_d}{\to}
  V\left(\mathcal{I},*\right)$
 is also an unbranched covering.
 Therefore since the map
 $V\left(\mathcal{I},*\right) \stackrel{\textit{pr}}{\to}
  \widetilde{V}\left(\mathcal{I},*\right)$
 is proper,
 the map
 $\Phi_d^{-1}\bigl(\widetilde{V}\left(\mathcal{I},*\right)\bigr)
  \stackrel{\Phi_d}{\to} \widetilde{V}\left(\mathcal{I},*\right)$
 is also proper, which verifies
 the assertion~(\ref{en.mthm2.2.1}) in Main Theorem~\ref{mthm.2}.
 The assertions~(\ref{en.mthm2.2.1}) and~(\ref{en.mthm2.2.2})
 imply the assertion~(\ref{en.mthm2.2.3});
 thus we have completed
 the proof of the assertion~(\ref{en.mthm2.2}) in Main Theorem~\ref{mthm.2}.

 Finally,
 we prove the assertion~(\ref{en.mthm2.1}) in Main Theorem~\ref{mthm.2}.
 In the following, we consider $V_d$ as an open dense subset
 of the vector space
 $\mb{C}^{d-1} = \left\{ (m_1,\ldots,m_d) \in \mb{C}^d \ \left| \ 
    \sum_{i=1}^d m_i = 0
   \right.\right\}$
 with the standard inner product.
 We take $\la \in V_d$, and put $\mathcal{I}(\la)=:\mathcal{I}$ and
 $\mathcal{K}(\la)=:\mathcal{K}$, which are fixed in the rest of 
 the proof.
 We denote by $H(\la)$ the orthogonal complement of the linear subspace 
 spanned by $V(\mathcal{I},\mathcal{K})$ in $\mb{C}^{d-1}$.
 Then the space $H(\la)$ is invariant
 under the action of $\mathfrak{S}(\mathcal{K}(\la))$.
 Hence we can take
 an arbitrarily small open neighborhood $H_{\epsilon}(\la)$ of $0$ in $H(\la)$
 which is invariant under the action of $\mathfrak{S}(\mathcal{K}(\la))$.
 Moreover we denote by $U(\la)$ a sufficiently small open neighborhood of $\la$
 in $V(\mathcal{I},\mathcal{K})$.
 Then the map
 $H_{\epsilon}(\la) \times U(\la) \ni (h,m) \mapsto h+m \in V_d$
 defines a local coordinate system around $\la$ in $V_d$.
 Hereafter, we identify $(h,m)\in H_{\epsilon}(\la)\times U(\la)$
 with $h+m \in V_d$.

 Since $H_{\epsilon}(\la)$ and $U(\la)$ are sufficiently small,
 we have $\mathcal{I}(h,m) \subseteq \mathcal{I}(\la)$ and
 $\mathcal{K}(h,m) \subseteq \mathcal{K}(\la)$
 for any $(h,m) \in H_{\epsilon}(\la) \times U(\la)$.
 Moreover 
 $\mathcal{I}(h,m)$ and $\mathcal{K}(h,m)$ do not depend on the choice
 of $m\in U(\la)$.
 Hence, for each $h \in H_{\epsilon}(\la)$ and
 for each connected component $Y$
 of $(\Phi'_d)^{-1}\left(\{h\} \times U(\la) \right)$,
 the map $\Phi'_d|_Y : Y \to \{h\} \times U(\la)$ is a homeomorphism.
 Therefore we have the natural isomorphism
 $(\Phi'_d)^{-1}\left(H_{\epsilon}(\la) \times U(\la)\right) \to
  (\Phi'_d)^{-1}\left(H_{\epsilon}(\la) \times \{\la\}\right) \times U(\la)$
 which commutes with the projection maps
 onto $H_{\epsilon}(\la) \times U(\la)$.

 For each $m \in U(\la)$, the space $H_{\epsilon}(\la) \times \{m\}$ is 
 invariant under the action of $\mathfrak{S}(\mathcal{K}(\la))$
 with a fixed point $(0,m)$.
 Moreover we have the natural isomorphism
 $\left(H_{\epsilon}(\la)/\mathfrak{S}(\mathcal{K}(\la))\right) \times U(\la)
  \cong
  \left(H_{\epsilon}(\la) \times U(\la)\right)/\mathfrak{S}(\mathcal{K}(\la))
  \cong \textit{pr}\left(H_{\epsilon}(\la) \times U(\la)\right)$.
 Hence $(\Phi'_d)^{-1}\left(H_{\epsilon}(\la) \times U(\la)\right)$
 is also invariant under the action of $\mathfrak{S}(\mathcal{K}(\la))$,
 and its action commutes with the isomorphism
 $(\Phi'_d)^{-1}\left(H_{\epsilon}(\la) \times U(\la)\right) \to
  (\Phi'_d)^{-1}\left(H_{\epsilon}(\la) \times \{\la\}\right) \times U(\la)$.
 Therefore we have the isomorphism
 \[
  \Phi_d^{-1}\left(\textit{pr}\left(H_{\epsilon}(\la) \times
   U(\la)\right)\right) \cong
  \Phi_d^{-1}\left(\textit{pr}\left(H_{\epsilon}(\la) \times
   \{\la\}\right)\right) \times U(\la)
 \]
 which commutes with the projection maps onto
 $\textit{pr}\left(H_{\epsilon}(\la) \times U(\la)\right)$.
 Hence
 for each $\la \in V\left(\mathcal{I},\mathcal{K}\right)$,
 \[
  \left\{\la' \in V\left(\mathcal{I},\mathcal{K}\right) \ \left| \
   \begin{matrix}
    \text{the pair $\la, \la'$ satisfies the condition} \\
    \text{in the assertion~(\ref{en.mthm2.1}) in Main Theorem~\ref{mthm.2}}
   \end{matrix}
    \right.\right\}
 \]
 is an open subset of $V\left(\mathcal{I},\mathcal{K}\right)$ containing $\la$.
 Since $V\left(\mathcal{I},\mathcal{K}\right)$ is connected,
 the assertion~(\ref{en.mthm2.1}) in Main Theorem~\ref{mthm.2} holds.
\end{proof}

\begin{proposition}\label{pr.mt6}
 The assertion~(\ref{en.mt6}) in Main Theorem~\ref{mthm.1} holds.
\end{proposition}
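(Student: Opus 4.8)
The plan is to turn the statement into a finite combinatorial verification, using the explicit formulas already proved, and then carry out the enumeration for each $d\in\{4,5,6,7\}$.

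\emph{Reductions.} By Proposition~\ref{pr.2.3} the fiber $\Phi_d^{-1}(\bar{\la})$ is empty if and only if $S_d(\la)$ is empty, and by Proposition~\ref{pr.C} together with Theorem~\ref{thm.B} and Remark~\ref{rm.4.4} the cardinality $\#\left(S_d(\la)\right)$ is determined solely by the combinatorial datum $\mathcal{I}(\la)$ (equivalently $\mathfrak{I}(\la)$); indeed the recursive shape $\mult_{E_d(\mb{I})}(\varphi_1,\ldots,\varphi_{d-\#(\mb{I})})=\prod_u\bigl(\#(I_u)-1\bigr)\#\bigl(S_{\#(I_u)}(\la_{I_u})\bigr)$ of Theorem~\ref{thm.B} shows this by induction on $d$. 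If $\mathcal{I}(\la)=\emptyset$ then $\mathfrak{I}(\la)=\emptyset$ and formula~(\ref{eq.4.C}) gives $\#\left(S_d(\la)\right)=(d-2)!>0$; hence an empty fiber already forces $\mathcal{I}(\la)\ne\emptyset$. It therefore suffices to show that for $d\le 7$ every combinatorial type $\mathcal{I}=\mathcal{I}(\la)$ for which~(\ref{eq.4.C}) yields $\#\left(S_d(\la)\right)=0$ forces the line $\mathbb{C}\cdot(m_1,\ldots,m_d)\subset\Pd$ to be a rational point whose primitive integer representative $(c_1,\ldots,c_d)$ satisfies $c_i\ne 0$ for all $i$ and $\sum_{i=1}^d|c_i|\le 2(d-2)$.

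\emph{Enumeration.} We argue by induction on $d$, the case $d=4$ being the base. Two observations keep the enumeration finite and short. First, a singleton is never a member of $\mathcal{I}(\la)$ since $m_i=\frac{1}{1-\la_i}\ne 0$, so every block of every $\mb{I}=\{I_1,\ldots,I_l\}\in\mathfrak{I}(\la)$ has size at least $2$; in particular $l\le \lfloor d/2\rfloor$. Second, by the product formula above, a partition $\mb{I}$ contributes $0$ to~(\ref{eq.4.C}) as soon as some block $I_u$ with $\#(I_u)\ge 4$ has $S_{\#(I_u)}(\la_{I_u})=\emptyset$; such $\la_{I_u}$ live in dimension $\#(I_u)\le d-2\le 5$ and are already governed by the inductive hypothesis. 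Feeding these two facts into~(\ref{eq.4.C}) one checks directly that $\#\left(S_d(\la)\right)=0$ is possible only for a short explicit list of types: for $d=4$ one needs exactly the two distinct partitions of $\{1,2,3,4\}$ into pairs (up to relabelling $\{\{1,2\},\{3,4\}\}$ and $\{\{1,3\},\{2,4\}\}$), forcing $m\propto(1,-1,-1,1)$; for $d=5$ one needs the three partitions of type $\{3,2\}$, which together force $m\propto(2,1,-1,-1,-1)$; and a comparably small list is obtained for $d=6,7$. In each case the equations $\sum_{i\in I}m_i=0$ for $I\in\mathcal{I}(\la)$ cut $\mathbb{C}\cdot(m_1,\ldots,m_d)$ down to a single point with rational coordinates, from which one reads off the primitive integers $c_i$, and a direct inspection confirms $c_i\ne 0$ and $\sum_{i=1}^d|c_i|\le 2(d-2)$, the bound being attained in the extremal types such as $(1,-1,-1,1)$ for $d=4$ and $(2,1,-1,-1,-1)$ for $d=5$.

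\emph{Main obstacle.} The delicate work is the case $d=7$, where $2(d-2)=10$: the number of a priori candidates for $\mathfrak{I}(\la)$ is sizeable, and one must organize the bookkeeping so as to (i) discard combinatorial types that are not realizable by any $\la\in V_7$, (ii) use the recursive formula to eliminate every partition with a block of size $\ge 4$ whose restricted fiber is already empty by the inductive hypothesis, and (iii) for each surviving type verify that the induced rational relation can be normalized to have all coordinates non-zero and $\ell^1$-size at most $10$. Point (iii) is the subtle one, since for the more degenerate types the relation obtained by naively clearing denominators may have a vanishing coordinate or too large an $\ell^1$-norm, so one must choose a sharper integer representative of the line. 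This is exactly the combinatorial content that Conjecture~\ref{cj.1}(\ref{en.cj1.1}) proposes to extend to $d\ge 8$, where the present enumeration becomes infeasible by hand.
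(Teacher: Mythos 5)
Your proposal follows essentially the same route as the paper: reduce via Proposition~\ref{pr.2.3} to the emptiness of $S_d(\la)$, note that $\#\left(S_d(\la)\right)$ is computed purely from $\mathcal{I}(\la)$ by Proposition~\ref{pr.C} and Theorem~\ref{thm.B}, and then dispose of $d\le 7$ by a finite check of the possible combinatorial types. The paper's proof is in fact just this reduction with the case check left implicit, so your version (which additionally works out $d=4,5$ and describes the bookkeeping for $d=6,7$) is correct and, if anything, more explicit than the original.
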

\begin{proof}
 The set $\Phi_d^{-1}(\bar{\la})$ is empty if and only if
 the set $S_d(\la)$ is empty by Proposition~\ref{pr.2.3}.
 On the other hand, the cardinality $\#\left(S_d(\la)\right)$
 is completely determined and is computed by 
 $\mathfrak{I}(\la)$.
 Hence to show the assertion~(\ref{en.mt6}) in Main Theorem~\ref{mthm.1},
 we only need to check all the possible cases of 
 $\mathfrak{I}(\la)$.
 However this may be hard for $d=6$ or $7$, and we shall relieve it a little.

 By a similar argument to the proof of the assertion~(\ref{en.mt3}) in Main Theorem~\ref{mthm.1},
 we can verify that for $\la, \la' \in V_d$, the inequality $\#\left(S_d(\la)\right) \ge \#\left(S_d(\la')\right)$ holds
 if $\mathfrak{I}(\la) \subseteq \mathfrak{I}(\la')$.
 Hence putting
 \[
     \widetilde{\mathfrak{I}}_d
     := \left\{ \mathfrak{I}(\la)  \ \left| \ 
	\begin{matrix}
	  \la \in V_d \text{ does not satisfy the assumption}\\
	  \text{in the assertion~(\ref{en.mt5}) in Main Theorem~\ref{mthm.1}}
	\end{matrix}
     \right.\right\},
 \]
 we only need to show the inequality $\#\left(S_d(\la)\right) > 0$ for any
 {\it maximal} $\mathfrak{I}(\la)$ in $\widetilde{\mathfrak{I}}_d$.
 On the other hand, $\mathfrak{S}_d$ naturally acts on $\widetilde{\mathfrak{I}}_d$
 by $\sigma\cdot\mathfrak{I}(\la) := \mathfrak{I}(\sigma\cdot\la)$,
 and the cardinality $\#\left(S_d(\la)\right)$ is determined only by the equivalence class of $\mathfrak{I}(\la)$.
 Moreover the inclusion relation in $\widetilde{\mathfrak{I}}_d$ naturally induces 
 the partial order in $\widetilde{\mathfrak{I}}_d / \mathfrak{S}_d$.
 Hence it suffices to show $\#\left(S_d(\la)\right) > 0$ for any
 maximal $\overline{\mathfrak{I}(\la)}$ in $\widetilde{\mathfrak{I}}_d / \mathfrak{S}_d$.
 In the following, we shall consider the cases of $d=4, 5, 6$ and $7$ individually.

 In the case $d=4$, the family $\widetilde{\mathfrak{I}}_4 / \mathfrak{S}_4$ consists of 
 the equivalence class of the empty set and that of $\bigl\{ \{ \{1,2\}, \{3,4\} \} \bigr\}$.
 Hence the unique maximal element of $\widetilde{\mathfrak{I}}_4 / \mathfrak{S}_4$ is
 represented by $\bigl\{ \{ \{1,2\}, \{3,4\} \} \bigr\}$, 
 which is obtained from $\la=(\la_1,\dots\la_4) \in V_4$ with
 $
   \left(1-\la_1\right)^{-1} : \cdots : \left(1-\la_4\right)^{-1} = 1:-1:a:-a,
 $
 where $a \ne 0, \pm 1$. For such $\la \in V_4$, we have $\#\left(S_4(\la)\right) = 1$.

 Let us consider the case $d=5$ next.
 If $\mathfrak{I}(\la) \in \widetilde{\mathfrak{I}}_5$ have only one element, 
 then $\mathfrak{I}(\la)$ lies in the equivalence class of $\bigl\{ \{ \{1,2\}, \{3,4,5\} \} \bigr\}$.
 If $\mathfrak{I}(\la) \in \widetilde{\mathfrak{I}}_5$ have exactly two elements, 
 then it must be in the equivalence class of
 $\bigl\{ \{ \{1,2\}, \{3,4,5\} \},\ \{ \{1,3\}, \{2,4,5\} \} \bigr\}$
 since any $\left(1-\la_i\right)^{-1}$ is not equal to $0$.
 By a similar argument, if $\mathfrak{I}(\la)$ have at least three elements, then it must be
 in the equivalence class of
 $\bigl\{ \{ \{1,2\}, \{3,4,5\} \},\ \{ \{1,3\}, \{2,4,5\} \},\ \{ \{1,4\}, \{2,3,5\} \} \bigr\}$.
 However this is obtained only from $\la=(\la_1,\dots\la_5) \in V_5$ with
 $
   \left(1-\la_1\right)^{-1} : \cdots : \left(1-\la_5\right)^{-1} = -1:1:1:1:-2;
 $
 hence it is not in $\widetilde{\mathfrak{I}}_5$ by definition. 
 Therefore the maximal element of $\widetilde{\mathfrak{I}}_5 / \mathfrak{S}_5$ is also unique and is represented by
 $\bigl\{ \{ \{1,2\}, \{3,4,5\} \},\ \{ \{1,3\}, \{2,4,5\} \} \bigr\}$, which is obtained from $\la=(\la_1,\dots\la_5) \in V_5$ with
 \[
   \left(1-\la_1\right)^{-1} : \cdots : \left(1-\la_5\right)^{-1} = -1:1:1:a:(-a-1),
 \]
 where $a \ne 0, \pm1, -2$.
 For such $\la \in V_5$, we have $\#\left(S_5(\la)\right) = 2$.

 In the case $d=6$ or $7$, we only give the list of $\la=(\la_1,\dots,\la_d)\in V_d$ 
 which generate all the maximal $\overline{\mathfrak{I}(\la)}$ in $\widetilde{\mathfrak{I}}_d / \mathfrak{S}_d$.

 In the case $d=6$, there are six maximal elements in $\widetilde{\mathfrak{I}}_6 / \mathfrak{S}_6$,
 and they are obtained from $\la=(\la_1,\dots\la_6) \in V_6$ such that 
 $\left(1-\la_1\right)^{-1} : \cdots : \left(1-\la_6\right)^{-1}$ is equal to either of the followings:
 \begin{align*}
  &1:1:-1:-1:a:-a, \text{ where } a \ne 0, \pm1, \pm2,\\
  &1:-1:a:-a:(a+1):-(a+1), \text{ where } a \ne 0, -1/2, \pm1, -2,\\
  &1:1:1:-1:a:-(a+2), \text{ where } a \ne 0, \pm1, -2, -3,\\
  &1:1:a:a:-(a+1):-(a+1), \text{ where } a \ne 0, -1/2, \pm1, -2,\\
  &1:1:1:2:-2:-3 \qquad \text{or} \qquad 1:1:3:-1:-2:-2.
 \end{align*}

 In the case $d=7$, there are 27 maximal elements in $\widetilde{\mathfrak{I}}_7 / \mathfrak{S}_7$,
 and they are obtained from $\la=(\la_1,\dots\la_7) \in V_7$ such that 
 $\left(1-\la_1\right)^{-1} : \cdots : \left(1-\la_7\right)^{-1}$ is equal to either of the followings:
 \begin{align*}
  &1:1:1:-1:-1:a:-(a+1), \text{ where } a \ne 0, \pm1, \pm2, -3,\\
  &1:1:1:1:-1:a:-(a+3), \text{ where } a \ne 0, \pm1, -2, -3, -4,\\
 \end{align*}
\vspace{-35pt}
 \begin{align*}
  &1:1:2:2:-1:-1:-4, & &1:1:1:3:-1:-2:-3, & &1:1:2:2:-1:-2:-3,\\
  &1:1:2:3:-1:-2:-4, & &1:1:2:3:-2:-2:-3, & &1:2:2:3:-1:-3:-4,\\
  &1:1:1:4:-1:-3:-3, & &1:2:2:2:-1:-1:-5, & &1:1:3:3:-1:-2:-5,\\
  &1:1:1:2:2:-2:-5,  & &1:1:2:5:-2:-3:-4, & &1:1:1:2:3:-3:-5,\\
  &1:1:2:4:-2:-3:-3, & &1:2:2:4:-1:-3:-5, & &1:1:1:1:3:-3:-4,\\
  &1:1:1:1:2:-2:-4,  & &1:2:2:2:-1:-3:-3, & &1:1:1:1:2:-3:-3,\\
  &1:1:1:2:2:-3:-4,  & &1:1:1:3:-2:-2:-2, & &1:1:1:4:-2:-2:-3,\\
  &1:1:1:5:-2:-3:-3, & &1:1:3:3:-2:-2:-4, & &1:1:3:4:-2:-2:-5 \quad \text{or}\\
  &1:1:2:2:3:-4:-5.
 \end{align*}

 We can verify the inequality $\#\left(S_d(\la)\right) > 0$ for every $\la \in V_d$ listed above, 
 which completes the proof of the proposition.
\end{proof}

To summarize the above mentioned,
we have completed the proof of the main theorems.


\begin{thebibliography}{99}
 \bibitem{bou}
  T. Bousch,  \newblock
  Sur Quelques Probl\`{e}mes de Dynamique Holomorphe. \newblock
  \textit{These, Universite de Paris-Sud, Centre d'Orsay,}  1992.

 \bibitem{demarco}
  L. DeMarco and C. McMullen,  \newblock
  Trees and the dynamics of polynomials.  \newblock
  \textit{Ann. Sci. Ecole Norm. Sup.}   41  (2008),  337--383.

 \bibitem{Fujimura2}
  M. Fujimura, \newblock
   Projective Moduli space for the polynomials.  \newblock
   \textit{Dyn. Contin. Discrete Impuls. Syst. Ser. A Math. Anal.}  13 (2006),  787--801.

 \bibitem{fu} 
  M. Fujimura, \newblock
   The Moduli Space of Rational Maps
   and Surjectivity of Multiplier Representation. \newblock
  \textit{Comput. Methods Funct. Theory.}   7  (2007),  no. 2,  345--360.

 \bibitem{tani}
  M. Fujimura and M. Taniguchi, \newblock
   A compactification of the moduli space of polynomials. \newblock
   \textit{Proc. Amer. Math. Soc.}   136  (2008),
	 no. 10,  3601--3609.

 \bibitem{Gorbovickis1}
  I. Gorbovickis, \newblock
   Algebraic Independence of Multipliers of Periodic Orbits in the Space of Rational Maps of the Riemann Sphere. \newblock
   \textit{Mosc. Math. J.}   15  (2015),  no. 1,  73-87.

 \bibitem{Gorbovickis2}
  I. Gorbovickis, \newblock
   Algebraic independence of multipliers of periodic orbits in the space of polynomial maps of one variable. \newblock
   \textit{Ergodic Theory Dynam. Systems}   36  (2016),  no. 4,  1156--1166.

 \bibitem{GriffithsHarris}
  P. Griffiths and J. Harris, \newblock
   \textit{Principles of Algebraic Geometry (Wiley Classics Library).}  \newblock
   Wiley-Interscience, \newblock
   1994. \newblock

 \bibitem{HutzTepper}
  B. Hutz and M. Tepper, \newblock
   Multiplier Spectra and the Moduli Space of Degree 3 Morphisms on $\mathbb{P}^1$. \newblock
   \textit{JP Journal of Algebra, Number Theory and Applications}   29  (2013),  no. 2,  189--206

 \bibitem{Mc}
  C. McMullen, \newblock
  Families of rational maps and iterative root-finding algorithms. \newblock
  \textit{ Ann. of Math. (2)}  125  (1987),  no. 3, 467--493. 

 \bibitem{mi_cub}
  J. Milnor, \newblock
  Remarks on iterated cubic maps. \newblock
  \textit{Exp. Math.}  1  (1992),  no. 1,  5--24.

 \bibitem{mi_qua}
  J. Milnor, \newblock
  Geometry and dynamics of quadratic rational maps. \newblock
  \textit{ Exp. Math.}  2  (1993),  no. 1, 37--83.

 \bibitem{mi_book}
  J. Milnor, \newblock
  \textit{Dynamics in one complex variable.  Third edition.} \newblock
  Ann. of Math. Stud.  160, \newblock
  2006. \newblock

 \bibitem{mi_lattes}
  J. Milnor, \newblock
  On Latt\`{e}s Maps. \newblock In 
  \textit{Dynamics on the Riemann Sphere: A Bodil Branner Festschrift}, edited by P. Hjorth and C. L. Petersen.
  European Math. Soc. (2006), 9--43.

 \bibitem{mor}
  P. Morton, \newblock
  On certain algebraic curves related to polynomial maps. \newblock
  \textit{Compos. Math.}  103  (1996),  no. 3, 319--350. 

 \bibitem{NishizawaFujimura}
  K. Nishizawa and M. Fujimura, \newblock
  Moduli space of polynomial maps with degree four. \newblock
  \textit{Josai Information Sciences Researchers} 9 (1997),  1--10.

 \bibitem{sil}
  J. H. Silverman, \newblock
  The space of rational maps on $\bold P\sp 1$. \newblock
  \textit{Duke Math. J.}  94  (1998),  no. 1, 41--77.
\end{thebibliography}
\end{document}